\documentclass[reqno]{amsart}
\usepackage{a4wide, textcomp}
\usepackage{microtype, fbb}
\usepackage{amsmath, amssymb, amsthm, mathtools, amsfonts}
\usepackage[utf8]{inputenc}
\usepackage[all]{xy}
\usepackage{graphicx, multirow}
\usepackage{enumerate, xcolor, color, enumitem}
\usepackage{appendix, pdfsync}
\usepackage{float}
\usepackage{setspace}
\usepackage{xpatch}
\usepackage{subcaption, caption}
\usepackage{pst-node}
\usepackage{wrapfig}
\usepackage{tikz}
\usepackage{tikz-cd}
\usetikzlibrary{positioning, calc}
\usetikzlibrary{shapes, arrows}
\usepackage[colorlinks,allcolors = blue]{hyperref} 
\usepackage{cleveref}

\DeclareCaptionLabelFormat{onlyalph}{(\Alph{subfigure})}
\makeatletter
\renewcommand{\p@subfigure}{\thefigure(}

\makeatother

\crefformat{subfigure}{figure~#2#1#3}
\Crefformat{subfigure}{Figure~#2#1#3}

\newtheorem{theorem}{Theorem}[section]
\newtheorem{proposition}[theorem]{Proposition}
\newtheorem{conjecture}[theorem]{Conjecture}
\newtheorem{corollary}[theorem]{Corollary}
\newtheorem{lemma}[theorem]{Lemma}
\newtheorem{claim}[theorem]{Claim}

\newtheorem{remark}[theorem]{Remark}

\theoremstyle{definition}
\newtheorem{definition}[theorem]{Definition}

\AddToHook{env/proposition/begin}{\crefalias{theorem}{proposition}}
\AddToHook{env/proposition/end}{\crefalias{proposition}{theorem}}
\AddToHook{env/conjecture/begin}{\crefalias{theorem}{conjecture}}
\AddToHook{env/conjecture/end}{\crefalias{conjecture}{theorem}}
\AddToHook{env/corollary/begin}{\crefalias{theorem}{corollary}}
\AddToHook{env/corollary/end}{\crefalias{corollary}{theorem}}
\AddToHook{env/lemma/begin}{\crefalias{theorem}{lemma}}
\AddToHook{env/lemma/end}{\crefalias{lemma}{theorem}}
\AddToHook{env/claim/begin}{\crefalias{theorem}{claim}}
\AddToHook{env/claim/end}{\crefalias{claim}{theorem}}
\AddToHook{env/question/begin}{\crefalias{theorem}{question}}
\AddToHook{env/question/end}{\crefalias{question}{theorem}}
\AddToHook{env/remark/begin}{\crefalias{theorem}{remark}}
\AddToHook{env/remark/end}{\crefalias{remark}{theorem}}
\AddToHook{env/definition/begin}{\crefalias{theorem}{definition}}
\AddToHook{env/definition/end}{\crefalias{definition}{theorem}}
\AddToHook{env/example/begin}{\crefalias{theorem}{example}}
\AddToHook{env/example/end}{\crefalias{example}{theorem}}

\def\Ind{\text{Ind}}
\newcommand{\IG}[1]{\mathcal{I}_{#1}}
\newcommand{\aG}[1]{\alpha({#1})}
\newcommand{\un}[2]{#1\cup\{#2\}}
\newcommand{\di}[2]{#1\setminus\{#2\}}
\newcommand{\homo}[2]{${#1}:\mathbb{Z}^{#2}$}

\begin{document}
	
	\title{Homology of matching complexes of $3\times n$ grid graphs}
	
	\author{Pratiksha Chauhan}
	\address{School of Mathematical and Statistical Sciences, IIT Mandi, India}
	\email{d22037@students.iitmandi.ac.in\\d24226@students.iitmandi.ac.in\\ samir@iitmandi.ac.in}
	
	\author{Anchal Sharma}
	
	\author{Samir Shukla}
	
	\subjclass[2020]{55P10, 05E45, 55U10, 05C69, 57Q70}
	\keywords{Matching complex, independence complex, grid graphs, homology, topological connectivity}
	
	\begin{abstract}
		For a finite simple graph $G$, the matching complex $M(G)$ is the simplicial complex whose vertex set is the edge set of $G$ and whose simplices are all the matchings in $G$. The topology of the matching complex of the $m\times n$ grid graph $G_{m\times n}$ is known only for $m = 1,2$, in which cases it is homotopy equivalent to a wedge of spheres. In this article, we study the matching complex $M(G_{3 \times n})$. We prove that for $n\ge2$, its reduced homology vanishes in dimensions $i \leq n-2$ and in top dimension, while $\tilde{H}_{n-1}(M(G_{3\times n}))\neq 0$. We also show that $M(G_{3 \times n})$ is simply connected for $n \geq 3$. Consequently, the topological connectivity of $M(G_{3\times n})$ is $n-2$.
	\end{abstract}
	
	\maketitle
	
	\section{Introduction}
	
	Let $G$ be a finite simple graph, with vertex set $V(G)$ and edge set $E(G)$. A matching in $G$ is a collection of pairwise non-adjacent edges of $G$. The matching complex of $G$, denoted by $M(G)$, is a simplicial complex whose vertex set is $E(G)$ and whose simplices are all the matchings in $G$. Matching complexes have been studied since the work of Garst \cite{garst1979cohen} and Bouc \cite{bouc1992homologie}, in connection with coset complexes, Brown complexes, and Quillen complexes. In particular, Bouc studied the matching complexes of complete graphs $M(K_n)$, and Garst studied the matching complexes of complete bipartite graphs $M(K_{m,n})$.
	
	The matching complexes of complete bipartite graphs admits a natural interpretation as \emph{chessboard complexes}. The chessboard complex $\Delta_{m,n}$ is a simplicial complex whose simplices are the non-taking rook placements on an $m\times n$ chessboard, \textit{i.e.}, placements in which no two rooks share a common row or column. Identifying the squares $(i, j)$ of the $m \times n$ chessboard with the edges of $K_{m,n}$, we obtain that $\Delta_{m,n}$ is the same as $M(K_{m,n})$. Chessboard complexes arise in the study of configuration spaces \cite{bjorner1987some} and halving hyperplanes \cite{Zivaljevic_colored_Tverberg}. 
	The topological connectivity of these complexes was studied by Bj\"{o}rner et al.\ \cite{Bjorner1994}, who proved that $\Delta_{m,n}$ is $(\nu-2)$-connected, where $\nu = \min\{m,n,\lfloor\frac{m+n+1}{3}\rfloor\}$ (here, $\lfloor x\rfloor$ denotes the greatest integer less than or equal to $x$). They also established the corresponding bound for $M(K_n)$ and proved that $M(K_n)$ is $(\nu'-2)$-connected, where $\nu' = \lfloor\frac{n+1}{3}\rfloor$. Recall that a topological space $X$ is said to be $k$-{\it connected} if every map from an $m$-dimensional sphere $S^m \to X$ can be extended to a map from the $(m+1)$-dimensional ball $\mathbb{B}^{m+1} \to X$ for $m = 0, 1, \ldots, k$.
	
	Beyond complete and complete bipartite graphs, matching complexes have been investigated for numerous families of graphs, including paths and cycles \cite{Kozlov_directed_trees}, trees \cite{Milutinovic_trees}, forests \cite{Marietti_forests}, polygonal tilings \cite{bayer2023general,Matsushita_polygonal}, outerplanar graphs \cite{bayer_outerplanar}, complete hypergraphs \cite{Bate_representation}, and categorical product of path graphs \cite{Categorical_product}. 
	Matching complexes also appear in several other areas of mathematics, including representation theory \cite{Bate_representation,bouc1992homologie,Shareshian}, discrete geometry \cite{Athanasiadis,Bjorner1994,Jojic_Tverberg}, and commutative algebra \cite{Jiang_chessboard,Karaguezian_Matching,Reiner_Matching}. 
	For further background, we refer the reader to the survey by Wachs \cite{Wachs_survey} and to \cite[Chapter 11]{JonssonBook}. 
	
	In this article, we focus on the matching complex of the $3 \times n$ grid graph, the Cartesian product of the path graphs $P_3$ and $P_n$.
	\begin{definition}
		The \textit{Cartesian product} of graphs $G$ and $H$, denoted by $G \Box H$, is the graph where $V(G \Box H) = V(G) \times V(H)$ and $E(G \Box H) = \{((g,h),(g',h')) : g = g' \text{ and } (h,h') \in E(H), \text{ or } h = h' \text{ and }(g,g')\in E(G)\}$. 
	\end{definition}
	
	For $n\ge1$, let $P_n$ denote the path graph on $n$ vertices. The $m\times n$ grid graph $G_{m\times n}$ is defined as $G_{m\times n}: = P_m\Box P_n$, the Cartesian product of two path graphs. Matching complexes of grid graphs have been studied by several authors. The case $G_{1\times n}\cong P_n$ was addressed by Kozlov \cite{Kozlov_directed_trees}, who showed that $M(G_{1\times n})$ is contractible or homotopy equivalent to a sphere. 
	In unpublished work, Jonsson \cite{Jonsson_unpublished} studied the homotopical depth and topological connectivity of $M(G_{m\times n})$ for certain cases. Moreover, he remarked that ``it is probably very hard to determine the homotopy type of'' matching complexes of grid graphs. Braun and Hough \cite{braun2016matching} later examined the homology of $M(G_{2\times n})$, and they mentioned that ``the topology of the matching complex for the $2\times n$ grid graph is mysterious''. 
	Matsushita \cite{Matsushita_grid} subsequently proved that $M(G_{2\times n})$ is homotopy equivalent to a wedge of spheres. In addition to matching complexes, several other graph complexes associated with grid graphs have been studied, including independence complexes \cite{adamaszek2012hardsquarescylindersrevisited,Matsushita_grid_independence_1,Matsushita_grid_independence_2}, cut complexes and total cut complexes \cite{Bayer2024Cutcomplex,Bayer2024TotalCutcomplex,bayer2025topology,chandrakar2024topology}, and robust clique complexes \cite{filakovsky_robust_clique}.
	
	Despite significant interest in the matching complexes of grid graphs, the topology of $M(G_{m\times n})$ remains largely unresolved in general. In this article, we study the matching complex of the $3\times n$ grid graph $G_{3\times n}$. A proposed determination of the homotopy type of $M(G_{3\times n})$ was given in \cite{Previous_3Xn}, but a gap was subsequently identified in the proof, leaving the problem open. Here, our main contribution is a sharp topological connectivity bound for $M(G_{3\times n})$. Recall that the topological connectivity of $X$, denoted by $conn(X)$, is the largest integer $k$ such that $X$ is $k$-connected, {\it i.e.}, all the homotopy groups $\pi_i(X) = 0$ for $0 \leq i \leq k$ and $\pi_{k+1}(X) \neq 0$. We set the connectivity of a contractible space to be $\infty$. The main results of this article are as follows. 
	\begin{theorem}\label{theorem:intro_homology}
		Let $n\ge 2$. Then 
		\begin{enumerate}[label = (\roman*)] 
			\item \label{theorem:intro_homology_i} $\tilde{H}_i(M(G_{3\times n})) = 0$ for $i \leq n-2$ and $i = \dim(M(G_{3\times n})) $.
			\item \label{theorem:intro_homology_ii} $\tilde{H}_{n-1}(M(G_{3\times n}))\ne0$.
		\end{enumerate} 
	\end{theorem}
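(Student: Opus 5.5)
The plan is to work throughout with independence complexes: $M(G)=\Ind(L(G))$, where $L(G)$ is the line graph of $G$. For a vertex $v$ of a graph $H$ I will use the cofiber sequence
\[
\Ind(H\setminus v)\hookrightarrow \Ind(H)\to \Sigma\,\Ind(H\setminus N[v]),
\]
together with its long exact sequence in reduced homology:
\[
\cdots\to \tilde H_i(\Ind(H\setminus v))\to \tilde H_i(\Ind(H))\to \tilde H_{i-1}(\Ind(H\setminus N[v]))\to \tilde H_{i-1}(\Ind(H\setminus v))\to\cdots .
\]
I will also use the fold lemma: if $N(u)\subseteq N(w)$, then $\Ind(H)\simeq\Ind(H\setminus w)$. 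Deleting a vertex of $L(G_{3\times n})$, or its closed neighbourhood, produces a grid with some edges removed near one end. So I first fix a finite list of auxiliary graphs $\Gamma^{(j)}_n$: the $3\times n$ grid with a prescribed set of edges deleted (or with a short pendant configuration) in the last one or two columns. The list must be closed, up to the fold lemma and isomorphism, under the deletions performed at the rightmost column.

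For part (i) in degrees $i\le n-2$, I will prove by simultaneous induction on $n$ a connectivity-type bound $\tilde H_i(\Ind(L(\Gamma^{(j)}_n)))=0$ for $i\le n-2+c_j$, where $c_j\in\{-1,0,1\}$ are offsets to be read off from small cases computed by hand or computer for $n\le 3$. The inductive step processes the edges of the last column one at a time via the long exact sequence. Whenever a piece of the form $\Ind(H\setminus N[v])$ appears, removing the closed neighbourhood of an edge consumes at least one column, which costs one degree. This matches the degree shift of the suspension, so the bounds propagate.

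For the top dimension $d=\lfloor 3n/2\rfloor-1$, I will use the same sequences in the upper range. Top-dimensional homology of a complex of dimension $d$ is just the group of $d$-cycles. The long exact sequence gives
\[
0\to \tilde H_d(\Ind(H\setminus v))\to \tilde H_d(\Ind(H))\to \tilde H_{d-1}(\Ind(H\setminus N[v])).
\]
I will show both outer groups vanish inductively by tracking the dimensions of the auxiliary complexes, which are maximum matching sizes minus one. Whenever $H\setminus N[v]$ has dimension below $d-1$, or is itself in a top-degree case, the outer group is zero. The parity of $n$ will need separate treatment, since $\lfloor 3n/2\rfloor$ behaves differently for even and odd $n$.

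For part (ii), the natural candidate cycle is the subcomplex of matchings that use only vertical edges. In each column the two vertical edges share the middle vertex, so that column contributes $M(P_3)=S^0$. Distinct columns are independent of each other, so this subcomplex is the join of $n$ copies of $S^0$, namely $S^{n-1}$, giving a class $z\in\tilde H_{n-1}$. To show $z\neq0$, I will push it through the connecting maps: delete horizontal edges one at a time, and check that at each step the boundary map $\tilde H_{n-1}(\Ind(H))\to \tilde H_{n-2}(\Ind(H\setminus N[v]))$, or the restriction, carries $z$ to the analogous join class of a smaller grid. The base case is $n=2$, where $M(G_{3\times2})$ is known explicitly. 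An alternative is to build an explicit cocycle that pairs to $\pm1$ with $z$, for instance the indicator of one fixed all-vertical top simplex, and check that it is a cocycle. That holds if every $n$-matching containing that $(n-1)$-simplex contains it an even number of times with signs cancelling, which can be tested on the edges adjacent to the chosen matching.

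I expect the main obstacle to be this nonvanishing step. It competes with the vanishing results obtained from the same exact sequences, so the auxiliary family must be chosen so that $z$ survives each connecting map rather than being killed by the image from $\Ind(H\setminus v)$. I would first attempt the explicit cocycle, and fall back on the inductive connecting-map argument if the sign cancellations fail.
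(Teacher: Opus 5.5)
Part (i) is not established by your outline, and the mechanism you describe for degrees $i\le n-2$ does not close. If you use the long exact sequence only to propagate vanishing, you need both $\tilde H_i(\Ind(H\setminus v))=0$ and $\tilde H_{i-1}(\Ind(H\setminus N[v]))=0$. Your column count addresses the link term. The deletion term, however, requires $H\setminus v$ to be at least as acyclic as $H$ itself, with nothing consumed.

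Suppose you delete, one at a time, the five edges meeting the last column ($b_n,d_n,a_{n-1},c_{n-1},e_{n-1}$ in the paper's notation). Then the deletion branch ends at $\Gamma_{n-1}$, and $\tilde H_{n-2}(\Ind(\Gamma_{n-1}))\neq 0$ by part (ii) for $n-1$. So whatever mix of exact sequences and folds you use along that branch, the bound it delivers for $\Gamma_n$ is at most $n-3$. The deletion branch must instead be steered to a graph at least as acyclic as the target, e.g.\ one with an isolated vertex, which is what simplicial-vertex splittings achieve. The graphs met along the way must also be shown to be more acyclic than their column count suggests.

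That is exactly the content of the paper's proof. There $\Gamma_n\setminus b_1\cong\mathcal V_{n-1}$ is acyclic through degree $n-2$. The closed system $\mathcal U_n,\mathcal V_n,\mathcal W_n,\mathcal X_n,\mathcal Y_n,\mathcal Z_n$ is acyclic through degrees $n-2,n-1,n-1,n,n,n$ respectively. It is held together by explicit wedge decompositions coming from simplicial-vertex and closed-neighbourhood splittings, folds, edge additions with contractible link, and Csorba's subdivision lemma; Mayer--Vietoris is used only at $b_1$, $w_1$, $z_1$. Offsets ``read off from small cases'' do not substitute for exhibiting such a system and verifying each recursion.

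The top-degree plan is likewise only a plan. In your sequence, the link term is itself a top-degree group whenever $v$ lies in some maximum independent set. The deletion term is one whenever some maximum independent set avoids $v$. So you would need a second induction proving top-degree vanishing for a whole family of auxiliary complexes. None is set up, and such vanishing is not formal: it already fails for $\Ind(\Gamma_1)$, which is two points.

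The paper avoids the homology of auxiliary complexes here altogether. It exhibits an independent set $\mathcal B_n$ of top- and bottom-row horizontal edges that meets every maximum independent set of $\Gamma_n$. The sequential element matchings on $\mathcal B_n$ then form an acyclic matching with no critical top-dimensional face. The work lies in the combinatorial hitting statement: computing $\alpha(\Gamma_n)$, showing that maximum independent sets of $\Gamma_{2m+1}$ meet $\{b_1,c_1,d_1\}$, and a case analysis modulo $6$.

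Part (ii), by contrast, can be finished along your first route, and far more simply than in the paper, which argues inductively through $\mathcal U_n$ and the sequence at $b_1$. Your cocycle criterion is off, though. The indicator cochain of an $(n-1)$-simplex $\sigma$ is a cocycle exactly when $\sigma$ is a facet, i.e.\ a maximal matching; no sign cancellation is involved. For $\sigma=\{b_1,\dots,b_n\}$ this fails, since $e_1$ is adjacent to no $b_i$.

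Take instead $\sigma=\{b_1,d_2,b_3,d_4,\dots\}$, the upper vertical edge in odd columns and the lower one in even columns. Each vertex outside $\sigma$ is adjacent to some element of $\sigma$:
\begin{itemize}
\item each $a_i$ is adjacent to whichever of $b_i,b_{i+1}$ has odd index;
\item each $e_i$ is adjacent to whichever of $d_i,d_{i+1}$ has even index;
\item each $c_i$ is adjacent to the element of $\sigma$ in column $i$;
\item each remaining vertical edge is adjacent to the element of $\sigma$ in its own column.
\end{itemize}
Hence $\sigma$ is a facet of dimension $n-1$. It occurs with coefficient $\pm1$ in $z=(b_1-d_1)*\cdots*(b_n-d_n)$. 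A facet of dimension $n-1$ occurs in no boundary of an $n$-chain, so no nonzero multiple of $z$ is a boundary, and $\tilde H_{n-1}(\Ind(\Gamma_n))\neq 0$.
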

	
	\begin{theorem}\label{theorem:intro_simply_connected}
		For $n\ge3$, $M(G_{3\times n})$ is simply connected.
	\end{theorem}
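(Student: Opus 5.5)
The approach is a Seifert--van Kampen argument, inducting on $n$ and using only the combinatorics of the $1$- and $2$-skeleta. By the definition of the matching complex, the $2$-skeleton of $M(G_{3\times n})$ depends only on edges of $G_{3\times n}$ and on triples of pairwise disjoint edges. So $\pi_1$ is presented combinatorially. The generators are edge-paths in the graph $\Gamma$ whose vertices are the edges of $G_{3\times n}$, with two of them adjacent when they are disjoint. The relations come from triangles, that is, from matchings of size three.

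The plan is to show directly that every edge-loop in $M(G_{3\times n})$ is null-homotopic. Connectivity comes first: $M(G_{3\times n})$ is connected for $n\ge 2$ by part \ref{theorem:intro_homology_i} of \Cref{theorem:intro_homology}, since $\tilde H_0=0$ there. For simple connectivity, I would use the following standard criterion. Suppose $X$ is connected and every cycle of length $3$, $4$ or $5$ in the $1$-skeleton, together with a generating set of short cycles, is filled by triangles. Then it is enough to reduce every loop to such short loops. Concretely, I would fix a base vertex $e_0$, an edge far from the rest, say a vertical edge in the first column. Two facts make the reduction work. First, for $n\ge 3$ almost every edge is disjoint from $e_0$, so $\mathrm{lk}(e_0)$ is large. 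Second, the star of $e_0$ is a cone, hence contractible.

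The key step is a cover of $M(G_{3\times n})$ by two pieces, glued via van Kampen. Let $A=\mathrm{st}(e_0)$, and let $B$ be the full subcomplex on all edges except $e_0$, so that $B=M(G_{3\times n}-e_0)$. Then
\[
\pi_1(M)\cong \pi_1(B)/\langle\!\langle \pi_1(\mathrm{lk}(e_0))\rangle\!\rangle ,
\]
provided $\mathrm{lk}(e_0)$ is connected. Now $\mathrm{lk}(e_0)=M(G_{3\times n}-V(e_0))$, the matching complex of the grid with two vertices removed. Iterating this, deleting edges one at a time in a chosen order (say column by column), the problem reduces to controlling the fundamental groups of matching complexes of ``partial'' $3\times k$ grids: $G_{3\times(k-1)}$ with a few extra edges or vertices attached in column $k$.

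I would set up a family of such partial grids, closed under the two operations of deleting an edge and deleting an edge's endpoints. For this family I would prove by simultaneous induction on $k$ two properties: the matching complex is connected, and every loop is killed once the link relation is added. The base cases $n=3$, and small partial grids, would be checked by hand or by a direct count. For $M(G_{3\times 3})$, which has 12 vertices, one can verify that the $2$-skeleton already kills all cycles of $\Gamma$. The inductive step uses the following observation. An edge in the last column is disjoint from all edges in columns $\le k-2$. The link of such an edge therefore contains a cone-like large piece, namely the join with a complex that is connected by induction. A join of a connected complex with a nonempty complex is simply connected. This gives $\pi_1(\mathrm{lk})$ surjecting trivially into the right place, or even $\mathrm{lk}$ simply connected outright.

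The main obstacle is the bookkeeping of the family of partial grids. The links must stay connected, and the join argument must apply at every stage; this can fail for the tiny graphs near the boundary columns. That is exactly why $n\ge 3$ is required: $M(G_{3\times 2})$ has nonzero $\tilde H_1$ by part \ref{theorem:intro_homology_ii} of \Cref{theorem:intro_homology}. I would first try to avoid a large case analysis by choosing the deleted edge $e$ in column $n$. Then $\mathrm{lk}(e)\supseteq M(G_{3\times (n-2)})*(\text{nonempty})$, which gives simple connectivity of the link via the join for $n\ge 3$. The remaining edges in columns $n-1,n$ would be handled by a finite check.
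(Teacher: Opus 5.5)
\textbf{There is a genuine gap: the inductive step does not go through.}

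The key inference is invalid. You argue that $\mathrm{lk}(e)\supseteq M(G_{3\times(n-2)})*(\text{nonempty})$, and hence that $\mathrm{lk}(e)$ is simply connected. But a complex containing a simply connected subcomplex can have arbitrary $\pi_1$.

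It is also aimed at the wrong target. By your own van Kampen formula, a simply connected link only gives $\pi_1(M)\cong\pi_1(\mathrm{del}(e))$, so the whole problem is transferred to the deletions. Those were to be handled by the family of partial grids, but that part is missing:
\begin{itemize}
\item The family is never defined.
\item ``Every loop is killed once the link relation is added'' is not a property of a single complex that one can induct on.
\item The ``finite check'' for the remaining edges of columns $n-1,n$ has to be done for every $n$, so it is not finite.
\end{itemize}
As written, nothing establishes $\pi_1(M(G_{3\times n}))=1$ for any $n\ge 4$.

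\textbf{The missing observation is already in your formula.} If $\mathrm{lk}(e)$ is merely \emph{connected}, then $\pi_1(\mathrm{del}(e))\to\pi_1(M)$ is surjective. So you can argue as follows.
\begin{itemize}
\item Induct on $n$, with base case $n=3$ from $\Ind(\Gamma_3)\simeq\vee_5\mathbb{S}^2$.
\item Start from $M(G_{3\times(n-1)})$, which is simply connected by induction.
\item Attach the five edges meeting column $n$ one at a time.
\item Each link is the matching complex of the current graph minus the two endpoints of the new edge. It contains $G_{3\times(n-1)}$ minus at most one vertex of column $n-1$.
\item For $n\ge 4$ each link is connected. Every edge avoiding column $1$ is disjoint from the vertical edges of column $1$. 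Every edge meeting column $1$ is disjoint from some edge avoiding column $1$.
\end{itemize}
No simple connectivity of links and no auxiliary family is needed.

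This is what the paper does, in combinatorial form. Set $B=\{a_{n-1},b_n,c_{n-1},d_n,e_{n-1}\}$ and $A=V(\Gamma_n)\setminus B$. Each vertex $p_j\in B$ of an edge loop is handled in one of two ways:
\begin{itemize}
\item If $\{p_{j-1},p_{j+1}\}$ is a simplex, $p_j$ is deleted, using that the complex is flag.
\item Otherwise, $p_j$ is replaced by some $q\in A$ with $\{p_{j-1},q,p_j\}$ and $\{q,p_j,p_{j+1}\}$ simplices; such a $q$ exists for $n\ge 4$.
\end{itemize}
This homotopes the loop into $\Ind(\Gamma_n[A])\cong\Ind(\Gamma_{n-1})$, which is simply connected by induction.
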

	
	By the Hurewicz theorem (see \cite[Theorem 4.32]{hatcher2005algebraic}), \Cref{theorem:intro_homology} and \Cref{theorem:intro_simply_connected} together imply the following. 
	\begin{corollary}
		For $n \geq 3$, conn$(M(G_{3\times n})) = n-2$.
	\end{corollary}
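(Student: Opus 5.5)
The corollary follows by combining \Cref{theorem:intro_homology} and \Cref{theorem:intro_simply_connected} with the Hurewicz theorem. Fix $n\ge 3$ and write $X = M(G_{3\times n})$.

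First, $X$ is path connected. Indeed, $\tilde H_0(X)=0$ by \Cref{theorem:intro_homology}\ref{theorem:intro_homology_i}, since $0\le n-2$. Next, \Cref{theorem:intro_simply_connected} gives $\pi_1(X)=0$.

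Now I would prove by induction on $i$, for $2\le i\le n-2$, that $\pi_i(X)=0$. Suppose $X$ is $(i-1)$-connected with $i\ge 2$. The Hurewicz theorem \cite[Theorem 4.32]{hatcher2005algebraic} then gives $\pi_i(X)\cong \tilde H_i(X)$. This group vanishes by \Cref{theorem:intro_homology}\ref{theorem:intro_homology_i}, because $i\le n-2$. Hence $X$ is $(n-2)$-connected, and so $\mathrm{conn}(X)\ge n-2$.

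For the reverse inequality, $X$ is $(n-2)$-connected and $n-1\ge 2$, so Hurewicz applies once more and gives $\pi_{n-1}(X)\cong \tilde H_{n-1}(X)$. This is nonzero by \Cref{theorem:intro_homology}\ref{theorem:intro_homology_ii}. Therefore $X$ is not $(n-1)$-connected, and $\mathrm{conn}(X)=n-2$.

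In particular $X$ is not contractible, so the convention $\mathrm{conn}=\infty$ for contractible spaces does not apply. The hypothesis $n\ge 3$ is needed at two points. It makes \Cref{theorem:intro_simply_connected} available, and it guarantees $n-1\ge 2$, which is what allows Hurewicz to identify $\pi_{n-1}(X)$ with homology rather than only its abelianization. No step presents a real obstacle; all the substance lies in the two theorems being combined.
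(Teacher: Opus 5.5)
Your proof is correct and takes the same route as the paper, which deduces the corollary from \Cref{theorem:intro_homology} and \Cref{theorem:intro_simply_connected} via the Hurewicz theorem. You have written out the inductive use of Hurewicz that the paper leaves implicit, and it also covers the edge case $n=3$, where the range $2\le i\le n-2$ is empty.
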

	
	Our approach to proving these results is based on a well-known correspondence between matching complexes and independence complexes via line graphs. Recall that an independent set in a graph $G$ is a subset $I \subseteq V(G)$ such that the induced subgraph $G[I]$ contains no edges. The independence complex of $G$, denoted by $\Ind(G)$, is the simplicial complex whose vertex set is $V(G)$ and whose simplices are the independent subsets of $G$.
	The line graph of $G$, denoted by $L(G)$, is the graph with vertex set $V(L(G)) = E(G)$, where two distinct vertices are adjacent if and only if the corresponding edges of $G$ share a common endpoint. 
	One can observe that $M(G) \cong \Ind(L(G))$. 
	
	Let $\Gamma_n: = L(G_{3\times n})$. Then $M(G_{3 \times n}) \cong \Ind(\Gamma_n)$. It is therefore sufficient to study the independence complex $\Ind(\Gamma_n)$. 
	To prove \Cref{theorem:intro_homology}, we establish the corresponding homological results for $\Ind(\Gamma_n)$. Our proof uses two different tools. We first establish that $\tilde{H}_i(\Ind(\Gamma_n)) = 0$ for $i \leq n-2$ and that $\tilde{H}_{n-1}(\Ind(\Gamma_n))\ne0$, by induction on $n$ together with the Mayer-Vietoris sequence. 
	We then prove that $\tilde{H}_{\dim(\Ind(\Gamma_n))}(\Ind(\Gamma_n)) = 0$ (the vanishing of top-dimensional homology) using discrete Morse theory by constructing an acyclic matching with no critical cell in the top dimension. Moreover, to prove \Cref{theorem:intro_simply_connected}, we establish that $\Ind(\Gamma_n)$ is simply connected by induction on $n$, showing that every loop in $\Ind(\Gamma_n)$ is null-homotopic.
	
	This article is organized as follows: In \Cref{section:preliminaries}, we present the necessary preliminaries on graph theory and simplicial complexes. The main results are established in \Cref{section:main}, which is divided into two subsections. In \Cref{section:Lower_dimensions_homology}, we examine the homology in lower dimensions and prove that $\tilde{H}_i(M(G_{3\times n})) = 0$ for $i \leq n-2$ and $\tilde{H}_{n-1}(M(G_{3\times n}))\ne0$. 
	In \Cref{section:top_dimension}, we prove the vanishing of homology in the top dimension, and that $M(G_{3 \times n})$ is simply connected. Finally, in \Cref{section:conclusion}, we discuss a possible approach towards proving the vanishing of homology in dimensions $\geq $ $n$, and make a conjecture about the homotopy type of $M(G_{3 \times n})$. 
	
	\section{Preliminaries}\label{section:preliminaries}
	
	In this section, we introduce the notation, definitions, and results that will be used throughout the article.
	
	\subsection{Graph}
	
	A \textit{graph} $G$ is a pair $(V(G), E(G))$, where $V(G)$ is its vertex set and $E(G) \subseteq \binom{V(G)}{2}$ is the edge set. For any $u,v\in V(G)$, we say that $u$ and $v$ are adjacent if $(u,v) \in E(G)$. 
	We write $u\sim v$ for adjacency and $u\nsim v$ for non-adjacency.
	A {\it subgraph} $H$ of $G$ is a graph with $V(H) \subseteq V(G)$ and $E(H) \subseteq E(G)$. For a subset $U \subseteq V(G)$, the \textit{induced subgraph} $G[U]$ is the subgraph with $V(G[U]) = U$ and $E(G[U]) = \{(a, b) \in E(G) \ | \ a, b \in U\}$. The \textit{open neighbourhood} $N_G(U)$ and the {\it closed neighbourhood} $N_G[U]$ of $U$ in $G$ are defined as $N_G(U): = \{u\in V(G)\ |\ u\sim v\text{ in $G$ for some }v\in U\}$ and $N_G[U]: = N_G(U)\cup U$.
	If $U = \{v\}$, we write $N_G(v)$ and $N_G[v]$ in place of $N_G(\{v\})$ and $N_G[\{v\}]$, respectively. 
	\begin{definition}
		Let $G$ and $H$ be simple graphs (\textit{i.e.}, undirected graphs with no loops or multiple edges).
		An \textit{isomorphism} from $G$ to $H$ is a bijection $f:V(G)\rightarrow V(H)$ such that, for every $u,v\in V(G),$ $(u,v)\in E(G)$ if and only if $(f(u),f(v))\in E(H).$ If such an isomorphism exists, then $G$ and $H$ are said to be \textit{isomorphic}, and it is denoted by $G\cong H.$
	\end{definition}
	
	For $n\geq 1,$ the {\it path graph} $P_n,$ is the graph with vertex set $V(P_n) = \{1,2,3,\cdots,n\},$ and edge set $E(P_n) = \{(i,i+1) \ | \ 1\leq i\leq n-1\}$. For $n\geq 3,$ the {\it cycle graph} $C_n,$ is a graph with vertex set $V(C_n) = \{1,2,3,\cdots,n\},$ and edge set $E(C_n) = \{(i,i+1) \ | \ 1\le i\le n-1\}\cup \{(1,n)\}.$
	
	For more details on the graph terminology used in this article, we refer the reader to \cite{bondy1976graph} and \cite{west}. 
	
	\subsection{Simplicial complex}
	
	A {\it finite abstract simplicial complex} $\Delta$ is a collection of finite sets such that if $\tau \in\Delta$ and $\sigma \subseteq\tau$, then $\sigma\in\Delta$. The elements of $\Delta$ are called {\it simplices} of $\Delta$. If $\sigma \subseteq\tau$, we say that $\sigma$ is a {\it face} of $\tau$. The \textit{dimension of a simplex} $\sigma$ is equal to $|\sigma| - 1$. The \textit{dimension of an abstract simplicial complex} is the maximum of the dimensions of its simplices. 
	If a simplex has dimension $d$, it is said to be $d$-{\it dimensional}. The $0$-dimensional simplices are called \textit{vertices} of $\Delta$, and the set of vertices is denoted by $V(\Delta)$. A \textit{subcomplex} $\Delta'$ of $\Delta$ is a simplicial complex such that $\sigma\in\Delta'$ implies $\sigma\in\Delta$.
	
	Throughout the article, we consider a simplicial complex as a topological space, namely, its {\it geometric realization} (see \cite{Kozlov2008} for details).
	\begin{definition}
		Let $\Delta_1$ and $\Delta_2$ be two abstract simplicial complexes with disjoint vertex sets. The \textit{join} of $\Delta_1$ and $\Delta_2$ is the abstract simplicial complex $\Delta_1 * \Delta_2$, whose vertex set is $V (\Delta_1) \cup V (\Delta_2)$ and whose set of simplices is given by
		$$\Delta_1 * \Delta_2 = \{\sigma\subseteq V (\Delta_1) \cup V (\Delta_2) \ | \ \sigma\cap V (\Delta_1) \in \Delta_1\text{ and }\sigma\cap V (\Delta_2) \in \Delta_2\}.$$
	\end{definition} 
	
	For a simplex $\tau$ of $\Delta$, we consider the following constructions.
	\begin{itemize}
		\item The \textit{link} of $\tau$ in $\Delta$ is the simplicial complex defined by 
		$$lk(\tau,\Delta) : = \{\sigma\in \Delta \ | \ \sigma\cap \tau = \emptyset, \text{ and } \sigma \cup \tau \in \Delta\}.$$
		
		\item The \textit{deletion} of $\tau$ in $\Delta$ is the simplicial complex defined by 
		$$del(\tau,\Delta) : = \{\sigma \in \Delta \ | \ \tau \not\subseteq\sigma\}.$$
		
		\item The \textit{star} of $\tau$ in $\Delta$ is the simplicial complex defined by 
		$$st(\tau,\Delta) : = \{\sigma\in \Delta \ | \ \sigma \cup \tau \in \Delta\}.$$
	\end{itemize}
	
	The following lemma expresses the homotopy type of a simplicial complex in terms of the link and deletion of a vertex.
	\begin{lemma}[{\cite[Example 0.14]{hatcher2005algebraic}}]\label{theorem:lk_del} 
		Let $\Delta$ be a simplicial complex and $v$ be a vertex of $\Delta$. If $lk(v,\Delta)$ is contractible in $del(v,\Delta)$, then 
		$$\Delta\simeq del(v,\Delta)\vee \Sigma(lk(v,\Delta)).$$
	\end{lemma}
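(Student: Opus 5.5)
The statement to prove is \Cref{theorem:lk_del}: if $lk(v,\Delta)$ is contractible in $del(v,\Delta)$, then $\Delta\simeq del(v,\Delta)\vee \Sigma(lk(v,\Delta))$. The plan is to write $\Delta$ as the union of $del(v,\Delta)$ and the star $st(v,\Delta)$, and then compute the homotopy type of this union by a standard cofibration argument.

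First, check the decomposition combinatorially. Every simplex of $\Delta$ either avoids $v$, and so lies in $del(v,\Delta)$, or contains $v$, and so lies in $st(v,\Delta)$. Hence $\Delta = del(v,\Delta)\cup st(v,\Delta)$. A simplex lies in both exactly when it avoids $v$ and its union with $v$ is a simplex, so $del(v,\Delta)\cap st(v,\Delta) = lk(v,\Delta)$. Moreover $st(v,\Delta)$ is the cone $v * lk(v,\Delta)$. Therefore $\Delta$ is obtained from $del(v,\Delta)$ by attaching the cone on the subcomplex $L := lk(v,\Delta)$, that is,
$$\Delta \cong del(v,\Delta)\cup_{L} CL \simeq del(v,\Delta)/L$$
up to homotopy. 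The pair $(del(v,\Delta), L)$ is a CW pair, so it has the homotopy extension property, and collapsing the contractible subcomplex $CL$ of $\Delta$ is a homotopy equivalence (Hatcher, Proposition 0.17).

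The key step is to use the hypothesis. Let $i\colon L\hookrightarrow D := del(v,\Delta)$ denote the inclusion. The hypothesis says $i$ is homotopic to a constant map $c$. The homotopy type of a mapping cone depends only on the homotopy class of the attaching map (Hatcher, Proposition 0.18, again using homotopy extension). So $D\cup_i CL \simeq D\cup_c CL$. Attaching $CL$ along a constant map gives $D$ wedged with $CL/L$, which is $D\vee \Sigma L$; here we use that $CL/L$ is the unreduced suspension of $L$. Combining these gives $\Delta\simeq del(v,\Delta)\vee\Sigma(lk(v,\Delta))$.

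The main obstacle is the careful justification of Proposition 0.18, namely that homotopic attaching maps yield homotopy equivalent adjunction spaces. I would prove it by forming $D\cup_{F}(L\times I)\cup CL$, where $F$ is the homotopy from $i$ to $c$. This space deformation retracts onto each end, $D\cup_i CL$ and $D\cup_c CL$, using the homotopy extension property of $(CL\times I,\ CL\times\partial I\cup L\times I)$. A minor point is the choice of wedge basepoint: if $D$ is disconnected, the constant map may land in any component, and the wedge is taken at that point.
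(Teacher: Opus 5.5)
Your argument is correct and is the same one behind the result the paper relies on: the paper gives no proof of its own and simply cites Hatcher's Example 0.14, whose argument is exactly yours, namely $\Delta=del(v,\Delta)\cup_{lk(v,\Delta)}C(lk(v,\Delta))$ via $st(v,\Delta)=v*lk(v,\Delta)$, then Proposition 0.18 to replace the inclusion of the link by a homotopic constant map, giving $del(v,\Delta)\vee\Sigma(lk(v,\Delta))$.

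There is one slip, in your optional sketch of Proposition 0.18. Attaching $L\times I$ to $D$ along the whole homotopy $F$ just collapses it into $D$, so the space to use is $D\sqcup_F(CL\times I)$, gluing $L\times I\subseteq CL\times I$ via $F$. This space deformation retracts onto $D\cup_i CL$ and onto $D\cup_c CL$ because $CL\times I$ deformation retracts onto $CL\times\{0\}\cup L\times I$ and onto $CL\times\{1\}\cup L\times I$. That retraction is the homotopy extension property of the pair $(CL,L)$, not of the pair $(CL\times I,\ CL\times\partial I\cup L\times I)$ that you cite.
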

	
	We refer the reader to \cite{hatcher2005algebraic,munkres_book} for further details on the terminology used in algebraic topology.
	
	Recall that the matching complex of a graph $G$ is the same as the independence complex of its line graph $ L(G)$, \textit{i.e.}, $M(G)=\Ind(L(G))$. We therefore present some results related to the independence complex $\Ind(G)$.
	
	Before stating these results, we fix some notation. For $S\subseteq V(G)$, let $G-S : = G[V(G)\setminus S]$. For $u,v\in V(G)$, if $(u,v)\notin E(G)$, then let $G\cup\{(u,v)\}$ denote the graph with vertex set $V(G)$ and edge set $E(G)\cup\{(u,v)\}$; if $(u,v)\in E(G)$, then let $G-\{(u,v)\}$ denote the graph with vertex set $V(G)$ and edge set $E(G)\setminus\{(u,v)\}$.
	
	We now describe the link and deletion of a vertex in $\Ind(G)$. Let $v\in V(G)$. Then
	\begin{equation}\label{equation:lk_del_ind_complex}
		lk(v,\Ind(G))\simeq \Ind(G-N_G[v])\text{ and }del(v,\Ind(G))\simeq \Ind(G-\{v\}).
	\end{equation}
	
	\Cref{theorem:lk_del} and \eqref{equation:lk_del_ind_complex} yield the following corollary for independence complexes.
	\begin{corollary}[{\cite[Theorem 3.3]{Adamaszek_power_of_cyles}}]\label{corollary:N[u]subsetN[v]_lk(v)_del(v)} 
		Let $G$ be a graph and $u,v\in V(G), u\neq v$ such that $N_{G}[u]\subseteq N_{G}[v]$. Then
		$$\Ind(G)\simeq \Ind(G-\{v\})\vee \Sigma(\Ind(G-N_G[v])).$$
	\end{corollary}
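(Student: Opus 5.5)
The statement to prove is \Cref{corollary:N[u]subsetN[v]_lk(v)_del(v)}: if $u\neq v$ and $N_G[u]\subseteq N_G[v]$, then $\Ind(G)\simeq \Ind(G-\{v\})\vee \Sigma(\Ind(G-N_G[v]))$. The plan is to apply \Cref{theorem:lk_del} to the vertex $v$ of $\Delta=\Ind(G)$. By \eqref{equation:lk_del_ind_complex}, the deletion $del(v,\Ind(G))$ is exactly $\Ind(G-\{v\})$. The link $lk(v,\Ind(G))$ is exactly $\Ind(G-N_G[v])$, because a set $\sigma\not\ni v$ satisfies $\sigma\cup\{v\}$ independent if and only if $\sigma$ is independent and avoids $N_G(v)$. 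Once the hypothesis of \Cref{theorem:lk_del} is checked, the stated wedge decomposition follows immediately by substituting these two identifications.

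So the only real work is to show that $lk(v,\Ind(G))$ is contractible in $del(v,\Ind(G))$. I will show that the inclusion of the link into the deletion factors through a cone; the apex of that cone is $u$.

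Since $u\in N_G[u]\subseteq N_G[v]$ and $u\neq v$, we have $u\sim v$. In particular $u\in V(G-\{v\})$, so $u$ is a vertex of the deletion, while $u\notin V(G-N_G[v])$. Now take any simplex $\sigma$ of the link. It is an independent set disjoint from $N_G[v]$, hence disjoint from $N_G[u]$. Therefore $\sigma\cup\{u\}$ is independent in $G$ and avoids $v$, so $\sigma\cup\{u\}\in del(v,\Ind(G))$.

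It follows that the cone $u*lk(v,\Ind(G))$ is a subcomplex of $del(v,\Ind(G))$. The inclusion of the link into the deletion therefore factors through this contractible cone, which means the inclusion is null-homotopic. That is precisely the condition "$lk(v,\Delta)$ is contractible in $del(v,\Delta)$" required by \Cref{theorem:lk_del}, and that lemma then gives the claimed homotopy equivalence.

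There is no serious obstacle; the one step needing care is confirming that $u$ lies in the deletion but not in the link. This uses that $u\ne v$ and that $u\in N_G[v]$, so the cone point is genuinely available in the deletion. The degenerate case where the link is empty (when $N_G[v]=V(G)$) also needs a check. In that case the wedge should read $\Ind(G)\simeq \Ind(G-\{v\})\vee S^0$, with the usual convention that $\Sigma\emptyset=S^0$. This holds directly, since $v$ is then an isolated point of $\Ind(G)$ and $\Ind(G-\{v\})$ is nonempty because it contains $u$.
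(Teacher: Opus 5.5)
Your proposal is correct and takes the same route as the paper, which obtains the corollary directly from \Cref{theorem:lk_del} and \eqref{equation:lk_del_ind_complex}. Your cone argument with apex $u$ supplies the verification the paper leaves implicit, namely that the link is contractible in the deletion, and your separate check of the case $N_G[v]=V(G)$ is also sound.
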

	
	We will also use the following results to establish the homotopy equivalences of independence complexes.
	\begin{lemma}[{\cite[Lemma 3.2]{engstrom2009complexes}}]\label{theorem:N(u)subsetN(v)}
		Let $G$ be a graph and $u,v\in V(G), u\neq v$ such that $N_{G}(u)\subseteq N_{G}(v)$. Then
		$$\Ind(G)\simeq \Ind(G-\{v\}).$$
	\end{lemma}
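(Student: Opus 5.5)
The plan is to exhibit a deformation retraction of $\Ind(G)$ onto $\Ind(G-\{v\})$, or equivalently to show that deleting the vertex $v$ changes nothing up to homotopy. The natural tool is \Cref{theorem:lk_del} together with \eqref{equation:lk_del_ind_complex}. By \eqref{equation:lk_del_ind_complex}, $del(v,\Ind(G)) = \Ind(G-\{v\})$ and $lk(v,\Ind(G)) = \Ind(G-N_G[v])$. Hence it suffices to show that the link is a cone, hence contractible, and that it sits inside the deletion. Since $\Ind(G)=del(v,\Ind(G))\cup st(v,\Ind(G))$, with the two pieces meeting in $lk(v,\Ind(G))$, a contractible link gives the homotopy equivalence directly. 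The cone is glued along a contractible subcomplex, so collapsing the star onto the link does not change the homotopy type. This is the more precise form of \Cref{theorem:lk_del}, in which the suspension of a contractible space is contractible and the wedge summand disappears.

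The key step is to show that $u$ is a cone point of $\Ind(G-N_G[v])$. First, $u\neq v$, and $u\notin N_G(v)$: otherwise $v\in N_G(u)\subseteq N_G(v)$, which would give a loop at $v$ in a simple graph. So $u$ is a vertex of $G-N_G[v]$. Next, $N_G(u)\subseteq N_G(v)\subseteq N_G[v]$, so $u$ has no neighbours in $G-N_G[v]$. An isolated vertex is a cone point of the independence complex: every independent set $\sigma$ of $G-N_G[v]$ extends to the independent set $\sigma\cup\{u\}$. Therefore $lk(v,\Ind(G))\cong \Ind(G-N_G[v])$ is a cone with apex $u$, and in particular it is contractible.

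Finally, I will assemble the pieces. The star $st(v,\Ind(G))$ is the cone on the link with apex $v$. Since $\Ind(G) = del(v,\Ind(G))\cup_{lk} st(v,\Ind(G))$ and the subcomplex $lk$ is contractible, we get $\Ind(G)\simeq del(v,\Ind(G))/lk \simeq del(v,\Ind(G))$. This uses the fact that quotienting a CW pair by a contractible subcomplex is a homotopy equivalence. Alternatively, \Cref{theorem:lk_del} gives $\Ind(G)\simeq \Ind(G-\{v\})\vee \Sigma(\text{pt})\simeq \Ind(G-\{v\})$.

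There is no real obstacle here. The only point needing care is the hypothesis check $u\notin N_G[v]$, which uses that $G$ is simple. One could even give a direct simplicial-collapse argument: every simplex containing $v$ but not $u$ pairs with the same simplex with $u$ added. This is valid because $u$ is non-adjacent to $v$ and to everything that is non-adjacent to $v$. The resulting elementary collapses remove all faces containing $v$.
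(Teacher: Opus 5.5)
Your proof is correct. Since $u\notin N_G[v]$ and $N_G(u)\subseteq N_G[v]$, the vertex $u$ is isolated in $G-N_G[v]$, so $lk(v,\Ind(G))=\Ind(G-N_G[v])$ is a cone with apex $u$. Then \Cref{theorem:lk_del}, or equivalently the elementary collapses pairing each face $\sigma$ with $v\in\sigma$, $u\notin\sigma$ with $\sigma\cup\{u\}$, removes $v$ without changing the homotopy type.

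The paper gives no proof of its own here and only cites Engstr\"om. Your argument is the standard one behind that result, and it is assembled from the same tools the paper uses nearby: \Cref{corollary:isolated_vertex} and \Cref{theorem:lk_del}.
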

	
	\begin{lemma}[{\cite[Theorem 3.7]{engstrom2009complexes}}]\label{theorem:simplicial_vertex}
		Let $v$ be a simplicial vertex of a graph $G$ and $N_{G}(v) = \{w_1,\cdots, w_k\}$. Then
		$$\Ind(G)\simeq \bigvee^{k}_{i = 1}\Sigma(\Ind(G-N_{G}[w_i])).$$ 
	\end{lemma}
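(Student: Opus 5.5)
We argue by induction on $k = |N_G(v)|$, repeatedly applying \Cref{corollary:N[u]subsetN[v]_lk(v)_del(v)} to remove the neighbours of $v$ one at a time. Throughout, we read an empty wedge as a point, so the case $k=0$ asserts that $\Ind(G)$ is contractible.

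\emph{Base case $k=0$.} Here $v$ is an isolated vertex of $G$. Then $v$ can be added to every independent set, so $\Ind(G)$ is the cone $v * \Ind(G-\{v\})$ and is contractible. This matches the empty wedge.

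\emph{The key inclusion.} Since $v$ is simplicial, $N_G(v)=\{w_1,\dots,w_k\}$ is a clique. Hence for each $i$ we have $N_G[v]=\{v,w_1,\dots,w_k\}\subseteq N_G[w_i]$. Indeed, $v\sim w_i$ and $w_j\sim w_i$ for all $j\neq i$.

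\emph{Inductive step.} Apply \Cref{corollary:N[u]subsetN[v]_lk(v)_del(v)} with $u=v$ and with $w_1$ in the role of the vertex being removed. This gives
\[
\Ind(G)\simeq \Ind(G-\{w_1\})\vee \Sigma\bigl(\Ind(G-N_G[w_1])\bigr).
\]
Now set $G'=G-\{w_1\}$. In $G'$, the vertex $v$ is still simplicial, with $N_{G'}(v)=\{w_2,\dots,w_k\}$, because an induced subgraph of a clique is a clique. By the induction hypothesis,
\[
\Ind(G')\simeq \bigvee_{i=2}^{k}\Sigma\bigl(\Ind(G'-N_{G'}[w_i])\bigr).
\]

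It remains to identify $G'-N_{G'}[w_i]$ with $G-N_G[w_i]$ for $i\ge 2$. We have $N_{G'}[w_i]=N_G[w_i]\setminus\{w_1\}$. Since $w_1\in N_G[w_i]$ by the clique property, removing $w_1$ and then $N_{G'}[w_i]$ removes exactly $N_G[w_i]$. Therefore
\[
G'-N_{G'}[w_i]=G-N_G[w_i],
\]
and substituting into the first display yields the claimed wedge decomposition.

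\emph{Where care is needed.} There is no real obstacle. The two points to check are:
\begin{itemize}
\item the neighbourhood bookkeeping in the last step, which relies on $w_1\in N_G[w_i]$;
\item that wedges of homotopy equivalent spaces are homotopy equivalent, so the equivalences can be assembled. This holds because simplicial complexes are CW complexes, so wedge points are well-pointed.
\end{itemize}
The step that the link is contractible in the deletion, which \Cref{theorem:lk_del} requires, is already packaged inside \Cref{corollary:N[u]subsetN[v]_lk(v)_del(v)}.
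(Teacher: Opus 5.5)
The paper gives no proof of this lemma; it quotes it from Engstr\"{o}m, so there is no in-text argument to compare against. Your derivation is correct. The inclusion $N_G[v]\subseteq N_G[w_i]$, the fact that $v$ stays simplicial in $G'=G-\{w_1\}$, and the identity $G'-N_{G'}[w_i]=G-N_G[w_i]$ (which uses $w_1\in N_G[w_i]$) are all right. Together with \Cref{corollary:N[u]subsetN[v]_lk(v)_del(v)} they give the wedge decomposition by induction on $k$.

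What your route buys is economy within the paper. It shows the lemma is a formal consequence of Adamaszek's splitting, which the paper cites separately, so only one of the two citations is really needed.

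A non-inductive version of the same idea is also available. Every maximal independent set meets the clique $N_G[v]$ in exactly one vertex. Hence $\Ind(G)$ is the union of the cone $\Ind(G-N_G(v))$ with apex $v$ and the cones $w_i*\Ind(G-N_G[w_i])$. Each of the latter meets the first cone exactly in $\Ind(G-N_G[w_i])$, and any two of them meet only inside it. Collapsing the contractible subcomplex $\Ind(G-N_G(v))$ then gives $\bigvee_i\Sigma(\Ind(G-N_G[w_i]))$ at once.

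This version also exposes a small point your assembly remark glosses over. When some $w_i$ is adjacent to every other vertex, $G-N_G[w_i]$ is empty, the summand $\Sigma(\Ind(G-N_G[w_i]))$ is $\mathbb{S}^0$, and the complexes are disconnected. Well-pointedness alone then does not determine the wedge, since the result depends on which component carries the wedge point. Your iteration does agree with the single wedge in the statement. Each application of the corollary null-homotopes the link through the cone with apex $v$, so every wedge point lies in the component of $v$. This is worth one sentence in a written proof.
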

	
	\begin{lemma}[{\cite[Proposition 3.4]{Adamaszek_power_of_cyles}}]\label{lemma:edge_contractible}
		Let $G$ be a graph and $\{u,v\}$ be a $1$-simplex in $\Ind(G)$. If $\Ind(G-N_G[\{u,v\}])$ is contractible, then
		$$\Ind(G)\simeq \Ind(G'),$$
		where $V(G') = V(G)$ and $E(G') = E(G)\cup \{(u,v)\}.$
	\end{lemma}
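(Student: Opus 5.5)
The plan is to view $\Ind(G')$ as the subcomplex of $\Ind(G)$ obtained by deleting the edge $e=\{u,v\}$, and then to show that putting $e$ back in, together with everything above it, does not change the homotopy type. Since $G'$ differs from $G$ only by the edge $(u,v)$, a subset of $V(G)$ is independent in $G'$ exactly when it is independent in $G$ and does not contain both $u$ and $v$. Hence
$$\Ind(G') = del(e,\Ind(G)),$$
and every simplex of $\Ind(G)$ not in $\Ind(G')$ contains $e$, so it lies in $st(e,\Ind(G))$. This gives the decomposition
$$\Ind(G) = \Ind(G') \cup st(e,\Ind(G)).$$

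Next I identify the pieces.
\begin{enumerate}
\item \emph{The link.} A set $\sigma$ disjoint from $e$ with $\sigma\cup e$ independent in $G$ is exactly an independent set of $G-N_G[\{u,v\}]$. Note that $u,v\in N_G[\{u,v\}]$. Therefore $lk(e,\Ind(G)) = \Ind(G-N_G[\{u,v\}])$, which is contractible by hypothesis.
\item \emph{The star.} $st(e,\Ind(G)) = e * lk(e,\Ind(G))$ is a cone, hence contractible.
\item \emph{The intersection.} The intersection $\Ind(G')\cap st(e,\Ind(G))$ consists of the simplices $\tau$ with $\tau\cup e\in\Ind(G)$ and $e\not\subseteq\tau$. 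This is precisely $\partial e * lk(e,\Ind(G))$, the join of the two-point complex $\{u\},\{v\}$ with the link. A join with a contractible complex is contractible, so the intersection is contractible.
\end{enumerate}

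To conclude, $\Ind(G)$ is obtained from $\Ind(G')$ by gluing the contractible subcomplex $st(e,\Ind(G))$ along the contractible subcomplex $\partial e * lk(e,\Ind(G))$. Subcomplex pairs of simplicial complexes are CW pairs, so they have the homotopy extension property. Collapsing the contractible subcomplex $st(e,\Ind(G))$ in $\Ind(G)$ is therefore a homotopy equivalence. Likewise, collapsing the contractible subcomplex $\partial e * lk(e,\Ind(G))$ in $\Ind(G')$ is a homotopy equivalence, by \cite[Proposition 0.17]{hatcher2005algebraic}. The two quotients coincide, so $\Ind(G)\simeq\Ind(G')$. Equivalently, the inclusion $\Ind(G')\hookrightarrow\Ind(G)$ is a homotopy equivalence by the gluing lemma.

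The only step needing care is the identification of the intersection as $\partial e * lk(e,\Ind(G))$, which is what makes the contractibility hypothesis apply. It amounts to checking that every face of a simplex $\tau\cup e$ that misses $u$ or $v$ splits as a face of $\partial e$ joined with a link simplex.
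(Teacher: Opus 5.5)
Your proof is correct and complete. The key steps all check out: $\Ind(G')=del(e,\Ind(G))$; $\Ind(G)=\Ind(G')\cup st(e,\Ind(G))$; the intersection is $\partial e * lk(e,\Ind(G))\cong\Sigma\,\Ind(G-N_G[\{u,v\}])$; and attaching the contractible star along this contractible subcomplex does not change the homotopy type. The two quotients agree because $\Ind(G)\setminus st(e,\Ind(G))=\Ind(G')\setminus\bigl(\partial e * lk(e,\Ind(G))\bigr)$.

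The paper gives no proof of its own here; it simply quotes the lemma from Adamaszek. Your star/deletion gluing argument is the standard way to prove that result, and it shows slightly more: the inclusion $\Ind(G')\hookrightarrow\Ind(G)$ is itself a homotopy equivalence.
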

	
	The following result relates the independence complexes of a disjoint union of graphs to the independence complexes of its components.
	\begin{lemma}\label{lemma:ind_disjoint_union}
		Let $G_1\sqcup G_2$ denote the disjoint union of two graphs $G_1$ and $G_2$. Then
		$$\Ind(G_1\sqcup G_2)\simeq \Ind(G_1) *\Ind(G_2).$$
	\end{lemma}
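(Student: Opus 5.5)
The plan is to verify the equality at the level of abstract simplicial complexes; the homotopy equivalence is then immediate, since equal complexes have the same geometric realization.

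\emph{Step 1: vertex sets.} Since $V(G_1\sqcup G_2)=V(G_1)\sqcup V(G_2)$, both $\Ind(G_1\sqcup G_2)$ and $\Ind(G_1)*\Ind(G_2)$ have vertex set $V(G_1)\cup V(G_2)$, and these two sets are disjoint. So the join is defined.

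\emph{Step 2: the key observation.} Every edge of $G_1\sqcup G_2$ lies entirely inside $G_1$ or entirely inside $G_2$; no edge joins a vertex of $G_1$ to a vertex of $G_2$. Hence for any $\sigma\subseteq V(G_1)\cup V(G_2)$, the induced subgraph $(G_1\sqcup G_2)[\sigma]$ is the disjoint union $G_1[\sigma\cap V(G_1)]\sqcup G_2[\sigma\cap V(G_2)]$.

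\emph{Step 3: matching the simplices.} By Step 2, $\sigma$ is independent in $G_1\sqcup G_2$ exactly when $\sigma\cap V(G_1)$ is independent in $G_1$ and $\sigma\cap V(G_2)$ is independent in $G_2$. The first condition says $\sigma\in\Ind(G_1\sqcup G_2)$. The second says, by the definition of the join, $\sigma\in\Ind(G_1)*\Ind(G_2)$. Therefore
\[
\Ind(G_1\sqcup G_2)=\Ind(G_1)*\Ind(G_2),
\]
and in particular the two are homotopy equivalent.

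There is no real obstacle; the only point needing care is the empty-set convention. The empty simplex lies in every $\Ind(G_i)$, so the join contains $\Ind(G_1)$ and $\Ind(G_2)$ as subcomplexes (take $\sigma\cap V(G_2)=\emptyset$ or $\sigma\cap V(G_1)=\emptyset$). This agrees with the join definition given earlier in the paper.
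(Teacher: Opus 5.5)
Your argument is correct: you show that the two abstract complexes are literally equal, which is stronger than the stated homotopy equivalence. The paper states this lemma without proof, as a standard fact. Your direct comparison of simplices is exactly the usual justification: no edge of $G_1\sqcup G_2$ joins $V(G_1)$ to $V(G_2)$, and $\emptyset$ lies in each $\Ind(G_i)$, so the join definition applies.
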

	
	As a consequence of \Cref{lemma:ind_disjoint_union}, we obtain the following corollary.
	\begin{corollary}\label{corollary:isolated_vertex}
		Let $G\sqcup \{v\}$ denote the disjoint union of a graph $G$ with an isolated vertex $v$. Then $\Ind(G\sqcup \{v\})$ is contractible.
	\end{corollary}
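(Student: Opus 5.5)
The plan is to deduce the corollary directly from \Cref{lemma:ind_disjoint_union}, using one basic fact: a join with a point is a cone. Write $\Ind(G\sqcup\{v\})\simeq \Ind(G)*\Ind(\{v\})$. The independence complex of the one-vertex graph $\{v\}$ is the single vertex $\{v\}$, i.e. a $0$-simplex, which topologically is a point. Hence
$$\Ind(G\sqcup\{v\})\simeq \Ind(G)*\{v\}=C(\Ind(G)),$$
the cone on $\Ind(G)$ with apex $v$. Every cone is contractible: the straight-line homotopy pushing each point toward the apex is well defined in the geometric realization of the join.

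Alternatively, I can argue directly at the level of abstract simplicial complexes, which even gives equality rather than homotopy equivalence. The vertex $v$ has no neighbours, so for any independent set $\sigma$ of $G\sqcup\{v\}$ the set $\sigma\cup\{v\}$ is also independent. Thus $\Ind(G\sqcup\{v\})$ is literally the simplicial cone with apex $v$ over $\Ind(G)$. In the language of the paper, $\Ind(G\sqcup\{v\})=st(v,\Ind(G\sqcup\{v\}))$. A star of a vertex is contractible, since it collapses to $v$ along straight lines.

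There is one degenerate case to check: $G$ empty. Then $\Ind(G)=\{\emptyset\}$, and the complex is the single point $v$, which is still contractible.

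I expect no genuine obstacle. The only care needed is in this convention for the empty graph, and in noting that the join with a point is the cone, so the argument is uniform.
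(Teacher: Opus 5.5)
Your proposal is correct and is essentially the paper's argument. The paper states the corollary as an immediate consequence of \Cref{lemma:ind_disjoint_union}: $\Ind(\{v\})$ is a single point, and the join of $\Ind(G)$ with a point is the cone on $\Ind(G)$, hence contractible. Your second, combinatorial observation that $\Ind(G\sqcup\{v\})$ equals $st(v,\Ind(G\sqcup\{v\}))$ is an equally valid way to see the same cone structure.
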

	
	Before stating the next result, we recall the notion of a path. Let $G$ be a graph. For $u,v\in V(G)$, a {\it path} from $u$ to $v$ is a sequence of distinct vertices $u = v_0,v_1, \ldots, v_n = v$ such that $v_i \sim v_{i+1}$ for all $0 \le i\le n-1$.
	\begin{lemma}[{\cite[Theorem 11]{csorba2009subdivision}}]\label{theorem:replace_edge}
		Let $G$ be a graph and $e = (u,v)$ an edge. If $G'$ is obtained from $G$ by replacing $e$ with a path $uabcv$ with three new vertices, then $\Ind(G')\simeq \Sigma(\Ind(G)).$
	\end{lemma}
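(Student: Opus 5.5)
The statement to prove is \Cref{theorem:replace_edge}: if $G'$ is obtained from $G$ by replacing the edge $e=(u,v)$ with a path $uabcv$, then $\Ind(G')\simeq\Sigma(\Ind(G))$. The plan is to analyse $\Ind(G')$ through the middle vertex $b$ of the new path, using the fold lemmas \Cref{theorem:N(u)subsetN(v)} and \Cref{corollary:N[u]subsetN[v]_lk(v)_del(v)}.

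In $G'$ we have $N_{G'}(a)=\{u,b\}$, $N_{G'}(b)=\{a,c\}$ and $N_{G'}(c)=\{b,v\}$. The vertex $b$ dominates neither $a$ nor $c$ in the sense of \Cref{theorem:N(u)subsetN(v)}, so a different reduction is needed.

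First I would auxiliarily add the edge $(u,v)$ to $G'$, calling the result $G''=G'\cup\{(u,v)\}$. To justify this via \Cref{lemma:edge_contractible}, I need $\{u,v\}$ to be a $1$-simplex of $\Ind(G')$, which holds because the edge $e$ is no longer present. I also need $\Ind(G'-N_{G'}[\{u,v\}])$ to be contractible. Removing $N_{G'}[\{u,v\}]$ deletes $u,v,a,c$ but keeps $b$. The vertex $b$ had neighbours only $a$ and $c$, so it becomes isolated, and \Cref{corollary:isolated_vertex} gives contractibility. Hence $\Ind(G')\simeq\Ind(G'')$.

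Next I would work in $G''$. Here $N_{G''}[a]=\{a,u,b\}$. Since $b\sim a$, $u\sim a$ and $u\sim v$, we get $N_{G''}[u]\supseteq N_G(u)\cup\{a,v,u\}$. Rather than compare these, I would use the vertex $b$, with $N[b]=\{a,b,c\}$. This contains neither $N[a]$ nor $N[c]$, so it does not fold directly either.

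The cleaner route is \Cref{corollary:N[u]subsetN[v]_lk(v)_del(v)} applied to the pair $a$ and $u$: we need $N[a]\subseteq N[u]$. In $G''$ we have $N[a]=\{a,u,b\}$, but $b\notin N[u]$, so this fails. So I would switch to the pair $(a, b)$ with the roles reversed.

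The direct approach is to apply \Cref{theorem:lk_del} to the vertex $b$ in $\Ind(G'')$.
\begin{itemize}
\item The deletion is $\Ind(G''-b)$. There $a$ is a leaf attached to $u$, so $N(a)=\{u\}\subseteq N(v)$ because $u\sim v$ in $G''$. By \Cref{theorem:N(u)subsetN(v)} we may delete $v$. After that, $c$ has neighbourhood $\emptyset$, i.e.\ $c$ is isolated, so the deletion is contractible.
\item The link is $\Ind(G''-\{a,b,c\})=\Ind(G)$, since $G''-\{a,b,c\}$ is exactly $G$ with the edge $(u,v)$ restored.
\end{itemize}
Since the deletion is contractible, the link is contractible in it, and \Cref{theorem:lk_del} yields $\Ind(G'')\simeq \Sigma\Ind(G)$.

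Combining the two steps gives $\Ind(G')\simeq\Ind(G'')\simeq\Sigma(\Ind(G))$.

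The main obstacle is spotting the auxiliary edge addition: without $(u,v)$, the deletion of $b$ is not contractible. I would therefore carefully verify the hypotheses of \Cref{lemma:edge_contractible} in the degenerate cases where $u$ and $v$ have common neighbours. These cases are harmless, since only $b$ needs to survive the removal of $N_{G'}[\{u,v\}]$, and it always does.
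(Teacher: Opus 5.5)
Your proof is correct. It differs from the paper only in that the paper gives no argument for this lemma: it imports the result verbatim from Csorba. You instead derive it in a few lines from tools already stated in the preliminaries:
\begin{enumerate}
\item \Cref{lemma:edge_contractible} lets you re-add $(u,v)$. This is harmless because $b$ is isolated in $G'-N_{G'}[\{u,v\}]$.
\item In $G''=G'\cup\{(u,v)\}$, the link of $b$ is exactly $\Ind(G''-\{a,b,c\})=\Ind(G)$.
\item The deletion $\Ind(G''-\{b\})$ is contractible. The leaf $a$ lets you remove $v$ via \Cref{theorem:N(u)subsetN(v)}, which leaves $c$ isolated.
\item \Cref{theorem:lk_del} then gives $\Ind(G'')\simeq\Sigma\Ind(G)$. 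Equivalently, it is the union of two contractible pieces $st(b)$ and $del(b)$ glued along $lk(b)$.
\end{enumerate}

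None of these ingredients depends on Csorba's theorem, so there is no circularity: Adamaszek's edge lemma, Engstr\"om's fold lemma and the link--deletion decomposition are all independent of it. Your route makes this part of the paper self-contained at almost no cost, whereas the citation buys only brevity. You correctly located the one real idea. Without the auxiliary edge $(u,v)$, the deletion of $b$ in $G'$ is in general not contractible, so the link--deletion split at $b$ would not immediately yield a suspension.

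In a final write-up, delete the exploratory false starts, namely the attempted comparisons of $N[a]$ with $N[u]$ and the pair $(a,b)$ ``with roles reversed.'' They play no role in the argument and obscure it.
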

	
	We conclude this section with the homotopy types of the independence complexes of path and cycle graphs.
	\begin{lemma}[{\cite[Proposition 4.6]{Kozlov_directed_trees}}]\label{lemma:path_graph}
		For $n\geq 1,$
		$$\Ind(P_n)\simeq 
		\begin{cases}
			\mathbb{S}^{k-1}& \text{ if } n = 3k,\\
			\text{pt} & \text{ if } n = 3k+1,\\
			\mathbb{S}^{k} & \text{ if } n = 3k+2.
		\end{cases}$$
		where pt denotes a space with one point.
	\end{lemma}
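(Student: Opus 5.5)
The statement to prove is Kozlov's result (\Cref{lemma:path_graph}) on the homotopy type of $\Ind(P_n)$. I would argue by induction on $n$, peeling vertices off one end of the path with \Cref{theorem:N(u)subsetN(v)}. Throughout, label the vertices of $P_n$ as $1,2,\dots,n$.

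\emph{Base cases.} For $n=1$, $\Ind(P_1)$ is a single point. For $n=2$, it is two points with no edge between them, which is $\mathbb{S}^0$ and matches $n=3\cdot0+2$. For $n=3$, it is an edge $\{1,3\}$ together with the disjoint point $2$. This is homotopy equivalent to $\mathbb{S}^0$, which matches $n=3\cdot 1$ with $k-1=0$.

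\emph{Inductive step.} For $n\ge 4$, take $u=1$ and $v=3$. Then $N(1)=\{2\}\subseteq N(3)=\{2,4\}$, so \Cref{theorem:N(u)subsetN(v)} gives $\Ind(P_n)\simeq\Ind(P_n-\{3\})$. The graph $P_n-\{3\}$ is the disjoint union of the edge $P_2$ on $\{1,2\}$ and a path $P_{n-3}$ on $\{4,\dots,n\}$. By \Cref{lemma:ind_disjoint_union},
\[
\Ind(P_n)\simeq \Ind(P_2)*\Ind(P_{n-3})\simeq \mathbb{S}^0*\Ind(P_{n-3})=\Sigma\,\Ind(P_{n-3}).
\]
To cover all residues cleanly, it is convenient to set $\Ind(P_0)=\emptyset=\mathbb{S}^{-1}$, with $\Sigma\emptyset=\mathbb{S}^0$. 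The recursion then also holds for $n=3$, and the base cases reduce to $n=0,1,2$.

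The recursion $\Ind(P_n)\simeq\Sigma\Ind(P_{n-3})$ finishes the argument, since suspension raises sphere dimension by one and the suspension of a contractible space is contractible:
\begin{itemize}
\item If $n=3k$, then $\Ind(P_n)\simeq\Sigma^k\emptyset=\mathbb{S}^{k-1}$.
\item If $n=3k+1$, then $\Ind(P_n)\simeq\Sigma^k(\mathrm{pt})\simeq \mathrm{pt}$.
\item If $n=3k+2$, then $\Ind(P_n)\simeq\Sigma^k\mathbb{S}^0=\mathbb{S}^k$.
\end{itemize}

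The only delicate point is the convention for the empty complex and the identification $\mathbb{S}^0*X=\Sigma X$. Both are handled by doing the base cases $n=1,2,3$ explicitly, as above, and starting the recursion at $n\ge4$.
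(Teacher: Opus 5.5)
Your proof is correct and complete. The fold at vertex $3$ is legitimate because $N_{P_n}(1)=\{2\}\subseteq\{2,4\}=N_{P_n}(3)$. Removing $3$ splits off the edge $\{1,2\}$ as its own component, and the join with $\Ind(P_2)=\mathbb{S}^0$ is literally the (unreduced) suspension. With the explicit base cases $n=1,2,3$, the recursion $\Ind(P_n)\simeq\Sigma\,\Ind(P_{n-3})$ for $n\ge 4$ settles each residue class mod $3$, without needing the empty-complex convention.

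The paper does not prove this lemma; it simply imports it from Kozlov, so there is no internal argument to compare against. The more common derivation of the same recursion uses the leaf rather than a fold. Vertex $1$ is a simplicial vertex with $N(1)=\{2\}$, so \Cref{theorem:simplicial_vertex} gives $\Ind(P_n)\simeq\Sigma\,\Ind(P_n-N[2])=\Sigma\,\Ind(P_{n-3})$ in one step. Geometrically, $\Ind(P_n)=del(2,\Ind(P_n))\cup st(2,\Ind(P_n))$. Here the deletion is a cone with apex $1$, the star is a cone with apex $2$, and they meet in $lk(2,\Ind(P_n))\cong\Ind(P_{n-3})$.

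That route matches how the paper treats its auxiliary graphs, where simplicial vertices are used throughout. Yours replaces the two-cones gluing argument with the fold lemma plus the disjoint-union/join identity. Both are equally elementary and need the same base cases.
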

	
	\begin{lemma}[{\cite[Proposition 5.2]{Kozlov_directed_trees}}]\label{theorem:cycle_graph}
		For $n\geq 3,$
		$$\Ind(C_n)\simeq 
		\begin{cases}
			\mathbb{S}^{k-1}\vee \mathbb{S}^{k-1} & \text{ if } n = 3k,\\
			\mathbb{S}^{k-1} & \text{ if } n = 3k\pm 1.
		\end{cases}$$
	\end{lemma}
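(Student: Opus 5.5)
This is Kozlov's cycle result, $\Ind(C_n)$, which I would prove by induction on $n$ using the path computation of \Cref{lemma:path_graph} together with the link/deletion tools. The base cases are direct. $\Ind(C_3)$ is three isolated points, i.e.\ $\mathbb{S}^0\vee\mathbb{S}^0$ ($k=1$). $\Ind(C_4)$ is two disjoint edges, homotopy equivalent to $\mathbb{S}^0$ ($k=1$, $n=3k+1$). $\Ind(C_5)$ is a $5$-cycle, i.e.\ $\mathbb{S}^1$ ($k=2$, $n=3k-1$). For the inductive step, \Cref{theorem:replace_edge} gives the cleanest route. Subdividing one edge of $C_n$ by three new vertices yields $C_{n+3}$, so $\Ind(C_{n+3})\simeq\Sigma(\Ind(C_n))$. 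Since $\Sigma(\mathbb{S}^{k-1}\vee\mathbb{S}^{k-1})\simeq\mathbb{S}^{k}\vee\mathbb{S}^{k}$ and $\Sigma\mathbb{S}^{k-1}\simeq \mathbb{S}^k$, and $n\mapsto n+3$ shifts $k$ by one while preserving the residue mod $3$, the three base cases $n=3,4,5$ propagate to all $n\ge 3$.

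If one prefers to avoid \Cref{theorem:replace_edge}, I would instead use \Cref{theorem:lk_del} at a vertex $v$ of $C_n$. Then $del(v)=\Ind(P_{n-1})$ and $lk(v)=\Ind(P_{n-3})$. The key point is that the inclusion $lk(v)\hookrightarrow del(v)$ is null-homotopic in the cases $n=3k\pm1$, because one of the two path complexes is contractible:
\begin{itemize}
\item for $n=3k+1$, $del(v)=\Ind(P_{3k})\simeq\mathbb{S}^{k-1}$ and $lk(v)=\Ind(P_{3k-2})$ is contractible;
\item for $n=3k-1$, $del(v)=\Ind(P_{3k-2})$ is contractible.
\end{itemize}
In the first case we get $\Ind(C_n)\simeq\mathbb{S}^{k-1}$. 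In the second we get $\Ind(C_n)\simeq\Sigma\Ind(P_{3k-4})=\Sigma\mathbb{S}^{k-2}=\mathbb{S}^{k-1}$.

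The case $n=3k$ is the main obstacle for this second route. Here $del(v)=\Ind(P_{3k-1})\simeq\mathbb{S}^{k-1}$ and $lk(v)=\Ind(P_{3k-3})\simeq\mathbb{S}^{k-2}$, and a sphere of lower dimension maps null-homotopically into $\mathbb{S}^{k-1}$ by cellular approximation. This gives $\mathbb{S}^{k-1}\vee\Sigma\mathbb{S}^{k-2}=\mathbb{S}^{k-1}\vee\mathbb{S}^{k-1}$, as claimed. That dimension argument resolves the obstacle. The only remaining care is the small cases (e.g.\ $P_0$ empty, with $\Ind(\emptyset)=\mathbb{S}^{-1}$ and its suspension $\mathbb{S}^0$), which I would check by hand.
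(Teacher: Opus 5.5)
Your proof is correct, and both of your routes work. The paper gives no proof of this lemma: it imports it from Kozlov \cite[Proposition 5.2]{Kozlov_directed_trees} as a black box, just as it does \Cref{lemma:path_graph}, so there is no internal argument to compare against.

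\textbf{First route.} This is the quickest given the tools the paper already cites. \Cref{theorem:replace_edge} turns $C_n$ into $C_{n+3}$, which reduces everything to the hand computations for $n=3,4,5$. Its cost is that it rests on Csorba's general subdivision theorem, which is heavier machinery than this statement needs.

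\textbf{Second route.} This one is more elementary and needs no induction at all. It identifies $\Ind(C_n)=del(v)\cup(v*lk(v))$ as the mapping cone of $\Ind(P_{n-3})\hookrightarrow\Ind(P_{n-1})$. It then uses only \Cref{lemma:path_graph} and \Cref{theorem:lk_del}.

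In the case $n=3k$, state the null-homotopy a bit more carefully, because $lk(v)$ and $del(v)$ are only homotopy equivalent to spheres. The composite $\mathbb{S}^{k-2}\simeq lk(v)\hookrightarrow del(v)\simeq\mathbb{S}^{k-1}$ is null by cellular approximation, and hence so is the inclusion.

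Two small cases need separate handling:
\begin{itemize}
\item For $k=2$, the link $\Ind(P_3)$ is disconnected, and all you need is that $\Ind(P_5)$ is path-connected.
\item For $k=1$, the link is empty, so $\Ind(C_3)$ is $\Ind(P_2)$ together with a disjoint point, i.e.\ three points.
\end{itemize}
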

	
	\section{Main Results}\label{section:main}
	
	The aim of this section is to prove \Cref{theorem:intro_homology,theorem:intro_simply_connected}. For a positive integer $m$, let $[m]: = \{1,2,\ldots,m\}$. Recall that $\Gamma_n$ denotes the line graph of $3\times n$ grid graph. For $n\geq 1$, the vertex and edge sets of $\Gamma_n$ are given by:
	\begin{align*}
		V(\Gamma_n)& = \{a_i,b_j,c_i,d_j,e_i: i\in [n-1]\text{ and } j\in [n]\},\\
		E(\Gamma_n)& = \{(a_i,a_{i+1}),(c_i,c_{i+1}),(e_i,e_{i+1}):i\in [n-2]\}\sqcup \{(a_i,b_i),\\
		&\quad\ ~ (a_i,b_{i+1}),(b_i,c_i),(c_i,d_i),(c_i,b_{i+1}),(c_i,d_{i+1}),(d_i,e_i),\\
		&\quad\ (e_i, d_{i+1}):i\in [n-1]\}
		\sqcup \{(b_i,d_i):i\in [n]\}.
	\end{align*}
	\begin{figure}[h!]
		\centering
		\begin{subfigure}{0.44\textwidth}
			\centering
			\begin{tikzpicture}[xshift = 1cm, scale = 0.25, vertices/.style = {draw, fill = black, circle, inner sep = 0.5pt}] 
				
				\node[vertices,label = left:{{\tiny $11$}}] (11) at (12, 3) {};
				\node[vertices,label = left:{{\tiny $12$}}] (12) at (12, 0) {}; 
				\node[vertices,label = left:{{\tiny $13$}}] (13) at (12, -3) {};
				
				\node[vertices,label = above:{{\tiny $21$}}] (21) at (15, 3) {};
				\node[vertices,label = {[xshift = -.2cm]below:{{\tiny $22$}}}] (22) at (15, 0) {}; 
				\node[vertices,label = below:{{\tiny $23$}}] (23) at (15, -3) {}; 
				
				\node[vertices,label = above:{{\tiny $31$}}] (31) at (18, 3) {};
				\node[vertices,label = {[xshift = -.2cm]below:{{\tiny $32$}}}] (32) at (18, 0) {}; 
				\node[vertices,label = below:{{\tiny $33$}}] (33) at (18, -3) {}; 
				
				\node[vertices,label = above:{{\tiny $41$}}] (a) at (21, 3) {};
				\node[vertices,label = {[xshift = -.2cm]below:{{\tiny $42$}}}] (b) at (21, 0) {}; 
				\node[vertices,label = below:{{\tiny $43$}}] (c) at (21, -3) {}; 
				
				\node[vertices,label = above:{{\tiny $(n-1)1$}}] (1) at ($(a)+(5,0)$) {};
				\node[vertices,label = below:{{\tiny $(n-1)2$}}] (2) at ($(b)+(5,0)$) {}; 
				\node[vertices,label = below:{{\tiny $(n-1)3$}}] (3) at ($(c)+(5,0)$) {}; 
				
				\node[vertices,label = right:{{\tiny $n1$}}] (n1) at ($(1)+(3,0)$) {};
				\node[vertices,label = right:{{\tiny $n2$}}] (n2) at ($(2)+(3,0)$) {}; 
				\node[vertices,label = right:{{\tiny $n3$}}] (n3) at ($(3)+(3,0)$) {}; 
				
				\draw (a) -- ++(1,0);
				\draw (b) -- ++(1,0);
				\draw (c) -- ++(1,0);
				\draw (1) -- ++(-1,0);
				\draw (2) -- ++(-1,0);
				\draw (3) -- ++(-1,0);
				
				\node at ($(a)!0.5!(1)$) {$\ldots$};
				\node at ($(b)!0.5!(2)$) {$\ldots$};
				\node at ($(c)!0.5!(3)$) {$\ldots$};
				
				\foreach \vertex/\neighbors in {
					12/{11,22,13},
					22/{21,32,23},
					32/{31,33},
					b/{32,a,c},
					a/{31},
					c/{33},
					2/{1,n2,3},
					n2/{n1,n3},
					21/{11,31},
					23/{13,33},
					1/{n1},
					3/{n3}}
				{
					\foreach \neighbor in \neighbors
					\draw [-] (\vertex)--(\neighbor);
				}
			\end{tikzpicture}
			\subcaption{$G_{3\times n}, \ n \geq 1$}
			\label{fig:G_3_n}
		\end{subfigure}
		\hfill
		\begin{subfigure}{0.13\textwidth}
			\centering
			\begin{tikzpicture}[xshift = 1cm, scale = 0.30, vertices/.style = {draw, fill = black, circle, inner sep = 0.5pt}] 
				\node[vertices,label = above:{{\tiny $b_1$}}] (b1) at (12, 3) {};
				\node[vertices,label = below:{{\tiny $d_1$}}] (d1) at (12, 0) {};
				
				\draw [-] (b1)--(d1);
				
				\node at (12,-2.58) {};
			\end{tikzpicture} 
			\subcaption{$\Gamma_1$} 
			\label{fig:Gamma_1} 
		\end{subfigure}
		\begin{subfigure}{0.4\textwidth}
			\centering
			\begin{tikzpicture}[scale = 0.28, vertices/.style = {draw, fill = black, circle, inner sep = 0.5pt}]
				
				\node[vertices,label = above:{{\tiny $b_1$}}] (b1) at (12, 3) {};
				\node[vertices,label = below:{{\tiny $d_1$}}] (d1) at (12, 0) {};
				
				\node[vertices,label = above:{{\tiny $a_1$}}] (a1) at (13.5, 4.5) {};
				\node[vertices,label = below:{{\tiny $c_1$}}] (c1) at (13.5, 1.5) {};
				\node[vertices,label = below:{{\tiny $e_1$}}] (e1) at (13.5, -1.5) {};
				
				\node[vertices,label = above:{{\tiny $b_2$}}] (b2) at (15, 3) {};
				\node[vertices,label = below:{{\tiny $d_2$}}] (d2) at (15, 0) {};
				
				\node[vertices,label = above:{{\tiny $a_2$}}] (a) at (16.5, 4.5) {};
				\node[vertices,label = below:{{\tiny $c_2$}}] (c) at (16.5, 1.5) {};
				\node[vertices,label = below:{{\tiny $e_2$}}] (e) at (16.5, -1.5) {};
				
				\node[vertices,label = above:{{\tiny $b_3$}}] (b) at (18, 3) {};
				\node[vertices,label = below:{{\tiny $d_3$}}] (d) at (18, 0) {};
				
				\node[vertices, label = {[xshift = .3cm,yshift = -.12cm]above:{{{\tiny $b_{(n-1)}$}}}}] (1) at ($(b)+(4,0)$) {};
				\node[vertices, label = {[xshift = .3cm]below:{{{\tiny $d_{(n-1)}$}}}}] (2) at ($(d)+(4,0)$) {};
				
				\node[vertices,label = {[xshift = .3cm,yshift = -.13cm]above:{{{\tiny $a_{(n-1)}$}}}}] (3) at ($(a)+(7,0)$) {}; 
				\node[vertices,label = {[xshift = .27cm]below:{{{\tiny $c_{(n-1)}$}}}}] (4) at ($(c)+(7,0)$) {};
				\node[vertices,label = {[xshift = .3cm]below:{{{\tiny $e_{(n-1)}$}}}}] (5) at ($(e)+(7,0)$) {};
				
				\node[vertices,label = {[xshift = .06cm]above:{{{\tiny $b_n$}}}}] (6) at ($(b)+(7,0)$) {};
				\node[vertices,label = {[xshift = .06cm]below:{{{\tiny $d_n$}}}}] (7) at ($(d)+(7,0)$) {};
				
				\draw (a) -- ++(2.3,0);
				\draw (c) -- ++(2.3,0);
				\draw (e) -- ++(2.3,0);
				\draw (3) -- ++(-2.3,0);
				\draw (4) -- ++(-2.3,0);
				\draw (5) -- ++(-2.3,0);
				\draw (b) -- ++(1,1);
				\draw (b) -- ++(1,-1);
				\draw (d) -- ++(1,1);
				\draw (d) -- ++(1,-1);
				\draw (1) -- ++(-1,1);
				\draw (1) -- ++(-1,-1);
				\draw (2) -- ++(-1,1);
				\draw (2) -- ++(-1,-1);
				
				\node at ($(b)!0.5!(1)$) {$\ldots$};
				\node at ($(d)!0.5!(2)$) {$\ldots$};
				\node at ($(a)!0.5!(3)$) {$\ldots$};
				\node at ($(c)!0.5!(4)$) {$\ldots$};
				\node at ($(e)!0.5!(5)$) {$\ldots$};
				
				\foreach \vertex/\neighbors in {
					b1/{a1,c1,d1},
					d1/{c1,e1},
					a1/{b2},
					c1/{b2,d2},
					e1/{d2},
					b2/{d2},
					a/{a1,b2,b},
					b/{c,d},
					c/{c1,b2,d2,d},
					d/{e},
					e/{d2,e1},
					1/{2,3,4},
					2/{4,5},
					3/{6},
					4/{6,7},
					5/{7},
					6/{7}}
				{
					\foreach \neighbor in \neighbors
					\draw [-] (\vertex)--(\neighbor);
				}
			\end{tikzpicture}
			\subcaption{$\Gamma_n, \ n \geq 2$}
			\label{fig:Gamma_n_2}
		\end{subfigure}\vspace{-.2cm}
		\caption{}
		\label{fig:Gamma_n}
	\end{figure}
	Since the matching complex of a graph coincides with the independence complex of its line graph, we have $M(G_{3 \times n}) = \Ind(\Gamma_n)$. 
	Using this, we formulate \Cref{theorem:intro_homology} in terms of $\Ind(\Gamma_n)$ as follows. 
	\begin{theorem}\label{theorem:ind_homology}
		Let $n\ge 2$. Then 
		\begin{enumerate}[label = (\roman*)] 
			\item \label{theorem:ind_homology_i} $\tilde{H}_i(\Ind(\Gamma_n)) = 0$ for $i \leq n-2$ and $i = \dim(\Ind(\Gamma_n)) $.
			\item \label{theorem:ind_homology_ii} $\tilde{H}_{n-1}(\Ind(\Gamma_n))\ne0$.
		\end{enumerate} 
	\end{theorem}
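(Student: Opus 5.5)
The plan has two parts, matching the two tools announced in the introduction. The first part is an induction on $n$ with Mayer--Vietoris for the low-dimensional statements. The second is discrete Morse theory for the top dimension.

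\emph{Low dimensions.} I will split $\Ind(\Gamma_n)$ along the last column of the grid. The vertices $b_n,d_n$ together with $a_{n-1},c_{n-1},e_{n-1}$ are the edges of the last column and the last rung of horizontal edges. Since $N_{\Gamma_n}[b_n]=\{b_n,d_n,a_{n-1},c_{n-1}\}$, I will first use \Cref{lemma:edge_contractible} and \Cref{theorem:N(u)subsetN(v)} to simplify the right end. Then I will write $\Ind(\Gamma_n)=del(v,\Ind(\Gamma_n))\cup st(v,\Ind(\Gamma_n))$ for a suitable vertex $v$, probably $c_{n-1}$ or $d_n$. The intersection is $lk(v)\cong \Ind(\Gamma_n-N[v])$ and the star is contractible, so the Mayer--Vietoris sequence gives
\[
\cdots\to\tilde H_i(lk(v))\to \tilde H_i(del(v))\to \tilde H_i(\Ind(\Gamma_n))\to \tilde H_{i-1}(lk(v))\to\cdots .
\]
The graphs $\Gamma_n-\{v\}$ and $\Gamma_n-N[v]$ are not again of the form $\Gamma_m$. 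So I will introduce a small auxiliary family of graphs, for example $\Gamma_{n}$ with some of $\{b_n,d_n,a_{n-1},e_{n-1}\}$ removed or with pendant paths attached. Repeated applications of the fold lemma, \Cref{corollary:N[u]subsetN[v]_lk(v)_del(v)}, \Cref{theorem:simplicial_vertex} and \Cref{lemma:ind_disjoint_union} reduce each member of this family to suspensions or wedges of complexes $\Ind(\Gamma_{n-1})$, $\Ind(\Gamma_{n-2})$ and the other family members. Path factors are handled by \Cref{lemma:path_graph}.

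I will then prove a joint inductive statement: every family graph indexed by $n$ has $\tilde H_i=0$ for $i\le n-2$, possibly shifted by a fixed amount for each family member. Vanishing follows from the exact sequence once each term is known to be highly enough connected. The base cases $n=2,3$ I will compute directly; for example, $\Ind(\Gamma_2)=M(C_6\text{ with a chord})$, which is small.

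\emph{Non-vanishing of $\tilde H_{n-1}$.} At the same degree, the Mayer--Vietoris sequence becomes a right-exact piece
\[
\tilde H_{n-1}(del(v))\to\tilde H_{n-1}(\Ind(\Gamma_n))\to\tilde H_{n-2}(lk(v))\to \tilde H_{n-2}(del(v))=0 .
\]
Here the last term vanishes by the inductive bound on $del(v)$. So it suffices to choose $v$ so that $lk(v)$ has nonzero $\tilde H_{n-2}$. I expect to get this inductively: $lk(v)$ should be a suspension or join involving $\Ind(\Gamma_{n-1})$, or $\Ind(\Gamma_{n-2})$ joined with a sphere. Its $\tilde H_{n-2}$ is then nonzero by the induction hypothesis (ii).

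\emph{Top dimension.} The dimension is $\nu(G_{3\times n})-1$, where $\nu$ is the maximum matching size: $\lfloor 3n/2\rfloor-1$. I will build an acyclic matching on the face poset by sequential element matchings. First I pair each $\sigma$ with $\sigma\triangle\{b_1\}$ whenever possible, then use $d_1$ on the unmatched faces, and proceed column by column. These matchings are acyclic by the standard cluster lemma. I then need to check that every critical cell has size less than $\lfloor 3n/2\rfloor$. Concretely, a critical cell must miss some vertex of each column in a way that forces a "defect", and maximum matchings of the $3\times n$ grid always admit a toggle that the rules pair off.

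I expect this last step to be the main obstacle: choosing the order of the toggle vertices so that no near-perfect or perfect matching survives. This is especially delicate for $n$ even, where perfect matchings of $G_{3\times n}$ exist and the critical cells must be shown to avoid them. I would first try the snake order $b_1,d_1,a_1,c_1,e_1,b_2,\dots$ and verify the parity cases separately.
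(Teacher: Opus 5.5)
Your overall architecture matches the paper's. You use Mayer--Vietoris for $st(v)\cup del(v)$ at an end vertex (your $d_n$ is the image of the paper's $b_1$ under the symmetry of $\Gamma_n$), a joint induction over auxiliary graphs with shifted bounds, and sequential element matchings in the top degree. But both substantive steps are left open, and where you guess their shape the guess is off.

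In low degrees, the real content is exhibiting a family of graphs that closes up under the reductions, and you never specify one. In the paper this takes six families $\mathcal{U}_n,\dots,\mathcal{Z}_n$ with bounds $n-2$, $n-1$ or $n$. Two of them ($\mathcal{W}_n$, $\mathcal{Z}_n$) are not reduced to wedges of suspensions at all; they need their own link/deletion sequence.

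Your argument for (ii) also does not close as stated. With $v=d_n$, the graph $\Gamma_n-N_{\Gamma_n}[d_n]$ is $\Gamma_{n-1}$ plus one vertex $a_{n-1}$ adjacent to $a_{n-2}$ and $b_{n-1}$, which is the mirror image of $\mathcal{U}_{n-1}$. So $lk(d_n,\Ind(\Gamma_n))\simeq\Ind(\mathcal{U}_{n-1})$, and this is not built from the $\Ind(\Gamma_m)$ by suspensions or joins. For example, for $n=5$ it is $\vee_6\mathbb{S}^3$, and $6$ is not a product of the total reduced Betti numbers $1,2,5,9$ of $\Ind(\Gamma_1),\dots,\Ind(\Gamma_4)$. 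Hence hypothesis (ii) for smaller $\Gamma$'s says nothing about $\tilde H_{n-2}(lk(v))$. You must carry non-vanishing for the link family in the same induction. The paper does this via $\Ind(\mathcal{U}_n)\simeq\Sigma(\Ind(\mathcal{U}_{n-1}))\vee\Sigma(\Ind(\mathcal{Z}_{n-3}))\vee\Sigma^3(\Ind(\mathcal{U}_{n-3}))$, so that $\tilde H_{n-1}(\Ind(\mathcal{U}_n))$ contains $\tilde H_{n-2}(\Ind(\mathcal{U}_{n-1}))\neq 0$ as a summand.

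The top-degree step is the other missing piece. Your snake order $b_1,d_1,a_1,\dots$ loses the property that makes such matchings tractable. Once two toggle vertices are adjacent, a maximum independent set $\sigma\ni v_j$ need not be matched at step $v_j$. This is because $\sigma\setminus\{v_j\}$ may already have been paired at an earlier step with another top face $(\sigma\setminus\{v_j\})\cup\{v_{j'}\}$. Already in $\Gamma_3$, the top face $\{d_1,a_2,c_2,e_2\}$ survives the $d_1$-step, since $\{a_2,c_2,e_2\}$ was paired with $\{b_1,a_2,c_2,e_2\}$ at the $b_1$-step. Whether such faces are rescued later requires a global analysis that you do not supply.

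The paper's key idea is to toggle only along an \emph{independent} ordered set $S$. Then, by \Cref{lemma:matching_top_dimension_in_G}, a top face is critical if and only if it is disjoint from $S$. The problem becomes purely combinatorial: find an independent set of edges of $G_{3\times n}$ meeting every maximum matching.

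Your parity intuition is also inverted. For $n$ even, $S=\{a_1,e_1\}$ works trivially: a perfect matching avoiding both would have to contain $b_1$ and $d_1$, which share the vertex $12$. It is odd $n$, where a near-perfect matching may leave a corner uncovered, that needs more. There the paper uses $S=\{a_1,a_4,\dots,a_{\beta_n}\}\sqcup\{e_1,e_4,\dots,e_{\beta_n}\}$ with $\beta_n$ depending on $n$ modulo $6$, together with the inductive case analysis of \Cref{lemma:3m+1,prop:even+odd}.
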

	
	We complete the proof of \Cref{theorem:ind_homology} in two parts. In \Cref{section:Lower_dimensions_homology}, we show that $\tilde{H}_i(\Ind(\Gamma_n)) = 0$ for $i\le n-2$ (\Cref{proposition:homology_up_to_n-2}) and $\tilde{H}_{n-1}(\Ind(\Gamma_n))\ne0$ (\Cref{proposition:homology_n-1}). In \Cref{section:top_dimension}, we show that $\tilde{H}_{\dim(\Ind(\Gamma_n))}(\Ind(\Gamma_n)) = 0$ (\Cref{proposition:top_dimension}). 
	
	Moreover, to prove \Cref{theorem:intro_simply_connected}, it suffices to show that $\Ind(\Gamma_n)$ is simply connected (\Cref{theorem:simply_connected}).
	
	\subsection{}\label{section:Lower_dimensions_homology}
	
	In this section, we show that for $n\geq 1$, $\tilde{H}_i(\Ind(\Gamma_n)) = 0$ when $i\le n-2$, and $\tilde{H}_{n-1}(\Ind(\Gamma_n))\neq 0$. To establish these results, we introduce six families of auxiliary graphs $\{\mathcal{U}_n\}_{n\ge1}$, $\{\mathcal{V}_n\}_{n\ge1}$, $\{\mathcal{W}_n\}_{n\ge1}$, $\{\mathcal{X}_n\}_{n\ge1}$, $\{\mathcal{Y}_n\}_{n\ge1}$, and $\{\mathcal{Z}_n\}_{n\ge1}$ (defined in \Cref{subsection:graph_definitions}), where the $n$-th member of each family is constructed from $\Gamma_n$ by adjoining certain vertices and edges. 
	
	We proceed by dividing this section into four subsections. In \Cref{section:Gamma_123}, we determine the homotopy type of the independence complexes of $\Gamma_n$ for $n = 1,2,3$. In \Cref{subsection:graph_definitions}, we define six families of auxiliary graphs and, for each family, we compute the homotopy type of the independence complexes for $n = 1,2$.
	In \Cref{subsection:recursive_homotopy_equivalences}, we analyze the independence complexes of these auxiliary graphs for general $n\ge3$ and derive the recursive relation between the independence complexes of these graphs. These results are then used in \Cref{subsection:lower_dimensions_homology} to obtain the desired homological results for $\Ind(\Gamma_n)$. 
	\subsubsection{Homotopy type of $\Ind(\Gamma_n)$ for $n = 1,2,3$}\label{section:Gamma_123}
	${}$
	
	\vspace{0.1 cm}
	
	\textbf{Case $n = 1:$} Since $\Gamma_1\cong P_2$, $\Ind(\Gamma_1)\simeq \mathbb{S}^0$. 
	
	\vspace{0.1 cm}
	\begin{wrapfigure}{r}{.15\textwidth}
		\vspace{-28pt}
		\centering
		\begin{tikzpicture}[scale = 0.30, vertices/.style = {draw, fill = black, circle, inner sep = 0.5pt}]
			
			\node[vertices,label = above:{{\tiny $b_1$}}] (b1) at (12, 3) {};
			\node[vertices,label = below:{{\tiny $d_1$}}] (d1) at (12, 0) {};
			
			\node[vertices,label = above:{{\tiny $a_1$}}] (a1) at (13.5, 4.5) {};
			\node[vertices,label = below:{{\tiny $c_1$}}] (c1) at (13.5, 1.5) {};
			\node[vertices,label = below:{{\tiny $e_1$}}] (e1) at (13.5, -1.5) {};
			
			\node[vertices,label = above:{{\tiny $b_2$}}] (b2) at (15, 3) {};
			\node[vertices,label = below:{{\tiny $d_2$}}] (d2) at (15, 0) {};
			
			\foreach \vertex/\neighbors in {
				b1/{a1,c1,d1},
				d1/{c1,e1},
				a1/{b2},
				c1/{b2,d2},
				e1/{d2},
				b2/{d2}}
			{
				\foreach \neighbor in \neighbors
				\draw [-] (\vertex)--(\neighbor);
			}
		\end{tikzpicture}\vspace{-.42cm}
		\caption{$\Gamma_2$}
		\label{fig:Gamma_2}
	\end{wrapfigure}
	\textbf{Case $n = 2:$} We have $N_{\Gamma_2}(a_1)\subseteq N_{\Gamma_2}(c_1)$. By \Cref{theorem:N(u)subsetN(v)}, $\Ind(\Gamma_2)\simeq \Ind(\Gamma_2 -\{c_1\})$. Since $\Gamma_2 -\{c_1\}\cong C_6$, $\Ind(\Gamma_2)\simeq \mathbb{S}^1 \vee \mathbb{S}^1$ by \Cref{theorem:cycle_graph}.
	
	\vspace{0.1 cm}
	\textbf{Case $n = 3:$} Note that $\Gamma_3-N_{\Gamma_3}[\{b_1,b_2\}]\cong P_4$ (see \Cref{fig:G3}). By \Cref{lemma:path_graph}, $\Ind(P_4)\simeq \text{pt}$. Hence $\Ind(\Gamma_3)\simeq \Ind(\Gamma_3\cup\{(b_1,b_2)\})$ by \Cref{lemma:edge_contractible}. Let $\Gamma_3': = \Gamma_3\cup \{(b_1,b_2)\}$. Then $\Ind(\Gamma_3)\simeq \Ind(\Gamma_3')$.
	
	We have $N_{\Gamma_3'}[a_1]\subseteq N_{\Gamma_3'}[b_2]$ (see \Cref{fig:G3_b1b2}). By \Cref{corollary:N[u]subsetN[v]_lk(v)_del(v)}, $\Ind(\Gamma_3')\simeq \Ind(\Gamma_3'-\{b_2\})\vee \Sigma(\Ind(\Gamma_3'-N_{\Gamma_3'}[b_2]))$. Since $\Gamma_3'-N_{\Gamma_3'}[b_2]\cong P_5$ (\Cref{fig:lk_G3_b1b2}), \Cref{lemma:path_graph} implies $\Ind(\Gamma_3'-N_{\Gamma_3'}[b_2])\simeq \mathbb{S}^1$. Hence $\Ind(\Gamma_3)\simeq \Ind(\Gamma_3') \simeq \Ind(\Gamma_3'-\{b_2\})\vee \mathbb{S}^2$.
	\begin{figure}[H]
		\begin{subfigure}{0.23\textwidth}
			\centering
			\begin{tikzpicture}[scale = 0.30, vertices/.style = {draw, fill = black, circle, inner sep = 0.5pt}]
				
				\node[vertices,label = above:{{\tiny $b_1$}}] (b1) at (12, 3) {};
				\node[vertices,label = below:{{\tiny $d_1$}}] (d1) at (12, 0) {};
				
				\node[vertices,label = above:{{\tiny $a_1$}}] (a1) at (13.5, 4.5) {};
				\node[vertices,label = left:{{\tiny $c_1$}}] (c1) at (13.5, 1.5) {};
				\node[vertices,label = below:{{\tiny $e_1$}}] (e1) at (13.5, -1.5) {};
				
				\node[vertices,label = above:{{\tiny $b_2$}}] (b2) at (15, 3) {};
				\node[vertices,label = below:{{\tiny $d_2$}}] (d2) at (15, 0) {};
				
				\node[vertices,label = above:{{\tiny $a_2$}}] (a2) at (16.5, 4.5) {};
				\node[vertices,label = right:{{\tiny $c_2$}}] (c2) at (16.5, 1.5) {};
				\node[vertices,label = below:{{\tiny $e_2$}}] (e2) at (16.5, -1.5) {};
				
				\node[vertices,label = above:{{\tiny $b_3$}}] (b3) at (18, 3) {};
				\node[vertices,label = below:{{\tiny $d_3$}}] (d3) at (18, 0) {};
				
				\foreach \to/\from in
				{a1/a2, a1/b1,a1/b2,a2/b2, b1/d1, b1/c1,d1/e1,c1/d2, d1/c1,b2/c1,c1/c2,d2/e1, d2/e2, e1/e2, b2/d2, b2/c2,d2/c2, a2/b3, c2/b3, c2/d3, d3/e2, b3/d3} \draw [-] (\to)--(\from);
				
			\end{tikzpicture}
			\caption{$\Gamma_3$}
			\label{fig:G3}
		\end{subfigure} 
		\hfill
		\begin{subfigure}{0.23\textwidth}
			\centering
			\begin{tikzpicture}[scale = 0.30, vertices/.style = {draw, fill = black, circle, inner sep = 0.5pt}]
				
				\node[vertices,label = above:{{\tiny $b_1$}}] (b1) at (12, 3) {};
				\node[vertices,label = below:{{\tiny $d_1$}}] (d1) at (12, 0) {};
				
				\node[vertices,label = above:{{\tiny $a_1$}}] (a1) at (13.5, 4.5) {};
				\node[vertices,label = left:{{\tiny $c_1$}}] (c1) at (13.5, 1.5) {};
				\node[vertices,label = below:{{\tiny $e_1$}}] (e1) at (13.5, -1.5) {};
				
				\node[vertices,label = above:{{\tiny $b_2$}}] (b2) at (15, 3) {};
				\node[vertices,label = below:{{\tiny $d_2$}}] (d2) at (15, 0) {};
				
				\node[vertices,label = above:{{\tiny $a_2$}}] (a2) at (16.5, 4.5) {};
				\node[vertices,label = right:{{\tiny $c_2$}}] (c2) at (16.5, 1.5) {};
				\node[vertices,label = below:{{\tiny $e_2$}}] (e2) at (16.5, -1.5) {};
				
				\node[vertices,label = above:{{\tiny $b_3$}}] (b3) at (18, 3) {};
				\node[vertices,label = below:{{\tiny $d_3$}}] (d3) at (18, 0) {};
				
				\foreach \to/\from in
				{a1/a2, a1/b1,a1/b2,a2/b2, b1/d1, b1/c1,d1/e1,c1/d2, d1/c1,b2/c1,c1/c2,d2/e1, d2/e2, e1/e2, b2/d2, b2/c2,d2/c2, a2/b3, c2/b3, c2/d3, d3/e2, b3/d3,b1/b2} \draw [-] (\to)--(\from);
				
			\end{tikzpicture}
			\caption{$\Gamma_3'$}
			\label{fig:G3_b1b2}
		\end{subfigure} 
		\hfill
		\begin{subfigure}{0.23\textwidth}
			\centering
			\begin{tikzpicture}[scale = 0.30, vertices/.style = {draw, fill = black, circle, inner sep = 0.5pt}]
				
				\node[vertices,label = below:{{\tiny $d_1$}}] (d1) at (12, 0) {};
				\node[vertices,label = below:{{\tiny $e_1$}}] (e1) at (13.5, -1.5) {};
				
				\node[vertices,label = below:{{\tiny $e_2$}}] (e2) at (16.5, -1.5) {};
				
				\node[vertices,label = above:{{\tiny $b_3$}}] (b3) at (18, 3) {};
				\node[vertices,label = below:{{\tiny $d_3$}}] (d3) at (18, 0) {};
				
				\foreach \to/\from in
				{d1/e1, e1/e2, d3/e2, b3/d3} \draw [-] (\to)--(\from);
				
			\end{tikzpicture}
			\caption{$\Gamma_3'-N_{\Gamma_3'}[b_2]$}
			\label{fig:lk_G3_b1b2}
		\end{subfigure}
		\hfill
		\begin{subfigure}{0.23\textwidth}
			\centering
			\begin{tikzpicture}[scale = 0.30, vertices/.style = {draw, fill = black, circle, inner sep = 0.5pt}]
				
				\node[vertices,label = above:{{\tiny $b_1$}}] (b1) at (12, 3) {};
				\node[vertices,label = below:{{\tiny $d_1$}}] (d1) at (12, 0) {};
				
				\node[vertices,label = above:{{\tiny $a_1$}}] (a1) at (13.5, 4.5) {};
				\node[vertices,label = left:{{\tiny $c_1$}}] (c1) at (13.5, 1.5) {};
				\node[vertices,label = below:{{\tiny $e_1$}}] (e1) at (13.5, -1.5) {};
				
				\node[vertices,label = below:{{\tiny $d_2$}}] (d2) at (15, 0) {};
				
				\node[vertices,label = above:{{\tiny $a_2$}}] (a2) at (16.5, 4.5) {};
				\node[vertices,label = right:{{\tiny $c_2$}}] (c2) at (16.5, 1.5) {};
				\node[vertices,label = below:{{\tiny $e_2$}}] (e2) at (16.5, -1.5) {};
				
				\node[vertices,label = above:{{\tiny $b_3$}}] (b3) at (18, 3) {};
				\node[vertices,label = below:{{\tiny $d_3$}}] (d3) at (18, 0) {};
				
				\foreach \to/\from in
				{a1/a2, a1/b1,b1/d1, b1/c1,d1/e1,c1/d2, d1/c1,c1/c2,d2/e1, d2/e2, e1/e2, d2/c2, a2/b3, c2/b3, c2/d3, d3/e2, b3/d3} \draw [-] (\to)--(\from);
				
			\end{tikzpicture}
			\caption{$\Gamma_3'-\{b_2\}$}
			\label{fig:del_G3'}
		\end{subfigure} 
		\vspace{-.2cm}
		\caption{}
	\end{figure}
	Since $\Gamma_3'-\{b_2\}-N_{\Gamma_3'-\{b_2\}}[\{d_1,d_2\}]\cong P_4$ (\Cref{fig:del_G3'}) and $\Ind(P_4)\simeq \text{pt}$,  \Cref{lemma:edge_contractible} implies $\Ind(\Gamma_3'-\{b_2\})\simeq \Ind((\Gamma_3'-\{b_2\})\cup \{(d_1,d_2)\})$. Let $\Gamma_3'': = (\Gamma_3'-\{b_2\})\cup \{(d_1,d_2)\}$ (see \Cref{fig:G3''}). Then $\Ind(\Gamma_3'-\{b_2\})\simeq \Ind(\Gamma_3'')$, and therefore $\Ind(\Gamma_3)\simeq \Ind(\Gamma_3'')\vee \mathbb{S}^2$.
	
	Now, $N_{\Gamma_3''}[e_1]\subseteq N_{\Gamma_3''}[d_2]$ implies $\Ind(\Gamma_3'')\simeq \Ind(\Gamma_3''-\{d_2\})\vee \Sigma(\Ind(\Gamma_3''-N_{\Gamma_3''}[d_2]))$. Since $\Gamma_3''-N_{\Gamma_3''}[d_2]\cong P_5$, we have $\Ind(\Gamma_3''-N_{\Gamma_3''}[d_2])\simeq \mathbb{S}^1$. Hence $\Ind(\Gamma_3'') \simeq \Ind(\Gamma_3''-\{d_2\})\vee \mathbb{S}^2$. This implies $\Ind(\Gamma_3)\simeq \Ind(\Gamma_3''-\{d_2\})\vee \mathbb{S}^2 \vee \mathbb{S}^2$.
	
	Observe that $\Gamma_3''-\{d_2\}-N_{\Gamma_3''-\{d_2\}}[c_1,e_1]\cong P_4$ (see \Cref{fig:del_G3''}). Using \Cref{lemma:edge_contractible}, we have $\Ind(\Gamma_3''-\{d_2\})\simeq \Ind((\Gamma_3''-\{d_2\})\cup \{(c_1,e_1)\})$. Let $\Gamma_3''': = (\Gamma_3''-\{d_2\})\cup \{(c_1,e_1)\}$ (see \Cref{fig:G3'''}). Then $\Ind(\Gamma_3''-\{d_2\})\simeq \Ind(\Gamma_3''')$, which gives $\Ind(\Gamma_3)\simeq \Ind(\Gamma_3''')\vee\mathbb{S}^2\vee\mathbb{S}^2$.
	\begin{figure}[H]
		\begin{subfigure}{0.23\textwidth}
			\centering
			\begin{tikzpicture}[scale = 0.30, vertices/.style = {draw, fill = black, circle, inner sep = 0.5pt}]
				
				\node[vertices,label = above:{{\tiny $b_1$}}] (b1) at (12, 3) {};
				\node[vertices,label = below:{{\tiny $d_1$}}] (d1) at (12, 0) {};
				
				\node[vertices,label = above:{{\tiny $a_1$}}] (a1) at (13.5, 4.5) {};
				\node[vertices,label = left:{{\tiny $c_1$}}] (c1) at (13.5, 1.5) {};
				\node[vertices,label = below:{{\tiny $e_1$}}] (e1) at (13.5, -1.5) {};
				
				\node[vertices,label = below:{{\tiny $d_2$}}] (d2) at (15, 0) {};
				
				\node[vertices,label = above:{{\tiny $a_2$}}] (a2) at (16.5, 4.5) {};
				\node[vertices,label = right:{{\tiny $c_2$}}] (c2) at (16.5, 1.5) {};
				\node[vertices,label = below:{{\tiny $e_2$}}] (e2) at (16.5, -1.5) {};
				
				\node[vertices,label = above:{{\tiny $b_3$}}] (b3) at (18, 3) {};
				\node[vertices,label = below:{{\tiny $d_3$}}] (d3) at (18, 0) {};
				
				\foreach \to/\from in
				{a1/a2, a1/b1,b1/d1, b1/c1,d1/e1,c1/d2, d1/c1,c1/c2,d2/e1, d2/e2, e1/e2, d2/c2, a2/b3, c2/b3, c2/d3, d3/e2, b3/d3,d2/d1} \draw [-] (\to)--(\from);
			\end{tikzpicture}
			\caption{$\Gamma_3''$}
			\label{fig:G3''}
		\end{subfigure} 
		\begin{subfigure}{0.23\textwidth}
			\centering
			\begin{tikzpicture}[scale = 0.30, vertices/.style = {draw, fill = black, circle, inner	sep = 0.5pt}]
				
				\node[vertices,label = above:{{\tiny $b_1$}}] (b1) at (12, 3) {};
				\node[vertices,label = below:{{\tiny $d_1$}}] (d1) at (12, 0) {};
				
				\node[vertices,label = above:{{\tiny $a_1$}}] (a1) at (13.5, 4.5) {};
				\node[vertices,label = left:{{\tiny $c_1$}}] (c1) at (13.5, 1.5) {};
				\node[vertices,label = below:{{\tiny $e_1$}}] (e1) at (13.5, -1.5) {};
				
				\node[vertices,label = above:{{\tiny $a_2$}}] (a2) at (16.5, 4.5) {};
				\node[vertices,label = right:{{\tiny $c_2$}}] (c2) at (16.5, 1.5) {};
				\node[vertices,label = below:{{\tiny $e_2$}}] (e2) at (16.5, -1.5) {};
				
				\node[vertices,label = above:{{\tiny $b_3$}}] (b3) at (18, 3) {};
				\node[vertices,label = below:{{\tiny $d_3$}}] (d3) at (18, 0) {};
				
				\foreach \to/\from in
				{a1/a2, a1/b1,b1/d1, b1/c1,d1/e1, d1/c1,c1/c2, e1/e2, a2/b3, c2/b3, c2/d3, d3/e2, b3/d3} \draw [-] (\to)--(\from);
			\end{tikzpicture}
			\caption{$\Gamma_3''-\{d_2\}$}
			\label{fig:del_G3''}
		\end{subfigure} 
		\hfill
		\begin{subfigure}{0.23\textwidth}
			\centering
			\begin{tikzpicture}[scale = 0.30, vertices/.style = {draw, fill = black, circle, inner sep = 0.5pt}]
				
				\node[vertices,label = above:{{\tiny $b_1$}}] (b1) at (12, 3) {};
				\node[vertices,label = below:{{\tiny $d_1$}}] (d1) at (12, 0) {};
				
				\node[vertices,label = above:{{\tiny $a_1$}}] (a1) at (13.5, 4.5) {};
				\node[vertices,label = left:{{\tiny $c_1$}}] (c1) at (13.5, 1.5) {};
				\node[vertices,label = below:{{\tiny $e_1$}}] (e1) at (13.5, -1.5) {};
				
				\node[vertices,label = above:{{\tiny $a_2$}}] (a2) at (16.5, 4.5) {};
				\node[vertices,label = right:{{\tiny $c_2$}}] (c2) at (16.5, 1.5) {};
				\node[vertices,label = below:{{\tiny $e_2$}}] (e2) at (16.5, -1.5) {};
				
				\node[vertices,label = above:{{\tiny $b_3$}}] (b3) at (18, 3) {};
				\node[vertices,label = below:{{\tiny $d_3$}}] (d3) at (18, 0) {};
				
				\foreach \to/\from in
				{a1/a2, a1/b1,b1/d1, b1/c1,d1/e1, d1/c1,c1/c2, e1/e2, a2/b3, c2/b3, c2/d3, d3/e2, b3/d3,c1/e1} \draw [-] (\to)--(\from);
				
			\end{tikzpicture}
			\caption{$\Gamma_3'''$}
			\label{fig:G3'''}
		\end{subfigure} 
		\begin{subfigure}{0.25\textwidth}
			\centering
			\begin{tikzpicture}[scale = 0.30, vertices/.style = {draw, fill = black, circle, inner sep = 0.5pt}]
				
				\node[vertices,label = above:{{\tiny $b_1$}}] (b1) at (12, 3) {};
				\node[vertices,label = below:{{\tiny $d_1$}}] (d1) at (12, 0) {};
				
				\node[vertices,label = above:{{\tiny $a_1$}}] (a1) at (13.5, 4.5) {};
				\node[vertices,label = below:{{\tiny $e_1$}}] (e1) at (13.5, -1.5) {};
				
				\node[vertices,label = above:{{\tiny $a_2$}}] (a2) at (16.5, 4.5) {};
				\node[vertices,label = right:{{\tiny $c_2$}}] (c2) at (16.5, 1.5) {};
				\node[vertices,label = below:{{\tiny $e_2$}}] (e2) at (16.5, -1.5) {};
				
				\node[vertices,label = above:{{\tiny $b_3$}}] (b3) at (18, 3) {};
				\node[vertices,label = below:{{\tiny $d_3$}}] (d3) at (18, 0) {};
				
				\foreach \to/\from in
				{a1/a2, a1/b1,b1/d1, d1/e1, e1/e2, a2/b3, c2/b3, c2/d3, d3/e2, b3/d3} \draw [-] (\to)--(\from);
				
			\end{tikzpicture}
			\caption{$\Gamma_3'''-\{c_1\}$}
			\label{fig:G3'''_c1}
		\end{subfigure} 
		\vspace{-.25cm}
		\caption{}
		\label{Fig:Gamma_3 second}
	\end{figure}
	
	Further, $N_{\Gamma_3'''}[d_1]\subseteq N_{\Gamma_3'''}[c_1]$ implies $\Ind(\Gamma_3''')\simeq \Ind(\Gamma_3'''-\{c_1\})\vee \Sigma(\Ind(\Gamma_3'''-N_{\Gamma_3'''}[c_1]))$ by \Cref{corollary:N[u]subsetN[v]_lk(v)_del(v)}. Since $\Gamma_3'''-N_{\Gamma_3'''}[c_1]\cong P_5$, we have $\Ind(\Gamma_3'''-N_{\Gamma_3'''}[c_1])\simeq \mathbb{S}^1$. Hence $\Ind(\Gamma_3''')\simeq \Ind(\Gamma_3'''-\{c_1\})\vee \Sigma(\mathbb{S}^1)\simeq \Ind(\Gamma_3'''-\{c_1\})\vee \mathbb{S}^2$. Therefore, $\Ind(\Gamma_3)\simeq \Ind(\Gamma_3'''-\{c_1\})\vee(\vee_3 \mathbb{S}^2)$. 
	
	Since $c_2$ is a simplicial vertex in $\Gamma_3'''-\{c_1\}$ with $N_{\Gamma_3'''-\{c_1\}}(c_2) = \{b_3,d_3\}$ (\Cref{fig:G3'''_c1}), \Cref{theorem:simplicial_vertex} implies that $\Ind(\Gamma_3'''-\{c_1\})\simeq \Sigma(\Ind(\Gamma_3'''-\{c_1\}-N_{\Gamma_3'''-\{c_1\}}[b_3])) \vee \Sigma(\Ind(\Gamma_3'''-\{c_1\}-N_{\Gamma_3'''-\{c_1\}}[d_3]))$. Observe that $\Gamma_3'''-\{c_1\}-N_{\Gamma_3'''-\{c_1\}}[b_3]\cong P_5$ and $\Gamma_3'''-\{c_1\}-N_{\Gamma_3'''-\{c_1\}}[d_3]\cong P_5$. Therefore $\Ind(\Gamma_3'''-\{c_1\})\simeq \vee_2\Sigma(\Ind(P_5))$. Since $\Ind(P_5)\simeq \mathbb{S}^1$, it follows that $\Ind(\Gamma_3'''-\{c_1\})\simeq \vee_2\Sigma(\mathbb{S}^1) \simeq \vee_2\mathbb{S}^2$. Hence, $\Ind(\Gamma_3)\simeq\vee_5\mathbb{S}^2.$ 
	
	We conclude that
	\begin{equation}\label{equation:Gamma_123}
		\Ind(\Gamma_1)\simeq\mathbb{S}^0,\ \Ind(\Gamma_2)\simeq\mathbb{S}^1\vee\mathbb{S}^1 \text{ and } \Ind(\Gamma_3)\simeq\vee_5\mathbb{S}^2.
	\end{equation}
	
	\subsubsection{Definitions of auxiliary graphs and their independence complexes for $n = 1,2$}\label{subsection:graph_definitions} ${}$
	
	\vspace{0.1 cm}
	$(1)$ $\mathcal{U}_n$. For $n\geq 1,$ the graph $\mathcal{U}_n$ is defined by:
	\begin{align*} 
		V(\mathcal{U}_n)& = V(\Gamma_n)\sqcup \{u\},\\
		E(\mathcal{U}_1)& = E(\Gamma_1)\sqcup \{(u,d_1)\},\text{ and}\\
		E(\mathcal{U}_n)& = E(\Gamma_n)\sqcup \{(u,d_1),(u,e_1)\} \text{ for } n\ge 2.
	\end{align*}
	\begin{figure}[H]
		\centering
		\begin{subfigure}{0.2\textwidth}
			\centering
			\begin{tikzpicture}[xshift = -7cm, scale = 0.26, vertices/.style = {draw, fill = black, circle, inner sep = 0.5pt}] 
				
				\node[vertices,label = below:{{\tiny $u$}}] (u) at (10.5, -1.5) {};
				\node[vertices,label = above:{{\tiny $b_1$}}] (b1) at (12, 3) {};
				\node[vertices,label = below:{{\tiny $d_1$}}] (d1) at (12, 0) {};
				\node at (12,-2.58) {};
				\draw [-] (b1)--(d1);
				\draw [-] (u)--(d1);
			\end{tikzpicture}
			\subcaption{$\mathcal{U}_1$}
			\label{fig:U1}
		\end{subfigure}
		\begin{subfigure}{0.30\textwidth}
			\centering
			\begin{tikzpicture}[scale = 0.26, vertices/.style = {draw, fill = black, circle, inner sep = 0.5pt}]
				
				\node[vertices,label = below:{{\tiny $u$}}] (u) at (10.5, -1.5) {}; 
				\node[vertices,label = above:{{\tiny $b_1$}}] (b1) at (12, 3) {}; 
				\node[vertices,label = below:{{\tiny $d_1$}}] (d1) at (12, 0) {};
				\node[vertices,label = above:{{\tiny $a_1$}}] (a1) at (13.5, 4.5) {}; 
				\node[vertices,label = below:{{\tiny $c_1$}}] (c1) at (13.5, 1.5) {}; 
				\node[vertices,label = below:{{\tiny $e_1$}}] (e1) at (13.5, -1.5) {};
				\node[vertices,label = above:{{\tiny $b_2$}}] (b2) at (15, 3) {}; \node[vertices,label = below:{{\tiny $d_2$}}] (d2) at (15, 0) {}; 
				
				\foreach \vertex/\neighbors in { 
					u/{d1,e1},
					b1/{a1,c1,d1},
					d1/{c1,e1},
					a1/{b2},
					c1/{b2,d2}, 
					e1/{d2},
					b2/{d2}} 
				{
					\foreach \neighbor in \neighbors \draw [-] (\vertex)--(\neighbor); }
			\end{tikzpicture}
			\subcaption{$\mathcal{U}_2$}\label{fig:U_2}
		\end{subfigure}
		\begin{subfigure}{0.4\textwidth}
			\centering
			\begin{tikzpicture}[scale = 0.26, vertices/.style = {draw, fill = black, circle, inner sep = 0.5pt}]
				
				\node[vertices,label = below:{{\tiny $u$}}] (u) at (10.5, -1.5) {}; 
				
				\node[vertices,label = above:{{\tiny $b_1$}}] (b1) at (12, 3) {}; 
				\node[vertices,label = below:{{\tiny $d_1$}}] (d1) at (12, 0) {}; 
				
				\node[vertices,label = above:{{\tiny $a_1$}}] (a1) at (13.5, 4.5) {}; 
				\node[vertices,label = below:{{\tiny $c_1$}}] (c1) at (13.5, 1.5) {}; 
				\node[vertices,label = below:{{\tiny $e_1$}}] (e1) at (13.5, -1.5) {}; 
				\node[vertices,label = above:{{\tiny $b_2$}}] (b2) at (15, 3) {}; 
				\node[vertices,label = below:{{\tiny $d_2$}}] (d2) at (15, 0) {}; 
				
				\node[vertices,label = above:{{\tiny $a_2$}}] (a) at (16.5, 4.5) {}; 
				\node[vertices,label = below:{{\tiny $c_2$}}] (c) at (16.5, 1.5) {}; 
				\node[vertices,label = below:{{\tiny $e_2$}}] (e) at (16.5, -1.5) {}; 
				\node[vertices,label = {[yshift = -.06cm]above:{{{\tiny $b_{3}$}}}}] (b) at (18,3) {};
				\node[vertices,label = below:{{\tiny $d_3$}}] (d) at (18, 0) {};
				
				\node[vertices, label = {[xshift = .33cm,yshift = -.12cm]above:{{{\tiny $b_{(n-1)}$}}}}] (1) at ($(b)+(4,0)$) {}; 
				\node[vertices, label = {[xshift = .3cm]below:{{{\tiny $d_{(n-1)}$}}}}] (2) at ($(d)+(4,0)$) {};
				\node[vertices,label = {[xshift = .3cm,yshift = -.13cm]above:{{{\tiny $a_{(n-1)}$}}}}] (3) at ($(a)+(7,0)$) {};
				\node[vertices,label = {[xshift = 0.02cm]right:{{{\tiny $c_{(n-1)}$}}}}] (4) at ($(c)+(7,0)$) {}; 
				\node[vertices,label = {[xshift = .3cm]below:{{{\tiny $e_{(n-1)}$}}}}] (5) at ($(e)+(7,0)$) {};
				\node[vertices,label = right:{{{\tiny $b_n$}}}] (6) at ($(b)+(7,0)$) {};
				
				\node[vertices,label = below:{{{\tiny $d_n$}}}] (7) at ($(d)+(7,0)$) {};
				\draw (a) -- ++(2.3,0); 
				\draw (c) -- ++(2.3,0); 
				\draw (e) -- ++(2.3,0); 
				\draw (3) -- ++(-2.3,0); 
				\draw (4) -- ++(-2.3,0); 
				\draw (5) -- ++(-2.3,0); 
				\draw (b) -- ++(1,1); 
				\draw (b) -- ++(1,-1); 
				\draw (d) -- ++(1,1); 
				\draw (d) -- ++(1,-1); 
				\draw (1) -- ++(-1,1); 
				\draw (1) -- ++(-1,-1); 
				\draw (2) -- ++(-1,1); 
				\draw (2) -- ++(-1,-1);
				
				\node at ($(b)!0.5!(1)$) {$\ldots$}; 
				\node at ($(d)!0.5!(2)$) {$\ldots$}; 
				\node at ($(a)!0.5!(3)$) {$\ldots$}; 
				\node at ($(c)!0.5!(4)$) {$\ldots$}; 
				\node at ($(e)!0.5!(5)$) {$\ldots$};
				
				\foreach \vertex/\neighbors in { 
					u/{d1,e1},  
					b1/{a1,c1,d1},  
					d1/{c1,e1},  
					a1/{b2},  
					c1/{b2,d2}, 
					e1/{d2},  
					b2/{d2},  
					a/{a1,b2,b},  
					b/{c,d},  
					c/{c1,b2,d2,d},  
					d/{e},  
					e/{d2,e1},  
					1/{2,3,4},  
					2/{4,5},  
					3/{6},  
					4/{6,7},  
					5/{7},  
					6/{7}} 
				{
					\foreach \neighbor in \neighbors \draw [-] (\vertex)--(\neighbor); 
				}
				
			\end{tikzpicture}
			\subcaption{$\mathcal{U}_n,\ n\geq 2$}\label{fig:Un_2}
		\end{subfigure}
		\vspace{-0.25cm}
		\caption{}\label{fig:Un}
	\end{figure}
	
	\textbf{Case $n = 1:$} Since $\mathcal{U}_1\cong P_3$, we have $\Ind(\mathcal{U}_1)\simeq \mathbb{S}^0.$
	
	\vspace{0.1cm}
	\textbf{Case $n = 2:$} Note that $u$ is a simplicial vertex in $\mathcal{U}_2$ with $N_{\mathcal{U}_2}(u) = \{d_1,e_1\}$. By \Cref{theorem:simplicial_vertex}, $\Ind(\mathcal{U}_2)\simeq \Sigma(\Ind(\mathcal{U}_2-N_{\mathcal{U}_2}[d_1]))\vee \Sigma(\Ind(\mathcal{U}_2-N_{\mathcal{U}_2}[e_1]))$. Since $\mathcal{U}_2-N_{\mathcal{U}_2}[d_1]\cong P_3$ and $\mathcal{U}_2-N_{\mathcal{U}_2}[e_1]\cong C_4$, we get $\Ind(\mathcal{U}_2)\simeq \Sigma(\mathbb{S}^0)\vee \Sigma(\mathbb{S}^0)\simeq \mathbb{S}^1 \vee \mathbb{S}^1$. 
	
	Thus, 
	\begin{equation}\label{equation:U_12}
		\Ind(\mathcal{U}_1)\simeq\mathbb{S}^0 \text{ and } \Ind(\mathcal{U}_2)\simeq\vee_2\mathbb{S}^1.
	\end{equation}
	
	$(2)$ $\mathcal{V}_n$. For $n\geq 1,$ the graph $\mathcal{V}_n$ is defined by:
	\begin{align*}
		V(\mathcal{V}_n)& = V(\Gamma_n)\sqcup \{v_1,v_2,v_3,v_4\},\\
		E(\mathcal{V}_1)& = E(\Gamma_1)\sqcup \{(v_1,v_3),(v_1,v_4),(v_2,b_1),(v_3,b_1),(v_3,d_1),(v_4,d_1)\},\text{ and}\\
		E(\mathcal{V}_n)& = E(\Gamma_n)\sqcup \{(v_1,v_3),(v_1,v_4),(v_2,a_1),(v_2,b_1),(v_3,b_1),(v_3,c_1),(v_3,d_1), \\
		&\qquad\qquad\qquad(v_4,d_1),(v_4,e_1)\}\text{ for }n\ge 2.
	\end{align*}
	\begin{figure}[H]
		\centering
		\begin{subfigure}{0.2\textwidth}
			\centering
			\begin{tikzpicture}[xshift = 1cm, scale = 0.3, vertices/.style = {draw, fill = black, circle, inner sep = 0.5pt}]
				
				\node[vertices,label = below:{{\tiny $v_1$}}] (v1) at (9, 0) {}; 
				\node[vertices,label = above:{{\tiny $v_2$}}] (v2) at (10.5, 4.5) {}; 
				\node[vertices,label = below:{{\tiny $v_3$}}] (v3) at (10.5, 1.5) {}; 
				\node[vertices,label = below:{{\tiny $v_4$}}] (v4) at (10.5, -1.5) {}; 
				
				\node[vertices,label = above:{{\tiny $b_1$}}] (b1) at (12, 3) {};
				\node[vertices,label = below:{{\tiny $d_1$}}] (d1) at (12, 0) {};
				
				\node at (10.5,-2.58) {};
				
				\draw [-] (b1)--(d1);
				\draw [-] (v2)--(b1);
				\draw [-] (v3)--(b1);
				\draw [-] (v3)--(d1);
				\draw [-] (v3)--(v1);
				\draw [-] (v4)--(v1);
				\draw [-] (v4)--(d1); 
			\end{tikzpicture} 
			\subcaption{$\mathcal{V}_1$} 
			\label{fig:v2}
		\end{subfigure}
		\begin{subfigure}{0.3\textwidth}
			\centering
			\begin{tikzpicture}[scale = 0.3, vertices/.style = {draw, fill = black, circle, inner sep = 0.5pt}]
				
				\node[vertices,label = below:{{\tiny $v_1$}}] (v1) at (9, 0) {};
				\node[vertices,label = above:{{\tiny $v_2$}}] (v2) at (10.5, 4.5) {}; 
				\node[vertices,label = below:{{\tiny $v_3$}}] (v3) at (10.5, 1.5) {}; 
				\node[vertices,label = below:{{\tiny $v_4$}}] (v4) at (10.5, -1.5) {}; 
				
				\node[vertices,label = above:{{\tiny $b_1$}}] (b1) at (12, 3) {};
				\node[vertices,label = below:{{\tiny $d_1$}}] (d1) at (12, 0) {};
				
				\node[vertices,label = above:{{\tiny $a_1$}}] (a1) at (13.5, 4.5) {};
				\node[vertices,label = below:{{\tiny $c_1$}}] (c1) at (13.5, 1.5) {};
				\node[vertices,label = below:{{\tiny $e_1$}}] (e1) at (13.5, -1.5) {};
				
				\node[vertices,label = above:{{\tiny $b_2$}}] (b2) at (15, 3) {};
				\node[vertices,label = below:{{\tiny $d_2$}}] (d2) at (15, 0) {};
				
				\foreach \vertex/\neighbors in {
					v2/{a1,b1},
					v3/{v1,b1,c1,d1},
					v4/{v1,d1,e1},
					b1/{a1,c1,d1},
					d1/{c1,e1},
					a1/{b2},
					c1/{b2,d2},
					e1/{d2},
					b2/{d2}}
				{
					\foreach \neighbor in \neighbors
					\draw [-] (\vertex)--(\neighbor);
				}
			\end{tikzpicture}
			\subcaption{$\mathcal{V}_2$}
			\label{fig:V_2}
		\end{subfigure}
		\begin{subfigure}{0.4\textwidth}
			\centering
			\begin{tikzpicture}[scale = 0.3, vertices/.style = {draw, fill = black, circle, inner sep = 0.5pt}]
				
				\node[vertices,label = below:{{\tiny $v_1$}}] (v1) at (9, 0) {};
				\node[vertices,label = above:{{\tiny $v_2$}}] (v2) at (10.5, 4.5) {}; 
				\node[vertices,label = below:{{\tiny $v_3$}}] (v3) at (10.5, 1.5) {}; 
				\node[vertices,label = below:{{\tiny $v_4$}}] (v4) at (10.5, -1.5) {}; 
				
				\node[vertices,label = above:{{\tiny $b_1$}}] (b1) at (12, 3) {};
				\node[vertices,label = below:{{\tiny $d_1$}}] (d1) at (12, 0) {};
				
				\node[vertices,label = above:{{\tiny $a_1$}}] (a1) at (13.5, 4.5) {};
				\node[vertices,label = below:{{\tiny $c_1$}}] (c1) at (13.5, 1.5) {};
				\node[vertices,label = below:{{\tiny $e_1$}}] (e1) at (13.5, -1.5) {};
				
				\node[vertices,label = above:{{\tiny $b_2$}}] (b2) at (15, 3) {};
				\node[vertices,label = below:{{\tiny $d_2$}}] (d2) at (15, 0) {};
				
				\node[vertices,label = above:{{\tiny $a_2$}}] (a) at (16.5, 4.5) {};
				\node[vertices,label = below:{{\tiny $c_2$}}] (c) at (16.5, 1.5) {};
				\node[vertices,label = below:{{\tiny $e_2$}}] (e) at (16.5, -1.5) {};
				
				\node[vertices,label = above:{{\tiny $b_3$}}] (b) at (18, 3) {};
				\node[vertices,label = below:{{\tiny $d_3$}}] (d) at (18, 0) {};
				
				\node[vertices, label = {[xshift = .32cm,yshift = -.12cm]above:{{{\tiny $b_{(n-1)}$}}}}] (1) at ($(b)+(4,0)$) {};
				\node[vertices, label = {[xshift = .3cm]below:{{{\tiny $d_{(n-1)}$}}}}] (2) at ($(d)+(4,0)$) {};
				
				\node[vertices,label = {[xshift = .3cm,yshift = -.13cm]above:{{{\tiny $a_{(n-1)}$}}}}] (3) at ($(a)+(7,0)$) {}; 
				\node[vertices,label = {[xshift = .27cm]below:{{{\tiny $c_{(n-1)}$}}}}] (4) at ($(c)+(7,0)$) {};
				\node[vertices,label = {[xshift = .3cm]below:{{{\tiny $e_{(n-1)}$}}}}] (5) at ($(e)+(7,0)$) {};
				
				\node[vertices,label = above:{{{\tiny $b_n$}}}] (6) at ($(b)+(7,0)$) {};
				\node[vertices,label = below:{{{\tiny $d_n$}}}] (7) at ($(d)+(7,0)$) {};
				
				\draw (a) -- ++(2.3,0);
				\draw (c) -- ++(2.3,0);
				\draw (e) -- ++(2.3,0);
				\draw (3) -- ++(-2.3,0);
				\draw (4) -- ++(-2.3,0);
				\draw (5) -- ++(-2.3,0);
				\draw (b) -- ++(1,1);
				\draw (b) -- ++(1,-1);
				\draw (d) -- ++(1,1);
				\draw (d) -- ++(1,-1);
				\draw (1) -- ++(-1,1);
				\draw (1) -- ++(-1,-1);
				\draw (2) -- ++(-1,1);
				\draw (2) -- ++(-1,-1);
				
				\node at ($(b)!0.5!(1)$) {$\ldots$};
				\node at ($(d)!0.5!(2)$) {$\ldots$};
				\node at ($(a)!0.5!(3)$) {$\ldots$};
				\node at ($(c)!0.5!(4)$) {$\ldots$};
				\node at ($(e)!0.5!(5)$) {$\ldots$};
				
				\foreach \vertex/\neighbors in {
					v2/{a1,b1},
					v3/{v1,b1,c1,d1},
					v4/{v1,d1,e1},
					b1/{a1,c1,d1},
					d1/{c1,e1},
					a1/{b2},
					c1/{b2,d2},
					e1/{d2},
					b2/{d2},
					a/{a1,b2,b},
					b/{c,d},
					c/{c1,b2,d2,d},
					d/{e},
					e/{d2,e1},
					1/{2,3,4},
					2/{4,5},
					3/{6},
					4/{6,7},
					5/{7},
					6/{7}}
				{
					\foreach \neighbor in \neighbors
					\draw [-] (\vertex)--(\neighbor);
				}
				
			\end{tikzpicture}
			\subcaption{$\mathcal{V}_n, \ n \geq 2$}
			\label{fig:Vn_2}
		\end{subfigure}
		\vspace{-.25cm}
		\caption{}
		\label{fig:Vn}
	\end{figure}
	
	\textbf{Case $n = 1:$} Since $v_2$ is a simplicial vertex in $\mathcal{V}_1$ with $N_{\mathcal{V}_1}(v_2) = \{b_1\}$, $\Ind(\mathcal{V}_1)\simeq \Sigma(\Ind(\mathcal{V}_1-N_{\mathcal{V}_1}[b_1]))$ by \Cref{theorem:simplicial_vertex}. Now, $\mathcal{V}_1-N_{\mathcal{V}_1}[b_1]\cong P_2$ implies $\Ind(\mathcal{V}_1)\simeq\Sigma(\Ind(P_2))\simeq \mathbb{S}^1$. \vspace{0.1cm}
	
	\textbf{Case $n = 2:$} In $\mathcal{V}_2$, $v_2$ is a simplicial vertex with $N_{\mathcal{V}_2}(v_2) = \{a_1,b_1\}$. Hence $\Ind(\mathcal{V}_2)\simeq \Sigma(\Ind(\mathcal{V}_2-N_{\mathcal{V}_2}[a_1]))\vee \Sigma(\Ind(\mathcal{V}_2-N_{\mathcal{V}_2}[b_1]))$. Since $\mathcal{V}_2-N_{\mathcal{V}_2}[a_1]\cong \Gamma_2$ and $\mathcal{V}_2-N_{\mathcal{V}_2}[b_1]\cong P_5$, it follows that $\Ind(\mathcal{V}_2)\simeq\Sigma(\Ind (\Gamma_2))\vee \Sigma(\Ind(P_5))$. We have $\Ind(\Gamma_2)\simeq\vee_2\mathbb{S}^1$ by \eqref{equation:Gamma_123} and $\Ind(P_5)\simeq\mathbb{S}^1$ by \Cref{lemma:path_graph}. Thus $\Ind(\mathcal{V}_2) \simeq \Sigma(\vee_2\mathbb{S}^1) \vee\Sigma(\mathbb{S}^1)\simeq \vee_3 \mathbb{S}^2.$ 
	
	Therefore, 
	\begin{equation}\label{equation:V_12}
		\Ind(\mathcal{V}_1)\simeq\mathbb{S}^1 \text{ and } \Ind(\mathcal{V}_2)\simeq\vee_3\mathbb{S}^2.
	\end{equation}
	
	$(3)$ $\mathcal{W}_n$. For $n\geq 1,$ the graph $\mathcal{W}_n$ is defined by:
	\begin{align*}
		V(\mathcal{W}_n)& = V(\Gamma_n)\sqcup \{w_1,w_2,w_3\},\\
		E(\mathcal{W}_1)& = E(\Gamma_1)\sqcup \{(w_1,w_2),(w_1,w_3),(w_2,b_1),(w_2,d_1),(w_3,d_1)\},\text{ and}\\
		E(\mathcal{W}_n)& = E(\Gamma_n)\sqcup \{(w_1,w_2),(w_1,w_3),(w_2,b_1),(w_2,c_1),(w_2,d_1),(w_3,d_1),(w_3,e_1)\}\text{ for $n\geq 2$}.
	\end{align*} 
	\begin{figure}[H]
		\centering
		\begin{subfigure}{0.25\textwidth}
			\centering
			\begin{tikzpicture}[xshift = 1cm, scale = 0.3, vertices/.style = {draw, fill = black, circle, inner sep = 0.5pt}]
				
				\node[vertices,label = below:{{\tiny $w_1$}}] (w1) at (9, 0) {};
				\node[vertices,label = below:{{\tiny $w_2$}}] (w2) at (10.5, 1.5) {}; 
				\node[vertices,label = below:{{\tiny $w_3$}}] (w3) at (10.5, -1.5) {};
				
				\node[vertices,label = above:{{\tiny $b_1$}}] (b1) at (12, 3) {};
				\node[vertices,label = below:{{\tiny $d_1$}}] (d1) at (12, 0) {};
				
				\node at (10.5,-2.58) {};
				
				\draw [-] (b1)--(d1);
				\draw [-] (w1)--(b1);
				\draw [-] (w2)--(d1);
				\draw [-] (w3)--(w1);
				\draw [-] (w3)--(d1);   
			\end{tikzpicture} 
			\subcaption{$\mathcal{W}_1$} 
			\label{fig:W1}
		\end{subfigure}
		\begin{subfigure}{0.25\textwidth}
			\centering
			\begin{tikzpicture}[scale = 0.3, vertices/.style = {draw, fill = black, circle, inner sep = 0.5pt}]
				
				\node[vertices,label = below:{{\tiny $w_1$}}] (w1) at (9, 0) {}; 
				\node[vertices,label = below:{{\tiny $w_2$}}] (w2) at (10.5, 1.5) {}; 
				\node[vertices,label = below:{{\tiny $w_3$}}] (w3) at (10.5, -1.5) {}; 
				
				\node[vertices,label = above:{{\tiny $b_1$}}] (b1) at (12, 3) {};
				\node[vertices,label = below:{{\tiny $d_1$}}] (d1) at (12, 0) {};
				
				\node[vertices,label = above:{{\tiny $a_1$}}] (a1) at (13.5, 4.5) {};
				\node[vertices,label = below:{{\tiny $c_1$}}] (c1) at (13.5, 1.5) {};
				\node[vertices,label = below:{{\tiny $e_1$}}] (e1) at (13.5, -1.5) {};
				
				\node[vertices,label = above:{{\tiny $b_2$}}] (b2) at (15, 3) {};
				\node[vertices,label = below:{{\tiny $d_2$}}] (d2) at (15, 0) {};
				
				\foreach \vertex/\neighbors in {
					w2/{w1,b1,c1,d1},
					w3/{w1,d1,e1},
					b1/{a1,c1,d1},
					d1/{c1,e1},
					a1/{b2},
					c1/{b2,d2},
					e1/{d2},
					b2/{d2}}
				{
					\foreach \neighbor in \neighbors
					\draw [-] (\vertex)--(\neighbor);
				}  
			\end{tikzpicture}
			\subcaption{$\mathcal{W}_2$}
			\label{fig:W_2}
		\end{subfigure}
		\begin{subfigure}{0.44\textwidth}
			\centering
			\begin{tikzpicture}[scale = 0.3, vertices/.style = {draw, fill = black, circle, inner sep = 0.5pt}]
				
				\node[vertices,label = below:{{\tiny $w_1$}}] (w1) at (9, 0) {}; 
				\node[vertices,label = below:{{\tiny $w_2$}}] (w2) at (10.5, 1.5) {}; 
				\node[vertices,label = below:{{\tiny $w_3$}}] (w3) at (10.5, -1.5) {}; 
				
				\node[vertices,label = above:{{\tiny $b_1$}}] (b1) at (12, 3) {};
				\node[vertices,label = below:{{\tiny $d_1$}}] (d1) at (12, 0) {};
				
				\node[vertices,label = above:{{\tiny $a_1$}}] (a1) at (13.5, 4.5) {};
				\node[vertices,label = below:{{\tiny $c_1$}}] (c1) at (13.5, 1.5) {};
				\node[vertices,label = below:{{\tiny $e_1$}}] (e1) at (13.5, -1.5) {};
				
				\node[vertices,label = above:{{\tiny $b_2$}}] (b2) at (15, 3) {};
				\node[vertices,label = below:{{\tiny $d_2$}}] (d2) at (15, 0) {};
				
				\node[vertices,label = above:{{\tiny $a_2$}}] (a) at (16.5, 4.5) {};
				\node[vertices,label = below:{{\tiny $c_2$}}] (c) at (16.5, 1.5) {};
				\node[vertices,label = below:{{\tiny $e_2$}}] (e) at (16.5, -1.5) {};
				
				\node[vertices,label = above:{{\tiny $b_3$}}] (b) at (18, 3) {};
				\node[vertices,label = below:{{\tiny $d_3$}}] (d) at (18, 0) {};
				
				\node[vertices, label = {[xshift = .32cm,yshift = -.12cm]above:{{{\tiny $b_{(n-1)}$}}}}] (1) at ($(b)+(4,0)$) {};
				\node[vertices, label = {[xshift = .3cm]below:{{{\tiny $d_{(n-1)}$}}}}] (2) at ($(d)+(4,0)$) {};
				
				\node[vertices,label = {[xshift = .3cm,yshift = -.13cm]above:{{{\tiny $a_{(n-1)}$}}}}] (3) at ($(a)+(7,0)$) {}; 
				\node[vertices,label = {[xshift = .27cm]below:{{{\tiny $c_{(n-1)}$}}}}] (4) at ($(c)+(7,0)$) {};
				\node[vertices,label = {[xshift = .3cm]below:{{{\tiny $e_{(n-1)}$}}}}] (5) at ($(e)+(7,0)$) {};
				
				\node[vertices,label = above:{{{\tiny $b_n$}}}] (6) at ($(b)+(7,0)$) {};
				\node[vertices,label = below:{{{\tiny $d_n$}}}] (7) at ($(d)+(7,0)$) {};
				
				\draw (a) -- ++(2.3,0);
				\draw (c) -- ++(2.3,0);
				\draw (e) -- ++(2.3,0);
				\draw (3) -- ++(-2.3,0);
				\draw (4) -- ++(-2.3,0);
				\draw (5) -- ++(-2.3,0);
				\draw (b) -- ++(1,1);
				\draw (b) -- ++(1,-1);
				\draw (d) -- ++(1,1);
				\draw (d) -- ++(1,-1);
				\draw (1) -- ++(-1,1);
				\draw (1) -- ++(-1,-1);
				\draw (2) -- ++(-1,1);
				\draw (2) -- ++(-1,-1);
				
				\node at ($(b)!0.5!(1)$) {$\ldots$};
				\node at ($(d)!0.5!(2)$) {$\ldots$};
				\node at ($(a)!0.5!(3)$) {$\ldots$};
				\node at ($(c)!0.5!(4)$) {$\ldots$};
				\node at ($(e)!0.5!(5)$) {$\ldots$};
				
				\foreach \vertex/\neighbors in {
					w2/{w1,b1,c1,d1},
					w3/{w1,d1,e1},
					b1/{a1,c1,d1},
					d1/{c1,e1},
					a1/{b2},
					c1/{b2,d2},
					e1/{d2},
					b2/{d2},
					a/{a1,b2,b},
					b/{c,d},
					c/{c1,b2,d2,d},
					d/{e},
					e/{d2,e1},
					1/{2,3,4},
					2/{4,5},
					3/{6},
					4/{6,7},
					5/{7},
					6/{7}}
				{
					\foreach \neighbor in \neighbors
					\draw [-] (\vertex)--(\neighbor);
				}
			\end{tikzpicture}
			\subcaption{$\mathcal{W}_n, \ n \geq 2$}
			\label{fig:Wn_2}
		\end{subfigure}
		\vspace{-.25cm}
		\caption{}
		\label{fig:Wn}
	\end{figure}
	
	\begin{wrapfigure}{r}{.2\textwidth}
		\vspace{-12pt}
		\centering
		\begin{tikzpicture}[scale = 0.29, vertices/.style = {draw, fill = black, circle, inner sep = 0.5pt}]
			
			\node[vertices,label = right:{{\tiny $w_1$}}] (w1) at (9, 0) {}; 
			\node[vertices,label = left:{{\tiny $w_2$}}] (w2) at (10.5, 1.5) {}; 
			\node[vertices,label = left:{{\tiny $w_3$}}] (w3) at (10.5, -1.5) {}; 
			
			\node[vertices,label = left:{{\tiny $b_1$}}] (b1) at (12, 3) {};
			
			\node[vertices,label = left:{{\tiny $a_1$}}] (a1) at (13.5, 4.5) {};
			\node[vertices,label = right:{{\tiny $c_1$}}] (c1) at (13.5, 1.5) {};
			\node[vertices,label = right:{{\tiny $e_1$}}] (e1) at (13.5, -1.5) {};
			
			\node[vertices,label = right:{{\tiny $b_2$}}] (b2) at (15, 3) {};
			\node[vertices,label = right:{{\tiny $d_2$}}] (d2) at (15, 0) {};
			
			\foreach \vertex/\neighbors in {
				w2/{w1,b1},
				w3/{w1,e1},
				c1/{b1,b2,d2,w2},
				b1/{a1},
				a1/{b2},
				e1/{d2},
				b2/{d2}}
			{
				\foreach \neighbor in \neighbors
				\draw [-] (\vertex)--(\neighbor);
			}
		\end{tikzpicture}  
		\vspace{-.25cm}
		\caption{$\mathcal{W}_2-\{d_1\}$}
		\label{fig:W2_d1}
	\end{wrapfigure}
	
	\textbf{Case $n = 1:$} Observe that $N_{\mathcal{W}_1}(w_1) \subseteq N_{\mathcal{W}_1}(d_1)$. By \Cref{theorem:N(u)subsetN(v)}, $\Ind(\mathcal{W}_1) \simeq \Ind(\mathcal{W}_1-\{d_1\})$. Since $\mathcal{W}_1-\{d_1\}\cong P_4$, it follows that $\Ind(\mathcal{W}_1)\simeq \text{pt}$.
	
	\vspace{0.1cm}
	
	\textbf{Case $n = 2:$} We have $N_{\mathcal{W}_2}(w_1)\subseteq N_{\mathcal{W}_2}(d_1)$, which implies $\Ind(\mathcal{W}_2)\simeq \Ind(\mathcal{W}_2-\{d_1\})$. Further, since $N_{\mathcal{W}_2-\{d_1\}}(a_1)\subseteq N_{\mathcal{W}_2-\{d_1\}}(c_1)$, $\Ind(\mathcal{W}_2-\{d_1\}) \simeq \Ind(\mathcal{W}_2-\{c_1,d_1\})$. Note that $\mathcal{W}_2-\{c_1,d_1\}\cong C_8$. Hence $\Ind(\mathcal{W}_2)\simeq \Ind(C_8)\simeq \mathbb{S}^2$ by \Cref{theorem:cycle_graph}. 
	
	Therefore,
	\begin{equation}\label{equation:W_12}
		\Ind(\mathcal{W}_1)\simeq \text{pt}\text{ and } \Ind(\mathcal{W}_2)\simeq \mathbb{S}^2. 
	\end{equation}
	
	$(4)$ $\mathcal{X}_n$. For $n\geq 1,$ the graph $\mathcal{X}_n$ is defined by:
	\begin{align*} 
		V(\mathcal{X}_n)& = V(\Gamma_n)\sqcup \{x_1,x_2,x_3,x_4,x_5, x_6,x_7,x_8\},\\
		E(\mathcal{X}_1)& = E(\Gamma_1)\sqcup \{(x_1,x_2),(x_1,x_3),(x_2,x_4),(x_3,x_5),(x_4,x_6),(x_4,x_7),(x_5,x_6),\\
		&\qquad\qquad\qquad(x_5,x_8),(x_6,x_7),(x_6,x_8),(x_7,b_1),(x_8,b_1),(x_8,d_1)\},\text{ and}\\
		E(\mathcal{X}_n)& = E(\Gamma_n)\sqcup \{(x_1,x_2),(x_1,x_3),(x_2,x_4),(x_3,x_5),(x_4,x_6),(x_4,x_7),(x_5,x_6),(x_5,x_8),\\
		&\qquad\qquad\qquad(x_6,x_7),(x_6,x_8),(x_7,a_1),(x_7,b_1),(x_8,b_1),(x_8,c_1),(x_8,d_1)\}\text{ for $n\geq 2$}.
	\end{align*} 
	\begin{figure}[H]
		\centering
		\begin{subfigure}{0.23\textwidth}
			\centering
			\begin{tikzpicture}[scale = 0.3, vertices/.style = {draw, fill = black, circle, inner sep = 0.5pt}]
				
				\node[vertices,label = above:{{\tiny $x_1$}}] (x1) at (3, 3) {};
				\node[vertices,label = above:{{\tiny $x_2$}}] (x2) at (4.5, 4.5) {}; 
				\node[vertices,label = below:{{\tiny $x_3$}}] (x3) at (4.5, 1.5) {}; 
				\node[vertices,label = above:{{\tiny $x_4$}}] (x4) at (7.5, 4.5) {}; 
				\node[vertices,label = below:{{\tiny $x_5$}}] (x5) at (7.5, 1.5) {}; 
				\node[vertices,label = above:{{\tiny $x_6$}}] (x6) at (9, 3) {};
				\node[vertices,label = above:{{\tiny $x_7$}}] (x7) at (10.5, 4.5) {}; 
				\node[vertices,label = below:{{\tiny $x_8$}}] (x8) at (10.5, 1.5) {};
				
				\node[vertices,label = above:{{\tiny $b_1$}}] (b1) at (11.8, 3) {};
				\node[vertices,label = below:{{\tiny $d_1$}}] (d1) at (11.8, 0) {};
				
				\node at (10.5,-2.59) {};
				
				\foreach \vertex/\neighbors in {
					b1/{d1},
					x7/{x6,x4,b1},
					x8/{x6,x5,b1,d1},
					x6/{x4,x5},
					x2/{x4,x1},
					x3/{x5,x1}}
				{
					\foreach \neighbor in \neighbors
					\draw [-] (\vertex)--(\neighbor);
				}
			\end{tikzpicture} 
			\subcaption{$\mathcal{X}_1$} 
			\label{fig:x7}
		\end{subfigure}
		\begin{subfigure}{0.25\textwidth}
			\centering
			\begin{tikzpicture}[scale = 0.3, vertices/.style = {draw, fill = black, circle, inner sep = 0.5pt}]
				
				\node[vertices,label = above:{{\tiny $x_1$}}] (x1) at (3, 3) {};
				\node[vertices,label = above:{{\tiny $x_2$}}] (x2) at (4.5, 4.5) {}; 
				\node[vertices,label = below:{{\tiny $x_3$}}] (x3) at (4.5, 1.5) {}; 
				\node[vertices,label = above:{{\tiny $x_4$}}] (x4) at (7.5, 4.5) {}; 
				\node[vertices,label = below:{{\tiny $x_5$}}] (x5) at (7.5, 1.5) {}; 
				\node[vertices,label = above:{{\tiny $x_6$}}] (x6) at (9, 3) {};
				\node[vertices,label = above:{{\tiny $x_7$}}] (x7) at (10.5, 4.5) {}; 
				\node[vertices,label = below:{{\tiny $x_8$}}] (x8) at (10.5, 1.5) {};
				
				\node[vertices,label = above:{{\tiny $b_1$}}] (b1) at (12, 3) {};
				\node[vertices,label = below:{{\tiny $d_1$}}] (d1) at (12, 0) {};
				
				\node[vertices,label = above:{{\tiny $a_1$}}] (a1) at (13.5, 4.5) {};
				\node[vertices,label = below:{{\tiny $c_1$}}] (c1) at (13.5, 1.5) {};
				\node[vertices,label = below:{{\tiny $e_1$}}] (e1) at (13.5, -1.5) {};
				
				\node[vertices,label = above:{{\tiny $b_2$}}] (b2) at (15, 3) {};
				\node[vertices,label = below:{{\tiny $d_2$}}] (d2) at (15, 0) {};
				
				\foreach \vertex/\neighbors in {
					x7/{x6,x4,b1},
					x8/{x6,x5,b1,c1,d1},
					x6/{x4,x5},
					x2/{x4,x1},
					x3/{x5,x1},
					a1/{x7},
					b1/{a1,c1,d1},
					d1/{c1,e1},
					a1/{b2},
					c1/{b2,d2},
					e1/{d2},
					b2/{d2}}
				{
					\foreach \neighbor in \neighbors
					\draw [-] (\vertex)--(\neighbor);
				}
			\end{tikzpicture}
			\subcaption{$\mathcal{X}_2$}
			\label{fig:X_2}
		\end{subfigure}
		\begin{subfigure}{0.42\textwidth}
			\centering
			\begin{tikzpicture}[scale = 0.3, vertices/.style = {draw, fill = black, circle, inner sep = 0.5pt}]
				
				\node[vertices,label = above:{{\tiny $x_1$}}] (x1) at (3, 3) {};
				\node[vertices,label = above:{{\tiny $x_2$}}] (x2) at (4.5, 4.5) {}; 
				\node[vertices,label = below:{{\tiny $x_3$}}] (x3) at (4.5, 1.5) {}; 
				\node[vertices,label = above:{{\tiny $x_4$}}] (x4) at (7.5, 4.5) {}; 
				\node[vertices,label = below:{{\tiny $x_5$}}] (x5) at (7.5, 1.5) {}; 
				\node[vertices,label = above:{{\tiny $x_6$}}] (x6) at (9, 3) {};
				\node[vertices,label = above:{{\tiny $x_7$}}] (x7) at (10.5, 4.5) {}; 
				\node[vertices,label = below:{{\tiny $x_8$}}] (x8) at (10.5, 1.5) {};
				
				\node[vertices,label = above:{{\tiny $b_1$}}] (b1) at (12, 3) {};
				\node[vertices,label = below:{{\tiny $d_1$}}] (d1) at (12, 0) {};
				
				\node[vertices,label = above:{{\tiny $a_1$}}] (a) at (13.5, 4.5) {};
				\node[vertices,label = below:{{\tiny $c_1$}}] (c) at (13.5, 1.5) {};
				\node[vertices,label = below:{{\tiny $e_1$}}] (e) at (13.5, -1.5) {};
				
				\node[vertices,label = above:{{\tiny $b_2$}}] (b) at (15, 3) {};
				\node[vertices,label = below:{{\tiny $d_2$}}] (d) at (15, 0) {};
				
				\node[vertices, label = {[xshift = .32cm,yshift = -.12cm]above:{{{\tiny $b_{(n-1)}$}}}}] (1) at ($(b)+(4,0)$) {};
				\node[vertices, label = {[xshift = .3cm]below:{{{\tiny $d_{(n-1)}$}}}}] (2) at ($(d)+(4,0)$) {};
				
				\node[vertices,label = {[xshift = .3cm,yshift = -.13cm]above:{{{\tiny $a_{(n-1)}$}}}}] (3) at ($(a)+(7,0)$) {}; 
				\node[vertices,label = {[xshift = .27cm]below:{{{\tiny $c_{(n-1)}$}}}}] (4) at ($(c)+(7,0)$) {};
				\node[vertices,label = {[xshift = .3cm]below:{{{\tiny $e_{(n-1)}$}}}}] (5) at ($(e)+(7,0)$) {};
				
				\node[vertices,label = above:{{{\tiny $b_n$}}}] (6) at ($(b)+(7,0)$) {};
				\node[vertices,label = below:{{{\tiny $d_n$}}}] (7) at ($(d)+(7,0)$) {};
				
				\draw (a) -- ++(2.3,0);
				\draw (c) -- ++(2.3,0);
				\draw (e) -- ++(2.3,0);
				\draw (3) -- ++(-2.3,0);
				\draw (4) -- ++(-2.3,0);
				\draw (5) -- ++(-2.3,0);
				\draw (b) -- ++(1,1);
				\draw (b) -- ++(1,-1);
				\draw (d) -- ++(1,1);
				\draw (d) -- ++(1,-1);
				\draw (1) -- ++(-1,1);
				\draw (1) -- ++(-1,-1);
				\draw (2) -- ++(-1,1);
				\draw (2) -- ++(-1,-1);
				
				\node at ($(b)!0.5!(1)$) {$\ldots$};
				\node at ($(d)!0.5!(2)$) {$\ldots$};
				\node at ($(a)!0.5!(3)$) {$\ldots$};
				\node at ($(c)!0.5!(4)$) {$\ldots$};
				\node at ($(e)!0.5!(5)$) {$\ldots$};
				
				\foreach \vertex/\neighbors in {
					x7/{x6,x4,b1},
					x8/{x6,x5,b1,d1},
					x6/{x4,x5},
					x2/{x4,x1},
					x3/{x5,x1},
					b1/{d1},
					a/{x7,b1,b},
					b/{c,d},
					c/{x8,b1,d1,d},
					d/{e},
					e/{d1},
					1/{2,3,4},
					2/{4,5},
					3/{6},
					4/{6,7},
					5/{7},
					6/{7}}
				{
					\foreach \neighbor in \neighbors
					\draw [-] (\vertex)--(\neighbor);
				}
			\end{tikzpicture}
			\subcaption{$\mathcal{X}_n, \ n \geq 2$}
			\label{fig:Xn_2}
		\end{subfigure}
		\vspace{-.25cm}
		\caption{}
		\label{fig:Xn}
	\end{figure}
	\textbf{Case $n = 1:$} Since $d_1$ is a simplicial vertex in $\mathcal{X}_1$ with $N_{\mathcal{X}_1}(d_1) = \{x_8,b_1\},$ we have $\Ind(\mathcal{X}_1)\simeq \Sigma(\Ind(\mathcal{X}_1-N_{\mathcal{X}_1}[x_8])) \vee \Sigma(\Ind(\mathcal{X}_1-N_{\mathcal{X}_1}[b_1]))$ by \Cref{theorem:simplicial_vertex}. 
	Now, $\mathcal{X}_1-N_{\mathcal{X}_1}[x_8]\cong P_5$ and $\mathcal{X}_1-N_{\mathcal{X}_1}[b_1]\cong C_6$ imply $\Ind(\mathcal{X}_1)\simeq \Sigma(\mathbb{S}^1)\vee \Sigma(\vee_2\mathbb{S}^1)\simeq \vee_3 \mathbb{S}^2$. 
	
	\vspace{0.1cm}
	\textbf{Case $n = 2:$} Note that $N_{\mathcal{X}_2}(e_1)\subseteq N_{\mathcal{X}_2}(c_1)$. By \Cref{theorem:N(u)subsetN(v)}, $\Ind(\mathcal{X}_2)\simeq \Ind(\mathcal{X}_2-\{c_1\})$. Let $\mathcal{X}_2'$ be the graph obtained from $\mathcal{X}_2-\{c_1\}$ by replacing the paths $x_4x_2x_1x_3x_5$ and $a_1b_2d_2e_1d_1$ with the edges $(x_4,x_5)$ and $(a_1,d_1)$, respectively (see \Cref{fig:X2'}).
	By \Cref{theorem:replace_edge}, $\Ind(\mathcal{X}_2-\{c_1\})\simeq \Sigma^2(\Ind(\mathcal{X}_2'))$. This implies $\Ind(\mathcal{X}_2)\simeq \Sigma^2(\Ind(\mathcal{X}_2')).$
	\begin{figure}[H]
		\centering
		\begin{subfigure}{0.25\textwidth}
			\centering
			\begin{tikzpicture}[scale = 0.3, vertices/.style = {draw, fill = black, circle, inner sep = 0.5pt}]
				
				\node[vertices,label = above:{{\tiny $x_1$}}] (x1) at (4.5, 3) {};
				\node[vertices,label = above:{{\tiny $x_2$}}] (x2) at (5.7, 4.5) {}; 
				\node[vertices,label = below:{{\tiny $x_3$}}] (x3) at (5.7, 1.5) {}; 
				\node[vertices,label = above:{{\tiny $x_4$}}] (x4) at (7.5, 4.5) {}; 
				\node[vertices,label = below:{{\tiny $x_5$}}] (x5) at (7.5, 1.5) {}; 
				\node[vertices,label = above:{{\tiny $x_6$}}] (x6) at (9, 3) {};
				\node[vertices,label = above:{{\tiny $x_7$}}] (x7) at (10.5, 4.5) {}; 
				\node[vertices,label = below:{{\tiny $x_8$}}] (x8) at (10.5, 1.5) {};
				
				\node[vertices,label = above:{{\tiny $b_1$}}] (b1) at (12, 3) {};
				\node[vertices,label = below:{{\tiny $d_1$}}] (d1) at (12, 0) {};
				
				\node[vertices,label = above:{{\tiny $a_1$}}] (a1) at (13.5, 4.5) {};
				\node[vertices,label = below:{{\tiny $e_1$}}] (e1) at (13.5, -1.5) {};
				
				\node[vertices,label = above:{{\tiny $b_2$}}] (b2) at (15, 3) {};
				\node[vertices,label = below:{{\tiny $d_2$}}] (d2) at (15, 0) {};
				
				\foreach \vertex/\neighbors in {
					x7/{x6,x4,a1,b1},
					x8/{x6,x5,b1,d1},
					x6/{x4,x5},
					x2/{x4,x1},
					x3/{x5,x1},
					b1/{a1,d1},
					d1/{e1},
					a1/{b2},
					e1/{d2},
					b2/{d2}}
				{
					\foreach \neighbor in \neighbors
					\draw [-] (\vertex)--(\neighbor);
				}
			\end{tikzpicture}
			\subcaption{$\mathcal{X}_2-\{c_1\}$}
			\label{fig:X_2_c_1}
		\end{subfigure}
		\begin{subfigure}{0.25\textwidth}
			\centering
			\begin{tikzpicture}[scale = 0.3, vertices/.style = {draw, fill = black, circle, inner sep = 0.5pt}]
				
				\node[vertices,label = above:{{\tiny $x_4$}}] (x4) at (7.5, 4.5) {}; 
				\node[vertices,label = below:{{\tiny $x_5$}}] (x5) at (7.5, 1.5) {}; 
				\node[vertices,label = above:{{\tiny $x_6$}}] (x6) at (9, 3) {};
				\node[vertices,label = above:{{\tiny $x_7$}}] (x7) at (10.5, 4.5) {}; 
				\node[vertices,label = below:{{\tiny $x_8$}}] (x8) at (10.5, 1.5) {};
				
				\node[vertices,label = above:{{\tiny $b_1$}}] (b1) at (12, 3) {};
				\node[vertices,label = below:{{\tiny $d_1$}}] (d1) at (12, 0) {};
				
				\node[vertices,label = above:{{\tiny $a_1$}}] (a1) at (13.5, 4.5) {};
				
				\node at (12,-2.4) {};
				
				\foreach \vertex/\neighbors in {
					x7/{x6,x4,a1,b1},
					x8/{x6,x5,b1,d1},
					x6/{x4,x5},
					x4/{x5},
					b1/{a1,d1},
					d1/{a1}}
				{
					\foreach \neighbor in \neighbors
					\draw [-] (\vertex)--(\neighbor);
				}
			\end{tikzpicture}
			\subcaption{$\mathcal{X}_2'$}
			\label{fig:X2'}
		\end{subfigure}
		\begin{subfigure}{0.25\textwidth}
			\centering
			\begin{tikzpicture}[scale = 0.3, vertices/.style = {draw, fill = black, circle, inner sep = 0.5pt}]
				
				\node[vertices,label = above:{{\tiny $x_4$}}] (x4) at (7.5, 4.5) {}; 
				\node[vertices,label = below:{{\tiny $x_5$}}] (x5) at (7.5, 1.5) {}; 
				\node[vertices,label = above:{{\tiny $x_6$}}] (x6) at (9, 3) {};
				\node[vertices,label = above:{{\tiny $x_7$}}] (x7) at (10.5, 4.5) {}; 
				\node[vertices,label = below:{{\tiny $x_8$}}] (x8) at (10.5, 1.5) {};
				
				\node[vertices,label = below:{{\tiny $d_1$}}] (d1) at (12, 0) {};
				
				\node[vertices,label = above:{{\tiny $a_1$}}] (a1) at (13.5, 4.5) {};
				
				\node at (12,-2.4) {};
				
				\foreach \vertex/\neighbors in {
					x7/{x6,x4,a1},
					x8/{x6,x5,d1},
					x6/{x4,x5},
					x4/{x5},
					d1/{a1}}
				{
					\foreach \neighbor in \neighbors
					\draw [-] (\vertex)--(\neighbor);
				}
				
			\end{tikzpicture}
			\subcaption{$\mathcal{X}_2''$}
		\end{subfigure}
		\vspace{-.25cm}
		\caption{}
	\end{figure} 
	
	Further, $N_{\mathcal{X}_2'}[a_1]\subseteq N_{\mathcal{X}_2'}[b_1]$. By \Cref{corollary:N[u]subsetN[v]_lk(v)_del(v)}, $\Ind(\mathcal{X}_2')\simeq \Ind(\mathcal{X}_2'-\{b_1\})\vee\Sigma(\Ind(\mathcal{X}_2'-N_{\mathcal{X}_2'}[b_1]))$. We have $\mathcal{X}_2'-N_{\mathcal{X}_2'}[b_1])\cong C_3$. Let $\mathcal{X}_2'': = \mathcal{X}_2'-\{b_1\}$. Then $\Ind(\mathcal{X}_2')\simeq \Ind(\mathcal{X}_2'')\vee\Sigma(\Ind(C_3))$.
	Using the fact that $N_{\mathcal{X}_2''}[x_4]\subseteq N_{\mathcal{X}_2''}[x_6]$, we have $\Ind(\mathcal{X}_2'')\simeq \Ind(\mathcal{X}_2''-\{x_6\})\vee\Sigma(\Ind(\mathcal{X}_2''-N_{\mathcal{X}_2''}[x_6]))$. Since $\mathcal{X}_2''-\{x_6\}\cong C_6$ and $\mathcal{X}_2''-N_{\mathcal{X}_2''}[x_6]\simeq P_2$, it follows that $\Ind(\mathcal{X}_2'')\simeq \Ind(C_6)\vee\Sigma(\Ind(P_2))$. Therefore $\Ind(\mathcal{X}_2)\simeq \Sigma^2(\Ind(\mathcal{X}_2'))\simeq \Sigma^2(\Ind(C_6))\vee\Sigma^3(\Ind(P_2))\vee\Sigma^3(\Ind(C_3))$. From \Cref{lemma:path_graph,theorem:cycle_graph}, $\Ind(\mathcal{X}_2)\simeq \Sigma^2(\vee_2\mathbb{S}^1)\vee\Sigma^3(\mathbb{S}^0)\vee\Sigma^3(\vee_2\mathbb{S}^0)\simeq \vee_5\mathbb{S}^3$. 
	
	Thus,
	\begin{equation}\label{equation:X_12}
		\Ind(\mathcal{X}_1)\simeq \vee_3 \mathbb{S}^2\text{ and }
		\Ind(\mathcal{X}_2)\simeq \vee_5 \mathbb{S}^3. 
	\end{equation}
	
	$(5)$ $\mathcal{Y}_n$. For $n\geq 1,$ the graph $\mathcal{Y}_n$ is defined by:
	\begin{align*} 
		V(\mathcal{Y}_n)& = V(\Gamma_n)\sqcup \{y_1,y_2,y_3,y_4,y_5, y_6\},\\
		E(\mathcal{Y}_1)& = E(\Gamma_1)\sqcup \{(y_1,y_3),(y_1,y_5),(y_2,y_6),(y_3,y_4),(y_3,y_5),(y_4,b_1),(y_5,b_1),(y_5,d_1),(y_6,d_1)\},\text{ and}\\
		E(\mathcal{Y}_n)& = E(\Gamma_n)\sqcup \{(y_1,y_3),(y_1,y_5),(y_2,y_6),(y_3,y_4),(y_3,y_5),(y_4,a_1),(y_4,b_1),\\
		&\qquad\qquad\qquad(y_5,b_1),(y_5,c_1),(y_5,d_1),(y_6,d_1),(y_6,e_1)\}
		\text{ for $n\geq 2.$}
	\end{align*} 
	\begin{figure}[H]
		\centering
		\begin{subfigure}{0.20\textwidth}
			\centering
			\begin{tikzpicture}[scale = 0.3, vertices/.style = {draw, fill = black, circle, inner sep = 0.5pt}]
				
				\node[vertices,label = below:{{\tiny $y_1$}}] (y1) at (7.5, 1.5) {};
				\node[vertices,label = below:{{\tiny $y_2$}}] (y2) at (7.5, -1.5) {};
				\node[vertices,label = above:{{\tiny $y_3$}}] (y3) at (9, 3) {};
				\node[vertices,label = above:{{\tiny $y_4$}}] (y4) at (10.5, 4.5) {}; 
				\node[vertices,label = below:{{\tiny $y_5$}}] (y5) at (10.5, 1.5) {}; 
				\node[vertices,label = below:{{\tiny $y_6$}}] (y6) at (10.5, -1.5) {}; 
				
				\node[vertices,label = above:{{\tiny $b_1$}}] (b1) at (12, 3) {};
				\node[vertices,label = below:{{\tiny $d_1$}}] (d1) at (12, 0) {};
				
				\node at (10.5,-2.58) {};
				
				\foreach \vertex/\neighbors in {
					b1/{d1},
					y1/{y3,y5},
					y2/{y6},
					y3/{y4,y5},
					y4/{b1},
					y5/{b1,d1},
					y6/{d1}}
				{
					\foreach \neighbor in \neighbors
					\draw [-] (\vertex)--(\neighbor);
				}
			\end{tikzpicture} 
			\subcaption{$\mathcal{Y}_1$} 
			\label{fig:Y1} 
		\end{subfigure}
		\begin{subfigure}{0.22\textwidth}
			\centering
			\begin{tikzpicture}[scale = 0.3, vertices/.style = {draw, fill = black, circle, inner sep = 0.5pt}]
				
				\node[vertices,label = below:{{\tiny $y_1$}}] (y1) at (7.5, 1.5) {};
				\node[vertices,label = below:{{\tiny $y_2$}}] (y2) at (7.5, -1.5) {};
				\node[vertices,label = above:{{\tiny $y_3$}}] (y3) at (9, 3) {};
				\node[vertices,label = above:{{\tiny $y_4$}}] (y4) at (10.5, 4.5) {}; 
				\node[vertices,label = below:{{\tiny $y_5$}}] (y5) at (10.5, 1.5) {}; 
				\node[vertices,label = below:{{\tiny $y_6$}}] (y6) at (10.5, -1.5) {};
				
				\node[vertices,label = above:{{\tiny $b_1$}}] (b1) at (12, 3) {};
				\node[vertices,label = below:{{\tiny $d_1$}}] (d1) at (12, 0) {};
				
				\node[vertices,label = above:{{\tiny $a_1$}}] (a1) at (13.5, 4.5) {};
				\node[vertices,label = below:{{\tiny $c_1$}}] (c1) at (13.5, 1.5) {};
				\node[vertices,label = below:{{\tiny $e_1$}}] (e1) at (13.5, -1.5) {};
				
				\node[vertices,label = above:{{\tiny $b_2$}}] (b2) at (15, 3) {};
				\node[vertices,label = below:{{\tiny $d_2$}}] (d2) at (15, 0) {};
				
				\foreach \vertex/\neighbors in {
					y1/{y3,y5},
					y2/{y6},
					y3/{y4,y5},
					y4/{a1,b1},
					y5/{b1,c1,d1},
					y6/{d1,e1},
					b1/{a1,c1,d1},
					d1/{c1,e1},
					a1/{b2},
					c1/{b2,d2},
					e1/{d2},
					b2/{d2}}
				{
					\foreach \neighbor in \neighbors
					\draw [-] (\vertex)--(\neighbor);
				}
			\end{tikzpicture}
			\caption{$\mathcal{Y}_2$}
		\end{subfigure}
		\begin{subfigure}{0.45\textwidth}
			\centering
			\begin{tikzpicture}[scale = 0.3, vertices/.style = {draw, fill = black, circle, inner
					sep = 0.5pt}]
				
				\node[vertices,label = below:{{\tiny $y_1$}}] (y1) at (7.5, 1.5) {};
				\node[vertices,label = below:{{\tiny $y_2$}}] (y2) at (7.5, -1.5) {};
				\node[vertices,label = above:{{\tiny $y_3$}}] (y3) at (9, 3) {};
				\node[vertices,label = above:{{\tiny $y_4$}}] (y4) at (10.5, 4.5) {}; 
				\node[vertices,label = below:{{\tiny $y_5$}}] (y5) at (10.5, 1.5) {}; 
				\node[vertices,label = below:{{\tiny $y_6$}}] (y6) at (10.5, -1.5) {};
				
				\node[vertices,label = above:{{\tiny $b_1$}}] (b1) at (12, 3) {};
				\node[vertices,label = below:{{\tiny $d_1$}}] (d1) at (12, 0) {};
				
				\node[vertices,label = above:{{\tiny $a_1$}}] (a1) at (13.5, 4.5) {};
				\node[vertices,label = below:{{\tiny $c_1$}}] (c1) at (13.5, 1.5) {};
				\node[vertices,label = below:{{\tiny $e_1$}}] (e1) at (13.5, -1.5) {};
				
				\node[vertices,label = above:{{\tiny $b_2$}}] (b2) at (15, 3) {};
				\node[vertices,label = below:{{\tiny $d_2$}}] (d2) at (15, 0) {};
				
				\node[vertices,label = above:{{\tiny $a_2$}}] (a) at (16.5, 4.5) {};
				\node[vertices,label = below:{{\tiny $c_2$}}] (c) at (16.5, 1.5) {};
				\node[vertices,label = below:{{\tiny $e_2$}}] (e) at (16.5, -1.5) {};
				
				\node[vertices,label = above:{{\tiny $b_3$}}] (b) at (18, 3) {};
				\node[vertices,label = below:{{\tiny $d_3$}}] (d) at (18, 0) {};
				
				\node[vertices, label = {[xshift = .32cm,yshift = -.12cm]above:{{{\tiny $b_{(n-1)}$}}}}] (1) at ($(b)+(4,0)$) {};
				\node[vertices, label = {[xshift = 0cm]right:{{{\tiny $d_{(n-1)}$}}}}] (2) at ($(d)+(4,0)$) {};
				
				\node[vertices,label = {[xshift = .3cm,yshift = -.13cm]above:{{{\tiny $a_{(n-1)}$}}}}] (3) at ($(a)+(7,0)$) {}; 
				\node[vertices,label = {[xshift = 0cm]right:{{{\tiny $c_{(n-1)}$}}}}] (4) at ($(c)+(7,0)$) {};
				\node[vertices,label = {[xshift = .3cm]below:{{{\tiny $e_{(n-1)}$}}}}] (5) at ($(e)+(7,0)$) {};
				
				\node[vertices,label = right:{{{\tiny $b_n$}}}] (6) at ($(b)+(7,0)$) {};
				\node[vertices,label = right:{{{\tiny $d_n$}}}] (7) at ($(d)+(7,0)$) {};
				
				\draw (a) -- ++(2.3,0);
				\draw (c) -- ++(2.3,0);
				\draw (e) -- ++(2.3,0);
				\draw (3) -- ++(-2.3,0);
				\draw (4) -- ++(-2.3,0);
				\draw (5) -- ++(-2.3,0);
				\draw (b) -- ++(1,1);
				\draw (b) -- ++(1,-1);
				\draw (d) -- ++(1,1);
				\draw (d) -- ++(1,-1);
				\draw (1) -- ++(-1,1);
				\draw (1) -- ++(-1,-1);
				\draw (2) -- ++(-1,1);
				\draw (2) -- ++(-1,-1);
				
				\node at ($(b)!0.5!(1)$) {$\ldots$};
				\node at ($(d)!0.5!(2)$) {$\ldots$};
				\node at ($(a)!0.5!(3)$) {$\ldots$};
				\node at ($(c)!0.5!(4)$) {$\ldots$};
				\node at ($(e)!0.5!(5)$) {$\ldots$};
				
				\foreach \vertex/\neighbors in {
					y1/{y3,y5},
					y2/{y6},
					y3/{y4,y5},
					y4/{a1,b1},
					y5/{b1,c1,d1},
					y6/{d1,e1},
					b1/{a1,c1,d1},
					d1/{c1,e1},
					a1/{b2},
					c1/{b2,d2},
					e1/{d2},
					b2/{d2},
					a/{a1,b2,b},
					b/{c,d},
					c/{c1,b2,d2,d},
					d/{e},
					e/{d2,e1},
					1/{2,3,4},
					2/{4,5},
					3/{6},
					4/{6,7},
					5/{7},
					6/{7}}
				{
					\foreach \neighbor in \neighbors
					\draw [-] (\vertex)--(\neighbor);
				}
			\end{tikzpicture}
			\subcaption{$\mathcal{Y}_n, \ n \geq 2$}
			\label{fig:Yn_2}
		\end{subfigure}
		\vspace{-.25cm}
		\caption{}
		\label{fig:Yn}
	\end{figure}
	\textbf{Case $n = 1:$} Since $N_{\mathcal{Y}_1}(y_4)\subseteq N_{\mathcal{Y}_1}(y_5)$, $\Ind(\mathcal{Y}_1)\simeq \Ind(\mathcal{Y}_1-\{y_5\})$. Observe that $\mathcal{Y}_1-\{y_5\}\cong P_7$. Hence $\Ind(\mathcal{Y}_1)\simeq \text{pt}$ by \Cref{lemma:path_graph}.
	
	\vspace{0.1cm}
	\begin{wrapfigure}{r}{.52\textwidth}
		\vspace{-22pt}
		\centering
		\begin{subfigure}{0.20\textwidth}
			\centering
			\begin{tikzpicture}[scale = 0.255, vertices/.style = {draw, fill = black, circle, inner sep = 0.5pt}] 
				
				\node[vertices,label = below:{{\tiny $y_1$}}] (y1) at (7.5, 1.5) {};
				\node[vertices,label = above:{{\tiny $y_3$}}] (y3) at (9, 3) {};
				\node[vertices,label = above:{{\tiny $y_4$}}] (y4) at (10.5, 4.5) {}; 
				\node[vertices,label = below:{{\tiny $y_5$}}] (y5) at (10.5, 1.5) {}; 
				
				\node[vertices,label = above:{{\tiny $b_1$}}] (b1) at (12, 3) {};
				
				\node[vertices,label = above:{{\tiny $a_1$}}] (a1) at (13.5, 4.5) {};
				\node[vertices,label = below:{{\tiny $c_1$}}] (c1) at (13.5, 1.5) {};
				
				\node[vertices,label = above:{{\tiny $b_2$}}] (b2) at (15, 3) {};
				\node[vertices,label = below:{{\tiny $d_2$}}] (d2) at (15, 0) {};
				
				\node at (12,-2.44) {};
				
				\foreach \vertex/\neighbors in {
					y1/{y3,y5},
					y3/{y4,y5},
					y4/{a1,b1},
					y5/{b1,c1},
					b1/{a1,c1},
					a1/{b2},
					c1/{b2,d2},
					b2/{d2}}
				{
					\foreach \neighbor in \neighbors
					\draw [-] (\vertex)--(\neighbor);
				}
			\end{tikzpicture}
			\vspace{-0.32cm}
			\caption{$\mathcal{Y}_2'$}
		\end{subfigure}
		\begin{subfigure}{0.20\textwidth}
			\centering
			\begin{tikzpicture}[scale = 0.255, vertices/.style = {draw, fill = black, circle, inner sep = 0.5pt}] 
				
				\node[vertices,label = above:{{\tiny $y_4$}}] (y4) at (10.5, 4.5) {}; 
				\node[vertices,label = above:{{\tiny $a_1$}}] (a1) at (13.5, 4.5) {};
				\node[vertices,label = above:{{\tiny $b_2$}}] (b2) at (15, 3) {};
				\node[vertices,label = below:{{\tiny $d_2$}}] (d2) at (15, 0) {};
				
				\node at (12,-2.44) {};
				
				\foreach \vertex/\neighbors in {
					y4/{a1},
					a1/{b2},
					b2/{d2}}
				{
					\foreach \neighbor in \neighbors
					\draw [-] (\vertex)--(\neighbor);
				}
				
			\end{tikzpicture}
			\vspace{-0.32cm}
			\caption{$\mathcal{Y}_2'-N_{\mathcal{Y}_2'}[y_5]$}
		\end{subfigure}
		\begin{subfigure}{0.10\textwidth}
			\centering
			\begin{tikzpicture}[scale = 0.255, vertices/.style = {draw, fill = black, circle, inner sep = 0.5pt}]
				
				\node[vertices,label = above:{{\tiny $b_1$}}] (b1) at (12, 3) {};
				
				\node[vertices,label = above:{{\tiny $a_1$}}] (a1) at (13.5, 4.5) {};
				\node[vertices,label = below:{{\tiny $c_1$}}] (c1) at (13.5, 1.5) {};
				
				\node[vertices,label = above:{{\tiny $b_2$}}] (b2) at (15, 3) {};
				\node[vertices,label = below:{{\tiny $d_2$}}] (d2) at (15, 0) {};
				
				\node at (12,-2.37) {};
				
				\foreach \vertex/\neighbors in {
					b1/{a1,c1},
					a1/{b2},
					c1/{b2,d2},
					b2/{d2}}
				{
					\foreach \neighbor in \neighbors
					\draw [-] (\vertex)--(\neighbor);
				}
			\end{tikzpicture}
			\vspace{-0.32cm}
			\caption{$\mathcal{Y}_2''$}
		\end{subfigure}
		\vspace{-.32cm}
		\caption{}
		\vspace{-.5cm} 
	\end{wrapfigure}
	
	\textbf{Case $n = 2:$} Observe that $y_1$ is a simplicial vertex in $\mathcal{Y}_2$ with $N_{\mathcal{Y}_2}(y_2) = \{y_6\}$. 
	Hence $\Ind(\mathcal{Y}_2)\simeq \Sigma(\Ind(\mathcal{Y}_2-N_{\mathcal{Y}_2}[y_6]))$ by \Cref{theorem:simplicial_vertex}.
	
	Let $\mathcal{Y}_2': = \mathcal{Y}_2-N_{\mathcal{Y}_2}[y_6]$. Then $\Ind(\mathcal{Y}_2)\simeq \Sigma(\Ind(\mathcal{Y}_2'))$. Since $y_1$ is a simplicial vertex in $\mathcal{Y}_2'$ with $N_{\mathcal{Y}_2'}(y_1) = \{y_3,y_5\}$, $\Ind( \mathcal{Y}_2' )\simeq \Sigma(\Ind(\mathcal{Y}_2'-N_{\mathcal{Y}_2'}[y_3]))\vee \Sigma(\Ind(\mathcal{Y}_2'-N_{\mathcal{Y}_2'}[y_5]))$. We have $\mathcal{Y}_2'-N_{\mathcal{Y}_2'}[y_5]\cong P_4$. Let $\mathcal{Y}_2'': = \mathcal{Y}_2'-N_{\mathcal{Y}_2'}[y_3]$. Then $\Ind( \mathcal{Y}_2' )\simeq \Sigma(\Ind(\mathcal{Y}_2''))\vee \Sigma(\Ind(P_4))$. 
	Further, since $N_{\mathcal{Y}_2''}(b_1)\subseteq N_{\mathcal{Y}_2''}(b_2)$, $\Ind(\mathcal{Y}_2'')\simeq \Ind(\mathcal{Y}_2''-\{b_2\}) \simeq \Ind(P_4) \simeq pt$. Hence $\Ind(\mathcal{Y}_2') \simeq pt$, and thus $\Ind(\mathcal{Y}_2)\simeq \Sigma(\Ind(\mathcal{Y}_2')) \simeq pt$. 
	
	Therefore,
	\begin{equation}\label{equation:Y_12}
		\Ind(\mathcal{Y}_1)\simeq \text{pt} \text{ and } \Ind(\mathcal{Y}_2)\simeq \text{pt}.
	\end{equation} 
	
	$(6)$ $\mathcal{Z}_n$. For $n\geq 1,$ the graph $\mathcal{Z}_n$ is defined by:
	\begin{align*} 
		V(\mathcal{Z}_n)& = V(\Gamma_n)\sqcup \{z_1,z_2,z_3,z_4,z_5, z_6,z_7,z_8,z_9\},\\
		E(\mathcal{Z}_1)& = E(\Gamma_1)\sqcup \{(z_1,z_2),(z_1,z_3),(z_2,z_4),(z_3,z_5),(z_4,z_6),(z_4,z_8),(z_5,z_9),\\
		&\qquad\qquad\qquad(z_6,z_7),(z_6,z_8),(z_7,b_1),(z_8,b_1),(z_8,d_1),(z_9,d_1)\},\text{ and}\\
		E(\mathcal{Z}_n)& = E(\Gamma_n)\sqcup \{(z_1,z_2),(z_1,z_3),(z_2,z_4),(z_3,z_5),(z_4,z_6),(z_4,z_8),(z_5,z_9),(z_6,z_7),(z_6,z_8),\\
		&\qquad\qquad\qquad(z_7,a_1),(z_7,b_1),(z_8,b_1),(z_8,c_1),(z_8,d_1),(z_9,d_1),(z_9,e_1)\text{ for $n\geq 2.$}
	\end{align*} 
	\begin{figure}[H]
		\centering
		\begin{subfigure}{0.2\textwidth}
			\centering
			\begin{tikzpicture}[scale = 0.3, vertices/.style = {draw, fill = black, circle, inner sep = 0.5pt}]
				
				\node[vertices,label = below:{{\tiny $z_1$}}] (z1) at (3, 0) {};
				\node[vertices,label = below:{{\tiny $z_2$}}] (z2) at (4.5, 1.5) {}; 
				\node[vertices,label = below:{{\tiny $z_3$}}] (z3) at (4.5, -1.5) {};
				\node[vertices,label = below:{{\tiny $z_4$}}] (z4) at (7.5, 1.5) {}; 
				\node[vertices,label = below:{{\tiny $z_5$}}] (z5) at (7.5, -1.5) {};
				\node[vertices,label = above:{{\tiny $z_6$}}] (z6) at (9, 3) {};
				\node[vertices,label = above:{{\tiny $z_7$}}] (z7) at (10.5, 4.5) {}; 
				\node[vertices,label = below:{{\tiny $z_8$}}] (z8) at (10.5, 1.5) {}; 
				\node[vertices,label = below:{{\tiny $z_9$}}] (z9) at (10.5, -1.5) {}; 
				
				\node[vertices,label = above:{{\tiny $b_1$}}] (b1) at (12, 3) {};
				\node[vertices,label = below:{{\tiny $d_1$}}] (d1) at (12, 0) {};
				
				\node at (10.5,-2.58) {};
				
				\foreach \vertex/\neighbors in {
					z1/{z2,z3},
					z2/{z4},
					z3/{z5},
					z4/{z6,z8},
					z5/{z9},
					z6/{z7,z8},
					z7/{b1},
					z8/{b1,d1},
					z9/{d1},
					b1/{d1}}
				{
					\foreach \neighbor in \neighbors
					\draw [-] (\vertex)--(\neighbor);
				}
			\end{tikzpicture} 
			\subcaption{$\mathcal{Z}_1$} 
			\label{fig:Z1}
		\end{subfigure}
		\begin{subfigure}{0.3\textwidth}
			\centering
			\begin{tikzpicture}[scale = 0.3, vertices/.style = {draw, fill = black, circle, inner sep = 0.5pt}]
				
				\node[vertices,label = below:{{\tiny $z_1$}}] (z1) at (3, 0) {};
				\node[vertices,label = below:{{\tiny $z_2$}}] (z2) at (4.5, 1.5) {}; 
				\node[vertices,label = below:{{\tiny $z_3$}}] (z3) at (4.5, -1.5) {};
				\node[vertices,label = below:{{\tiny $z_4$}}] (z4) at (7.5, 1.5) {}; 
				\node[vertices,label = below:{{\tiny $z_5$}}] (z5) at (7.5, -1.5) {};
				\node[vertices,label = above:{{\tiny $z_6$}}] (z6) at (9, 3) {};
				\node[vertices,label = above:{{\tiny $z_7$}}] (z7) at (10.5, 4.5) {}; 
				\node[vertices,label = below:{{\tiny $z_8$}}] (z8) at (10.5, 1.5) {}; 
				\node[vertices,label = below:{{\tiny $z_9$}}] (z9) at (10.5, -1.5) {}; 
				
				\node[vertices,label = above:{{\tiny $b_1$}}] (b1) at (12, 3) {};
				\node[vertices,label = below:{{\tiny $d_1$}}] (d1) at (12, 0) {};
				
				\node[vertices,label = above:{{\tiny $a_1$}}] (a) at (13.5, 4.5) {};
				\node[vertices,label = below:{{\tiny $c_1$}}] (c) at (13.5, 1.5) {};
				\node[vertices,label = below:{{\tiny $e_1$}}] (e) at (13.5, -1.5) {};
				
				\node[vertices,label = above:{{\tiny $b_2$}}] (b) at (15, 3) {};
				\node[vertices,label = below:{{\tiny $d_2$}}] (d) at (15, 0) {};
				
				\foreach \vertex/\neighbors in {
					z1/{z2,z3},
					z2/{z4},
					z3/{z5},
					z4/{z6,z8},
					z5/{z9},
					z6/{z7,z8},
					z7/{a,b1},
					z8/{b1,c,d1},
					z9/{d1,e},
					b1/{d1},
					a/{b1,b},
					b/{c,d},
					c/{b1,d1,d},
					d/{e},
					e/{d1}}
				{
					\foreach \neighbor in \neighbors
					\draw [-] (\vertex)--(\neighbor);
				}
			\end{tikzpicture}
			\subcaption{$\mathcal{Z}_2$}
			\label{fig:Z_2}
		\end{subfigure}
		\begin{subfigure}{0.4\textwidth}
			\centering
			\begin{tikzpicture}[scale = 0.3, vertices/.style = {draw, fill = black, circle, inner sep = 0.5pt}]
				
				\node[vertices,label = below:{{\tiny $z_1$}}] (z1) at (3, 0) {};
				\node[vertices,label = below:{{\tiny $z_2$}}] (z2) at (4.5, 1.5) {}; 
				\node[vertices,label = below:{{\tiny $z_3$}}] (z3) at (4.5, -1.5) {};
				\node[vertices,label = below:{{\tiny $z_4$}}] (z4) at (7.5, 1.5) {}; 
				\node[vertices,label = below:{{\tiny $z_5$}}] (z5) at (7.5, -1.5) {};
				\node[vertices,label = above:{{\tiny $z_6$}}] (z6) at (9, 3) {};
				\node[vertices,label = above:{{\tiny $z_7$}}] (z7) at (10.5, 4.5) {}; 
				\node[vertices,label = below:{{\tiny $z_8$}}] (z8) at (10.5, 1.5) {}; 
				\node[vertices,label = below:{{\tiny $z_9$}}] (z9) at (10.5, -1.5) {}; 
				
				\node[vertices,label = above:{{\tiny $b_1$}}] (b1) at (12, 3) {};
				\node[vertices,label = below:{{\tiny $d_1$}}] (d1) at (12, 0) {};
				
				\node[vertices,label = above:{{\tiny $a_1$}}] (a) at (13.5, 4.5) {};
				\node[vertices,label = below:{{\tiny $c_1$}}] (c) at (13.5, 1.5) {};
				\node[vertices,label = below:{{\tiny $e_1$}}] (e) at (13.5, -1.5) {};
				
				\node[vertices,label = above:{{\tiny $b_2$}}] (b) at (15, 3) {};
				\node[vertices,label = below:{{\tiny $d_2$}}] (d) at (15, 0) {};
				
				\node[vertices, label = {[xshift = .32cm,yshift = -.12cm]above:{{{\tiny $b_{(n-1)}$}}}}] (1) at ($(b)+(4,0)$) {};
				\node[vertices, label = {[xshift = .3cm]below:{{{\tiny $d_{(n-1)}$}}}}] (2) at ($(d)+(4,0)$) {};
				
				\node[vertices,label = {[xshift = .3cm,yshift = -.13cm]above:{{{\tiny $a_{(n-1)}$}}}}] (3) at ($(a)+(7,0)$) {}; 
				\node[vertices,label = {[xshift = .27cm]below:{{{\tiny $c_{(n-1)}$}}}}] (4) at ($(c)+(7,0)$) {};
				\node[vertices,label = {[xshift = .3cm]below:{{{\tiny $e_{(n-1)}$}}}}] (5) at ($(e)+(7,0)$) {};
				
				\node[vertices,label = above:{{{\tiny $b_n$}}}] (6) at ($(b)+(7,0)$) {};
				\node[vertices,label = below:{{{\tiny $d_n$}}}] (7) at ($(d)+(7,0)$) {};
				
				\draw (a) -- ++(2.3,0);
				\draw (c) -- ++(2.3,0);
				\draw (e) -- ++(2.3,0);
				\draw (3) -- ++(-2.3,0);
				\draw (4) -- ++(-2.3,0);
				\draw (5) -- ++(-2.3,0);
				\draw (b) -- ++(1,1);
				\draw (b) -- ++(1,-1);
				\draw (d) -- ++(1,1);
				\draw (d) -- ++(1,-1);
				\draw (1) -- ++(-1,1);
				\draw (1) -- ++(-1,-1);
				\draw (2) -- ++(-1,1);
				\draw (2) -- ++(-1,-1);
				
				\node at ($(b)!0.5!(1)$) {$\ldots$};
				\node at ($(d)!0.5!(2)$) {$\ldots$};
				\node at ($(a)!0.5!(3)$) {$\ldots$};
				\node at ($(c)!0.5!(4)$) {$\ldots$};
				\node at ($(e)!0.5!(5)$) {$\ldots$};
				
				\foreach \vertex/\neighbors in {
					z1/{z2,z3},
					z2/{z4},
					z3/{z5},
					z4/{z6,z8},
					z5/{z9},
					z6/{z7,z8},
					z7/{a,b1},
					z8/{b1,c,d1},
					z9/{d1,e},
					b1/{d1},
					a/{b1,b},
					b/{c,d},
					c/{b1,d1,d},
					d/{e},
					e/{d1},
					1/{2,3,4},
					2/{4,5},
					3/{6},
					4/{6,7},
					5/{7},
					6/{7}}
				{
					\foreach \neighbor in \neighbors
					\draw [-] (\vertex)--(\neighbor);
				}
			\end{tikzpicture}
			\subcaption{$\mathcal{Z}_n, \ n \geq 2$}
			\label{fig:Zn_2}
		\end{subfigure}
		\vspace{-.2cm}
		\caption{} 
		\label{fig:Zn}
	\end{figure}
	
	\textbf{Case $n = 1:$} Since $N_{\mathcal{Z}_1}(z_7)\subseteq N_{\mathcal{Z}_1}(z_8)$, we have $\Ind(\mathcal{Z}_1)\simeq \Ind(\mathcal{Z}_1-\{z_8\})$. Observe that $\mathcal{Z}_1-\{z_8\}\cong C_{10}$. Hence $\Ind(\mathcal{Z}_1)\simeq \mathbb{S}^2$ by \Cref{theorem:cycle_graph}. 
	
	\textbf{Case $n = 2:$} In $\mathcal{Z}_2$, replacing the path $z_4z_2z_1z_3z_5$ with the edge $(z_4,z_5)$, we get $\mathcal{Z}_2^{(1)}$ (see \Cref{fig:Z2_1}). By \Cref{theorem:replace_edge}, $\Ind(\mathcal{Z}_2)\simeq \Sigma(\Ind(\mathcal{Z}_2^{(1)}))$. 
	Note that $\mathcal{Z}_2^{(1)}-N_{\mathcal{Z}_2^{(1)}}[\{b_1,d_2\}]\cong P_4$. 
	Since $\Ind(P_4)\simeq \text{pt}$, $\Ind(\mathcal{Z}_2^{(1)})\simeq \Ind(\mathcal{Z}_2^{(1)}\cup\{(b_1,d_2)\})$ by \Cref{lemma:edge_contractible}.
	We have $N_{\mathcal{Z}_2^{(1)}\cup\{(b_1,d_2)\}}(b_2)\subseteq N_{\mathcal{Z}_2^{(1)}\cup\{(b_1,d_2)\}}(b_1)$ (see \Cref{fig:Z2'}). Hence $\Ind(\mathcal{Z}_2^{(1)}\cup\{(b_1,d_2)\})\simeq \Ind(\mathcal{Z}_2^{(1)}\cup\{(b_1,d_2)\}-\{b_1\})$. Let $\mathcal{Z}_2^{(2)}: = \mathcal{Z}_2^{(1)}\cup\{(b_1,d_2)\}-\{b_1\}$. Then $\Ind(\mathcal{Z}_2^{(1)})\simeq \Ind(\mathcal{Z}_2^{(2)})$, which implies $\Ind(\mathcal{Z}_2)\simeq \Sigma(\Ind(\mathcal{Z}_2^{(2)})).$
	\begin{figure}[H]
		\centering
		\begin{subfigure}{0.27\textwidth}
			\centering
			\begin{tikzpicture}[scale = 0.250, vertices/.style = {draw, fill = black, circle, inner sep = 0.5pt}]
				
				\node[vertices,label = below:{{\tiny $z_5$}}] (z5) at (10,-2) {};
				\node[vertices,label = above:{{\tiny $z_4$}}] (z4) at (10, 2) {};
				\node[vertices,label = above:{{\tiny $z_6$}}] (z6) at (12, 4) {};
				\node[vertices,label = below:{{\tiny $z_9$}}] (z9) at (14, -2) {};
				\node[vertices,label = below:{{\tiny $z_8$}}] (z8) at (14,2) {};
				\node[vertices,label = above:{{\tiny $z_7$}}] (z7) at (14, 6) {};
				
				\node[vertices,label = below:{{\tiny $d_1$}}] (d1) at (16, 0) {};
				\node[vertices,label = above:{{\tiny $b_1$}}] (b1) at (16, 4) {};
				
				\node[vertices,label = above:{{\tiny $a_1$}}] (a1) at ($(z7)+(4,0)$) {};
				\node[vertices,label = below:{{\tiny $c_1$}}] (c1) at ($(z8)+(4,0)$) {};
				\node[vertices,label = below:{{\tiny $e_1$}}] (e1) at ($(z9)+(4,0)$) {};
				
				\node[vertices,label = below:{{\tiny $d_2$}}] (d2) at ($(d1)+(4,0)$) {};
				\node[vertices,label = above:{{\tiny $b_2$}}] (b2) at ($(b1)+(4,0)$) {};
				
				\foreach \vertex/\neighbors in {
					z7/{b1,a1},
					z4/{z5},
					z9/{d1,e1,z5},
					z6/{z4,z7},
					z8/{c1,z4,z6},
					a1/{b2},
					c1/{d1,d2,b2},
					b1/{z8,d1,c1,a1},
					d1/{z8,e1},
					d2/{e1,b2}}
				{
					\foreach \neighbor in \neighbors
					\draw [-] (\vertex)--(\neighbor);
				}
				
			\end{tikzpicture}
			\caption{$\mathcal{Z}_2^{(1)}$}
			\label{fig:Z2_1}
		\end{subfigure}
		\begin{subfigure}{0.30\textwidth}
			\centering
			\begin{tikzpicture}[scale = 0.250, vertices/.style = {draw, fill = black, circle, inner sep = 0.5pt}]
				
				\node[vertices,label = below:{{\tiny $z_5$}}] (z5) at (10,-2) {};
				\node[vertices,label = above:{{\tiny $z_4$}}] (z4) at (10, 2) {};
				\node[vertices,label = above:{{\tiny $z_6$}}] (z6) at (12, 4) {};
				\node[vertices,label = below:{{\tiny $z_9$}}] (z9) at (14, -2) {};
				\node[vertices,label = below:{{\tiny $z_8$}}] (z8) at (14,2) {};
				\node[vertices,label = above:{{\tiny $z_7$}}] (z7) at (14, 6) {};
				
				\node[vertices,label = below:{{\tiny $d_1$}}] (d1) at (16, 0) {};
				\node[vertices,label = above:{{\tiny $b_1$}}] (b1) at (16, 4) {};
				
				\node[vertices,label = above:{{\tiny $a_1$}}] (a1) at ($(z7)+(4,0)$) {};
				\node[vertices,label = below:{{\tiny $c_1$}}] (c1) at ($(z8)+(4,0)$) {};
				\node[vertices,label = below:{{\tiny $e_1$}}] (e1) at ($(z9)+(4,0)$) {};
				
				\node[vertices,label = below:{{\tiny $d_2$}}] (d2) at ($(d1)+(4,0)$) {};
				\node[vertices,label = above:{{\tiny $b_2$}}] (b2) at ($(b1)+(4,0)$) {};
				
				\foreach \vertex/\neighbors in {
					z7/{b1,a1},
					z4/{z5},
					z9/{d1,e1,z5},
					z6/{z4,z7},
					z8/{c1,z4,z6},
					a1/{b2},
					c1/{d1,d2,b2},
					b1/{z8,d1,c1,a1},
					d1/{z8,e1},
					d2/{e1,b2}}
				{
					\foreach \neighbor in \neighbors
					\draw [-] (\vertex)--(\neighbor);
				}
				\draw [-, bend left = 25] (b1) to (d2);
				
			\end{tikzpicture}
			\caption{$\mathcal{Z}_2^{(1)}\cup\{(b_1,d_2)\}$}
			\label{fig:Z2'}
		\end{subfigure}
		\begin{subfigure}{0.30\textwidth}
			\centering
			\begin{tikzpicture}[scale = 0.250, vertices/.style = {draw, fill = black, circle, inner sep = 0.5pt}]
				
				\node[vertices,label = below:{{\tiny $z_5$}}] (z5) at (10,-2) {};
				\node[vertices,label = above:{{\tiny $z_4$}}] (z4) at (10, 2) {};
				\node[vertices,label = above:{{\tiny $z_6$}}] (z6) at (12, 4) {};
				\node[vertices,label = below:{{\tiny $z_9$}}] (z9) at (14, -2) {};
				\node[vertices,label = below:{{\tiny $z_8$}}] (z8) at (14,2) {};
				\node[vertices,label = above:{{\tiny $z_7$}}] (z7) at (14, 6) {};
				
				\node[vertices,label = below:{{\tiny $d_1$}}] (d1) at (16, 0) {};
				\node[vertices,label = above:{{\tiny $a_1$}}] (a1) at ($(z7)+(4,0)$) {};
				\node[vertices,label = below:{{\tiny $c_1$}}] (c1) at ($(z8)+(4,0)$) {};
				\node[vertices,label = below:{{\tiny $e_1$}}] (e1) at ($(z9)+(4,0)$) {};
				
				\node[vertices,label = below:{{\tiny $d_2$}}] (d2) at ($(d1)+(4,0)$) {};
				\node[vertices,label = above:{{\tiny $b_2$}}] (b2) at ($(b1)+(4,0)$) {};
				
				\foreach \vertex/\neighbors in {
					z7/{a1},
					z4/{z5},
					z9/{d1,e1,z5},
					z6/{z4,z7},
					z8/{c1,z4,z6},
					a1/{b2},
					c1/{d1,d2,b2},
					d1/{z8,e1},
					d2/{e1,b2}}
				{
					\foreach \neighbor in \neighbors
					\draw [-] (\vertex)--(\neighbor);
				}
			\end{tikzpicture}
			\caption{$\mathcal{Z}_2^{(2)}$}
			\label{fig:Z2_2}
		\end{subfigure} 
		\caption{}
		\label{fig:Z2s}
	\end{figure}
	
	Observe that $\mathcal{Z}_2^{(2)}-N_{\mathcal{Z}_2^{(2)}}[\{z_8,z_9\}]\cong P_4$. Hence $\Ind(\mathcal{Z}_2^{(2)})\simeq \Ind(\mathcal{Z}_2^{(2)}\cup\{(z_8,z_9)\})$ (see \Cref{fig:Z2_2_(z8z9)}). 
	We have $N_{\mathcal{Z}_2^{(2)}\cup\{(z_8,z_9)\}}(z_5)\subseteq N_{\mathcal{Z}_2^{(2)}\cup\{(z_8,z_9)\}}(z_8)$. By \Cref{theorem:N(u)subsetN(v)}, $\Ind(\mathcal{Z}_2^{(2)}\cup\{(z_8,z_9)\})\simeq \Ind(\mathcal{Z}_2^{(2)}\cup\{(z_8,z_9)\}-\{z_8\})$. 
	In $\mathcal{Z}_2^{(2)}\cup\{(z_8,z_9)\}-\{z_8\}$, replacing the path $z_7z_6z_4z_5z_9$ with the edge $(z_7,z_9)$, we get $\mathcal{Z}_2^{(3)}$ (see \Cref{fig:Z2_3}). By \Cref{theorem:replace_edge}, $\Ind(\mathcal{Z}_2^{(2)})\simeq \Sigma(\Ind(\mathcal{Z}_2^{(3)}))$. It follows that $\Ind(\mathcal{Z}_2)\simeq \Sigma^{2}(\Ind(\mathcal{Z}_2^{(3)})).$ 
	
	\begin{wrapfigure}{r}{0.52\textwidth} 
		\centering
		\begin{subfigure}{0.2\textwidth}
			\centering
			\begin{tikzpicture}[scale = 0.250, vertices/.style = {draw, fill = black, circle, inner sep = 0.5pt}]
				
				\node[vertices,label = below:{{\tiny $z_5$}}] (z5) at (10,-2) {};
				\node[vertices,label = above:{{\tiny $z_4$}}] (z4) at (10, 2) {};
				\node[vertices,label = above:{{\tiny $z_6$}}] (z6) at (12, 4) {};
				\node[vertices,label = below:{{\tiny $z_9$}}] (z9) at (14, -2) {};
				\node[vertices,label = above:{{\tiny $z_8$}}] (z8) at (14,2) {};
				\node[vertices,label = above:{{\tiny $z_7$}}] (z7) at (14, 6) {};
				
				\node[vertices,label = below:{{\tiny $d_1$}}] (d1) at (16, 0) {};
				\node[vertices,label = above:{{\tiny $a_1$}}] (a1) at ($(z7)+(4,0)$) {};
				\node[vertices,label = below:{{\tiny $c_1$}}] (c1) at ($(z8)+(4,0)$) {};
				\node[vertices,label = below:{{\tiny $e_1$}}] (e1) at ($(z9)+(4,0)$) {};
				
				\node[vertices,label = below:{{\tiny $d_2$}}] (d2) at ($(d1)+(4,0)$) {};
				\node[vertices,label = above:{{\tiny $b_2$}}] (b2) at ($(16, 4)+(4,0)$) {};
				
				\foreach \vertex/\neighbors in {
					z7/{a1},
					z4/{z5},
					z9/{d1,e1,z5},
					z6/{z4,z7},
					z8/{c1,z4,z9,z6},
					a1/{b2},
					c1/{d1,d2,b2},
					d1/{z8,e1},
					d2/{e1,b2}}
				{
					\foreach \neighbor in \neighbors
					\draw [-] (\vertex)--(\neighbor);
				}
			\end{tikzpicture} 
			\caption{$\mathcal{Z}_2^{(2)}\cup \{(z_8,z_9)\}$}
			\label{fig:Z2_2_(z8z9)}
		\end{subfigure} 
		\begin{subfigure}{0.15\textwidth}
			\centering
			\begin{tikzpicture}[scale = 0.250, vertices/.style = {draw, fill = black, circle, inner sep = 0.5pt}]
				
				\node[vertices,label = below:{{\tiny $z_9$}}] (z9) at (14, -2) {};
				\node[vertices,label = above:{{\tiny $z_7$}}] (z7) at (14, 6) {};
				
				\node[vertices,label = below:{{\tiny $d_1$}}] (d1) at (16, 0) {};
				\node[vertices,label = above:{{\tiny $a_1$}}] (a1) at ($(z7)+(4,0)$) {};
				\node[vertices,label = below:{{\tiny $c_1$}}] (c1) at ($(z8)+(4,0)$) {};
				\node[vertices,label = below:{{\tiny $e_1$}}] (e1) at ($(z9)+(4,0)$) {};
				
				\node[vertices,label = below:{{\tiny $d_2$}}] (d2) at ($(d1)+(4,0)$) {};
				\node[vertices,label = above:{{\tiny $b_2$}}] (b2) at ($(16, 4)+(4,0)$) {};
				
				\foreach \vertex/\neighbors in {
					z7/{a1,z9},
					z9/{d1,e1},
					a1/{b2},
					c1/{d1,d2,b2},
					d1/{e1},
					d2/{e1,b2}}
				{
					\foreach \neighbor in \neighbors
					\draw [-] (\vertex)--(\neighbor);
				}
			\end{tikzpicture}
			\caption{$\mathcal{Z}_2^{(3)}$}
			\label{fig:Z2_3}
		\end{subfigure}
		\begin{subfigure}{0.15\textwidth}
			\centering
			\begin{tikzpicture}[scale = 0.250, vertices/.style = {draw, fill = black, circle, inner sep = 0.5pt}]
				
				\node[vertices,label = below:{{\tiny $z_9$}}] (z9) at (14, -2) {};
				\node[vertices,label = above:{{\tiny $z_7$}}] (z7) at (14, 6) {};
				\node[vertices,label = above:{{\tiny $a_1$}}] (a1) at ($(z7)+(4,0)$) {};
				\node[vertices,label = below:{{\tiny $c_1$}}] (c1) at ($(z8)+(4,0)$) {};
				\node[vertices,label = below:{{\tiny $e_1$}}] (e1) at ($(z9)+(4,0)$) {};
				
				\node[vertices,label = below:{{\tiny $d_2$}}] (d2) at ($(d1)+(4,0)$) {};
				\node[vertices,label = above:{{\tiny $b_2$}}] (b2) at ($(16, 4)+(4,0)$) {};
				
				\foreach \vertex/\neighbors in {
					z7/{a1,z9},
					z9/{e1},
					a1/{b2},
					c1/{d2,b2},
					d2/{e1,b2}}
				{
					\foreach \neighbor in \neighbors
					\draw [-] (\vertex)--(\neighbor);
				}
			\end{tikzpicture} 
			\caption{$\mathcal{Z}_2^{(4)}$}
			\label{fig:Z2_4}
		\end{subfigure} 
		\vspace{-.2cm} 
		\caption{}
		\label{fig:Z2seq}
		\vspace{-1cm} 
	\end{wrapfigure}
	
	Since $\mathcal{Z}_2^{(3)}-N_{\mathcal{Z}_2^{(3)}}[\{d_1,a_1\}]\cong P_1$ and $\Ind(P_1)\simeq \text{pt},$ \Cref{lemma:edge_contractible} implies that $\Ind(\mathcal{Z}_2^{(3)})\simeq \Ind(\mathcal{Z}_2^{(3)}\cup\{(d_1,a_1)\})$. 
	Now, note that $N_{\mathcal{Z}_2^{(3)}\cup\{(d_1,a_1)\}}(z_7)\subseteq N_{\mathcal{Z}_2^{(3)}\cup\{(d_1,a_1)\}}(d_1)$. Hence $\Ind(\mathcal{Z}_2^{(3)}\cup\{(d_1,a_1)\})\simeq \Ind(\mathcal{Z}_2^{(3)}\cup\{(d_1,a_1)\}-\{d_1\})$. 
	Let $\mathcal{Z}_2^{(4)}: = \mathcal{Z}_2^{(3)}\cup\{(d_1,a_1)\}-\{d_1\}$. Then $\Ind(\mathcal{Z}_2^{(3)})\simeq \Ind(\mathcal{Z}_2^{(4)})$, and therefore $\Ind(\mathcal{Z}_2)\simeq \Sigma^2(\Ind(\mathcal{Z}_2^{(4)})).$
	
	Observe that $c_1$ is a simplicial vertex in $\mathcal{Z}_2^{(4)}$ with $N_{\mathcal{Z}_2^{(4)}}(c_1) = \{b_2,d_2\}$ (see \Cref{fig:Z2_4}). By \Cref{theorem:simplicial_vertex}, $\Ind(\mathcal{Z}_2^{(4)})\simeq\Sigma(\Ind(\mathcal{Z}_2^{(4)}-N_{\mathcal{Z}_2^{(4)}}[b_2])) \vee \Sigma(\Ind(\mathcal{Z}_2^{(4)}-N_{\mathcal{Z}_2^{(4)}}[d_2]))$. 
	We have $\mathcal{Z}_2^{(4)}-N_{\mathcal{Z}_2^{(4)}}[b_2]\cong P_3$ and $\mathcal{Z}_2^{(4)}-N_{\mathcal{Z}_2^{(4)}}[d_2]\cong P_3$. It follows that $\Ind(\mathcal{Z}_2)\simeq \Sigma^2(\Ind(\mathcal{Z}_2^{(4)}))\simeq \vee_2\Sigma^3(\Ind(P_3))$. Since $\Ind(P_3)\simeq \mathbb{S}^0$ by \Cref{lemma:path_graph}, we get $\Ind(\mathcal{Z}_2)\simeq\Sigma^3(\vee_2(\mathbb{S}^0))\simeq \vee_2\mathbb{S}^3$.
	
	Thus,
	\begin{equation}\label{equation:Z_12}
		\Ind(\mathcal{Z}_1)\simeq \mathbb{S}^2 \text{ and } \Ind(\mathcal{Z}_2)\simeq \vee_2 \mathbb{S}^3.
	\end{equation}
	
	\subsubsection{Recursive homotopy equivalences}\label{subsection:recursive_homotopy_equivalences}
	
	In this section, we study the independence complexes of the six auxiliary graph families defined in \Cref{subsection:graph_definitions}. We derive recursive relations among the independence complexes of these auxiliary graphs and $\Ind(\Gamma_n)$. For $\mathcal{U}_n$, $\mathcal{V}_n$, $\mathcal{X}_n$ and $\mathcal{Y}_n$, we directly establish the recursive homotopy equivalences for their independence complexes. For $\mathcal{W}_n$ and $\mathcal{Z}_n$, we instead determine homotopy equivalences for the link and deletion of suitable vertices. 
	\begin{claim}\label{claim:Un_relation}
		For $n\ge3$,
		$$\Ind(\mathcal{U}_n)\simeq
		\begin{cases}
			\vee_4\mathbb{S}^2 &\text{ if $n = 3$},\\
			\Sigma(\Ind(\mathcal{U}_{n-1}))\vee \Sigma(\Ind(\mathcal{Z}_{n-3}))\vee \Sigma^3(\Ind(\mathcal{U}_{n-3})) &\text{ if $n\ge4$}.
		\end{cases}$$
	\end{claim}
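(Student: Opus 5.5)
The plan is to peel off the added vertex $u$ with \Cref{theorem:simplicial_vertex}. In $\mathcal{U}_n$ we have $N_{\mathcal{U}_n}(u)=\{d_1,e_1\}$ and $(d_1,e_1)\in E(\Gamma_n)$, so $u$ is a simplicial vertex. Hence
$$\Ind(\mathcal{U}_n)\simeq \Sigma(\Ind(\mathcal{U}_n-N_{\mathcal{U}_n}[d_1]))\vee\Sigma(\Ind(\mathcal{U}_n-N_{\mathcal{U}_n}[e_1])).$$

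\emph{First summand.} Removing $N[d_1]=\{u,d_1,b_1,c_1,e_1\}$ leaves the copy of $\Gamma_{n-1}$ on columns $2,\dots,n$, together with $a_1$. The vertex $a_1$ is attached only to $a_2$ and $b_2$. Reflecting the grid top-to-bottom ($a\leftrightarrow e$, $b\leftrightarrow d$, $c$ fixed) is an automorphism of $\Gamma_{n-1}$. It carries this graph to $\Gamma_{n-1}$ with one extra vertex joined to the first $d$- and $e$-vertices, which is exactly $\mathcal{U}_{n-1}$. So this summand is $\Sigma(\Ind(\mathcal{U}_{n-1}))$.

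\emph{Second summand.} Let $H:=\mathcal{U}_n-N_{\mathcal{U}_n}[e_1]$. This removes $u,e_1,d_1,d_2,e_2$. What remains is columns $3,\dots,n$ of $\Gamma_n$, together with $a_1,b_1,c_1,a_2,b_2,c_2,e_3,\dots$. Around the left end, $b_1$ is a vertex with $N(b_1)=\{a_1,c_1\}$, and $a_1,b_2,c_1$ form a short cycle-like appendage hanging off the column-$3$ part. The goal is
$$\Ind(H)\simeq\Ind(\mathcal{Z}_{n-3})\vee\Sigma^2(\Ind(\mathcal{U}_{n-3})).$$

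To get this, I would first look for a pair $x,y$ with $N_H[x]\subseteq N_H[y]$ near $b_2$ or $c_2$ (for instance $x=a_1$ or $b_1$, $y=b_2$ after an edge addition). Then \Cref{corollary:N[u]subsetN[v]_lk(v)_del(v)} splits $\Ind(H)$ into a deletion part and a suspended link part. The edge addition is justified by \Cref{lemma:edge_contractible}, whenever the residual graph is a path $P_{3k+1}$ or has an isolated vertex. I would reduce the remaining pieces with \Cref{theorem:N(u)subsetN(v)}, which deletes dominated vertices such as $c_2$ or $e_3$.

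\emph{Identifying the pieces.} I expect the link part, after removing a closed neighbourhood that swallows columns $2$--$3$, to become $\mathcal{U}_{n-3}$ (possibly after the same top/bottom reflection), suspended once more. That gives $\Sigma^2(\Ind(\mathcal{U}_{n-3}))$. The deletion part should contain a long chain of the old $a,b,c$ vertices of columns $1$--$3$. Matching it with the gadget $z_1,\dots,z_9$ attached to column $4$ (relabelled as column $1$ of $\Gamma_{n-3}$) would show it is isomorphic to $\mathcal{Z}_{n-3}$. If the chain comes out shorter or longer than the gadget, I would use \Cref{theorem:replace_edge} to compare. The reason is that $z_4z_2z_1z_3z_5$ is a $3$-subdivided edge, so the suspension count can be balanced.

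\emph{Case $n=3$.} Here $\Gamma_0$ and $\mathcal{Z}_0$ make no sense, so I would compute directly. The first summand is $\Sigma(\Ind(\mathcal{U}_2))\simeq\vee_2\mathbb{S}^2$ by \eqref{equation:U_12}. $H$ is a small graph that the same reductions turn into paths or cycles, giving $\vee_2\mathbb{S}^1$ by \Cref{lemma:path_graph,theorem:cycle_graph}. Together this is $\vee_4\mathbb{S}^2$.

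The main obstacle is the second summand for $n\ge4$. One has to find the right sequence of fold and edge-addition moves so that the two pieces are \emph{exactly} $\mathcal{Z}_{n-3}$ and a double suspension of $\mathcal{U}_{n-3}$. In particular, the vertices adjacent to $a_4,b_4,c_4,d_4,e_4$ must carry precisely the attachments $z_7\sim a,b$, $z_8\sim b,c,d$, $z_9\sim d,e$. Verifying these adjacencies, and that the contractibility hypotheses of \Cref{lemma:edge_contractible} hold, is the delicate bookkeeping step.
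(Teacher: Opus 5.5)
\textbf{Gap in the second summand for $n\ge4$.} Your first step (the simplicial vertex $u$ with $N(u)=\{d_1,e_1\}$) matches the paper, as does your identification of the first summand with $\Sigma(\Ind(\mathcal{U}_{n-1}))$. Making the top--bottom reflection explicit is a real improvement, since the paper uses it silently. However, for $n\ge4$ the whole content of the claim is the equivalence $\Ind(H)\simeq\Ind(\mathcal{Z}_{n-3})\vee\Sigma^2(\Ind(\mathcal{U}_{n-3}))$. There you give no argument, only moves you would look for, so this is a genuine gap.

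The specific moves you suggest also do not work:
\begin{itemize}
\item Both sample pairs ($a_1$ or $b_1$ against $b_2$) need the edge $(b_1,b_2)$. \Cref{lemma:edge_contractible} cannot supply it: $H-N_H[\{b_1,b_2\}]$ is the copy of $\Gamma_{n-2}$ on columns $3,\dots,n$, and its independence complex is not contractible (its $\tilde H_{n-3}$ is nonzero).
\item Using \Cref{theorem:replace_edge} to ``balance'' a chain of the wrong length would shift the suspension degree. It therefore could not yield the exact stated formula.
\end{itemize}

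\textbf{The missing sequence of moves.}
\begin{enumerate}
\item $N_H(b_1)=\{a_1,c_1\}\subseteq N_H(b_2)=\{a_1,a_2,c_1,c_2\}$. So $b_2$ is simply deleted by \Cref{theorem:N(u)subsetN(v)}; no splitting happens here.
\item In $H-\{b_2\}$, removing $N[\{a_1,b_3\}]$ leaves $c_1$ isolated. So the edge $(a_1,b_3)$ may be added.
\item In the resulting graph, $N[a_2]=\{a_1,a_2,a_3,b_3\}\subseteq N[b_3]$. Now \Cref{corollary:N[u]subsetN[v]_lk(v)_del(v)} splits at $b_3$, not near $b_2$ or $c_2$.
\end{enumerate}
The link graph is $\mathcal{U}_{n-3}\sqcup P_2$, with $e_3$ playing the role of $u$ and $P_2=b_1c_1$. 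The join with $\Ind(P_2)\simeq\mathbb{S}^0$ is where the extra suspension comes from.

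The deletion graph is $H-\{b_2,b_3\}$. It is isomorphic to $\mathcal{Z}_{n-3}$ only after the top--bottom reflection, via $e_3,d_3,c_3,c_2,c_1,b_1,a_1,a_2,a_3\mapsto z_7,z_6,z_8,z_4,z_2,z_1,z_3,z_5,z_9$. The unreflected matching you propose to verify fails. There, $z_7$ and $z_8$ would be $a_3$ and $c_3$, so $z_6$ would have to be a common neighbour of $a_3$ and $c_3$ among the nine added vertices, and there is none.

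\textbf{The case $n=3$.} Your value $\Ind(H)\simeq\vee_2\mathbb{S}^1$ is correct but asserted without justification. It follows at once from $H\cong\mathcal{U}_2$ and \eqref{equation:U_12}.
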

	
	\begin{proof} 
		Note that for $n\ge3$, $u$ is a simplicial vertex in $\mathcal{U}_n$ with $N_{\mathcal{U}_n}(u) = \{d_1, e_1\}$ (see \Cref{fig:Un_2}). By \Cref{theorem:simplicial_vertex}, $\Ind(\mathcal{U}_n)\simeq \Sigma(\Ind(\mathcal{U}_n-N_{\mathcal{U}_n}[d_1]))\vee \Sigma(\Ind(\mathcal{U}_n-N_{\mathcal{U}_n}[e_1]))$.
		We have $\mathcal{U}_n-N_{\mathcal{U}_n}[d_1]\cong \mathcal{U}_{n-1}$ (see \Cref{fig:Un-N_d1}). Let $\mathcal{U}_n': = \mathcal{U}_n-N_{\mathcal{U}_n}[e_1]$. Then $\Ind(\mathcal{U}_n)\simeq \Sigma(\Ind(\mathcal{U}_{n-1}))\vee\Sigma(\Ind(\mathcal{U}_n'))$.
		\begin{figure}[h!]
			\centering
			\begin{subfigure}{0.3\textwidth}
				\centering
				\begin{tikzpicture}[scale = 0.27, vertices/.style = {draw, fill = black, circle, inner sep = 0.5pt}]
					
					\node[vertices,label = above:{{\tiny $a_1$}}] (a1) at (13.5, 4.5) {}; 
					
					\node[vertices,label = above:{{\tiny $b_2$}}] (b2) at (15, 3) {};
					\node[vertices,label = below:{{\tiny $d_2$}}] (d2) at (15, 0) {};
					
					\node[vertices,label = above:{{\tiny $a_2$}}] (a2) at (16.5, 4.5) {};
					\node[vertices,label = below:{{\tiny $c_2$}}] (c2) at (16.5, 1.5) {};
					\node[vertices,label = below:{{\tiny $e_2$}}] (e2) at (16.5, -1.5) {};
					
					\node[vertices,label = above:{{\tiny $b_3$}}] (b3) at (18, 3) {};
					\node[vertices,label = below:{{\tiny $d_3$}}] (d3) at (18, 0) {};
					
					\node[vertices,label = above:{{\tiny $a_3$}}] (a) at (19.5, 4.5) {};
					\node[vertices,label = below:{{\tiny $c_3$}}] (c) at (19.5, 1.5) {};
					\node[vertices,label = below:{{\tiny $e_3$}}] (e) at (19.5, -1.5) {};
					
					\node[vertices,label = above:{{\tiny $b_4$}}] (b) at (21, 3) {};
					\node[vertices,label = below:{{\tiny $d_4$}}] (d) at (21, 0) {};
					
					\node[vertices, label = {[xshift = .32cm,yshift = -.12cm]above:{{{\tiny $b_{(n-1)}$}}}}] (1) at ($(b)+(4,0)$) {};
					\node[vertices, label = {[xshift = .3cm]below:{{{\tiny $d_{(n-1)}$}}}}] (2) at ($(d)+(4,0)$) {};
					
					\node[vertices,label = {[xshift = .3cm,yshift = -.13cm]above:{{{\tiny $a_{(n-1)}$}}}}] (3) at ($(a)+(7,0)$) {}; 
					\node[vertices,label = {[xshift = 0cm]right:{{{\tiny $c_{(n-1)}$}}}}] (4) at ($(c)+(7,0)$) {};
					\node[vertices,label = {[xshift = .3cm]below:{{{\tiny $e_{(n-1)}$}}}}] (5) at ($(e)+(7,0)$) {};
					
					\node[vertices,label = right:{{{\tiny $b_n$}}}] (6) at ($(b)+(7,0)$) {};
					\node[vertices,label = {[xshift = 0cm]right:{{{\tiny $d_n$}}}}] (7) at ($(d)+(7,0)$) {};
					
					\draw (a) -- ++(2.3,0);
					\draw (c) -- ++(2.3,0);
					\draw (e) -- ++(2.3,0);
					\draw (3) -- ++(-2.3,0);
					\draw (4) -- ++(-2.3,0);
					\draw (5) -- ++(-2.3,0);
					\draw (b) -- ++(1,1);
					\draw (b) -- ++(1,-1);
					\draw (d) -- ++(1,1);
					\draw (d) -- ++(1,-1);
					\draw (1) -- ++(-1,1);
					\draw (1) -- ++(-1,-1);
					\draw (2) -- ++(-1,1);
					\draw (2) -- ++(-1,-1);
					
					\node at ($(b)!0.5!(1)$) {$\ldots$};
					\node at ($(d)!0.5!(2)$) {$\ldots$};
					\node at ($(a)!0.5!(3)$) {$\ldots$};
					\node at ($(c)!0.5!(4)$) {$\ldots$};
					\node at ($(e)!0.5!(5)$) {$\ldots$};
					
					\foreach \vertex/\neighbors in {
						a1/{a2,b2},
						b2/{a2,c2,d2},
						d2/{c2,e2},
						a2/{b3},
						c2/{b3,d3},
						e2/{d3},
						b3/{d3},
						a/{a2,b3,b},
						b/{c,d},
						c/{c2,b3,d3,d},
						d/{e},
						e/{d3,e2},
						1/{2,3,4},
						2/{4,5},
						3/{6},
						4/{6,7},
						5/{7},
						6/{7}}
					{
						\foreach \neighbor in \neighbors
						\draw [-] (\vertex)--(\neighbor);
					}
				\end{tikzpicture}
				\subcaption{$\mathcal{U}_n-N_{\mathcal{U}_n}[d_1]$}
				\label{fig:Un-N_d1}
			\end{subfigure}
			\hfill
			\begin{subfigure}{0.33\textwidth}
				\centering
				\begin{tikzpicture}[scale = 0.27, vertices/.style = {draw, fill = black, circle, inner sep = 0.5pt}]
					
					\node[vertices,label = above:{{\tiny $b_1$}}] (b1) at (12, 3) {};
					
					\node[vertices,label = above:{{\tiny $a_1$}}] (a1) at (13.5, 4.5) {}; 
					\node[vertices,label = below:{{\tiny $c_1$}}] (c1) at (13.5, 1.5) {};
					
					\node[vertices,label = above:{{\tiny $b_2$}}] (b2) at (15, 3) {};
					
					\node[vertices,label = above:{{\tiny $a_2$}}] (a2) at (16.5, 4.5) {};
					\node[vertices,label = below:{{\tiny $c_2$}}] (c2) at (16.5, 1.5) {};
					
					\node[vertices,label = above:{{\tiny $b_3$}}] (b3) at (18, 3) {};
					\node[vertices,label = below:{{\tiny $d_3$}}] (d3) at (18, 0) {};
					
					\node[vertices,label = above:{{\tiny $a_3$}}] (a) at (19.5, 4.5) {};
					\node[vertices,label = below:{{\tiny $c_3$}}] (c) at (19.5, 1.5) {};
					\node[vertices,label = below:{{\tiny $e_3$}}] (e) at (19.5, -1.5) {};
					
					\node[vertices,label = above:{{\tiny $b_4$}}] (b) at (21, 3) {};
					\node[vertices,label = below:{{\tiny $d_4$}}] (d) at (21, 0) {};
					
					\node[vertices, label = {[xshift = .32cm,yshift = -.12cm]above:{{{\tiny $b_{(n-1)}$}}}}] (1) at ($(b)+(4,0)$) {};
					\node[vertices, label = {[xshift = .3cm]below:{{{\tiny $d_{(n-1)}$}}}}] (2) at ($(d)+(4,0)$) {};
					
					\node[vertices,label = {[xshift = .3cm,yshift = -.13cm]above:{{{\tiny $a_{(n-1)}$}}}}] (3) at ($(a)+(7,0)$) {}; 
					\node[vertices,label = {[xshift = 0cm]right:{{{\tiny $c_{(n-1)}$}}}}] (4) at ($(c)+(7,0)$) {};
					\node[vertices,label = {[xshift = .3cm]below:{{{\tiny $e_{(n-1)}$}}}}] (5) at ($(e)+(7,0)$) {};
					
					\node[vertices,label = right:{{{\tiny $b_n$}}}] (6) at ($(b)+(7,0)$) {};
					\node[vertices,label = right:{{{\tiny $d_n$}}}] (7) at ($(d)+(7,0)$) {};
					
					\draw (a) -- ++(2.3,0);
					\draw (c) -- ++(2.3,0);
					\draw (e) -- ++(2.3,0);
					\draw (3) -- ++(-2.3,0);
					\draw (4) -- ++(-2.3,0);
					\draw (5) -- ++(-2.3,0);
					\draw (b) -- ++(1,1);
					\draw (b) -- ++(1,-1);
					\draw (d) -- ++(1,1);
					\draw (d) -- ++(1,-1);
					\draw (1) -- ++(-1,1);
					\draw (1) -- ++(-1,-1);
					\draw (2) -- ++(-1,1);
					\draw (2) -- ++(-1,-1);
					
					\node at ($(b)!0.5!(1)$) {$\ldots$};
					\node at ($(d)!0.5!(2)$) {$\ldots$};
					\node at ($(a)!0.5!(3)$) {$\ldots$};
					\node at ($(c)!0.5!(4)$) {$\ldots$};
					\node at ($(e)!0.5!(5)$) {$\ldots$};
					
					\foreach \vertex/\neighbors in {
						a1/{b1,a2,b2},
						c1/{b1,b2,c2},
						b2/{a2,c2},
						a2/{b3},
						c2/{b3,d3},
						b3/{d3},
						a/{a2,b3,b},
						b/{c,d},
						c/{c2,b3,d3,d},
						d/{e},
						e/{d3},
						1/{2,3,4},
						2/{4,5},
						3/{6},
						4/{6,7},
						5/{7},
						6/{7}}
					{
						\foreach \neighbor in \neighbors
						\draw [-] (\vertex)--(\neighbor);
					}
				\end{tikzpicture}
				\subcaption{$\mathcal{U}_n'$}
			\end{subfigure}
			\begin{subfigure}{0.34\textwidth}
				\centering
				\begin{tikzpicture}[scale = 0.27, vertices/.style = {draw, fill = black, circle, inner sep = 0.5pt}]
					
					\node[vertices,label = above:{{\tiny $b_1$}}] (b1) at (12, 3) {};
					
					\node[vertices,label = above:{{\tiny $a_1$}}] (a1) at (13.5, 4.5) {}; 
					\node[vertices,label = below:{{\tiny $c_1$}}] (c1) at (13.5, 1.5) {};
					
					\node[vertices,label = above:{{\tiny $a_2$}}] (a2) at (16.5, 4.5) {};
					\node[vertices,label = below:{{\tiny $c_2$}}] (c2) at (16.5, 1.5) {};
					
					\node[vertices,label = above:{{\tiny $b_3$}}] (b3) at (18, 3) {};
					\node[vertices,label = below:{{\tiny $d_3$}}] (d3) at (18, 0) {};
					
					\node[vertices,label = above:{{\tiny $a_3$}}] (a) at (19.5, 4.5) {};
					\node[vertices,label = below:{{\tiny $c_3$}}] (c) at (19.5, 1.5) {};
					\node[vertices,label = below:{{\tiny $e_3$}}] (e) at (19.5, -1.5) {};
					
					\node[vertices,label = above:{{\tiny $b_4$}}] (b) at (21, 3) {};
					\node[vertices,label = below:{{\tiny $d_4$}}] (d) at (21, 0) {};
					
					\node[vertices, label = {[xshift = .32cm,yshift = -.12cm]above:{{{\tiny $b_{(n-1)}$}}}}] (1) at ($(b)+(4,0)$) {};
					\node[vertices, label = {[xshift = .3cm]below:{{{\tiny $d_{(n-1)}$}}}}] (2) at ($(d)+(4,0)$) {};
					
					\node[vertices,label = {[xshift = .3cm,yshift = -.13cm]above:{{{\tiny $a_{(n-1)}$}}}}] (3) at ($(a)+(7,0)$) {}; 
					\node[vertices,label = {[xshift = 0cm]right:{{{\tiny $c_{(n-1)}$}}}}] (4) at ($(c)+(7,0)$) {};
					\node[vertices,label = {[xshift = .3cm]below:{{{\tiny $e_{(n-1)}$}}}}] (5) at ($(e)+(7,0)$) {};
					
					\node[vertices,label = right:{{{\tiny $b_n$}}}] (6) at ($(b)+(7,0)$) {};
					\node[vertices,label = right:{{{\tiny $d_n$}}}] (7) at ($(d)+(7,0)$) {};
					
					\draw (a) -- ++(2.3,0);
					\draw (c) -- ++(2.3,0);
					\draw (e) -- ++(2.3,0);
					\draw (3) -- ++(-2.3,0);
					\draw (4) -- ++(-2.3,0);
					\draw (5) -- ++(-2.3,0);
					\draw (b) -- ++(1,1);
					\draw (b) -- ++(1,-1);
					\draw (d) -- ++(1,1);
					\draw (d) -- ++(1,-1);
					\draw (1) -- ++(-1,1);
					\draw (1) -- ++(-1,-1);
					\draw (2) -- ++(-1,1);
					\draw (2) -- ++(-1,-1);
					
					\node at ($(b)!0.5!(1)$) {$\ldots$};
					\node at ($(d)!0.5!(2)$) {$\ldots$};
					\node at ($(a)!0.5!(3)$) {$\ldots$};
					\node at ($(c)!0.5!(4)$) {$\ldots$};
					\node at ($(e)!0.5!(5)$) {$\ldots$};
					
					\foreach \vertex/\neighbors in {
						a1/{b1,a2},
						c1/{b1,c2},
						a2/{b3},
						c2/{b3,d3},
						b3/{d3},
						a/{a2,b3,b},
						b/{c,d},
						c/{c2,b3,d3,d},
						d/{e},
						e/{d3},
						1/{2,3,4},
						2/{4,5},
						3/{6},
						4/{6,7},
						5/{7},
						6/{7}}
					{
						\foreach \neighbor in \neighbors
						\draw [-] (\vertex)--(\neighbor);
					}
				\end{tikzpicture}
				\subcaption{$\mathcal{U}_n'-\{b_2\}$}
				\label{fig:Un'_b2}
			\end{subfigure}
			\vspace{-.18cm}
			\caption{}
			\label{fig:Un_seq}
		\end{figure}
		
		For $n = 3$, $\mathcal{U}_3'\cong \mathcal{U}_2$. Hence $\Ind(\mathcal{U}_3)\simeq \vee_2\Sigma(\Ind(\mathcal{U}_2))\simeq \vee_4 \mathbb{S}^2$ by \eqref{equation:U_12}. 
		
		Now, let $n\ge4$. Since $N_{\mathcal{U}_n'}(b_1)\subseteq N_{\mathcal{U}_n'}(b_2)$, \Cref{theorem:N(u)subsetN(v)} implies $\Ind(\mathcal{U}_n')\simeq \Ind(\mathcal{U}_n'-\{b_2\})$. Note that $(\mathcal{U}_n'-\{b_2\})-N_{\mathcal{U}_n'-\{b_2\}}[\{a_1,b_3\}]$ contains an isolated vertex $c_1$ (\Cref{fig:Un'_b2}), and hence $\Ind((\mathcal{U}_n'-\{b_2\})-N_{\mathcal{U}_n'-\{b_2\}}[\{a_1,b_3\}])$ is contractible. By \Cref{lemma:edge_contractible}, $ \Ind(\mathcal{U}_n'-\{b_2\})\simeq \Ind((\mathcal{U}_n'-\{b_2\})\cup \{(a_1,b_3)\})$. Let $\mathcal{U}_n'': = (\mathcal{U}_n'-\{b_2\})\cup \{(a_1,b_3)\}$ (see \Cref{fig:U_n''}). Then $\Ind(\mathcal{U}_n')\simeq \Ind(\mathcal{U}_n'')$, which implies $\Ind(\mathcal{U}_n)\simeq\Sigma(\Ind(\mathcal{U}_{n-1}))\vee \Sigma(\Ind(\mathcal{U}_n''))$. 
		
		Since $N_{\mathcal{U}_n''}[a_2]\subseteq N_{\mathcal{U}_n''}[b_3]$, \Cref{corollary:N[u]subsetN[v]_lk(v)_del(v)} implies $\Ind(\mathcal{U}_n'')\simeq \Ind(\mathcal{U}_n''-\{b_3\})\vee\Sigma(\Ind(\mathcal{U}_n''-N_{\mathcal{U}_n''}[b_3]))$. Observe that $\mathcal{U}_n''-N_{\mathcal{U}_n''}[b_3]\cong\mathcal{U}_{n-3}\sqcup P_2$ (\Cref{fig:U_n''-N_b3}) and $\mathcal{U}_n''-\{b_3\}\cong\mathcal{Z}_{n-3}$ (\Cref{fig_U_n''-b3}).
		Therefore $\Ind(\mathcal{U}_n'')\simeq \Ind(\mathcal{Z}_{n-3})\vee \Sigma(\Ind(\mathcal{U}_{n-3}\sqcup P_2))$. 
		\begin{figure}[H]
			\centering
			\begin{subfigure}{0.33\textwidth}
				\centering
				\begin{tikzpicture}[scale = 0.27, vertices/.style = {draw, fill = black, circle, inner sep = 0.5pt}]
					
					\node[vertices,label = above:{{\tiny $b_1$}}] (b1) at (12.3, 3) {};
					
					\node[vertices,label = above:{{\tiny $a_1$}}] (a1) at (13.5, 4.5) {}; 
					\node[vertices,label = below:{{\tiny $c_1$}}] (c1) at (13.5, 1.5) {};
					
					\node[vertices,label = above:{{\tiny $a_2$}}] (a2) at (16.5, 4.5) {};
					\node[vertices,label = below:{{\tiny $c_2$}}] (c2) at (16.5, 1.5) {};
					
					\node[vertices,label = above:{{\tiny $b_3$}}] (b3) at (18, 3) {};
					\node[vertices,label = below:{{\tiny $d_3$}}] (d3) at (18, 0) {};
					
					\node[vertices,label = above:{{\tiny $a_3$}}] (a) at (19.5, 4.5) {};
					\node[vertices,label = below:{{\tiny $c_3$}}] (c) at (19.5, 1.5) {};
					\node[vertices,label = below:{{\tiny $e_3$}}] (e) at (19.5, -1.5) {};
					
					\node[vertices,label = above:{{\tiny $b_4$}}] (b) at (21, 3) {};
					\node[vertices,label = below:{{\tiny $d_4$}}] (d) at (21, 0) {};
					
					\node[vertices, label = {[xshift = .32cm,yshift = -.12cm]above:{{{\tiny $b_{(n-1)}$}}}}] (1) at ($(b)+(4,0)$) {};
					\node[vertices, label = {[xshift = .3cm]below:{{{\tiny $d_{(n-1)}$}}}}] (2) at ($(d)+(4,0)$) {};
					
					\node[vertices,label = {[xshift = .3cm,yshift = -.13cm]above:{{{\tiny $a_{(n-1)}$}}}}] (3) at ($(a)+(7,0)$) {}; 
					\node[vertices,label = {[xshift = 0cm]right:{{{\tiny $c_{(n-1)}$}}}}] (4) at ($(c)+(7,0)$) {};
					\node[vertices,label = {[xshift = .3cm]below:{{{\tiny $e_{(n-1)}$}}}}] (5) at ($(e)+(7,0)$) {};
					
					\node[vertices,label = right:{{{\tiny $b_n$}}}] (6) at ($(b)+(7,0)$) {};
					\node[vertices,label = right:{{{\tiny $d_n$}}}] (7) at ($(d)+(7,0)$) {};
					
					\draw (a) -- ++(2.3,0);
					\draw (c) -- ++(2.3,0);
					\draw (e) -- ++(2.3,0);
					\draw (3) -- ++(-2.3,0);
					\draw (4) -- ++(-2.3,0);
					\draw (5) -- ++(-2.3,0);
					\draw (b) -- ++(1,1);
					\draw (b) -- ++(1,-1);
					\draw (d) -- ++(1,1);
					\draw (d) -- ++(1,-1);
					\draw (1) -- ++(-1,1);
					\draw (1) -- ++(-1,-1);
					\draw (2) -- ++(-1,1);
					\draw (2) -- ++(-1,-1);
					
					\node at ($(b)!0.5!(1)$) {$\ldots$};
					\node at ($(d)!0.5!(2)$) {$\ldots$};
					\node at ($(a)!0.5!(3)$) {$\ldots$};
					\node at ($(c)!0.5!(4)$) {$\ldots$};
					\node at ($(e)!0.5!(5)$) {$\ldots$};
					
					\foreach \vertex/\neighbors in {
						a1/{b1,a2,b3},
						c1/{b1,c2},
						a2/{b3},
						c2/{b3,d3},
						b3/{d3},
						a/{a2,b3,b},
						b/{c,d},
						c/{c2,b3,d3,d},
						d/{e},
						e/{d3},
						1/{2,3,4},
						2/{4,5},
						3/{6},
						4/{6,7},
						5/{7},
						6/{7}}
					{
						\foreach \neighbor in \neighbors
						\draw [-] (\vertex)--(\neighbor);
					}
				\end{tikzpicture}
				\subcaption{$\mathcal{U}_n''$}
				\label{fig:U_n''}
			\end{subfigure}
			\hfill
			\begin{subfigure}{0.3\textwidth}
				\centering
				\begin{tikzpicture}[scale = 0.27, vertices/.style = {draw, fill = black, circle, inner sep = 0.5pt}]
					
					\node[vertices,label = above:{{\tiny $b_1$}}] (b1) at (17, 3) {};
					
					\node[vertices,label = below:{{\tiny $c_1$}}] (c1) at (18.5, 1.5) {};
					
					\node[vertices,label = below:{{\tiny $e_3$}}] (e3) at (19.5, -1.5) {};
					
					\node[vertices,label = above:{{\tiny $b_4$}}] (b4) at (21, 3) {};
					\node[vertices,label = below:{{\tiny $d_4$}}] (d4) at (21, 0) {};
					
					\node[vertices,label = above:{{\tiny $a_4$}}] (a) at (22.5,4.5) {};
					\node[vertices,label = below:{{\tiny $c_4$}}] (c) at (22.5,1.5) {};
					\node[vertices,label = below:{{\tiny $e_4$}}] (e) at (22.5,-1.5) {};
					
					\node[vertices,label = above:{{\tiny $b_5$}}] (b) at (24, 3) {};
					\node[vertices,label = below:{{\tiny $d_5$}}] (d) at (24, 0) {};
					
					\node[vertices, label = {[xshift = .32cm,yshift = -.12cm]above:{{{\tiny $b_{(n-1)}$}}}}] (1) at ($(b)+(4,0)$) {};
					\node[vertices, label = {[xshift = .3cm]below:{{{\tiny $d_{(n-1)}$}}}}] (2) at ($(d)+(4,0)$) {};
					
					\node[vertices,label = {[xshift = .3cm,yshift = -.13cm]above:{{{\tiny $a_{(n-1)}$}}}}] (3) at ($(a)+(7,0)$) {}; 
					\node[vertices,label = {[xshift = 0cm]right:{{{\tiny $c_{(n-1)}$}}}}] (4) at ($(c)+(7,0)$) {};
					\node[vertices,label = {[xshift = .3cm]below:{{{\tiny $e_{(n-1)}$}}}}] (5) at ($(e)+(7,0)$) {};
					
					\node[vertices,label = right:{{{\tiny $b_n$}}}] (6) at ($(b)+(7,0)$) {};
					\node[vertices,label = right:{{{\tiny $d_n$}}}] (7) at ($(d)+(7,0)$) {};
					
					\draw (a) -- ++(2.3,0);
					\draw (c) -- ++(2.3,0);
					\draw (e) -- ++(2.3,0);
					\draw (3) -- ++(-2.3,0);
					\draw (4) -- ++(-2.3,0);
					\draw (5) -- ++(-2.3,0);
					\draw (b) -- ++(1,1);
					\draw (b) -- ++(1,-1);
					\draw (d) -- ++(1,1);
					\draw (d) -- ++(1,-1);
					\draw (1) -- ++(-1,1);
					\draw (1) -- ++(-1,-1);
					\draw (2) -- ++(-1,1);
					\draw (2) -- ++(-1,-1);
					
					\node at ($(b)!0.5!(1)$) {$\ldots$};
					\node at ($(d)!0.5!(2)$) {$\ldots$};
					\node at ($(a)!0.5!(3)$) {$\ldots$};
					\node at ($(c)!0.5!(4)$) {$\ldots$};
					\node at ($(e)!0.5!(5)$) {$\ldots$};
					
					\foreach \vertex/\neighbors in {
						c1/{b1},
						d4/{e3,b4},
						a/{b4,b},
						b/{c,d},
						c/{b4,d4,d},
						d/{e},
						e/{e3,d4},
						1/{2,3,4},
						2/{4,5},
						3/{6},
						4/{6,7},
						5/{7},
						6/{7}}
					{
						\foreach \neighbor in \neighbors
						\draw [-] (\vertex)--(\neighbor);
					}
				\end{tikzpicture}
				\subcaption{$\mathcal{U}_n''-N_{\mathcal{U}_n''}[b_3]$}
				\label{fig:U_n''-N_b3}
			\end{subfigure}
			\begin{subfigure}{0.35\textwidth}
				\centering
				\begin{tikzpicture}[scale = 0.27, vertices/.style = {draw, fill = black, circle, inner sep = 0.5pt}]
					
					\node[vertices,label = above:{{\tiny $b_1$}}] (b1) at (12, 3) {};
					
					\node[vertices,label = above:{{\tiny $a_1$}}] (a1) at (13.5, 4.5) {}; 
					\node[vertices,label = below:{{\tiny $c_1$}}] (c1) at (13.5, 1.5) {};

					\node[vertices,label = above:{{\tiny $a_2$}}] (a2) at (16.5, 4.5) {};
					\node[vertices,label = below:{{\tiny $c_2$}}] (c2) at (16.5, 1.5) {};
					
					\node[vertices,label = below:{{\tiny $d_3$}}] (d3) at (18, 0) {};
					
					\node[vertices,label = above:{{\tiny $a_3$}}] (a) at (19.5, 4.5) {};
					\node[vertices,label = below:{{\tiny $c_3$}}] (c) at (19.5, 1.5) {};
					\node[vertices,label = below:{{\tiny $e_3$}}] (e) at (19.5, -1.5) {};
					
					\node[vertices,label = above:{{\tiny $b_4$}}] (b) at (21, 3) {};
					\node[vertices,label = below:{{\tiny $d_4$}}] (d) at (21, 0) {};
					
					\node[vertices, label = {[xshift = .32cm,yshift = -.12cm]above:{{{\tiny $b_{(n-1)}$}}}}] (1) at ($(b)+(4,0)$) {};
					\node[vertices, label = {[xshift = .3cm]below:{{{\tiny $d_{(n-1)}$}}}}] (2) at ($(d)+(4,0)$) {};
					
					\node[vertices,label = {[xshift = .3cm,yshift = -.13cm]above:{{{\tiny $a_{(n-1)}$}}}}] (3) at ($(a)+(7,0)$) {}; 
					\node[vertices,label = {[xshift = 0cm]right:{{{\tiny $c_{(n-1)}$}}}}] (4) at ($(c)+(7,0)$) {};
					\node[vertices,label = {[xshift = .3cm]below:{{{\tiny $e_{(n-1)}$}}}}] (5) at ($(e)+(7,0)$) {};
					
					\node[vertices,label = right:{{{\tiny $b_n$}}}] (6) at ($(b)+(7,0)$) {};
					\node[vertices,label = right:{{{\tiny $d_n$}}}] (7) at ($(d)+(7,0)$) {};
					
					\draw (a) -- ++(2.3,0);
					\draw (c) -- ++(2.3,0);
					\draw (e) -- ++(2.3,0);
					\draw (3) -- ++(-2.3,0);
					\draw (4) -- ++(-2.3,0);
					\draw (5) -- ++(-2.3,0);
					\draw (b) -- ++(1,1);
					\draw (b) -- ++(1,-1);
					\draw (d) -- ++(1,1);
					\draw (d) -- ++(1,-1);
					\draw (1) -- ++(-1,1);
					\draw (1) -- ++(-1,-1);
					\draw (2) -- ++(-1,1);
					\draw (2) -- ++(-1,-1);
					
					\node at ($(b)!0.5!(1)$) {$\ldots$};
					\node at ($(d)!0.5!(2)$) {$\ldots$};
					\node at ($(a)!0.5!(3)$) {$\ldots$};
					\node at ($(c)!0.5!(4)$) {$\ldots$};
					\node at ($(e)!0.5!(5)$) {$\ldots$};
					
					\foreach \vertex/\neighbors in {
						a1/{b1,a2},
						c1/{b1,c2},
						c2/{d3},
						a/{a2,b},
						b/{c,d},
						c/{c2,d3,d},
						d/{e},
						e/{d3},
						1/{2,3,4},
						2/{4,5},
						3/{6},
						4/{6,7},
						5/{7},
						6/{7}}
					{
						\foreach \neighbor in \neighbors
						\draw [-] (\vertex)--(\neighbor);
					}
				\end{tikzpicture}
				\subcaption{$\mathcal{U}_n''-\{b_3\}$}
				\label{fig_U_n''-b3}
			\end{subfigure}
			\vspace{-.18cm}
			\caption{}
			\label{fig:Un_seq2}
		\end{figure}
		By \Cref{lemma:ind_disjoint_union}, $\Ind(\mathcal{U}_{n-3}\sqcup P_2)\simeq \Ind(\mathcal{U}_{n-3})* \Ind(P_2) \simeq \Sigma(\Ind(\mathcal{U}_{n-3}))$. It follows that $\Ind(\mathcal{U}_n'')\simeq \Ind(\mathcal{Z}_{n-3}) \vee \Sigma^2 (\Ind(\mathcal{U}_{n-3}))$. Thus $\Ind(\mathcal{U}_n)\simeq\Sigma(\Ind(\mathcal{U}_{n-1}))\vee \Sigma(\Ind(\mathcal{Z}_{n-3}))\vee \Sigma^3(\Ind(\mathcal{U}_{n-3})).$
	\end{proof}
	
	\begin{claim}\label{claim:Vn_relation}
		For $n\ge3$,
		$\Ind(\mathcal{V}_n)\simeq \Sigma(\Ind(\mathcal{X}_{n-2}))\vee \Sigma^2 (\Ind(\Gamma_{n-1})).$
	\end{claim}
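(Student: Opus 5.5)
The plan is to split at the simplicial vertex $v_2$, exactly as in the computation of $\Ind(\mathcal{V}_2)$. For $n\ge3$, $v_2$ is a simplicial vertex of $\mathcal{V}_n$ with $N_{\mathcal{V}_n}(v_2)=\{a_1,b_1\}$, since $a_1\sim b_1$. \Cref{theorem:simplicial_vertex} then gives
$$\Ind(\mathcal{V}_n)\simeq \Sigma(\Ind(\mathcal{V}_n-N_{\mathcal{V}_n}[a_1]))\vee\Sigma(\Ind(\mathcal{V}_n-N_{\mathcal{V}_n}[b_1])).$$
It then suffices to prove $\Ind(\mathcal{V}_n-N_{\mathcal{V}_n}[a_1])\simeq\Sigma(\Ind(\Gamma_{n-1}))$ and $\Ind(\mathcal{V}_n-N_{\mathcal{V}_n}[b_1])\simeq\Ind(\mathcal{X}_{n-2})$. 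I expect this assignment of the two pieces, but if the bookkeeping comes out the other way round I will simply swap them.

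\textbf{The $a_1$-piece.} Deleting $N[a_1]=\{a_1,b_1,b_2,a_2,v_2\}$ leaves the vertices $v_1,v_3,v_4,c_1,d_1,e_1$ followed by the grid from column~$2$ onward with $b_2$ and $a_2$ removed. The left end is the small gadget $v_1,v_3,v_4$ attached to $c_1,d_1,e_1$. I would first look for a vertex $x$ with $N(x)\subseteq N(y)$ for some $y$ (for instance $v_1$ against $d_1$, or $c_1$ against something), to delete dominating vertices via \Cref{theorem:N(u)subsetN(v)}. If a closed-neighbourhood inclusion appears instead, I would use \Cref{corollary:N[u]subsetN[v]_lk(v)_del(v)}. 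The goal is to cut off the first column, leaving a copy of $\Gamma_{n-1}$ on columns $2,\dots,n$ joined by a single edge, or an isolated $P_2$, that contributes one suspension via \Cref{lemma:ind_disjoint_union}. If that does not happen directly, my first fallback is to add an edge using \Cref{lemma:edge_contractible} (whenever some $N[\{u,w\}]$-complement has an isolated vertex) to create a domination.

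\textbf{The $b_1$-piece.} Deleting $N[b_1]=\{b_1,a_1,c_1,d_1,v_2,v_3\}$ leaves the pendant path $v_1v_4e_1$ attached to $d_2,e_2$, together with $b_2,a_2,c_2,d_2,e_2$ and the grid from column~$3$ onward. In $\mathcal{X}_{n-2}$ the new vertices $x_7,x_8$ attach to $\Gamma_{n-2}$ the way $a_2,c_2$ attach to columns $3,\dots,n$, so I will identify $x_7=a_2$ and $x_8=c_2$. The neighbours $b_2,d_2$ should then play the roles of $x_6,x_5$.

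A direct isomorphism fails: $a_2$ has no further neighbour to serve as $x_4$, and the hexagon $x_1,\dots,x_5$ closing up against $x_6$ is not visibly present. I would try to repair this using \Cref{lemma:edge_contractible} to insert an edge such as $(a_2,d_2)$ or $(b_2,e_2)$, checking that the relevant complement graph has an isolated vertex or is a $P_4$. Combined with domination removals among $\{b_2,d_2,e_2,e_1\}$ and, if needed, \Cref{theorem:replace_edge} in reverse on the path $v_1v_4e_1\cdots$, this should reproduce the $x$-gadget with the five-vertex path $x_4x_2x_1x_3x_5$.

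\textbf{Main obstacle.} I expect the second identification, matching $\mathcal{V}_n-N[b_1]$ with $\mathcal{X}_{n-2}$, to be the hard step. It needs a precise sequence of edge additions and vertex deletions, each checked against the local neighbourhoods near columns $1$–$3$.
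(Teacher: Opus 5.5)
Splitting at the simplicial vertex $v_2$ via \Cref{theorem:simplicial_vertex} is exactly the paper's framework. The gap is that you assign the two pieces to the wrong targets, and both sub-claims you set out to prove are false in general. For $n=3$, $\Ind(\mathcal{V}_3-N[a_1])\simeq\Ind(\mathcal{X}_1)\simeq\vee_3\mathbb{S}^2$, whereas $\Sigma(\Ind(\Gamma_2))\simeq\vee_2\mathbb{S}^2$. Likewise $\Ind(\mathcal{V}_3-N[b_1])\simeq\vee_2\mathbb{S}^2\not\simeq\Ind(\mathcal{X}_1)$. So the step you single out as the main obstacle, turning $\mathcal{V}_n-N[b_1]$ into $\mathcal{X}_{n-2}$ by edge insertions and dominations, cannot succeed. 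Your plan to cut off a copy of $\Gamma_{n-1}$ from $\mathcal{V}_n-N[a_1]$ also fails, since $a_2,b_2$ have already been deleted there. The hedge about swapping the pieces does not rescue the argument, because none of the reductions you sketch proves the swapped equivalences.

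Two short observations are missing.

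First, $\mathcal{V}_n-N[b_1]$ is, as you found, a full copy of $\Gamma_{n-1}$ on columns $2,\dots,n$ plus the pendant path $v_1v_4e_1$. Here $v_1$ is a leaf, hence simplicial with $N(v_1)=\{v_4\}$. Removing $N[v_4]=\{v_1,v_4,e_1\}$ leaves exactly $\Gamma_{n-1}$, so this piece is $\Sigma(\Ind(\Gamma_{n-1}))$. That gives the $\Sigma^2(\Ind(\Gamma_{n-1}))$ summand.

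Second, in $\mathcal{V}_n-N[a_1]$ your suggested domination $N(v_1)=\{v_3,v_4\}\subseteq N(d_1)=\{v_3,v_4,c_1,e_1\}$ is the right move. After deleting $d_1$, the graph is isomorphic to $\mathcal{X}_{n-2}$, but only via the top--bottom reflection of the grid. Under this isomorphism $\Gamma_{n-2}$ sits on columns $3,\dots,n$ with $a\leftrightarrow e$ and $b\leftrightarrow d$ interchanged, and $(x_1,\dots,x_8)=(v_1,v_4,v_3,e_1,c_1,d_2,e_2,c_2)$. This is why your identification $x_7=a_2$, $x_8=c_2$ found no candidate for $x_4$. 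The $x$-gadget lives in the other piece, in reflected orientation, with $x_7=e_2$ adjacent to $d_3,e_3$.
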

	
	\begin{proof}
		For $n\ge3$, $v_2$ is a simplicial vertex in $\mathcal{V}_n$ with $N_{\mathcal{V}_n}(v_2) = \{a_1, b_1\}$ (see \Cref{fig:Vn_2}). Using \Cref{theorem:simplicial_vertex}, we get $\Ind(\mathcal{V}_n)\simeq \Sigma(\Ind(\mathcal{V}_n-N_{\mathcal{V}_n}[a_1]))\vee \Sigma(\Ind(\mathcal{V}_n-N_{\mathcal{V}_n}[b_1]))$.
		Let $\mathcal{V}_n': = \mathcal{V}_n-N_{\mathcal{V}_n}[a_1]$ and $\mathcal{V}_n'': = \mathcal{V}_n-N_{\mathcal{V}_n}[b_1]$. Then $\Ind(\mathcal{V}_n)\simeq \Sigma(\Ind(\mathcal{V}_n'))\vee \Sigma(\Ind(\mathcal{V}_n''))$. 
		\begin{figure}[H]
			\centering
			\begin{subfigure}{0.40\textwidth}
				\centering
				\begin{tikzpicture}[scale = 0.28, vertices/.style = {draw, fill = black, circle, inner sep = 0.5pt}]
					
					\node[vertices,label = below:{{\tiny $v_1$}}] (v1) at (9, 0) {};
					\node[vertices,label = below:{{\tiny $v_3$}}] (v3) at (10.5, 1.5) {}; 
					\node[vertices,label = below:{{\tiny $v_4$}}] (v4) at (10.5, -1.5) {}; 
					
					\node[vertices,label = below:{{\tiny $d_1$}}] (d1) at (12, 0) {};
					
					\node[vertices,label = below:{{\tiny $c_1$}}] (c1) at (13.5, 1.5) {};
					\node[vertices,label = below:{{\tiny $e_1$}}] (e1) at (13.5, -1.5) {};
					
					\node[vertices,label = below:{{\tiny $d_2$}}] (d2) at (15, 0) {};
					
					\node[vertices,label = below:{{\tiny $c_2$}}] (c2) at (16.5, 1.5) {};
					\node[vertices,label = below:{{\tiny $e_2$}}] (e2) at (16.5, -1.5) {};
					
					\node[vertices,label = above:{{\tiny $b_3$}}] (b3) at (18, 3) {};
					\node[vertices,label = below:{{\tiny $d_3$}}] (d3) at (18, 0) {};
					
					\node[vertices,label = above:{{\tiny $a_3$}}] (a) at (19.5, 4.5) {};
					\node[vertices,label = below:{{\tiny $c_3$}}] (c) at (19.5, 1.5) {};
					\node[vertices,label = below:{{\tiny $e_3$}}] (e) at (19.5, -1.5) {};
					
					\node[vertices,label = above:{{\tiny $b_4$}}] (b) at (21, 3) {};
					\node[vertices,label = below:{{\tiny $d_4$}}] (d) at (21, 0) {};
					
					\node[vertices, label = {[xshift = .32cm,yshift = -.12cm]above:{{{\tiny $b_{(n-1)}$}}}}] (1) at ($(b)+(4,0)$) {};
					\node[vertices, label = {[xshift = .3cm]below:{{{\tiny $d_{(n-1)}$}}}}] (2) at ($(d)+(4,0)$) {};
					
					\node[vertices,label = {[xshift = .3cm,yshift = -.13cm]above:{{{\tiny $a_{(n-1)}$}}}}] (3) at ($(a)+(7,0)$) {}; 
					\node[vertices,label = {[xshift = 0cm]right:{{{\tiny $c_{(n-1)}$}}}}] (4) at ($(c)+(7,0)$) {};
					\node[vertices,label = {[xshift = .3cm]below:{{{\tiny $e_{(n-1)}$}}}}] (5) at ($(e)+(7,0)$) {};
					
					\node[vertices,label = right:{{{\tiny $b_n$}}}] (6) at ($(b)+(7,0)$) {};
					\node[vertices,label = right:{{{\tiny $d_n$}}}] (7) at ($(d)+(7,0)$) {};
					
					\draw (a) -- ++(2.3,0);
					\draw (c) -- ++(2.3,0);
					\draw (e) -- ++(2.3,0);
					\draw (3) -- ++(-2.3,0);
					\draw (4) -- ++(-2.3,0);
					\draw (5) -- ++(-2.3,0);
					\draw (b) -- ++(1,1);
					\draw (b) -- ++(1,-1);
					\draw (d) -- ++(1,1);
					\draw (d) -- ++(1,-1);
					\draw (1) -- ++(-1,1);
					\draw (1) -- ++(-1,-1);
					\draw (2) -- ++(-1,1);
					\draw (2) -- ++(-1,-1);
					
					\node at ($(b)!0.5!(1)$) {$\ldots$};
					\node at ($(d)!0.5!(2)$) {$\ldots$};
					\node at ($(a)!0.5!(3)$) {$\ldots$};
					\node at ($(c)!0.5!(4)$) {$\ldots$};
					\node at ($(e)!0.5!(5)$) {$\ldots$};
					
					\foreach \vertex/\neighbors in {
						v3/{v1,c1,d1},
						v4/{v1,d1,e1},
						d1/{c1,e1},
						c1/{d2},
						e1/{d2,e2},
						c2/{c1,b3,d3},
						d2/{c2,e2},
						e2/{d3},
						b3/{d3},
						a/{b3,b},
						b/{c,d},
						c/{c2,b3,d3,d},
						d/{e},
						e/{d3,e2},
						1/{2,3,4},
						2/{4,5},
						3/{6},
						4/{6,7},
						5/{7},
						6/{7}}
					{
						\foreach \neighbor in \neighbors
						\draw [-] (\vertex)--(\neighbor);
					}
				\end{tikzpicture}
				\subcaption{$\mathcal{V}_n'$}
			\end{subfigure}
			\begin{subfigure}{0.47\textwidth}
				\centering
				\begin{tikzpicture}[scale = 0.28, vertices/.style = {draw, fill = black, circle, inner sep = 0.5pt}]
					
					\node[vertices,label = below:{{\tiny $v_1$}}] (v1) at (9, 0) {};
					\node[vertices,label = below:{{\tiny $v_4$}}] (v4) at (10.5, -1.5) {}; 
					
					\node[vertices,label = below:{{\tiny $e_1$}}] (e1) at (13.5, -1.5) {};
					
					\node[vertices,label = above:{{\tiny $b_2$}}] (b2) at (15, 3) {};
					\node[vertices,label = below:{{\tiny $d_2$}}] (d2) at (15, 0) {};
					
					\node[vertices,label = above:{{\tiny $a_2$}}] (a) at (16.5, 4.5) {};
					\node[vertices,label = below:{{\tiny $c_2$}}] (c) at (16.5, 1.5) {};
					\node[vertices,label = below:{{\tiny $e_2$}}] (e) at (16.5, -1.5) {};
					
					\node[vertices,label = above:{{\tiny $b_3$}}] (b) at (18, 3) {};
					\node[vertices,label = below:{{\tiny $d_3$}}] (d) at (18, 0) {};
					
					\node[vertices, label = {[xshift = .32cm,yshift = -.12cm]above:{{{\tiny $b_{(n-1)}$}}}}] (1) at ($(b)+(4,0)$) {};
					\node[vertices, label = {[xshift = .3cm]below:{{{\tiny $d_{(n-1)}$}}}}] (2) at ($(d)+(4,0)$) {};
					
					\node[vertices,label = {[xshift = .3cm,yshift = -.13cm]above:{{{\tiny $a_{(n-1)}$}}}}] (3) at ($(a)+(7,0)$) {}; 
					\node[vertices,label = {[xshift = 0cm]right:{{{\tiny $c_{(n-1)}$}}}}] (4) at ($(c)+(7,0)$) {};
					\node[vertices,label = {[xshift = .3cm]below:{{{\tiny $e_{(n-1)}$}}}}] (5) at ($(e)+(7,0)$) {};
					
					\node[vertices,label = right:{{{\tiny $b_n$}}}] (6) at ($(b)+(7,0)$) {};
					\node[vertices,label = right:{{{\tiny $d_n$}}}] (7) at ($(d)+(7,0)$) {};
					
					\draw (a) -- ++(2.3,0);
					\draw (c) -- ++(2.3,0);
					\draw (e) -- ++(2.3,0);
					\draw (3) -- ++(-2.3,0);
					\draw (4) -- ++(-2.3,0);
					\draw (5) -- ++(-2.3,0);
					\draw (b) -- ++(1,1);
					\draw (b) -- ++(1,-1);
					\draw (d) -- ++(1,1);
					\draw (d) -- ++(1,-1);
					\draw (1) -- ++(-1,1);
					\draw (1) -- ++(-1,-1);
					\draw (2) -- ++(-1,1);
					\draw (2) -- ++(-1,-1);
					
					\node at ($(b)!0.5!(1)$) {$\ldots$};
					\node at ($(d)!0.5!(2)$) {$\ldots$};
					\node at ($(a)!0.5!(3)$) {$\ldots$};
					\node at ($(c)!0.5!(4)$) {$\ldots$};
					\node at ($(e)!0.5!(5)$) {$\ldots$};
					
					\foreach \vertex/\neighbors in {
						v4/{v1,e1},
						e1/{d2},
						b2/{d2},
						a/{b2,b},
						b/{c,d},
						c/{b2,d2,d},
						d/{e},
						e/{d2,e1},
						1/{2,3,4},
						2/{4,5},
						3/{6},
						4/{6,7},
						5/{7},
						6/{7}}
					{
						\foreach \neighbor in \neighbors
						\draw [-] (\vertex)--(\neighbor);
					}
				\end{tikzpicture}
				\subcaption{$\mathcal{V}_n''$}
			\end{subfigure}
			\vspace{-.2cm}
			\caption{}
		\end{figure}
		
		Since $N_{\mathcal{V}_n'}(v_1)\subseteq N_{\mathcal{V}_n'}(d_1)$, $\Ind(\mathcal{V}_ n')\simeq \Ind(\mathcal{V}_n'-\{d_1\})$ by \Cref{theorem:N(u)subsetN(v)}. Note that $\mathcal{V}_n'-\{d_1\}\cong\mathcal{X}_{n-2}$. Therefore $\Ind(\mathcal{V}_n')\simeq \Ind(\mathcal{X}_{n-2})$. 
		Further, $v_1$ is a simplicial vertex in $\mathcal{V}_n''$ with $N_{\mathcal{V}_n''}(v_1) = \{v_4\}$. Hence $\Ind(\mathcal{V}_n'')\simeq \Sigma(\Ind(\mathcal{V}_n''-N_{\mathcal{V}_n''}[v_4]))$. Since $\mathcal{V}_n''-N_{\mathcal{V}_n''}[v_4]\cong\Gamma_{n-1}$, we have $\Ind(\mathcal{V}_n'')\simeq \Sigma (\Ind(\Gamma_{n-1}))$. Thus $\Ind(\mathcal{V}_n)\simeq \Sigma(\Ind(\mathcal{X}_{n-2}))\vee \Sigma^2 (\Ind(\Gamma_{n-1})).$
	\end{proof}
	
	\begin{claim}\label{claim:Wn_relation}
		For $n\ge3$, let $W_n': = \mathcal{W}_n-\{d_1\}$. Then $\Ind(\mathcal{W}_n)\simeq \Ind(\mathcal{W}_n')$. Moreover, 
		$$lk(w_1,\Ind(\mathcal{W}_n'))\simeq 
		\begin{cases} 
			\vee_3\mathbb{S}^2& \text{ if $n = 3$,}\\
			\Sigma^2 (\Ind(\Gamma_{n-2}))\vee \Sigma(\Ind(\mathcal{X}_{n-3})) & \text{ if $n\ge4$,}
		\end{cases}$$ 
		and $del(w_1,\Ind(\mathcal{W}_n'))\simeq \Ind(\mathcal{Y}_{n-1}).$
	\end{claim}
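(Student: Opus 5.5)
The plan is to use the domination lemmas repeatedly, exactly as in the small cases. In each step I list neighbourhoods from the edge list of $\Gamma_n$.

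\emph{First assertion.} In $\mathcal{W}_n$ we have $N_{\mathcal{W}_n}(w_1)=\{w_2,w_3\}$. The neighbourhood of $d_1$ is $N_{\mathcal{W}_n}(d_1)=\{b_1,c_1,e_1,w_2,w_3\}$, which contains it. So \Cref{theorem:N(u)subsetN(v)} gives $\Ind(\mathcal{W}_n)\simeq\Ind(\mathcal{W}_n-\{d_1\})=\Ind(W_n')$.

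\emph{Deletion.} The complex $del(w_1,\Ind(W_n'))$ is $\Ind(W_n'-\{w_1\})$. I claim this graph is isomorphic to $\mathcal{Y}_{n-1}$, with no homotopy argument needed. Column $1$ of $\Gamma_n$ minus $d_1$ consists of $a_1,b_1,c_1,e_1$; together with $w_2,w_3$ these give six vertices. Use the map
$$y_1\mapsto w_2,\ y_2\mapsto w_3,\ y_3\mapsto b_1,\ y_4\mapsto a_1,\ y_5\mapsto c_1,\ y_6\mapsto e_1,$$
and shift the remaining indices of $\Gamma_{n-1}$ by one, to columns $2,\dots,n$.

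Checking edges:
\begin{itemize}
\item $w_2\sim b_1,c_1$ matches $y_1\sim y_3,y_5$.
\item $b_1\sim a_1,c_1$ matches $y_3\sim y_4,y_5$.
\item $a_1\sim a_2,b_2$ matches $y_4\sim a_1,b_1$ after the shift.
\item $c_1\sim b_2,c_2,d_2$ matches $y_5\sim b_1,c_1,d_1$.
\item $e_1\sim d_2,e_2$ and $e_1\sim w_3$ match $y_6\sim d_1,e_1$ and $y_6\sim y_2$.
\end{itemize}
No other edges appear, so the isomorphism holds.

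\emph{Link.} The link is $\Ind(W_n'-N[w_1])=\Ind(\Gamma_n-\{d_1\})$. In this graph $N(b_1)=\{a_1,c_1\}\subseteq N(b_2)=\{a_1,a_2,c_1,c_2,d_2\}$, so I delete $b_2$ by \Cref{theorem:N(u)subsetN(v)}. Next, $N[e_1]=\{e_1,d_2,e_2\}\subseteq N[d_2]=\{d_2,c_1,e_1,c_2,e_2\}$. So \Cref{corollary:N[u]subsetN[v]_lk(v)_del(v)} splits the complex as $\Ind(G-\{d_2\})\vee\Sigma\Ind(G-N[d_2])$, where $G=\Gamma_n-\{d_1,b_2\}$.

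\emph{The $\Sigma\Ind(G-N[d_2])$ summand.} Removing $N[d_2]$ deletes $c_1,e_1,c_2,e_2$. This leaves $b_1$ as a leaf attached to $a_1$. The simplicial-vertex lemma \Cref{theorem:simplicial_vertex} then removes $N[a_1]=\{a_1,b_1,a_2\}$. What remains is $\Gamma_{n-2}$ on columns $3,\dots,n$, so this summand is $\Sigma^2\Ind(\Gamma_{n-2})$.

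\emph{The $\Ind(G-\{d_2\})$ summand.} In $G-\{d_2\}$ the first columns form an induced path $a_2a_1b_1c_1c_2$ together with the pendant path $e_1e_2$. These are attached to column $3$ of $\Gamma_{n-2}$, meaning $b_3,c_2,d_3$ and so on. I would simplify this using \Cref{theorem:simplicial_vertex} at the leaf $e_1$, or \Cref{theorem:replace_edge} on a subdivided path, plus possibly one edge addition via \Cref{lemma:edge_contractible}. The target is a graph isomorphic to $\mathcal{X}_{n-3}$ with one suspension, giving the summand $\Sigma\Ind(\mathcal{X}_{n-3})$.

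\emph{Main obstacle.} This last identification is the hard step. One must find the right sequence of folds so that the leftover gadget matches the eight-vertex $x$-gadget of $\mathcal{X}_{n-3}$ exactly. That gadget is a $6$-cycle-like structure $x_1,\dots,x_6$ joined via $x_7,x_8$ to $a_1,b_1,c_1,d_1$. Here the leaf $e_2$ leads to $d_3$ and $e_3$, which will probably play the roles of $x_8$ and $x_5$.

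\emph{Case $n=3$.} The same reductions are carried out explicitly on a small graph. The $\Gamma_1$ summand gives $\Sigma^2\mathbb{S}^0=\mathbb{S}^2$. The remaining piece should reduce by the same moves to a cycle or path, contributing $\vee_2\mathbb{S}^2$, for a total of $\vee_3\mathbb{S}^2$ as stated.
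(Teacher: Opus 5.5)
\textbf{Gap: the $\mathcal{X}_{n-3}$ summand of the link is never computed.} Your first assertion, the isomorphism $\mathcal{W}_n'-\{w_1\}\cong\mathcal{Y}_{n-1}$, and the reduction of the link to $\Ind(G-\{d_2\})\vee\Sigma^2\Ind(\Gamma_{n-2})$ with $G=\Gamma_n-\{d_1,b_2\}$ are all correct. What is missing is a proof that $\Ind(G-\{d_2\})\simeq\Sigma\Ind(\mathcal{X}_{n-3})$, and likewise the $\vee_2\mathbb{S}^2$ for $n=3$. You only list candidate tools and call this ``the hard step.'' Your guess that $d_3,e_3$ play the roles of $x_8,x_5$ is also wrong. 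Since this summand is exactly what the claim asserts for $n\ge4$, the proposal as written does not prove the statement.

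\textbf{How to close it.} The missing step is a single move.
\begin{itemize}
\item In $G-\{d_2\}=\Gamma_n-\{d_1,b_2,d_2\}$, the vertex $e_1$ is a leaf whose only neighbour is $e_2$. \Cref{theorem:simplicial_vertex} then gives $\Ind(G-\{d_2\})\simeq\Sigma\Ind(K)$ with $K=\Gamma_n-\{d_1,b_2,d_2,e_1,e_2,e_3,d_3\}$. So $d_3$ and $e_3$ are deleted, not used as $x$-vertices.
\item $K\cong\mathcal{X}_{n-3}$ via $x_1\mapsto b_1$, $x_2\mapsto a_1$, $x_3\mapsto c_1$, $x_4\mapsto a_2$, $x_5\mapsto c_2$, $x_6\mapsto b_3$, $x_7\mapsto a_3$, $x_8\mapsto c_3$, with columns $4,\dots,n$ forming $\Gamma_{n-3}$. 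For $n=4$ this also matches the separate definition of $\mathcal{X}_1$. No path replacement or edge addition is needed.
\item For $n=3$ (where $e_3$ does not exist), the same step leaves the $6$-cycle $b_1a_1a_2b_3c_2c_1$. This summand is therefore $\Sigma\Ind(C_6)\simeq\vee_2\mathbb{S}^2$, giving $\vee_3\mathbb{S}^2$ in total.
\end{itemize}

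\textbf{Relation to the paper's proof.} Once filled in, your argument is the paper's with the steps reordered. The paper applies the simplicial-vertex lemma at $e_1$ (neighbours $d_2,e_2$) directly in $\Gamma_n-\{d_1\}$. It removes $b_2$ (dominated by $b_1$) only in the $e_2$-branch. This produces literally the same two graphs as your route.
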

	
	\begin{proof}
		Since $N_{\mathcal{W}_n}(w_1)\subseteq N_{\mathcal{W}_n}(d_1)$ (\Cref{fig:Wn_2}), $\Ind(\mathcal{W}_n)\simeq \Ind(\mathcal{W}_n-\{d_1\})$. Hence $\Ind(\mathcal{W}_n)\simeq \Ind(\mathcal{W}_n')$.
		
		We first compute the deletion of the vertex $w_1$ in $\Ind(\mathcal{W}_n')$. Using \eqref{equation:lk_del_ind_complex}, $del(w_1,\Ind(\mathcal{W}_n'))\simeq \Ind(\mathcal{W}_n'-\{w_1\})$. Since $\mathcal{W}_n'-\{w_1\}\cong \mathcal{Y}_{n-1}$ (\Cref{fig:Wn'-w1}), it follows that $del(w_1,\Ind(\mathcal{W}_n'))\simeq \Ind(\mathcal{Y}_{n-1}).$  
		\begin{figure}[h!]
			\centering
			\begin{subfigure}{0.32\textwidth}
				\centering
				\begin{tikzpicture}[scale = 0.27, vertices/.style = {draw, fill = black, circle, inner sep = 0.5pt}]
					
					\node[vertices,label = below:{{\tiny $w_1$}}] (w1) at (9, 0) {}; 
					\node[vertices,label = below:{{\tiny $w_2$}}] (w2) at (10.5, 1.5) {}; 
					\node[vertices,label = below:{{\tiny $w_3$}}] (w3) at (10.5, -1.5) {}; 
					
					\node[vertices,label = above:{{\tiny $b_1$}}] (b1) at (12, 3) {};
					
					\node[vertices,label = above:{{\tiny $a_1$}}] (a1) at (13.5, 4.5) {};
					\node[vertices,label = below:{{\tiny $c_1$}}] (c1) at (13.5, 1.5) {};
					\node[vertices,label = below:{{\tiny $e_1$}}] (e1) at (13.5, -1.5) {};
					
					\node[vertices,label = above:{{\tiny $b_2$}}] (b2) at (15, 3) {};
					\node[vertices,label = below:{{\tiny $d_2$}}] (d2) at (15, 0) {};
					
					\node[vertices,label = above:{{\tiny $a_2$}}] (a) at (16.5, 4.5) {};
					\node[vertices,label = below:{{\tiny $c_2$}}] (c) at (16.5, 1.5) {};
					\node[vertices,label = below:{{\tiny $e_2$}}] (e) at (16.5, -1.5) {};
					
					\node[vertices,label = above:{{\tiny $b_3$}}] (b) at (18, 3) {};
					\node[vertices,label = below:{{\tiny $d_3$}}] (d) at (18, 0) {};
					
					\node[vertices, label = {[xshift = .25cm,yshift = -.12cm]above:{{{\tiny $b_{(n-1)}$}}}}] (1) at ($(b)+(4,0)$) {};
					\node[vertices, label = {[xshift = .25cm]below:{{{\tiny $d_{(n-1)}$}}}}] (2) at ($(d)+(4,0)$) {};
					
					\node[vertices,label = {[xshift = .3cm,yshift = -.13cm]above:{{{\tiny $a_{(n-1)}$}}}}] (3) at ($(a)+(7,0)$) {}; 
					\node[vertices,label = {[xshift = .27cm]below:{{{\tiny $c_{(n-1)}$}}}}] (4) at ($(c)+(7,0)$) {};
					\node[vertices,label = {[xshift = .3cm]below:{{{\tiny $e_{(n-1)}$}}}}] (5) at ($(e)+(7,0)$) {};
					
					\node[vertices,label = above:{{{\tiny $b_n$}}}] (6) at ($(b)+(7,0)$) {};
					\node[vertices,label = below:{{{\tiny $d_n$}}}] (7) at ($(d)+(7,0)$) {};
					
					\draw (a) -- ++(2.3,0);
					\draw (c) -- ++(2.3,0);
					\draw (e) -- ++(2.3,0);
					\draw (3) -- ++(-2.3,0);
					\draw (4) -- ++(-2.3,0);
					\draw (5) -- ++(-2.3,0);
					\draw (b) -- ++(1,1);
					\draw (b) -- ++(1,-1);
					\draw (d) -- ++(1,1);
					\draw (d) -- ++(1,-1);
					\draw (1) -- ++(-1,1);
					\draw (1) -- ++(-1,-1);
					\draw (2) -- ++(-1,1);
					\draw (2) -- ++(-1,-1);
					
					\node at ($(b)!0.5!(1)$) {$\ldots$};
					\node at ($(d)!0.5!(2)$) {$\ldots$};
					\node at ($(a)!0.5!(3)$) {$\ldots$};
					\node at ($(c)!0.5!(4)$) {$\ldots$};
					\node at ($(e)!0.5!(5)$) {$\ldots$};
					
					\foreach \vertex/\neighbors in {
						w2/{w1,b1,c1},
						w3/{w1,e1},
						b1/{a1,c1},
						a1/{b2},
						c1/{b2,d2},
						e1/{d2},
						b2/{d2},
						a/{a1,b2,b},
						b/{c,d},
						c/{c1,b2,d2,d},
						d/{e},
						e/{d2,e1},
						1/{2,3,4},
						2/{4,5},
						3/{6},
						4/{6,7},
						5/{7},
						6/{7}}
					{
						\foreach \neighbor in \neighbors
						\draw [-] (\vertex)--(\neighbor);
					}
				\end{tikzpicture}
				\subcaption{$\mathcal{W}_n'$}
			\end{subfigure}
			\hfill
			\begin{subfigure}{0.3\textwidth}
				\centering
				\begin{tikzpicture}[scale = 0.27, vertices/.style = {draw, fill = black, circle, inner sep = 0.5pt}]
					
					\node[vertices,label = below:{{\tiny $w_2$}}] (w2) at (10.5, 1.5) {}; 
					\node[vertices,label = below:{{\tiny $w_3$}}] (w3) at (10.5, -1.5) {}; 
					
					\node[vertices,label = above:{{\tiny $b_1$}}] (b1) at (12, 3) {};
					
					\node[vertices,label = above:{{\tiny $a_1$}}] (a1) at (13.5, 4.5) {};
					\node[vertices,label = below:{{\tiny $c_1$}}] (c1) at (13.5, 1.5) {};
					\node[vertices,label = below:{{\tiny $e_1$}}] (e1) at (13.5, -1.5) {};
					
					\node[vertices,label = above:{{\tiny $b_2$}}] (b2) at (15, 3) {};
					\node[vertices,label = below:{{\tiny $d_2$}}] (d2) at (15, 0) {};
					
					\node[vertices,label = above:{{\tiny $a_2$}}] (a) at (16.5, 4.5) {};
					\node[vertices,label = below:{{\tiny $c_2$}}] (c) at (16.5, 1.5) {};
					\node[vertices,label = below:{{\tiny $e_2$}}] (e) at (16.5, -1.5) {};
					
					\node[vertices,label = above:{{\tiny $b_3$}}] (b) at (18, 3) {};
					\node[vertices,label = below:{{\tiny $d_3$}}] (d) at (18, 0) {};
					
					\node[vertices, label = {[xshift = .25cm,yshift = -.12cm]above:{{{\tiny $b_{(n-1)}$}}}}] (1) at ($(b)+(4,0)$) {};
					\node[vertices, label = {[xshift = .25cm]below:{{{\tiny $d_{(n-1)}$}}}}] (2) at ($(d)+(4,0)$) {};
					
					\node[vertices,label = {[xshift = .3cm,yshift = -.13cm]above:{{{\tiny $a_{(n-1)}$}}}}] (3) at ($(a)+(7,0)$) {}; 
					\node[vertices,label = {[xshift = .27cm]below:{{{\tiny $c_{(n-1)}$}}}}] (4) at ($(c)+(7,0)$) {};
					\node[vertices,label = {[xshift = .3cm]below:{{{\tiny $e_{(n-1)}$}}}}] (5) at ($(e)+(7,0)$) {};
					
					\node[vertices,label = above:{{{\tiny $b_n$}}}] (6) at ($(b)+(7,0)$) {};
					\node[vertices,label = below:{{{\tiny $d_n$}}}] (7) at ($(d)+(7,0)$) {};
					
					\draw (a) -- ++(2.3,0);
					\draw (c) -- ++(2.3,0);
					\draw (e) -- ++(2.3,0);
					\draw (3) -- ++(-2.3,0);
					\draw (4) -- ++(-2.3,0);
					\draw (5) -- ++(-2.3,0);
					\draw (b) -- ++(1,1);
					\draw (b) -- ++(1,-1);
					\draw (d) -- ++(1,1);
					\draw (d) -- ++(1,-1);
					\draw (1) -- ++(-1,1);
					\draw (1) -- ++(-1,-1);
					\draw (2) -- ++(-1,1);
					\draw (2) -- ++(-1,-1);
					
					\node at ($(b)!0.5!(1)$) {$\ldots$};
					\node at ($(d)!0.5!(2)$) {$\ldots$};
					\node at ($(a)!0.5!(3)$) {$\ldots$};
					\node at ($(c)!0.5!(4)$) {$\ldots$};
					\node at ($(e)!0.5!(5)$) {$\ldots$};
					
					\foreach \vertex/\neighbors in {
						w2/{b1,c1},
						w3/{e1},
						b1/{a1,c1},
						a1/{b2},
						c1/{b2,d2},
						e1/{d2},
						b2/{d2},
						a/{a1,b2,b},
						b/{c,d},
						c/{c1,b2,d2,d},
						d/{e},
						e/{d2,e1},
						1/{2,3,4},
						2/{4,5},
						3/{6},
						4/{6,7},
						5/{7},
						6/{7}}
					{
						\foreach \neighbor in \neighbors
						\draw [-] (\vertex)--(\neighbor);
					}
				\end{tikzpicture}
				\subcaption{$\mathcal{W}_n'-\{w_1\}$}
				\label{fig:Wn'-w1}
			\end{subfigure}
			\hfill
			\begin{subfigure}{0.27\textwidth}
				\centering
				\begin{tikzpicture}[scale = 0.27, vertices/.style = {draw, fill = black, circle, inner sep = 0.5pt}]
					
					\node[vertices,label = above:{{\tiny $b_1$}}] (b1) at (12, 3) {};
					
					\node[vertices,label = above:{{\tiny $a_1$}}] (a1) at (13.5, 4.5) {};
					\node[vertices,label = below:{{\tiny $c_1$}}] (c1) at (13.5, 1.5) {};
					\node[vertices,label = below:{{\tiny $e_1$}}] (e1) at (13.5, -1.5) {};
					
					\node[vertices,label = above:{{\tiny $b_2$}}] (b2) at (15, 3) {};
					\node[vertices,label = below:{{\tiny $d_2$}}] (d2) at (15, 0) {};
					
					\node[vertices,label = above:{{\tiny $a_2$}}] (a) at (16.5, 4.5) {};
					\node[vertices,label = below:{{\tiny $c_2$}}] (c) at (16.5, 1.5) {};
					\node[vertices,label = below:{{\tiny $e_2$}}] (e) at (16.5, -1.5) {};
					
					\node[vertices,label = above:{{\tiny $b_3$}}] (b) at (18, 3) {};
					\node[vertices,label = below:{{\tiny $d_3$}}] (d) at (18, 0) {};
					
					\node[vertices, label = {[xshift = .25cm,yshift = -.12cm]above:{{{\tiny $b_{(n-1)}$}}}}] (1) at ($(b)+(4,0)$) {};
					\node[vertices, label = {[xshift = .25cm]below:{{{\tiny $d_{(n-1)}$}}}}] (2) at ($(d)+(4,0)$) {};
					
					\node[vertices,label = {[xshift = .3cm,yshift = -.13cm]above:{{{\tiny $a_{(n-1)}$}}}}] (3) at ($(a)+(7,0)$) {}; 
					\node[vertices,label = {[xshift = .27cm]below:{{{\tiny $c_{(n-1)}$}}}}] (4) at ($(c)+(7,0)$) {};
					\node[vertices,label = {[xshift = .3cm]below:{{{\tiny $e_{(n-1)}$}}}}] (5) at ($(e)+(7,0)$) {};
					
					\node[vertices,label = above:{{{\tiny $b_n$}}}] (6) at ($(b)+(7,0)$) {};
					\node[vertices,label = below:{{{\tiny $d_n$}}}] (7) at ($(d)+(7,0)$) {};
					
					\draw (a) -- ++(2.3,0);
					\draw (c) -- ++(2.3,0);
					\draw (e) -- ++(2.3,0);
					\draw (3) -- ++(-2.3,0);
					\draw (4) -- ++(-2.3,0);
					\draw (5) -- ++(-2.3,0);
					\draw (b) -- ++(1,1);
					\draw (b) -- ++(1,-1);
					\draw (d) -- ++(1,1);
					\draw (d) -- ++(1,-1);
					\draw (1) -- ++(-1,1);
					\draw (1) -- ++(-1,-1);
					\draw (2) -- ++(-1,1);
					\draw (2) -- ++(-1,-1);
					
					\node at ($(b)!0.5!(1)$) {$\ldots$};
					\node at ($(d)!0.5!(2)$) {$\ldots$};
					\node at ($(a)!0.5!(3)$) {$\ldots$};
					\node at ($(c)!0.5!(4)$) {$\ldots$};
					\node at ($(e)!0.5!(5)$) {$\ldots$};
					
					\foreach \vertex/\neighbors in {
						b1/{a1,c1},
						a1/{b2},
						c1/{b2,d2},
						e1/{d2},
						b2/{d2},
						a/{a1,b2,b},
						b/{c,d},
						c/{c1,b2,d2,d},
						d/{e},
						e/{d2,e1},
						1/{2,3,4},
						2/{4,5},
						3/{6},
						4/{6,7},
						5/{7},
						6/{7}}
					{
						\foreach \neighbor in \neighbors
						\draw [-] (\vertex)--(\neighbor);
					}
				\end{tikzpicture}
				\subcaption{$\mathcal{W}_n''$}
				\label{fig:Wn''}
			\end{subfigure}
			\vspace{-.18cm}
			\caption{}
			\label{fig:Wnseq1}
		\end{figure}
		
		For the link, we have $lk(w_1,\Ind(\mathcal{W}_n'))\simeq \Ind(\mathcal{W}_n'-N_{\mathcal{W}_n'}[w_1])$ by \eqref{equation:lk_del_ind_complex}. Let $\mathcal{W}_n'': = \mathcal{W}_n'-N_{\mathcal{W}_n'}[w_1]$ (see \Cref{fig:Wn''}). Then $lk(w_1,\Ind(\mathcal{W}_n'))\simeq \Ind(\mathcal{W}_n'')$. Since $e_1$ is a simplicial vertex in $\mathcal{W}_n''$ with $N_{\mathcal{W}_n''}(e_1) = \{d_2,e_2\}$, \Cref{theorem:simplicial_vertex} implies $\Ind( \mathcal{W}_n'')\simeq \Sigma(\Ind(\mathcal{W}_n''-N_{\mathcal{W}_n''}[d_2]))\vee \Sigma(\Ind(\mathcal{W}_n''-N_{\mathcal{W}_n''}[e_2]))$. 
		
		We first assume $n = 3$. Then $\mathcal{W}_3''-N_{\mathcal{W}_3''}[d_2]\cong P_5$ and $\mathcal{W}_3''-N_{\mathcal{W}_3''}[e_2]\cong \Gamma_2$. It follows that $\Ind(\mathcal{W}_3'')\simeq\Sigma(\Ind(P_5))\vee\Sigma(\Ind(\Gamma_2))$, and therefore $lk(w_1,\Ind(\mathcal{W}_3'))\simeq \Ind(\mathcal{W}_3'')\simeq\vee_3\mathbb{S}^2$ by \Cref{lemma:path_graph} and \eqref{equation:Gamma_123}.
		
		Now, assume $n\ge4$. Let $\mathcal{W}_n''': = \mathcal{W}_n''-N_{\mathcal{W}_n''}[d_2]$. Since $b_1$ is a simplicial vertex in $\mathcal{W}_n'''$ with $N_{\mathcal{W}_n'''}(b_1) = \{a_1\}$, it follows that $\Ind(\mathcal{W}_n''')\simeq \Sigma(\Ind(\mathcal{W}_n'''-N_{\mathcal{W}_n'''}[a_1]))$. Further, $\mathcal{W}_n'''-N_{\mathcal{W}_n'''}[a_1]\cong \Gamma_{n-2}$ implies $\Ind(\mathcal{W}_n''-N_{\mathcal{W}_n''}[d_2])\simeq \Ind(\mathcal{W}_n''')\simeq \Sigma(\Ind(\Gamma_{n-2}))$.
		\begin{figure}[H]
			\centering
			\begin{subfigure}{0.43\textwidth}
				\centering
				\begin{tikzpicture}[scale = 0.27, vertices/.style = {draw, fill = black, circle, inner sep = 0.5pt}]
					
					\node[vertices,label = above:{{\tiny $b_1$}}] (b1) at (12, 3) {};
					
					\node[vertices,label = above:{{\tiny $a_1$}}] (a1) at (13.5, 4.5) {};
					
					\node[vertices,label = above:{{\tiny $a_2$}}] (a2) at (16.5, 4.5) {};
					
					\node[vertices,label = above:{{\tiny $b_3$}}] (b3) at (18, 3) {};
					\node[vertices,label = below:{{\tiny $d_3$}}] (d3) at (18, 0) {};
					
					\node[vertices,label = above:{{\tiny $a_3$}}] (a) at (19.5, 4.5) {};
					\node[vertices,label = below:{{\tiny $c_3$}}] (c) at (19.5, 1.5) {};
					\node[vertices,label = below:{{\tiny $e_3$}}] (e) at (19.5, -1.5) {};
					
					\node[vertices,label = above:{{\tiny $b_4$}}] (b) at (21, 3) {};
					\node[vertices,label = below:{{\tiny $d_4$}}] (d) at (21, 0) {};
					
					\node[vertices, label = {[xshift = .25cm,yshift = -.12cm]above:{{{\tiny $b_{(n-1)}$}}}}] (1) at ($(b)+(4,0)$) {};
					\node[vertices, label = {[xshift = .25cm]below:{{{\tiny $d_{(n-1)}$}}}}] (2) at ($(d)+(4,0)$) {};
					
					\node[vertices,label = {[xshift = .3cm,yshift = -.13cm]above:{{{\tiny $a_{(n-1)}$}}}}] (3) at ($(a)+(7,0)$) {}; 
					\node[vertices,label = {[xshift = 0cm]right:{{{\tiny $c_{(n-1)}$}}}}] (4) at ($(c)+(7,0)$) {};
					\node[vertices,label = {[xshift = .3cm]below:{{{\tiny $e_{(n-1)}$}}}}] (5) at ($(e)+(7,0)$) {};
					
					\node[vertices,label = right:{{{\tiny $b_n$}}}] (6) at ($(b)+(7,0)$) {};
					\node[vertices,label = right:{{{\tiny $d_n$}}}] (7) at ($(d)+(7,0)$) {};
					
					\draw (a) -- ++(2.3,0);
					\draw (c) -- ++(2.3,0);
					\draw (e) -- ++(2.3,0);
					\draw (3) -- ++(-2.3,0);
					\draw (4) -- ++(-2.3,0);
					\draw (5) -- ++(-2.3,0);
					\draw (b) -- ++(1,1);
					\draw (b) -- ++(1,-1);
					\draw (d) -- ++(1,1);
					\draw (d) -- ++(1,-1);
					\draw (1) -- ++(-1,1);
					\draw (1) -- ++(-1,-1);
					\draw (2) -- ++(-1,1);
					\draw (2) -- ++(-1,-1);
					
					\node at ($(b)!0.5!(1)$) {$\ldots$};
					\node at ($(d)!0.5!(2)$) {$\ldots$};
					\node at ($(a)!0.5!(3)$) {$\ldots$};
					\node at ($(c)!0.5!(4)$) {$\ldots$};
					\node at ($(e)!0.5!(5)$) {$\ldots$};
					
					\foreach \vertex/\neighbors in {
						a1/{a2},
						b1/{a1},
						b3/{a2,d3},
						a/{a2,b3,b},
						b/{c,d},
						c/{b3,d3,d},
						d/{e},
						e/{d3},
						1/{2,3,4},
						2/{4,5},
						3/{6},
						4/{6,7},
						5/{7},
						6/{7}}
					{
						\foreach \neighbor in \neighbors
						\draw [-] (\vertex)--(\neighbor);
					}
				\end{tikzpicture}
				\subcaption{$\mathcal{W}_n'''$}
			\end{subfigure}
			\begin{subfigure}{0.45\textwidth}
				\centering
				\begin{tikzpicture}[scale = 0.27, vertices/.style = {draw, fill = black, circle, inner sep = 0.5pt}]
					
					\node[vertices,label = above:{{\tiny $b_1$}}] (b1) at (12, 3) {};
					
					\node[vertices,label = above:{{\tiny $a_1$}}] (a1) at (13.5, 4.5) {};
					\node[vertices,label = below:{{\tiny $c_1$}}] (c1) at (13.5, 1.5) {};
					
					\node[vertices,label = above:{{\tiny $b_2$}}] (b2) at (15, 3) {};
					
					\node[vertices,label = above:{{\tiny $a_2$}}] (a2) at (16.5, 4.5) {};
					\node[vertices,label = below:{{\tiny $c_2$}}] (c2) at (16.5, 1.5) {};
					
					\node[vertices,label = above:{{\tiny $b_3$}}] (b3) at (18, 3) {};
					
					\node[vertices,label = above:{{\tiny $a_3$}}] (a3) at (19.5, 4.5) {};
					\node[vertices,label = below:{{\tiny $c_3$}}] (c3) at (19.5, 1.5) {};
					
					\node[vertices,label = above:{{\tiny $b_4$}}] (b4) at (21, 3) {};
					\node[vertices,label = below:{{\tiny $d_4$}}] (d4) at (21, 0) {};
					
					\node[vertices,label = above:{{\tiny $a_4$}}] (a) at (22.5, 4.5) {};
					\node[vertices,label = below:{{\tiny $c_4$}}] (c) at (22.5, 1.5) {};
					\node[vertices,label = below:{{\tiny $e_4$}}] (e) at (22.5, -1.5) {};
					
					\node[vertices,label = above:{{\tiny $b_5$}}] (b) at (24, 3) {};
					\node[vertices,label = below:{{\tiny $d_5$}}] (d) at (24, 0) {};
					
					\node[vertices, label = {[xshift = .25cm,yshift = -.12cm]above:{{{\tiny $b_{(n-1)}$}}}}] (1) at ($(b)+(4,0)$) {};
					\node[vertices, label = {[xshift = .25cm]below:{{{\tiny $d_{(n-1)}$}}}}] (2) at ($(d)+(4,0)$) {};
					
					\node[vertices,label = {[xshift = .3cm,yshift = -.13cm]above:{{{\tiny $a_{(n-1)}$}}}}] (3) at ($(a)+(7,0)$) {}; 
					\node[vertices,label = {[xshift = 0cm]right:{{{\tiny $c_{(n-1)}$}}}}] (4) at ($(c)+(7,0)$) {};
					\node[vertices,label = {[xshift = .3cm]below:{{{\tiny $e_{(n-1)}$}}}}] (5) at ($(e)+(7,0)$) {};
					
					\node[vertices,label = right:{{{\tiny $b_n$}}}] (6) at ($(b)+(7,0)$) {};
					\node[vertices,label = right:{{{\tiny $d_n$}}}] (7) at ($(d)+(7,0)$) {};
					
					\draw (a) -- ++(2.3,0);
					\draw (c) -- ++(2.3,0);
					\draw (e) -- ++(2.3,0);
					\draw (3) -- ++(-2.3,0);
					\draw (4) -- ++(-2.3,0);
					\draw (5) -- ++(-2.3,0);
					\draw (b) -- ++(1,1);
					\draw (b) -- ++(1,-1);
					\draw (d) -- ++(1,1);
					\draw (d) -- ++(1,-1);
					\draw (1) -- ++(-1,1);
					\draw (1) -- ++(-1,-1);
					\draw (2) -- ++(-1,1);
					\draw (2) -- ++(-1,-1);
					
					\node at ($(b)!0.5!(1)$) {$\ldots$};
					\node at ($(d)!0.5!(2)$) {$\ldots$};
					\node at ($(a)!0.5!(3)$) {$\ldots$};
					\node at ($(c)!0.5!(4)$) {$\ldots$};
					\node at ($(e)!0.5!(5)$) {$\ldots$};
					
					\foreach \vertex/\neighbors in {
						a1/{a2},
						b1/{a1,c1},
						c1/{c2},
						a2/{a3},
						b2/{a1,a2,c1,c2},
						b3/{a2,c2,a3,c3},
						c3/{d4},
						b4/{a3,c3,d4},
						a/{a3,b4,b},
						b/{c,d},
						c/{c2,b4,d4,d},
						d/{e},
						e/{d4},
						1/{2,3,4},
						2/{4,5},
						3/{6},
						4/{6,7},
						5/{7},
						6/{7}}
					{
						\foreach \neighbor in \neighbors
						\draw [-] (\vertex)--(\neighbor);
					}
				\end{tikzpicture}
				\subcaption{$\mathcal{W}_n''-N_{\mathcal{W}_n''}[e_2]$}
			\end{subfigure}
			\vspace{-.18cm}
			\caption{}
			\label{fig:Wn_seq}
		\end{figure}
		
		Observe that $N_{\mathcal{W}_n''-N_{\mathcal{W}_n''}[e_2]}(b_1)\subseteq N_{\mathcal{W}_n''-N_{\mathcal{W}_n''}[e_2]}(b_2)$. By \Cref{theorem:N(u)subsetN(v)}, $\Ind(\mathcal{W}_n''-N_{\mathcal{W}_n''}[e_2])\simeq \Ind((\mathcal{W}_n''-N_{\mathcal{W}_n''}[e_2])-\{b_2\})$. Since $(\mathcal{W}_n''-N_{\mathcal{W}_n''}[e_2])-\{b_2\}\cong\mathcal{X}_{n-3}$, we have $\Ind(\mathcal{W}_n''-N_{\mathcal{W}_n''}[e_2])\simeq \Ind(\mathcal{X}_{n-3})$. 
		
		Therefore $lk(w_1,\Ind(\mathcal{W}_n'))\simeq \Ind( \mathcal{W}_n'')\simeq \Sigma^2 (\Ind(\Gamma_{n-2}))\vee \Sigma(\Ind(\mathcal{X}_{n-3}))$ for $n\ge4$.
	\end{proof}
	
	\begin{claim}\label{claim:Xn_relation}
		For $n\ge3$,
		$\Ind(\mathcal{X}_n)\simeq\vee_2 \Sigma^2(\Ind(\Gamma_n))\vee \Sigma^3 (\Ind(\mathcal{U}_{n-1})).$
	\end{claim}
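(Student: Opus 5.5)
The plan is to mimic the $n=2$ computation of $\Ind(\mathcal{X}_2)$: first strip off the left gadget $x_1,\dots,x_8$ using the standard reductions, and only then touch $\Gamma_n$.

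\textbf{Step 1 (path replacement).} In $\mathcal{X}_n$, replace the path $x_4x_2x_1x_3x_5$ by the edge $(x_4,x_5)$. By \Cref{theorem:replace_edge} this gives $\Ind(\mathcal{X}_n)\simeq \Sigma(\Ind(\mathcal{X}_n'))$, where $\mathcal{X}_n'$ consists of $\Gamma_n$ together with $x_4,x_5,x_6,x_7,x_8$. In $\mathcal{X}_n'$ we have $N_{\mathcal{X}_n'}[x_4]=\{x_4,x_5,x_6,x_7\}\subseteq N_{\mathcal{X}_n'}[x_6]=\{x_4,\dots,x_8\}$. Hence \Cref{corollary:N[u]subsetN[v]_lk(v)_del(v)} gives
\[
\Ind(\mathcal{X}_n')\simeq \Ind(\mathcal{X}_n'-\{x_6\})\vee\Sigma(\Ind(\mathcal{X}_n'-N_{\mathcal{X}_n'}[x_6])).
\]
Since $\mathcal{X}_n'-N_{\mathcal{X}_n'}[x_6]=\Gamma_n$, this already produces one summand $\Sigma^2(\Ind(\Gamma_n))$ of $\Ind(\mathcal{X}_n)$.

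\textbf{Step 2 (the graph $H:=\mathcal{X}_n'-\{x_6\}$).} The edges of $H$ are those of $\Gamma_n$ together with
\[
x_7x_4,\quad x_4x_5,\quad x_5x_8,\quad x_7a_1,\quad x_7b_1,\quad x_8b_1,\quad x_8c_1,\quad x_8d_1.
\]
It remains to show $\Ind(H)\simeq \Sigma(\Ind(\Gamma_n))\vee\Sigma^2(\Ind(\mathcal{U}_{n-1}))$. I would split at the vertex $x_8$.
\begin{itemize}
\item \emph{Deletion.} In $H-\{x_8\}$, the vertex $x_5$ is a leaf with unique neighbour $x_4$, hence simplicial. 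By \Cref{theorem:simplicial_vertex}, $del(x_8,\Ind(H))\simeq\Sigma(\Ind(H-\{x_8\}-N[x_4]))$. Removing $x_4,x_5,x_7$ leaves exactly $\Gamma_n$, so the deletion is $\Sigma(\Ind(\Gamma_n))$.
\item \emph{Link.} $H-N_H[x_8]$ removes $x_5,x_8,b_1,c_1,d_1$. In what remains, $x_4$ is a leaf attached to $x_7$, and $x_7$ is attached to $a_1$. Applying \Cref{theorem:simplicial_vertex} at $x_4$ gives $\Sigma\Ind$ of the graph with $x_7,a_1$ also removed. That graph is $\Gamma_n$ minus $a_1,b_1,c_1,d_1$, in which $e_1$ hangs off $d_2$ and $e_2$. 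After re-indexing it is precisely $\mathcal{U}_{n-1}$, with $e_1$ playing the role of $u$. So $lk(x_8,\Ind(H))\simeq\Sigma(\Ind(\mathcal{U}_{n-1}))$.
\end{itemize}

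\textbf{Step 3 (gluing).} Apply \Cref{theorem:lk_del} at $x_8$. This yields $\Ind(H)\simeq \Sigma(\Ind(\Gamma_n))\vee\Sigma^2(\Ind(\mathcal{U}_{n-1}))$. Suspending once more (Step 1) gives
\[
\Ind(\mathcal{X}_n)\simeq\vee_2\Sigma^2(\Ind(\Gamma_n))\vee\Sigma^3(\Ind(\mathcal{U}_{n-1})),
\]
as claimed.

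The main obstacle is the hypothesis of \Cref{theorem:lk_del}: $lk(x_8,\Ind(H))$ must be contractible in $del(x_8,\Ind(H))$. My first attempt would use the leaf $x_5$, whose only neighbours are $x_4$ and $x_8$. Every simplex $\sigma$ of the link either contains $x_4$, or else $\sigma\cup\{x_5\}$ is independent in $H-\{x_8\}$. Thus the link lies in $st(x_4)\cup st(x_5)$ inside the deletion. I would try to exhibit an explicit null-homotopy: first push simplices avoiding $x_4$ to the cone point $x_5$, then contract using the cone on $x_4$ relative to the common part. If that gets messy, the fallback is to find a different vertex $w$ in $H-\{x_8\}$ with every link simplex in $st(w)$. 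Failing that, I would instead first add an edge via \Cref{lemma:edge_contractible}, e.g.\ $(x_5,x_7)$, whose common non-neighbourhood I would check is contractible. The aim is to reach a configuration where \Cref{corollary:N[u]subsetN[v]_lk(v)_del(v)} applies directly, so the splitting is automatic.
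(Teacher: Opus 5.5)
Your Steps 1 and 2 are correct: the deletion of $x_8$ is $\simeq\Sigma(\Ind(\Gamma_n))$, and the link is $\simeq\Sigma(\Ind(\mathcal{U}_{n-1}))$ with $e_1$ playing the role of $u$. But the whole content of your route is the hypothesis of \Cref{theorem:lk_del} in Step 3, and none of the strategies you list establishes it.

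\begin{itemize}
\item \emph{Stars of $x_4$ and $x_5$.} Saying the link lies in $st(x_4)\cup st(x_5)$ carries no information. Since $x_5$ is a leaf on $x_4$ in $H-\{x_8\}$, \emph{every} face of the deletion lies in $st(x_4)\cup st(x_5)$. The deletion itself is not contractible: it is $\simeq\vee_5\mathbb{S}^3$ for $n=3$. Pushing to $x_5$ and then contracting through $x_4$ cannot be done compatibly, because no face contains both $x_4$ and $x_5$.
\item \emph{A single vertex $w$.} Having the link inside $st(w)$ would require some $w\in N_H(x_8)$ with $N_H[w]\subseteq N_H[x_8]$, since no vertex of $H-N_H[x_8]$ is isolated. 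This fails for $x_5,b_1,c_1,d_1$ because of $x_4$, $a_1$, $b_2$, $e_1$ respectively.
\item \emph{Adding $(x_5,x_7)$.} This edge cannot be added by \Cref{lemma:edge_contractible}. Here $H-N_H[\{x_5,x_7\}]=\Gamma_n-\{a_1,b_1\}$. For $n=3$, delete $c_2$ (as $N(a_2)\subseteq N(c_2)$) and then $d_2$ (as $N(d_1)\subseteq N(d_2)$); what remains is $C_8$, so the complex is $\simeq\mathbb{S}^2$, not contractible.
\end{itemize}

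The missing idea is to use $b_1$. Set $K=\Ind(\Gamma_n-\{b_1,c_1,d_1\})$ and $U=\Ind(\Gamma_n-N_{\Gamma_n}[b_1])$. Then the link equals $(x_4*K)\cup(x_7*U)$. The part $x_4*K$ is harmless, since it sits in the cone $x_4*\Ind(\Gamma_n)$. The problematic part $x_7*U$ has base $U=lk(b_1,\Ind(\Gamma_n))$, which $b_1$ cones off. Moreover, $b_1$ and $x_7$ are both non-adjacent to $x_5$ in $H-\{x_8\}$. Hence
\[
R=\bigl(x_4*\Ind(\Gamma_n)\bigr)\cup\bigl(x_5*(b_1*U\cup x_7*U)\bigr)
\]
is a subcomplex of the deletion containing the link. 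It is a union of two cones meeting in the cone $b_1*U$, hence contractible, and your splitting at $x_8$ then goes through.

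The paper sidesteps this verification altogether. It removes the edge $(x_8,b_1)$ and adds $(x_7,d_1)$ and $(x_7,c_1)$ via \Cref{lemma:edge_contractible}; each time $x_4$ or $x_5$ is isolated in the relevant graph. After this, $N[b_1]\subseteq N[x_7]$, and \Cref{corollary:N[u]subsetN[v]_lk(v)_del(v)} splits at $x_7$. The link there is $\mathcal{U}_{n-1}\sqcup P_2$, and in the deletion $x_4$ is simplicial, giving the same two summands you found.
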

	
	\begin{proof}
		Let $\mathcal{X}_n'$ be the graph obtained from $\mathcal{X}_n$ by replacing the path $x_4x_2x_1x_3x_5$ with the edge $(x_4,x_5)$ (see \Cref{fig:Xn_2,fig:Xn'}). 
		By \Cref{theorem:replace_edge}, $\Ind(\mathcal{X}_n)\simeq \Sigma(\Ind(\mathcal{X}_n'))$. 
		Since $N_{\mathcal{X}_n'}[x_4]\subseteq N_{\mathcal{X}_n'}[x_6]$, \Cref{corollary:N[u]subsetN[v]_lk(v)_del(v)} implies $\Ind(\mathcal{X}_n')\simeq \Ind(\mathcal{X}_n'-\{x_6\})\vee\Sigma(\Ind(\mathcal{X}_n'-N_{\mathcal{X}_n'}[x_6]))$. 
		We have $\mathcal{X}_n'-N_{\mathcal{X}_n'}[x_6]\cong \Gamma_n$. Hence $\Ind(\mathcal{X}_n')\simeq \Ind(\mathcal{X}_n'-\{x_6\})\vee\Sigma(\Ind(\Gamma_n))$, and therefore $\Ind(\mathcal{X}_n)\simeq \Sigma (\Ind(\mathcal{X}_n'-\{x_6\}))\vee\Sigma^2(\Ind(\Gamma_n))$.
		\begin{figure}[H]
			\begin{subfigure}{0.30\textwidth}
				\centering
				\begin{tikzpicture}[scale = 0.27, vertices/.style = {draw, fill = black, circle, inner sep = 0.5pt}]
					
					\node[vertices,label = above:{{\tiny $x_4$}}] (x4) at (8, 4.5) {}; 
					\node[vertices,label = below:{{\tiny $x_5$}}] (x5) at (8, 1.5) {}; 
					\node[vertices,label = above:{{\tiny $x_6$}}] (x6) at (9.2, 3) {};
					\node[vertices,label = above:{{\tiny $x_7$}}] (x7) at (10.5, 4.5) {}; 
					\node[vertices,label = below:{{\tiny $x_8$}}] (x8) at (10.5, 1.5) {};
					
					\node[vertices,label = above:{{\tiny $b_1$}}] (b1) at (12, 3) {};
					\node[vertices,label = below:{{\tiny $d_1$}}] (d1) at (12, 0) {};
					
					\node[vertices,label = above:{{\tiny $a_1$}}] (a) at (13.5, 4.5) {};
					\node[vertices,label = below:{{\tiny $c_1$}}] (c) at (13.5, 1.5) {};
					\node[vertices,label = below:{{\tiny $e_1$}}] (e) at (13.5, -1.5) {};
					
					\node[vertices,label = above:{{\tiny $b_2$}}] (b) at (15, 3) {};
					\node[vertices,label = below:{{\tiny $d_2$}}] (d) at (15, 0) {};
					
					\node[vertices, label = {[xshift = .32cm,yshift = -.12cm]above:{{{\tiny $b_{(n-1)}$}}}}] (1) at ($(b)+(3.3,0)$) {};
					\node[vertices, label = {[xshift = .3cm]below:{{{\tiny $d_{(n-1)}$}}}}] (2) at ($(d)+(3.3,0)$) {};
					
					\node[vertices,label = {[xshift = .3cm,yshift = -.13cm]above:{{{\tiny $a_{(n-1)}$}}}}] (3) at ($(a)+(6.3,0)$) {}; 
					\node[vertices,label = {[xshift = .27cm]below:{{{\tiny $c_{(n-1)}$}}}}] (4) at ($(c)+(6.3,0)$) {};
					\node[vertices,label = {[xshift = .3cm]below:{{{\tiny $e_{(n-1)}$}}}}] (5) at ($(e)+(6.3,0)$) {};
					
					\node[vertices,label = above:{{{\tiny $b_n$}}}] (6) at ($(b)+(6.3,0)$) {};
					\node[vertices,label = below:{{{\tiny $d_n$}}}] (7) at ($(d)+(6.3,0)$) {};
					
					\draw (a) -- ++(2.3,0);
					\draw (c) -- ++(2.3,0);
					\draw (e) -- ++(2.3,0);
					\draw (3) -- ++(-2.3,0);
					\draw (4) -- ++(-2.3,0);
					\draw (5) -- ++(-2.3,0);
					\draw (b) -- ++(1,1);
					\draw (b) -- ++(1,-1);
					\draw (d) -- ++(1,1);
					\draw (d) -- ++(1,-1);
					\draw (1) -- ++(-1,1);
					\draw (1) -- ++(-1,-1);
					\draw (2) -- ++(-1,1);
					\draw (2) -- ++(-1,-1);
					
					\node at ($(b)!0.5!(1)$) {$\ldots$};
					\node at ($(d)!0.5!(2)$) {$\ldots$};
					\node at ($(a)!0.5!(3)$) {$\ldots$};
					\node at ($(c)!0.5!(4)$) {$\ldots$};
					\node at ($(e)!0.5!(5)$) {$\ldots$};
					
					\foreach \vertex/\neighbors in {
						x7/{x6,x4,b1},
						x8/{x6,x5,b1,d1},
						x6/{x4,x5},
						x4/{x5},
						b1/{d1},
						a/{x7,b1,b},
						b/{c,d},
						c/{x8,b1,d1,d},
						d/{e},
						e/{d1},
						1/{2,3,4},
						2/{4,5},
						3/{6},
						4/{6,7},
						5/{7},
						6/{7}}
					{
						\foreach \neighbor in \neighbors
						\draw [-] (\vertex)--(\neighbor);
					}
				\end{tikzpicture}
				\subcaption{$\mathcal{X}_n'$}
				\label{fig:Xn'}
			\end{subfigure}
			\hfill
			\begin{subfigure}{0.30\textwidth}
				\centering
				\begin{tikzpicture}[scale = 0.3, vertices/.style = {draw, fill = black, circle, inner sep = 0.5pt}]
					
					\node[vertices,label = above:{{\tiny $x_4$}}] (x4) at (8.7, 4.5) {}; 
					\node[vertices,label = below:{{\tiny $x_5$}}] (x5) at (8.7, 1.5) {}; 
					\node[vertices,label = above:{{\tiny $x_7$}}] (x7) at (10.5, 4.5) {}; 
					\node[vertices,label = below:{{\tiny $x_8$}}] (x8) at (10.5, 1.5) {};
					
					\node[vertices,label = above:{{\tiny $b_1$}}] (b1) at (12, 3) {};
					\node[vertices,label = below:{{\tiny $d_1$}}] (d1) at (12, 0) {};
					
					\node[vertices,label = above:{{\tiny $a_1$}}] (a) at (13.5, 4.5) {};
					\node[vertices,label = below:{{\tiny $c_1$}}] (c) at (13.5, 1.5) {};
					\node[vertices,label = below:{{\tiny $e_1$}}] (e) at (13.5, -1.5) {};
					
					\node[vertices,label = above:{{\tiny $b_2$}}] (b) at (15, 3) {};
					\node[vertices,label = below:{{\tiny $d_2$}}] (d) at (15, 0) {};
					
					\node[vertices, label = {[xshift = .32cm,yshift = -.12cm]above:{{{\tiny $b_{(n-1)}$}}}}] (1) at ($(b)+(3.3,0)$) {};
					\node[vertices, label = {[xshift = .3cm]below:{{{\tiny $d_{(n-1)}$}}}}] (2) at ($(d)+(3.3,0)$) {};
					
					\node[vertices,label = {[xshift = .3cm,yshift = -.13cm]above:{{{\tiny $a_{(n-1)}$}}}}] (3) at ($(a)+(6.3,0)$) {}; 
					\node[vertices,label = {[xshift = .27cm]below:{{{\tiny $c_{(n-1)}$}}}}] (4) at ($(c)+(6.3,0)$) {};
					\node[vertices,label = {[xshift = .3cm]below:{{{\tiny $e_{(n-1)}$}}}}] (5) at ($(e)+(6.3,0)$) {};
					
					\node[vertices,label = above:{{{\tiny $b_n$}}}] (6) at ($(b)+(6.3,0)$) {};
					\node[vertices,label = below:{{{\tiny $d_n$}}}] (7) at ($(d)+(6.3,0)$) {};
					
					\draw (a) -- ++(2.3,0);
					\draw (c) -- ++(2.3,0);
					\draw (e) -- ++(2.3,0);
					\draw (3) -- ++(-2.3,0);
					\draw (4) -- ++(-2.3,0);
					\draw (5) -- ++(-2.3,0);
					\draw (b) -- ++(1,1);
					\draw (b) -- ++(1,-1);
					\draw (d) -- ++(1,1);
					\draw (d) -- ++(1,-1);
					\draw (1) -- ++(-1,1);
					\draw (1) -- ++(-1,-1);
					\draw (2) -- ++(-1,1);
					\draw (2) -- ++(-1,-1);
					
					\node at ($(b)!0.5!(1)$) {$\ldots$};
					\node at ($(d)!0.5!(2)$) {$\ldots$};
					\node at ($(a)!0.5!(3)$) {$\ldots$};
					\node at ($(c)!0.5!(4)$) {$\ldots$};
					\node at ($(e)!0.5!(5)$) {$\ldots$};
					
					\foreach \vertex/\neighbors in {
						x7/{x4,b1},
						x8/{x5,b1,d1},
						x4/{x5},
						b1/{d1},
						a/{x7,b1,b},
						b/{c,d},
						c/{x8,b1,d1,d},
						d/{e},
						e/{d1},
						1/{2,3,4},
						2/{4,5},
						3/{6},
						4/{6,7},
						5/{7},
						6/{7}}
					{
						\foreach \neighbor in \neighbors
						\draw [-] (\vertex)--(\neighbor);
					}
				\end{tikzpicture}
				\subcaption{$\mathcal{X}_n'-\{x_6\}$}
				\label{fig:Xn'_x6}
			\end{subfigure}
			\begin{subfigure}{0.333\textwidth}
				\centering 
				\begin{tikzpicture}[scale = 0.3, vertices/.style = {draw, fill = black, circle, inner
						sep = 0.5pt}]
					
					\node[vertices,label = above:{{\tiny $x_4$}}] (x4) at (9.2, 4.5) {}; 
					\node[vertices,label = below:{{\tiny $x_5$}}] (x5) at (9.2, 1.5) {}; 
					\node[vertices,label = above:{{\tiny $x_7$}}] (x7) at (10.7, 4.5) {}; 
					\node[vertices,label = below:{{\tiny $x_8$}}] (x8) at (10.7, 1.5) {};
					
					\node[vertices,label = above:{{\tiny $b_1$}}] (b1) at (12, 3) {};
					\node[vertices,label = below:{{\tiny $d_1$}}] (d1) at (12, 0) {};
					
					\node[vertices,label = above:{{\tiny $a_1$}}] (a) at (13.5, 4.5) {};
					\node[vertices,label = below:{{\tiny $c_1$}}] (c) at (13.5, 1.5) {};
					\node[vertices,label = below:{{\tiny $e_1$}}] (e) at (13.5, -1.5) {};
					
					\node[vertices,label = above:{{\tiny $b_2$}}] (b) at (15, 3) {};
					\node[vertices,label = below:{{\tiny $d_2$}}] (d) at (15, 0) {};
					
					\node[vertices, label = {[xshift = .32cm,yshift = -.12cm]above:{{{\tiny $b_{(n-1)}$}}}}] (1) at ($(b)+(3.35,0)$) {};
					\node[vertices, label = {[xshift = .3cm]below:{{{\tiny $d_{(n-1)}$}}}}] (2) at ($(d)+(3.35,0)$) {};
					
					\node[vertices,label = {[xshift = .3cm,yshift = -.13cm]above:{{{\tiny $a_{(n-1)}$}}}}] (3) at ($(a)+(6.35,0)$) {}; 
					\node[vertices,label = {[xshift = .27cm]below:{{{\tiny $c_{(n-1)}$}}}}] (4) at ($(c)+(6.35,0)$) {};
					\node[vertices,label = {[xshift = .3cm]below:{{{\tiny $e_{(n-1)}$}}}}] (5) at ($(e)+(6.35,0)$) {};
					
					\node[vertices,label = above:{{{\tiny $b_n$}}}] (6) at ($(b)+(6.35,0)$) {};
					\node[vertices,label = below:{{{\tiny $d_n$}}}] (7) at ($(d)+(6.35,0)$) {};
					
					\draw (a) -- ++(2.3,0);
					\draw (c) -- ++(2.3,0);
					\draw (e) -- ++(2.3,0);
					\draw (3) -- ++(-2.3,0);
					\draw (4) -- ++(-2.3,0);
					\draw (5) -- ++(-2.3,0);
					\draw (b) -- ++(1,1);
					\draw (b) -- ++(1,-1);
					\draw (d) -- ++(1,1);
					\draw (d) -- ++(1,-1);
					\draw (1) -- ++(-1,1);
					\draw (1) -- ++(-1,-1);
					\draw (2) -- ++(-1,1);
					\draw (2) -- ++(-1,-1);
					
					\node at ($(b)!0.5!(1)$) {$\ldots$};
					\node at ($(d)!0.5!(2)$) {$\ldots$};
					\node at ($(a)!0.5!(3)$) {$\ldots$};
					\node at ($(c)!0.5!(4)$) {$\ldots$};
					\node at ($(e)!0.5!(5)$) {$\ldots$};
					
					\foreach \vertex/\neighbors in {
						x7/{x4,b1},
						x8/{x5,d1},
						x4/{x5},
						b1/{d1},
						a/{x7,b1,b},
						b/{c,d},
						c/{x8,b1,d1,d},
						d/{e},
						e/{d1},
						1/{2,3,4},
						2/{4,5},
						3/{6},
						4/{6,7},
						5/{7},
						6/{7}}
					{
						\foreach \neighbor in \neighbors
						\draw [-] (\vertex)--(\neighbor);
					}
				\end{tikzpicture}
				\subcaption{$\mathcal{X}_n''$}
				\label{fig:Xn''}
			\end{subfigure}
			\vspace{-.18cm}
			\caption{}
			\label{}
		\end{figure}  
		
		Observe that $x_4$ is an isolated vertex in $(\mathcal{X}_n'-\{x_6\})-N_{\mathcal{X}_n'-\{x_6\}}[\{x_8,b_1\}]$ (see \Cref{fig:Xn'_x6}). Hence $\Ind((\mathcal{X}_n'-\{x_6\})-N_{\mathcal{X}_n'-\{x_6\}}[\{x_8,b_1\}])$ is contractible. From \Cref{lemma:edge_contractible}, $ \Ind(\mathcal{X}_n'-\{x_6\})\simeq \Ind((\mathcal{X}_n'-\{x_6\})- \{(x_8,b_1)\}).$ Let $\mathcal{X}_n'': = (\mathcal{X}_n'-\{x_6\})- \{(x_8,b_1)\}$ (see \Cref{fig:Xn''}). Then $\Ind(\mathcal{X}_n'-\{x_6\})\simeq \Ind(\mathcal{X}_n'')$. It follows that $\Ind(\mathcal{X}_n)\simeq \Sigma(\Ind(\mathcal{X}_n''))\vee \Sigma^2(\Ind(\Gamma_n)).$
		
		Now, since $\mathcal{X}_n''-N_{\mathcal{X}_n''}[\{x_7,d_1\}]$ contains an isolated vertex $x_5$, $\Ind(\mathcal{X}_n''-N_{\mathcal{X}_n''}[\{x_7,d_1\}])$ is contractible. Hence $ \Ind(\mathcal{X}_n'')\simeq \Ind(\mathcal{X}_n''\cup \{(x_7,d_1)\})$. Moreover, in $(\mathcal{X}_n''\cup \{(x_7,d_1)\})-N_{\mathcal{X}_n''\cup \{(x_7,d_1)\}}[\{x_7,c_1\}]$, $x_5$ is an isolated vertex. Therefore $ \Ind(\mathcal{X}_n''\cup \{(x_7,d_1)\})\simeq \Ind((\mathcal{X}_n''\cup \{(x_7,d_1)\})\cup \{(x_7,c_1)\}).$ Let $\mathcal{X}_n''': = (\mathcal{X}_n''\cup \{(x_7,d_1)\})\cup \{(x_7,c_1)\}$ (see \Cref{fig:Xn'''}). Then $\Ind(\mathcal{X}_n'')\simeq \Ind(\mathcal{X}_n''')$. This yields $\Ind(\mathcal{X}_n) \simeq \Sigma(\Ind(\mathcal{X}_n''')) \vee\Sigma^2(\Ind(\Gamma_n)).$
		
		We have $N_{\mathcal{X}_n'''}[b_1]\subseteq N_{\mathcal{X}_n'''}[x_7]$. By \Cref{corollary:N[u]subsetN[v]_lk(v)_del(v)}, $\Ind(\mathcal{X}_n''')\simeq \Ind(\mathcal{X}_n'''-\{x_7\})\vee\Sigma(\Ind(\mathcal{X}_n'''-N_{\mathcal{X}_n'''}[x_7]))$. Note that $\mathcal{X}_n'''-N_{\mathcal{X}_n'''}[x_7]\cong \mathcal{U}_{n-1}\sqcup P_2$ (see \Cref{fig:Xn'''_lk_x7}). Hence $\Ind(\mathcal{X}_n''')\simeq \Ind(\mathcal{X}_n'''-\{x_7\})\vee\Sigma^2(\Ind(\mathcal{U}_{n-1}))$.
		\begin{figure}[h!]
			\centering
			\begin{subfigure}{0.31\textwidth}
				\centering
				\begin{tikzpicture}[scale = 0.27, vertices/.style = {draw, fill = black, circle, inner sep = 0.5pt}]
					
					\node[vertices,label = above:{{\tiny $x_4$}}] (x4) at (9.2, 4.5) {}; 
					\node[vertices,label = below:{{\tiny $x_5$}}] (x5) at (9.2, 1.5) {}; 
					\node[vertices,label = above:{{\tiny $x_7$}}] (x7) at (10.7, 4.5) {}; 
					\node[vertices,label = below:{{\tiny $x_8$}}] (x8) at (10.7, 1.5) {};
					
					\node[vertices,label = above:{{\tiny $b_1$}}] (b1) at (12, 3) {};
					\node[vertices,label = below:{{\tiny $d_1$}}] (d1) at (12, 0) {};
					
					\node[vertices,label = above:{{\tiny $a_1$}}] (a) at (13.5, 4.5) {};
					\node[vertices,label = below:{{\tiny $c_1$}}] (c) at (13.5, 1.5) {};
					\node[vertices,label = below:{{\tiny $e_1$}}] (e) at (13.5, -1.5) {};
					
					\node[vertices,label = above:{{\tiny $b_2$}}] (b) at (15, 3) {};
					\node[vertices,label = below:{{\tiny $d_2$}}] (d) at (15, 0) {};
					
					\node[vertices, label = {[xshift = .32cm,yshift = -.12cm]above:{{{\tiny $b_{(n-1)}$}}}}] (1) at ($(b)+(3.35,0)$) {};
					\node[vertices, label = {[xshift = .3cm]below:{{{\tiny $d_{(n-1)}$}}}}] (2) at ($(d)+(3.35,0)$) {};
					
					\node[vertices,label = {[xshift = .3cm,yshift = -.13cm]above:{{{\tiny $a_{(n-1)}$}}}}] (3) at ($(a)+(6.35,0)$) {}; 
					\node[vertices,label = {[xshift = .27cm]below:{{{\tiny $c_{(n-1)}$}}}}] (4) at ($(c)+(6.35,0)$) {};
					\node[vertices,label = {[xshift = .3cm]below:{{{\tiny $e_{(n-1)}$}}}}] (5) at ($(e)+(6.35,0)$) {};
					
					\node[vertices,label = above:{{{\tiny $b_n$}}}] (6) at ($(b)+(6.35,0)$) {};
					\node[vertices,label = below:{{{\tiny $d_n$}}}] (7) at ($(d)+(6.35,0)$) {};
					
					\draw (a) -- ++(2.3,0);
					\draw (c) -- ++(2.3,0);
					\draw (e) -- ++(2.3,0);
					\draw (3) -- ++(-2.3,0);
					\draw (4) -- ++(-2.3,0);
					\draw (5) -- ++(-2.3,0);
					\draw (b) -- ++(1,1);
					\draw (b) -- ++(1,-1);
					\draw (d) -- ++(1,1);
					\draw (d) -- ++(1,-1);
					\draw (1) -- ++(-1,1);
					\draw (1) -- ++(-1,-1);
					\draw (2) -- ++(-1,1);
					\draw (2) -- ++(-1,-1);
					
					\node at ($(b)!0.5!(1)$) {$\ldots$};
					\node at ($(d)!0.5!(2)$) {$\ldots$};
					\node at ($(a)!0.5!(3)$) {$\ldots$};
					\node at ($(c)!0.5!(4)$) {$\ldots$};
					\node at ($(e)!0.5!(5)$) {$\ldots$};
					
					\foreach \vertex/\neighbors in {
						x7/{x4,b1,d1},
						x8/{x5,d1},
						x4/{x5},
						b1/{d1},
						a/{x7,b1,b},
						b/{c,d},
						c/{x8,b1,d1,d},
						d/{e},
						e/{d1},
						1/{2,3,4},
						2/{4,5},
						3/{6},
						4/{6,7},
						5/{7},
						6/{7}}
					{
						\foreach \neighbor in \neighbors
						\draw [-] (\vertex)--(\neighbor);
					}
					\draw [-, bend left = 25] (x7) to (c);
				\end{tikzpicture} 
				\subcaption{$\mathcal{X}_n'''$}
				\label{fig:Xn'''}
			\end{subfigure}
			\begin{subfigure}{0.30\textwidth}
				\centering
				\begin{tikzpicture}[scale = 0.3, vertices/.style = {draw, fill = black, circle, inner sep = 0.5pt}]
					
					\node[vertices,label = below:{{\tiny $x_5$}}] (x5) at (11.3, 1.5) {}; 
					\node[vertices,label = below:{{\tiny $x_8$}}] (x8) at (13.3, 1.5) {};
					
					\node[vertices,label = below:{{\tiny $e_1$}}] (e1) at (13.5, -1.5) {};
					
					\node[vertices,label = above:{{\tiny $b_2$}}] (b2) at (15, 3) {};
					\node[vertices,label = below:{{\tiny $d_2$}}] (d2) at (15, 0) {};
					
					\node[vertices,label = above:{{\tiny $a_2$}}] (a) at (16.5, 4.5) {};
					\node[vertices,label = below:{{\tiny $c_2$}}] (c) at (16.5, 1.5) {};
					\node[vertices,label = below:{{\tiny $e_2$}}] (e) at (16.5, -1.5) {};
					
					\node[vertices,label = above:{{\tiny $b_3$}}] (b) at (18, 3) {};
					\node[vertices,label = below:{{\tiny $d_3$}}] (d) at (18, 0) {};
					
					\node[vertices, label = {[xshift = .32cm,yshift = -.12cm]above:{{{\tiny $b_{(n-1)}$}}}}] (1) at ($(b)+(3.3,0)$) {};
					\node[vertices, label = {[xshift = .3cm]below:{{{\tiny $d_{(n-1)}$}}}}] (2) at ($(d)+(3.3,0)$) {};
					
					\node[vertices,label = {[xshift = .3cm,yshift = -.13cm]above:{{{\tiny $a_{(n-1)}$}}}}] (3) at ($(a)+(6.3,0)$) {}; 
					\node[vertices,label = {[xshift = .27cm]below:{{{\tiny $c_{(n-1)}$}}}}] (4) at ($(c)+(6.3,0)$) {};
					\node[vertices,label = {[xshift = .3cm]below:{{{\tiny $e_{(n-1)}$}}}}] (5) at ($(e)+(6.3,0)$) {};
					
					\node[vertices,label = above:{{{\tiny $b_n$}}}] (6) at ($(b)+(6.3,0)$) {};
					\node[vertices,label = below:{{{\tiny $d_n$}}}] (7) at ($(d)+(6.3,0)$) {};
					
					\draw (a) -- ++(2.3,0);
					\draw (c) -- ++(2.3,0);
					\draw (e) -- ++(2.3,0);
					\draw (3) -- ++(-2.3,0);
					\draw (4) -- ++(-2.3,0);
					\draw (5) -- ++(-2.3,0);
					\draw (b) -- ++(1,1);
					\draw (b) -- ++(1,-1);
					\draw (d) -- ++(1,1);
					\draw (d) -- ++(1,-1);
					\draw (1) -- ++(-1,1);
					\draw (1) -- ++(-1,-1);
					\draw (2) -- ++(-1,1);
					\draw (2) -- ++(-1,-1);
					
					\node at ($(b)!0.5!(1)$) {$\ldots$};
					\node at ($(d)!0.5!(2)$) {$\ldots$};
					\node at ($(a)!0.5!(3)$) {$\ldots$};
					\node at ($(c)!0.5!(4)$) {$\ldots$};
					\node at ($(e)!0.5!(5)$) {$\ldots$};
					
					\foreach \vertex/\neighbors in {
						x8/{x5},
						d2/{e1,b2},
						a/{b2,b},
						b/{c,d},
						c/{b2,d2,d},
						d/{e},
						e/{e1,d2},
						1/{2,3,4},
						2/{4,5},
						3/{6},
						4/{6,7},
						5/{7},
						6/{7}}
					{
						\foreach \neighbor in \neighbors
						\draw [-] (\vertex)--(\neighbor);
					}
				\end{tikzpicture}
				\subcaption{$\mathcal{X}_n'''-N_{\mathcal{X}_n'''}[x_7]$}
				\label{fig:Xn'''_lk_x7}
			\end{subfigure}
			\begin{subfigure}{0.30\textwidth}
				\centering
				\begin{tikzpicture}[scale = 0.3, vertices/.style = {draw, fill = black, circle, inner sep = 0.5pt}]
					
					\node[vertices,label = above:{{\tiny $x_4$}}] (x4) at (9.1, 4.5) {}; 
					\node[vertices,label = below:{{\tiny $x_5$}}] (x5) at (9.1, 1.5) {}; 
					\node[vertices,label = below:{{\tiny $x_8$}}] (x8) at (10.7, 1.5) {};
					
					\node[vertices,label = above:{{\tiny $b_1$}}] (b1) at (12, 3) {};
					\node[vertices,label = below:{{\tiny $d_1$}}] (d1) at (12, 0) {};
					
					\node[vertices,label = above:{{\tiny $a_1$}}] (a) at (13.5, 4.5) {};
					\node[vertices,label = below:{{\tiny $c_1$}}] (c) at (13.5, 1.5) {};
					\node[vertices,label = below:{{\tiny $e_1$}}] (e) at (13.5, -1.5) {};
					
					\node[vertices,label = above:{{\tiny $b_2$}}] (b) at (15, 3) {};
					\node[vertices,label = below:{{\tiny $d_2$}}] (d) at (15, 0) {};
					
					\node[vertices, label = {[xshift = .32cm,yshift = -.12cm]above:{{{\tiny $b_{(n-1)}$}}}}] (1) at ($(b)+(3.3,0)$) {};
					\node[vertices, label = {[xshift = .3cm]below:{{{\tiny $d_{(n-1)}$}}}}] (2) at ($(d)+(3.3,0)$) {};
					
					\node[vertices,label = {[xshift = .3cm,yshift = -.13cm]above:{{{\tiny $a_{(n-1)}$}}}}] (3) at ($(a)+(6.3,0)$) {}; 
					\node[vertices,label = {[xshift = .27cm]below:{{{\tiny $c_{(n-1)}$}}}}] (4) at ($(c)+(6.3,0)$) {};
					\node[vertices,label = {[xshift = .3cm]below:{{{\tiny $e_{(n-1)}$}}}}] (5) at ($(e)+(6.3,0)$) {};
					
					\node[vertices,label = above:{{{\tiny $b_n$}}}] (6) at ($(b)+(6.3,0)$) {};
					\node[vertices,label = below:{{{\tiny $d_n$}}}] (7) at ($(d)+(6.3,0)$) {};
					
					\draw (a) -- ++(2.3,0);
					\draw (c) -- ++(2.3,0);
					\draw (e) -- ++(2.3,0);
					\draw (3) -- ++(-2.3,0);
					\draw (4) -- ++(-2.3,0);
					\draw (5) -- ++(-2.3,0);
					\draw (b) -- ++(1,1);
					\draw (b) -- ++(1,-1);
					\draw (d) -- ++(1,1);
					\draw (d) -- ++(1,-1);
					\draw (1) -- ++(-1,1);
					\draw (1) -- ++(-1,-1);
					\draw (2) -- ++(-1,1);
					\draw (2) -- ++(-1,-1);
					
					\node at ($(b)!0.5!(1)$) {$\ldots$};
					\node at ($(d)!0.5!(2)$) {$\ldots$};
					\node at ($(a)!0.5!(3)$) {$\ldots$};
					\node at ($(c)!0.5!(4)$) {$\ldots$};
					\node at ($(e)!0.5!(5)$) {$\ldots$};
					
					\foreach \vertex/\neighbors in {
						x8/{x5,d1},
						x4/{x5},
						b1/{d1},
						a/{b1,b},
						b/{c,d},
						c/{x8,b1,d1,d},
						d/{e},
						e/{d1},
						1/{2,3,4},
						2/{4,5},
						3/{6},
						4/{6,7},
						5/{7},
						6/{7}}
					{
						\foreach \neighbor in \neighbors
						\draw [-] (\vertex)--(\neighbor);
					}
				\end{tikzpicture}
				\subcaption{$\mathcal{X}_n'''-\{x_7\}$}
				\label{fig:Xn'''_x7}
			\end{subfigure}
			\vspace{-.2cm}
			\caption{}
		\end{figure}
		Further, $x_4$ is a simplicial vertex in $\mathcal{X}_n'''-\{x_7\}$ with $N_{\mathcal{X}_n'''-\{x_7\}}[x_4] = \{x_5\}$ (see \Cref{fig:Xn'''_x7}). By \Cref{theorem:simplicial_vertex}, $\Ind(\mathcal{X}_n'''-\{x_7\})\simeq \Sigma(\Ind((\mathcal{X}_n'''-\{x_7\})-N_{\mathcal{X}_n'''-\{x_7\}}[x_5]))$. Note that $(\mathcal{X}_n'''-\{x_7\})-N_{\mathcal{X}_n'''-\{x_7\}}[x_5]\cong\Gamma_n$. Hence $\Ind(\mathcal{X}_n'''-\{x_7\})\simeq \Sigma( \Ind(\Gamma_n))$. 
		It follows that $\Ind(\mathcal{X}_n''')\simeq \Sigma( \Ind(\Gamma_n))\vee\Sigma^2(\Ind(\mathcal{U}_{n-1}))$. Thus $\Ind(\mathcal{X}_n)\simeq \vee_2 \Sigma^2(\Ind(\Gamma_n))\vee \Sigma^3 (\Ind(\mathcal{U}_{n-1})).$
	\end{proof}
	
	\begin{claim}\label{claim:Yn_relation}
		For $n\ge3$, $\Ind(\mathcal{Y}_n)\simeq \Sigma^2(\Ind(\mathcal{W}_{n-1}))\vee \Sigma^3(\Ind(\mathcal{W}_{n-2})).$
	\end{claim}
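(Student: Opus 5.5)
The plan is to peel off the pendant structure on the left of $\mathcal{Y}_n$ using simplicial vertices, via \Cref{theorem:simplicial_vertex}. Then I identify the remaining graphs, up to isomorphism, with members of the family $\{\mathcal{W}_n\}$. The two isomorphisms at the end are where I expect the real work to be.

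\textbf{Step 1.} The vertex $y_2$ has degree one in $\mathcal{Y}_n$, with $N_{\mathcal{Y}_n}(y_2)=\{y_6\}$, so $y_2$ is simplicial. By \Cref{theorem:simplicial_vertex},
$$\Ind(\mathcal{Y}_n)\simeq \Sigma(\Ind(\mathcal{Y}_n-N_{\mathcal{Y}_n}[y_6])).$$
Here $N_{\mathcal{Y}_n}[y_6]=\{y_2,y_6,d_1,e_1\}$. Let $\mathcal{Y}_n':=\mathcal{Y}_n-N_{\mathcal{Y}_n}[y_6]$.

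\textbf{Step 2.} In $\mathcal{Y}_n'$ the vertex $y_1$ is simplicial, with $N_{\mathcal{Y}_n'}(y_1)=\{y_3,y_5\}$; these two are adjacent. By \Cref{theorem:simplicial_vertex},
$$\Ind(\mathcal{Y}_n')\simeq \Sigma(\Ind(\mathcal{Y}_n'-N_{\mathcal{Y}_n'}[y_3]))\vee\Sigma(\Ind(\mathcal{Y}_n'-N_{\mathcal{Y}_n'}[y_5])).$$

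\textbf{Step 3: the $y_3$ term.} Removing $N_{\mathcal{Y}_n'}[y_3]=\{y_1,y_3,y_4,y_5\}$ leaves exactly $\Gamma_n-\{d_1,e_1\}$. I claim this graph is isomorphic to $\mathcal{W}_{n-1}$. The isomorphism uses the vertical reflection of the grid, which swaps $a\leftrightarrow e$ and $b\leftrightarrow d$ and fixes the $c$'s, combined with shifting column indices by one.

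Under this map:
\begin{itemize}
\item $c_1$ plays the role of $w_2$: it is adjacent to $b_2,c_2,d_2$, which are the images of $d_1,c_1,b_1$ of $\mathcal{W}_{n-1}$.
\item $a_1$ plays the role of $w_3$: it is adjacent to $b_2,a_2$, the images of $d_1,e_1$.
\item $b_1$ plays the role of $w_1$: it is adjacent to $a_1$ and $c_1$.
\end{itemize}
This gives the summand $\Sigma^2(\Ind(\mathcal{W}_{n-1}))$ after accounting for the suspension from Step 1.

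\textbf{Step 4: the $y_5$ term.} Removing $N_{\mathcal{Y}_n'}[y_5]=\{y_1,y_3,y_5,b_1,c_1\}$ leaves $y_4$ as a leaf attached to $a_1$. One more application of \Cref{theorem:simplicial_vertex} gives a suspension of the independence complex of the graph obtained by deleting $\{y_4,a_1,a_2,b_2\}$.

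What remains is columns $\ge 2$ of $\Gamma_n$ with $a_2,b_2$ removed, with $d_2,c_2,e_2$ forming the left end:
\begin{itemize}
\item $d_2$ is adjacent to $c_2$ and $e_2$;
\item $c_2$ is adjacent to $b_3,c_3,d_3$;
\item $e_2$ is adjacent to $d_3,e_3$.
\end{itemize}
This is precisely $\mathcal{W}_{n-2}$ with $d_2,c_2,e_2$ playing the roles of $w_1,w_2,w_3$, and no reflection is needed. For $n=3$ the correct edges match the definition of $\mathcal{W}_1$, since $c_2$ has no neighbour $c_3$ and $e_2$ has no neighbour $e_3$. Collecting the three suspensions gives $\Sigma^3(\Ind(\mathcal{W}_{n-2}))$, which completes the claimed formula.

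The main obstacle is checking carefully that these two isomorphisms hold, including the boundary column $n$, in both the general case and the small case $n=3$. Everything else is a mechanical application of \Cref{theorem:simplicial_vertex}.
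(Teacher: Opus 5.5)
Your proposal is correct and follows the paper's proof step for step. It uses the same three applications of the simplicial-vertex lemma (at $y_2$, then $y_1$, then $y_4$) and the same identifications $\mathcal{Y}_n'-N_{\mathcal{Y}_n'}[y_3]\cong\mathcal{W}_{n-1}$ and $(\mathcal{Y}_n'-N_{\mathcal{Y}_n'}[y_5])-N[a_1]\cong\mathcal{W}_{n-2}$; your explicit top--bottom reflection for the first isomorphism is a detail the paper leaves implicit, and it checks out.
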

	
	\begin{proof}
		Since $y_2$ is a simplicial vertex in $\mathcal{Y}_n$ with $N_{\mathcal{Y}_n}(y_2) = \{y_6\}$ (\Cref{fig:Yn_2}), \Cref{theorem:simplicial_vertex} gives $\Ind(\mathcal{Y}_n)\simeq \Sigma(\Ind(\mathcal{Y}_n-N_{\mathcal{Y}_n}[y_6]))$. Let $\mathcal{Y}_n': = \mathcal{Y}_n-N_{\mathcal{Y}_n}[y_6]$ (see \Cref{fig:Yn'}). Then $\Ind(\mathcal{Y}_n)\simeq \Sigma(\Ind(\mathcal{Y}_n')).$
		\begin{figure}[H]
			\centering
			\begin{subfigure}{0.40\textwidth}
				\centering
				\begin{tikzpicture}[scale = 0.27, vertices/.style = {draw, fill = black, circle, inner
						sep = 0.5pt}]
					
					\node[vertices,label = below:{{\tiny $y_1$}}] (y1) at (7.5, 1.5) {};
					\node[vertices,label = above:{{\tiny $y_3$}}] (y3) at (9, 3) {};
					\node[vertices,label = above:{{\tiny $y_4$}}] (y4) at (10.5, 4.5) {}; 
					\node[vertices,label = below:{{\tiny $y_5$}}] (y5) at (10.5, 1.5) {}; 
					
					\node[vertices,label = above:{{\tiny $b_1$}}] (b1) at (12, 3) {};
					
					\node[vertices,label = above:{{\tiny $a_1$}}] (a1) at (13.5, 4.5) {};
					\node[vertices,label = below:{{\tiny $c_1$}}] (c1) at (13.5, 1.5) {};
					
					\node[vertices,label = above:{{\tiny $b_2$}}] (b2) at (15, 3) {};
					\node[vertices,label = below:{{\tiny $d_2$}}] (d2) at (15, 0) {};
					
					\node[vertices,label = above:{{\tiny $a_2$}}] (a) at (16.5, 4.5) {};
					\node[vertices,label = below:{{\tiny $c_2$}}] (c) at (16.5, 1.5) {};
					\node[vertices,label = below:{{\tiny $e_2$}}] (e) at (16.5, -1.5) {};
					
					\node[vertices,label = above:{{\tiny $b_3$}}] (b) at (18, 3) {};
					\node[vertices,label = below:{{\tiny $d_3$}}] (d) at (18, 0) {};
					
					\node[vertices, label = {[xshift = .32cm,yshift = -.12cm]above:{{{\tiny $b_{(n-1)}$}}}}] (1) at ($(b)+(4,0)$) {};
					\node[vertices, label = {[xshift = .3cm]below:{{{\tiny $d_{(n-1)}$}}}}] (2) at ($(d)+(4,0)$) {};
					
					\node[vertices,label = {[xshift = .3cm,yshift = -.13cm]above:{{{\tiny $a_{(n-1)}$}}}}] (3) at ($(a)+(7,0)$) {}; 
					\node[vertices,label = {[xshift = 0cm]right:{{{\tiny $c_{(n-1)}$}}}}] (4) at ($(c)+(7,0)$) {};
					\node[vertices,label = {[xshift = .3cm]below:{{{\tiny $e_{(n-1)}$}}}}] (5) at ($(e)+(7,0)$) {};
					
					\node[vertices,label = right:{{{\tiny $b_n$}}}] (6) at ($(b)+(7,0)$) {};
					\node[vertices,label = right:{{{\tiny $d_n$}}}] (7) at ($(d)+(7,0)$) {};
					
					\draw (a) -- ++(2.3,0);
					\draw (c) -- ++(2.3,0);
					\draw (e) -- ++(2.3,0);
					\draw (3) -- ++(-2.3,0);
					\draw (4) -- ++(-2.3,0);
					\draw (5) -- ++(-2.3,0);
					\draw (b) -- ++(1,1);
					\draw (b) -- ++(1,-1);
					\draw (d) -- ++(1,1);
					\draw (d) -- ++(1,-1);
					\draw (1) -- ++(-1,1);
					\draw (1) -- ++(-1,-1);
					\draw (2) -- ++(-1,1);
					\draw (2) -- ++(-1,-1);
					
					\node at ($(b)!0.5!(1)$) {$\ldots$};
					\node at ($(d)!0.5!(2)$) {$\ldots$};
					\node at ($(a)!0.5!(3)$) {$\ldots$};
					\node at ($(c)!0.5!(4)$) {$\ldots$};
					\node at ($(e)!0.5!(5)$) {$\ldots$};
					
					\foreach \vertex/\neighbors in {
						y1/{y3,y5},
						y3/{y4,y5},
						y4/{a1,b1},
						y5/{b1,c1},
						b1/{a1,c1},
						a1/{b2},
						c1/{b2,d2},
						b2/{d2},
						a/{a1,b2,b},
						b/{c,d},
						c/{c1,b2,d2,d},
						d/{e},
						e/{d2},
						1/{2,3,4},
						2/{4,5},
						3/{6},
						4/{6,7},
						5/{7},
						6/{7}}
					{
						\foreach \neighbor in \neighbors
						\draw [-] (\vertex)--(\neighbor);
					}   
				\end{tikzpicture}
				\subcaption{$\mathcal{Y}_n'$}
				\label{fig:Yn'}
			\end{subfigure}
			\begin{subfigure}{0.29\textwidth}
				\centering
				\begin{tikzpicture}[scale = 0.3, vertices/.style = {draw, fill = black, circle, inner sep = 0.5pt}]
					
					\node[vertices,label = above:{{\tiny $b_1$}}] (b1) at (12, 3) {};
					
					\node[vertices,label = above:{{\tiny $a_1$}}] (a1) at (13.5, 4.5) {};
					\node[vertices,label = below:{{\tiny $c_1$}}] (c1) at (13.5, 1.5) {};
					
					\node[vertices,label = above:{{\tiny $b_2$}}] (b2) at (15, 3) {};
					\node[vertices,label = below:{{\tiny $d_2$}}] (d2) at (15, 0) {};
					
					\node[vertices,label = above:{{\tiny $a_2$}}] (a) at (16.5, 4.5) {};
					\node[vertices,label = below:{{\tiny $c_2$}}] (c) at (16.5, 1.5) {};
					\node[vertices,label = below:{{\tiny $e_2$}}] (e) at (16.5, -1.5) {};
					
					\node[vertices,label = above:{{\tiny $b_3$}}] (b) at (18, 3) {};
					\node[vertices,label = below:{{\tiny $d_3$}}] (d) at (18, 0) {};
					
					\node[vertices, label = {[xshift = .32cm,yshift = -.12cm]above:{{{\tiny $b_{(n-1)}$}}}}] (1) at ($(b)+(4,0)$) {};
					\node[vertices, label = {[xshift = .3cm]below:{{{\tiny $d_{(n-1)}$}}}}] (2) at ($(d)+(4,0)$) {};
					
					\node[vertices,label = {[xshift = .3cm,yshift = -.13cm]above:{{{\tiny $a_{(n-1)}$}}}}] (3) at ($(a)+(7,0)$) {}; 
					\node[vertices,label = {[xshift = 0cm]right:{{{\tiny $c_{(n-1)}$}}}}] (4) at ($(c)+(7,0)$) {};
					\node[vertices,label = {[xshift = .3cm]below:{{{\tiny $e_{(n-1)}$}}}}] (5) at ($(e)+(7,0)$) {};
					
					\node[vertices,label = right:{{{\tiny $b_n$}}}] (6) at ($(b)+(7,0)$) {};
					\node[vertices,label = right:{{{\tiny $d_n$}}}] (7) at ($(d)+(7,0)$) {};
					
					\draw (a) -- ++(2.3,0);
					\draw (c) -- ++(2.3,0);
					\draw (e) -- ++(2.3,0);
					\draw (3) -- ++(-2.3,0);
					\draw (4) -- ++(-2.3,0);
					\draw (5) -- ++(-2.3,0);
					\draw (b) -- ++(1,1);
					\draw (b) -- ++(1,-1);
					\draw (d) -- ++(1,1);
					\draw (d) -- ++(1,-1);
					\draw (1) -- ++(-1,1);
					\draw (1) -- ++(-1,-1);
					\draw (2) -- ++(-1,1);
					\draw (2) -- ++(-1,-1);
					
					\node at ($(b)!0.5!(1)$) {$\ldots$};
					\node at ($(d)!0.5!(2)$) {$\ldots$};
					\node at ($(a)!0.5!(3)$) {$\ldots$};
					\node at ($(c)!0.5!(4)$) {$\ldots$};
					\node at ($(e)!0.5!(5)$) {$\ldots$};
					
					\foreach \vertex/\neighbors in {
						b1/{a1,c1},
						a1/{b2},
						c1/{b2,d2},
						b2/{d2},
						a/{a1,b2,b},
						b/{c,d},
						c/{c1,b2,d2,d},
						d/{e},
						e/{d2},
						1/{2,3,4},
						2/{4,5},
						3/{6},
						4/{6,7},
						5/{7},
						6/{7}}
					{
						\foreach \neighbor in \neighbors
						\draw [-] (\vertex)--(\neighbor);
					}
				\end{tikzpicture}
				\subcaption{$\mathcal{Y}_n'-N_{\mathcal{Y}_n'}[y_3]$}
				\label{fig:Yn'_lk_y3}
			\end{subfigure}
			\begin{subfigure}{0.29\textwidth}
				\centering
				\begin{tikzpicture}[scale = 0.3, vertices/.style = {draw, fill = black, circle, inner sep = 0.5pt}]
					
					\node[vertices,label = above:{{\tiny $y_4$}}] (y4) at (11.5, 4.5) {};
					
					\node[vertices,label = above:{{\tiny $a_1$}}] (a1) at (13.5, 4.5) {};
					
					\node[vertices,label = above:{{\tiny $b_2$}}] (b2) at (15, 3) {};
					\node[vertices,label = below:{{\tiny $d_2$}}] (d2) at (15, 0) {};
					
					\node[vertices,label = above:{{\tiny $a_2$}}] (a) at (16.5, 4.5) {};
					\node[vertices,label = below:{{\tiny $c_2$}}] (c) at (16.5, 1.5) {};
					\node[vertices,label = below:{{\tiny $e_2$}}] (e) at (16.5, -1.5) {};
					
					\node[vertices,label = above:{{\tiny $b_3$}}] (b) at (18, 3) {};
					\node[vertices,label = below:{{\tiny $d_3$}}] (d) at (18, 0) {};
					
					\node[vertices, label = {[xshift = .32cm,yshift = -.12cm]above:{{{\tiny $b_{(n-1)}$}}}}] (1) at ($(b)+(4,0)$) {};
					\node[vertices, label = {[xshift = .3cm]below:{{{\tiny $d_{(n-1)}$}}}}] (2) at ($(d)+(4,0)$) {};
					
					\node[vertices,label = {[xshift = .3cm,yshift = -.13cm]above:{{{\tiny $a_{(n-1)}$}}}}] (3) at ($(a)+(7,0)$) {}; 
					\node[vertices,label = {[xshift = .27cm]below:{{{\tiny $c_{(n-1)}$}}}}] (4) at ($(c)+(7,0)$) {};
					\node[vertices,label = {[xshift = .3cm]below:{{{\tiny $e_{(n-1)}$}}}}] (5) at ($(e)+(7,0)$) {};
					
					\node[vertices,label = above:{{{\tiny $b_n$}}}] (6) at ($(b)+(7,0)$) {};
					\node[vertices,label = below:{{{\tiny $d_n$}}}] (7) at ($(d)+(7,0)$) {};
					
					\draw (a) -- ++(2.3,0);
					\draw (c) -- ++(2.3,0);
					\draw (e) -- ++(2.3,0);
					\draw (3) -- ++(-2.3,0);
					\draw (4) -- ++(-2.3,0);
					\draw (5) -- ++(-2.3,0);
					\draw (b) -- ++(1,1);
					\draw (b) -- ++(1,-1);
					\draw (d) -- ++(1,1);
					\draw (d) -- ++(1,-1);
					\draw (1) -- ++(-1,1);
					\draw (1) -- ++(-1,-1);
					\draw (2) -- ++(-1,1);
					\draw (2) -- ++(-1,-1);
					
					\node at ($(b)!0.5!(1)$) {$\ldots$};
					\node at ($(d)!0.5!(2)$) {$\ldots$};
					\node at ($(a)!0.5!(3)$) {$\ldots$};
					\node at ($(c)!0.5!(4)$) {$\ldots$};
					\node at ($(e)!0.5!(5)$) {$\ldots$};
					
					\foreach \vertex/\neighbors in {
						y4/{a1},
						a1/{b2},
						b2/{d2},
						a/{a1,b2,b},
						b/{c,d},
						c/{b2,d2,d},
						d/{e},
						e/{d2},
						1/{2,3,4},
						2/{4,5},
						3/{6},
						4/{6,7},
						5/{7},
						6/{7}}
					{
						\foreach \neighbor in \neighbors
						\draw [-] (\vertex)--(\neighbor);
					}
				\end{tikzpicture}
				\subcaption{$\mathcal{Y}_n''$}
				\label{fig:Yn''}
			\end{subfigure}
			\vspace{-.2cm}
			\caption{}
			\label{fig:Yn_seq}
		\end{figure}
		Now, $y_1$ is a simplicial vertex in $\mathcal{Y}_n'$ with $N_{\mathcal{Y}_n'}(y_1) = \{y_3,y_5\}.$ Hence $\Ind(\mathcal{Y}_n')\simeq \Sigma(\Ind(\mathcal{Y}_n'-N_{\mathcal{Y}_n'}[y_3]))\vee \Sigma(\Ind(\mathcal{Y}_n'-N_{\mathcal{Y}_n'}[y_5])).$
		We have $\mathcal{Y}_n'-N_{\mathcal{Y}_n'}[y_3]\cong\mathcal{W}_{n-1}$ (see \Cref{fig:Yn'_lk_y3}). Let $\mathcal{Y}_n'': = \mathcal{Y}_n'-N_{\mathcal{Y}_n'}[y_5]$. Then $\Ind( \mathcal{Y}_n')\simeq \Sigma(\Ind(\mathcal{W}_{n-1}))\vee \Sigma(\Ind(\mathcal{Y}_n''))$. Therefore $\Ind(\mathcal{Y}_n)\simeq \Sigma^2 (\Ind(\mathcal{W}_{n-1}))\vee \Sigma^2 (\Ind(\mathcal{Y}_n''))$.
		
		In $\mathcal{Y}_n''$, $y_4$ is a simplicial vertex with $N_{\mathcal{Y}_n''}(y_4) = \{a_1\}$ (see \Cref{fig:Yn''}). Hence $\Ind(\mathcal{Y}_n'')\simeq \Sigma(\Ind(\mathcal{Y}_n''-N_{\mathcal{Y}_n''}[a_1]))$. Since $\mathcal{Y}_n''-N_{\mathcal{Y}_n''}[a_1]\cong\mathcal{W}_{n-2}$, it follows that $\Ind(\mathcal{Y}_n)\simeq \Sigma^2(\Ind(\mathcal{W}_{n-1}))\vee \Sigma^3(\Ind(\mathcal{W}_{n-2}))$.
	\end{proof}
	
	\begin{claim}\label{claim:Zn_relation}
		For $n\ge3$, $lk(z_1,\Ind(\mathcal{Z}_n))\simeq \Ind(\mathcal{Y}_n)$ and $del(z_1,\Ind(\mathcal{Z}_n))\simeq \Sigma^2 (\Ind(\mathcal{U}_n)).$
	\end{claim}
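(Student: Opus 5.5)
Both statements reduce, via \eqref{equation:lk_del_ind_complex}, to identifying two graphs: $lk(z_1,\Ind(\mathcal{Z}_n))\simeq \Ind(\mathcal{Z}_n-N_{\mathcal{Z}_n}[z_1])$ and $del(z_1,\Ind(\mathcal{Z}_n))\simeq \Ind(\mathcal{Z}_n-\{z_1\})$. The plan is to treat each one separately.

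\emph{The link.} Here $N_{\mathcal{Z}_n}[z_1]=\{z_1,z_2,z_3\}$. Removing these leaves $\Gamma_n$ together with $z_4,\dots,z_9$ and the edges
$(z_4,z_6),(z_4,z_8),(z_5,z_9),(z_6,z_7),(z_6,z_8),(z_7,a_1),(z_7,b_1),(z_8,b_1),(z_8,c_1),(z_8,d_1),(z_9,d_1),(z_9,e_1)$.
I would compare this with $\mathcal{Y}_n$ under the relabelling
\[
z_4\mapsto y_1,\quad z_5\mapsto y_2,\quad z_6\mapsto y_3,\quad z_7\mapsto y_4,\quad z_8\mapsto y_5,\quad z_9\mapsto y_6,
\]
with the identity on $\Gamma_n$. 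Checking the edges: $(y_1,y_3),(y_1,y_5),(y_2,y_6),(y_3,y_4),(y_3,y_5)$ correspond to $(z_4,z_6),(z_4,z_8),(z_5,z_9),(z_6,z_7),(z_6,z_8)$, and the edges into $\Gamma_n$ also agree. So $\mathcal{Z}_n-N[z_1]\cong\mathcal{Y}_n$ exactly, and the link statement follows.

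\emph{The deletion.} In $\mathcal{Z}_n-\{z_1\}$, the vertex $z_2$ has degree one with neighbour $z_4$, and $z_3$ has degree one with neighbour $z_5$. By \Cref{theorem:simplicial_vertex} applied at $z_2$,
\[
\Ind(\mathcal{Z}_n-\{z_1\})\simeq\Sigma\bigl(\Ind(\mathcal{Z}_n-\{z_1,z_2,z_4\})\bigr).
\]
In the resulting graph, $z_3$ is still a leaf attached to $z_5$, so a second application gives
\[
\Sigma^2\bigl(\Ind(\mathcal{Z}_n-\{z_1,\dots,z_5,z_9\})\bigr).
\]
What remains is $\Gamma_n$ plus $z_6,z_7,z_8$, with edges $(z_6,z_7),(z_6,z_8),(z_7,a_1),(z_7,b_1),(z_8,b_1),(z_8,c_1),(z_8,d_1)$. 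This is not yet $\mathcal{U}_n$. In this graph $N(z_6)=\{z_7,z_8\}$. I would look for vertices $u,v$ with $N(u)\subseteq N(v)$ in order to apply \Cref{theorem:N(u)subsetN(v)}:
\begin{itemize}
\item $N(a_1)=\{z_7,b_1,a_2,b_2\}$ and $N(z_6)=\{z_7,z_8\}$ are not nested;
\item $N(z_7)=\{z_6,a_1,b_1\}$ and $N(z_8)=\{z_6,b_1,c_1,d_1\}$ are not nested either.
\end{itemize}
The fallback is \Cref{lemma:edge_contractible} (adding or removing edges such as $(z_7,d_1)$), then a closed-neighbourhood splitting, aiming at a graph with one pendant vertex $u$ adjacent to $d_1,e_1$, i.e. $\mathcal{U}_n$. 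This step does not work as written: $\mathcal{U}_n$ has $u$ attached to $e_1$, but the reduced graph has no new vertex adjacent to $e_1$. That suggests the deletion must be taken differently.

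\emph{Corrected deletion.} Instead, use the leaves $z_2,z_3$ in the other order, eliminating the top path first. The pendant $z_3$ forces the removal of $N[z_5]=\{z_3,z_5,z_9\}$, leaving $z_2$ pendant on $z_4$. The pendant $z_2$ then forces the removal of $N[z_4]=\{z_2,z_4,z_6,z_8\}$. What remains is $\Gamma_n$ plus $z_7$ alone, with $z_7$ adjacent to $a_1$ and $b_1$. This is a top-side analogue of $\mathcal{U}_n$. Using the up–down reflection symmetry of $\Gamma_n$ ($a\leftrightarrow e$, $b\leftrightarrow d$, $c$ fixed), this graph is isomorphic to $\mathcal{U}_n$ with $u\mapsto z_7$. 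This yields $\Sigma^2(\Ind(\mathcal{U}_n))$, as claimed.

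The main obstacle was getting the order of the leaf eliminations right. Verifying that the final graph has exactly the edges of $\mathcal{U}_n$ (after reflection), and that no $z_i$ other than $z_7$ survives, is routine bookkeeping.
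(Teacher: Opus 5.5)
Your link computation and your corrected deletion argument are correct and essentially reproduce the paper's proof: the paper also uses \Cref{theorem:simplicial_vertex} to strip off $N[z_4]$ and $N[z_5]$ (eliminating $z_2$ before $z_3$, though the order is immaterial), and identifies $\Gamma_n$ plus $z_7\sim a_1,b_1$ with $\mathcal{U}_n$ through the up--down symmetry. One correction to your commentary: your first deletion attempt failed not because of the elimination order but because you removed only $N[z_2]=\{z_2,z_4\}$ instead of $N[z_4]=\{z_2,z_4,z_6,z_8\}$, which is why the spurious vertices $z_6,z_8$ survived.
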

	
	\begin{proof}
		By \eqref{equation:lk_del_ind_complex}, we have $lk(z_1,\Ind(\mathcal{Z}_n))\simeq \Ind(\mathcal{Z}_n-N_{\mathcal{Z}_n}[z_1])$. Since $\mathcal{Z}_n-N_{\mathcal{Z}_n}[z_1]\cong\mathcal{Y}_n$ (\Cref{fig:lk_Zn_z1}), $lk(z_1,\Ind(\mathcal{Z}_n))\simeq \Ind(\mathcal{Y}_n).$
		\begin{figure}[h!]
			\centering
			\begin{subfigure}{0.31\textwidth}
				\centering
				\begin{tikzpicture}[scale = 0.27, vertices/.style = {draw, fill = black, circle, inner sep = 0.5pt}]
					
					\node[vertices,label = below:{{\tiny $z_4$}}] (z4) at (7.5, 1.5) {}; 
					\node[vertices,label = below:{{\tiny $z_5$}}] (z5) at (7.5, -1.5) {};
					\node[vertices,label = above:{{\tiny $z_6$}}] (z6) at (9, 3) {};
					\node[vertices,label = above:{{\tiny $z_7$}}] (z7) at (10.5, 4.5) {}; 
					\node[vertices,label = below:{{\tiny $z_8$}}] (z8) at (10.5, 1.5) {}; 
					\node[vertices,label = below:{{\tiny $z_9$}}] (z9) at (10.5, -1.5) {}; 
					
					\node[vertices,label = above:{{\tiny $b_1$}}] (b1) at (12, 3) {};
					\node[vertices,label = below:{{\tiny $d_1$}}] (d1) at (12, 0) {};
					
					\node[vertices,label = above:{{\tiny $a_1$}}] (a) at (13.5, 4.5) {};
					\node[vertices,label = below:{{\tiny $c_1$}}] (c) at (13.5, 1.5) {};
					\node[vertices,label = below:{{\tiny $e_1$}}] (e) at (13.5, -1.5) {};
					
					\node[vertices,label = above:{{\tiny $b_2$}}] (b) at (15, 3) {};
					\node[vertices,label = below:{{\tiny $d_2$}}] (d) at (15, 0) {};
					
					\node[vertices, label = {[xshift = .32cm,yshift = -.12cm]above:{{{\tiny $b_{(n-1)}$}}}}] (1) at ($(b)+(4,0)$) {};
					\node[vertices, label = {[xshift = .3cm]below:{{{\tiny $d_{(n-1)}$}}}}] (2) at ($(d)+(4,0)$) {};
					
					\node[vertices,label = {[xshift = .3cm,yshift = -.13cm]above:{{{\tiny $a_{(n-1)}$}}}}] (3) at ($(a)+(7,0)$) {}; 
					\node[vertices,label = {[xshift = .27cm]below:{{{\tiny $c_{(n-1)}$}}}}] (4) at ($(c)+(7,0)$) {};
					\node[vertices,label = {[xshift = .3cm]below:{{{\tiny $e_{(n-1)}$}}}}] (5) at ($(e)+(7,0)$) {};
					
					\node[vertices,label = above:{{{\tiny $b_n$}}}] (6) at ($(b)+(7,0)$) {};
					\node[vertices,label = below:{{{\tiny $d_n$}}}] (7) at ($(d)+(7,0)$) {};
					
					\draw (a) -- ++(2.3,0);
					\draw (c) -- ++(2.3,0);
					\draw (e) -- ++(2.3,0);
					\draw (3) -- ++(-2.3,0);
					\draw (4) -- ++(-2.3,0);
					\draw (5) -- ++(-2.3,0);
					\draw (b) -- ++(1,1);
					\draw (b) -- ++(1,-1);
					\draw (d) -- ++(1,1);
					\draw (d) -- ++(1,-1);
					\draw (1) -- ++(-1,1);
					\draw (1) -- ++(-1,-1);
					\draw (2) -- ++(-1,1);
					\draw (2) -- ++(-1,-1);
					
					\node at ($(b)!0.5!(1)$) {$\ldots$};
					\node at ($(d)!0.5!(2)$) {$\ldots$};
					\node at ($(a)!0.5!(3)$) {$\ldots$};
					\node at ($(c)!0.5!(4)$) {$\ldots$};
					\node at ($(e)!0.5!(5)$) {$\ldots$};
					
					\foreach \vertex/\neighbors in {
						z4/{z6,z8},
						z5/{z9},
						z6/{z7,z8},
						z7/{a,b1},
						z8/{b1,c,d1},
						z9/{d1,e},
						b1/{d1},
						a/{b1,b},
						b/{c,d},
						c/{b1,d1,d},
						d/{e},
						e/{d1},
						1/{2,3,4},
						2/{4,5},
						3/{6},
						4/{6,7},
						5/{7},
						6/{7}}
					{
						\foreach \neighbor in \neighbors
						\draw [-] (\vertex)--(\neighbor);
					}
				\end{tikzpicture}
				\subcaption{$\mathcal{Z}_n-N_{\mathcal{Z}_n}[z_1]$}
				\label{fig:lk_Zn_z1}
			\end{subfigure}
			\begin{subfigure}{0.34\textwidth}
				\centering
				\begin{tikzpicture}[scale = 0.3, vertices/.style = {draw, fill = black, circle, inner sep = 0.5pt}]
					
					\node[vertices,label = below:{{\tiny $z_2$}}] (z2) at (5.5, 1.5) {}; 
					\node[vertices,label = below:{{\tiny $z_3$}}] (z3) at (5.5, -1.5) {};
					\node[vertices,label = below:{{\tiny $z_4$}}] (z4) at (7.5, 1.5) {}; 
					\node[vertices,label = below:{{\tiny $z_5$}}] (z5) at (7.5, -1.5) {};
					\node[vertices,label = above:{{\tiny $z_6$}}] (z6) at (9, 3) {};
					\node[vertices,label = above:{{\tiny $z_7$}}] (z7) at (10.5, 4.5) {}; 
					\node[vertices,label = below:{{\tiny $z_8$}}] (z8) at (10.5, 1.5) {}; 
					\node[vertices,label = below:{{\tiny $z_9$}}] (z9) at (10.5, -1.5) {}; 
					
					\node[vertices,label = above:{{\tiny $b_1$}}] (b1) at (12, 3) {};
					\node[vertices,label = below:{{\tiny $d_1$}}] (d1) at (12, 0) {};
					
					\node[vertices,label = above:{{\tiny $a_1$}}] (a) at (13.5, 4.5) {};
					\node[vertices,label = below:{{\tiny $c_1$}}] (c) at (13.5, 1.5) {};
					\node[vertices,label = below:{{\tiny $e_1$}}] (e) at (13.5, -1.5) {};
					
					\node[vertices,label = above:{{\tiny $b_2$}}] (b) at (15, 3) {};
					\node[vertices,label = below:{{\tiny $d_2$}}] (d) at (15, 0) {};
					
					\node[vertices, label = {[xshift = .32cm,yshift = -.12cm]above:{{{\tiny $b_{(n-1)}$}}}}] (1) at ($(b)+(4,0)$) {};
					\node[vertices, label = {[xshift = .3cm]below:{{{\tiny $d_{(n-1)}$}}}}] (2) at ($(d)+(4,0)$) {};
					
					\node[vertices,label = {[xshift = .3cm,yshift = -.13cm]above:{{{\tiny $a_{(n-1)}$}}}}] (3) at ($(a)+(7,0)$) {}; 
					\node[vertices,label = {[xshift = .27cm]below:{{{\tiny $c_{(n-1)}$}}}}] (4) at ($(c)+(7,0)$) {};
					\node[vertices,label = {[xshift = .3cm]below:{{{\tiny $e_{(n-1)}$}}}}] (5) at ($(e)+(7,0)$) {};
					
					\node[vertices,label = above:{{{\tiny $b_n$}}}] (6) at ($(b)+(7,0)$) {};
					\node[vertices,label = below:{{{\tiny $d_n$}}}] (7) at ($(d)+(7,0)$) {};
					
					\draw (a) -- ++(2.3,0);
					\draw (c) -- ++(2.3,0);
					\draw (e) -- ++(2.3,0);
					\draw (3) -- ++(-2.3,0);
					\draw (4) -- ++(-2.3,0);
					\draw (5) -- ++(-2.3,0);
					\draw (b) -- ++(1,1);
					\draw (b) -- ++(1,-1);
					\draw (d) -- ++(1,1);
					\draw (d) -- ++(1,-1);
					\draw (1) -- ++(-1,1);
					\draw (1) -- ++(-1,-1);
					\draw (2) -- ++(-1,1);
					\draw (2) -- ++(-1,-1);
					
					\node at ($(b)!0.5!(1)$) {$\ldots$};
					\node at ($(d)!0.5!(2)$) {$\ldots$};
					\node at ($(a)!0.5!(3)$) {$\ldots$};
					\node at ($(c)!0.5!(4)$) {$\ldots$};
					\node at ($(e)!0.5!(5)$) {$\ldots$};
					
					\foreach \vertex/\neighbors in {
						z2/{z4},
						z3/{z5},
						z4/{z6,z8},
						z5/{z9},
						z6/{z7,z8},
						z7/{a,b1},
						z8/{b1,c,d1},
						z9/{d1,e},
						b1/{d1},
						a/{b1,b},
						b/{c,d},
						c/{b1,d1,d},
						d/{e},
						e/{d1},
						1/{2,3,4},
						2/{4,5},
						3/{6},
						4/{6,7},
						5/{7},
						6/{7}}
					{
						\foreach \neighbor in \neighbors
						\draw [-] (\vertex)--(\neighbor);
					}
				\end{tikzpicture}
				\subcaption{$\mathcal{Z}_n-\{z_1\}$}
				\label{fig:Zn'}
			\end{subfigure}
			\begin{subfigure}{0.31\textwidth}
				\centering
				\begin{tikzpicture}[scale = 0.3, vertices/.style = {draw, fill = black, circle, inner sep = 0.5pt}]
					
					\node[vertices,label = below:{{\tiny $z_3$}}] (z3) at (7.5, -1.5) {};
					\node[vertices,label = below:{{\tiny $z_5$}}] (z5) at (9, -1.5) {};
					\node[vertices,label = above:{{\tiny $z_7$}}] (z7) at (10.5, 4.5) {}; 
					\node[vertices,label = below:{{\tiny $z_9$}}] (z9) at (10.5, -1.5) {}; 
					
					\node[vertices,label = above:{{\tiny $b_1$}}] (b1) at (12, 3) {};
					\node[vertices,label = below:{{\tiny $d_1$}}] (d1) at (12, 0) {};
					
					\node[vertices,label = above:{{\tiny $a_1$}}] (a) at (13.5, 4.5) {};
					\node[vertices,label = below:{{\tiny $c_1$}}] (c) at (13.5, 1.5) {};
					\node[vertices,label = below:{{\tiny $e_1$}}] (e) at (13.5, -1.5) {};
					
					\node[vertices,label = above:{{\tiny $b_2$}}] (b) at (15, 3) {};
					\node[vertices,label = below:{{\tiny $d_2$}}] (d) at (15, 0) {};
					
					\node[vertices, label = {[xshift = .32cm,yshift = -.12cm]above:{{{\tiny $b_{(n-1)}$}}}}] (1) at ($(b)+(4,0)$) {};
					\node[vertices, label = {[xshift = .3cm]below:{{{\tiny $d_{(n-1)}$}}}}] (2) at ($(d)+(4,0)$) {};
					
					\node[vertices,label = {[xshift = .3cm,yshift = -.13cm]above:{{{\tiny $a_{(n-1)}$}}}}] (3) at ($(a)+(7,0)$) {}; 
					\node[vertices,label = {[xshift = .27cm]below:{{{\tiny $c_{(n-1)}$}}}}] (4) at ($(c)+(7,0)$) {};
					\node[vertices,label = {[xshift = .3cm]below:{{{\tiny $e_{(n-1)}$}}}}] (5) at ($(e)+(7,0)$) {};
					
					\node[vertices,label = above:{{{\tiny $b_n$}}}] (6) at ($(b)+(7,0)$) {};
					\node[vertices,label = below:{{{\tiny $d_n$}}}] (7) at ($(d)+(7,0)$) {};
					
					\draw (a) -- ++(2.3,0);
					\draw (c) -- ++(2.3,0);
					\draw (e) -- ++(2.3,0);
					\draw (3) -- ++(-2.3,0);
					\draw (4) -- ++(-2.3,0);
					\draw (5) -- ++(-2.3,0);
					\draw (b) -- ++(1,1);
					\draw (b) -- ++(1,-1);
					\draw (d) -- ++(1,1);
					\draw (d) -- ++(1,-1);
					\draw (1) -- ++(-1,1);
					\draw (1) -- ++(-1,-1);
					\draw (2) -- ++(-1,1);
					\draw (2) -- ++(-1,-1);
					
					\node at ($(b)!0.5!(1)$) {$\ldots$};
					\node at ($(d)!0.5!(2)$) {$\ldots$};
					\node at ($(a)!0.5!(3)$) {$\ldots$};
					\node at ($(c)!0.5!(4)$) {$\ldots$};
					\node at ($(e)!0.5!(5)$) {$\ldots$};
					
					\foreach \vertex/\neighbors in {
						z3/{z5},
						z5/{z9},
						z7/{a,b1},
						z9/{d1,e},
						b1/{d1},
						a/{b1,b},
						b/{c,d},
						c/{b1,d1,d},
						d/{e},
						e/{d1},
						1/{2,3,4},
						2/{4,5},
						3/{6},
						4/{6,7},
						5/{7},
						6/{7}}
					{
						\foreach \neighbor in \neighbors
						\draw [-] (\vertex)--(\neighbor);
					}
				\end{tikzpicture}
				\subcaption{$\mathcal{Z}_n'$}
				\label{fig:Zn''}
			\end{subfigure}
			\vspace{-.2cm}
			\caption{} 
			\label{fig:Zn_seq1}
		\end{figure}
		
		On the other hand, $del(z_1,\Ind(\mathcal{Z}_n)) \simeq \Ind(\mathcal{Z}_n-\{z_1\})$ by \eqref{equation:lk_del_ind_complex}. Since $z_2$ is a simplicial vertex in $\mathcal{Z}_n-\{z_1\}$ with $N_{\mathcal{Z}_n-\{z_1\}}(z_2) = \{z_4\}$ (\Cref{fig:Zn'}), \Cref{theorem:simplicial_vertex} gives $\Ind(\mathcal{Z}_n-\{z_1\})\simeq \Sigma(\Ind((\mathcal{Z}_n-\{z_1\})-N_{\mathcal{Z}_n-\{z_1\}}[z_4])).$ Let $\mathcal{Z}_n': = (\mathcal{Z}_n-\{z_1\})-N_{\mathcal{Z}_n-\{z_1\}}[z_4]$. Then $\Ind(\mathcal{Z}_n-\{z_1\})\simeq \Sigma(\Ind(\mathcal{Z}_n'))$.
		
		Now, $z_3$ is a simplicial vertex in $\mathcal{Z}_n'$ with $N_{\mathcal{Z}_n'}(z_3) = \{z_5\}$ (see \Cref{fig:Zn''}). Hence $\Ind(\mathcal{Z}_n')\simeq \Sigma(\Ind(\mathcal{Z}_n'-N_{\mathcal{Z}_n'}[z_5]))$. Since $\mathcal{Z}_n'-N_{\mathcal{Z}_n'}[z_5]\cong\mathcal{U}_n$, we have $\Ind(\mathcal{Z}_n-\{z_1\})\simeq \Sigma(\Ind(\mathcal{Z}_n'))\simeq \Sigma^2(\Ind(\mathcal{U}_n))$. Therefore $del(z_1,\Ind(\mathcal{Z}_n))\simeq \Sigma^2 (\Ind(\mathcal{U}_n)).$
	\end{proof}
	
	\subsubsection{Homological results}\label{subsection:lower_dimensions_homology}
	
	In this section, we prove $\tilde{H}_i(\Ind(\Gamma_n)) = 0$ for all $i\le n-2$ and $\tilde{H}_{n-1}(\Ind(\Gamma_n))\ne0$. These results are proved by induction on $n$, using the Mayer-Vietoris sequence together with the recursive relations (derived in \Cref{subsection:recursive_homotopy_equivalences}) among the independence complexes of the auxiliary graphs and of $\Ind(\Gamma_n)$.
	
	We first recall the Mayer-Vietoris sequence (see \cite{munkres_book} for more details). Let $\Delta$ be a simplicial complex, and let $\Delta_1$ and $\Delta_2$ be two subcomplexes of $\Delta$ such that $\Delta = \Delta_1\cup\Delta_2$. Then there is a long exact sequence
	$$\cdots\to H_n(\Delta_1\cap \Delta_2) \xrightarrow{a_n} H_n(\Delta_1) \oplus H_n(\Delta_2) \xrightarrow{b_n} H_n(\Delta) \xrightarrow{c_n} H_{n-1}(\Delta_1 \cap \Delta_2)\to \cdots\to H_0(\Delta) \to 0$$
	known as the Mayer-Vietoris sequence associated with the pair $(\Delta_1,\Delta_2)$. When $\Delta_1\cap \Delta_2$ is nonempty, we obtain the following long exact sequence in reduced homology:
	\begin{equation}\label{equation:mayer_vietoris_reduced}
		\cdots\to \tilde{H}_n(\Delta_1\cap \Delta_2) \xrightarrow{a_n} \tilde{H}_n(\Delta_1) \oplus \tilde{H}_n(\Delta_2) \xrightarrow{b_n} \tilde{H}_n(\Delta) \xrightarrow{c_n} \tilde{H}_{n-1}(\Delta_1 \cap \Delta_2)\to \cdots\to \tilde{H}_0(\Delta) \to 0.
	\end{equation}
	
	For a vertex $v$ of $\Delta$, we have $$\Delta = st(v,\Delta) \cup del(v,\Delta)\text{ and }lk(v,\Delta) = st(v,\Delta) \cap del(v,\Delta).$$ Since $st(v,\Delta)$ is contractible, $\tilde{H}_i(st(v,\Delta)) = 0 $ for all $i$. Therefore, substituting $\Delta_1 = st(v,\Delta) $ and $\Delta_2 = del(v,\Delta)$ in \eqref{equation:mayer_vietoris_reduced}, we get 
	\begin{equation}\label{equation:MayerVietoris}
		\cdots\to \tilde{H}_n(lk(v,\Delta)) \xrightarrow{a_n} \tilde{H}_n(del(v,\Delta)) \xrightarrow{b_n} \tilde{H}_n(\Delta) \xrightarrow{c_n} \tilde{H}_{n-1}(lk(v,\Delta))\to \cdots\to \tilde{H}_0(\Delta) \to 0.
	\end{equation}
	By the exactness of \eqref{equation:MayerVietoris}, $Im(a_i) = Ker(b_i)$, $Im(b_i) = Ker(c_i)$ and $Im(c_i) = Ker(a_{i-1})$ for all $i$, where $Im(f)$ and $Ker(f)$ denote the image and kernel of a map $f$, respectively. 
	\begin{proposition}\label{proposition:homology_up_to_n-2} 
		Let $n\geq 1$. Then
		\begin{enumerate}[label = (\roman*)]
			\item $\tilde{H}_i(\Ind(\Gamma_n)) = 0$ for all $i\le n-2$. \label{result:Gamma_n}
			\item $\tilde{H}_i(\Ind(\mathcal{U}_n)) = 0$ for all $i\le n-2$. \label{result:U_n}
			\item $\tilde{H}_i(\Ind(\mathcal{V}_n)) = 0$ for all $i\le n-1$. \label{result:V_n}
			\item $\tilde{H}_i(\Ind(\mathcal{W}_n)) = 0$ for all $i\le n-1$. \label{result:W_n}
			\item $\tilde{H}_i(\Ind(\mathcal{X}_n)) = 0$ for all $i\le n$. \label{result:X_n}
			\item $\tilde{H}_i(\Ind(\mathcal{Y}_n)) = 0$ for all $i\le n$. \label{result:Y_n}
			\item $\tilde{H}_i(\Ind(\mathcal{Z}_n)) = 0$ for all $i\le n$. \label{result:Z_n}
		\end{enumerate}
	\end{proposition}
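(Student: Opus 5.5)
We argue by simultaneous strong induction on $n$, proving all seven statements \ref{result:Gamma_n}--\ref{result:Z_n} together. The base cases $n=1,2$ follow from the explicit homotopy types in \eqref{equation:Gamma_123}, \eqref{equation:U_12}, \eqref{equation:V_12}, \eqref{equation:W_12}, \eqref{equation:X_12}, \eqref{equation:Y_12} and \eqref{equation:Z_12}. For example, $\Ind(\mathcal{X}_1)\simeq\vee_3\mathbb{S}^2$ has no reduced homology in degrees $\le 1$, and $\Ind(\mathcal{Z}_2)\simeq\vee_2\mathbb{S}^3$ has none in degrees $\le 2$. For $\mathcal{U}_n,\mathcal{V}_n,\mathcal{X}_n,\mathcal{Y}_n$ at $n\ge 3$, we use the wedge decompositions of \Cref{claim:Un_relation,claim:Vn_relation,claim:Xn_relation,claim:Yn_relation} and the facts $\tilde H_i(\Sigma^k K)=\tilde H_{i-k}(K)$ and $\tilde H_i(A\vee B)=\tilde H_i(A)\oplus\tilde H_i(B)$, which reduce each claim to index bookkeeping. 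For instance:
\begin{itemize}
\item $\Sigma(\Ind(\mathcal{U}_{n-1}))$ has vanishing homology up to degree $(n-3)+1=n-2$.
\item $\Sigma(\Ind(\mathcal{Z}_{n-3}))$ vanishes up to degree $n-2$.
\item $\Sigma^3(\Ind(\mathcal{U}_{n-3}))$ vanishes up to degree $n-5+3=n-2$.
\end{itemize}
Hence $\mathcal{U}_n$ vanishes up to $n-2$. The case $n=3$ uses $\vee_4\mathbb{S}^2$ directly, and for $n=4$ we use $\mathcal{Z}_1,\mathcal{U}_1$. Similarly, $\mathcal{V}_n$ needs $\mathcal{X}_{n-2}$ up to $n-2$ and $\Gamma_{n-1}$ up to $n-3$. $\mathcal{Y}_n$ needs $\mathcal{W}_{n-1}$ up to $n-2$ and $\mathcal{W}_{n-2}$ up to $n-3$.

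The statement for $\mathcal{X}_n$ involves $\Gamma_n$ itself, via $\vee_2\Sigma^2(\Ind(\Gamma_n))\vee\Sigma^3(\Ind(\mathcal{U}_{n-1}))$. So within step $n$ we order the arguments: first $\mathcal{U}_n$, then $\Gamma_n$, then $\mathcal{X}_n$ (needing $\Gamma_n$ up to $n-2$ and $\mathcal{U}_{n-1}$ up to $n-3$). Then come $\mathcal{W}_n$, $\mathcal{Y}_n$ and $\mathcal{Z}_n$, with $\mathcal{V}_n$ anywhere after its inputs.

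For $\mathcal{W}_n$ and $\mathcal{Z}_n$ we only have link and deletion, so we use the Mayer–Vietoris sequence \eqref{equation:MayerVietoris}. The relevant segment is $\tilde H_i(del)\to\tilde H_i(\Delta)\to\tilde H_{i-1}(lk)$, so $\tilde H_i(\Delta)=0$ whenever $\tilde H_i(del)=0$ and $\tilde H_{i-1}(lk)=0$.
\begin{itemize}
\item For $\mathcal{W}_n$: $del\simeq\Ind(\mathcal{Y}_{n-1})$ vanishes up to $n-1$. The link $\Sigma^2\Ind(\Gamma_{n-2})\vee\Sigma\Ind(\mathcal{X}_{n-3})$ vanishes up to $\min(n-2,n-2)=n-2$, or is $\vee_3\mathbb{S}^2$ when $n=3$. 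This gives vanishing up to $n-1$.
\item For $\mathcal{Z}_n$: $del\simeq\Sigma^2\Ind(\mathcal{U}_n)$ vanishes up to $n$, and $lk\simeq\Ind(\mathcal{Y}_n)$ vanishes up to $n$. This gives vanishing up to $n$. Here $\mathcal{Y}_n$ must already be done at step $n$, which it is, since it depends only on $\mathcal{W}_{n-1}$ and $\mathcal{W}_{n-2}$.
\end{itemize}

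The main obstacle is $\Gamma_n$ itself, for which no recursion has been stated so far. My plan is to decompose $\Ind(\Gamma_n)$ at an end vertex such as $b_1$ (or $a_1$) using \eqref{equation:lk_del_ind_complex}, together with the same simplification lemmas (\Cref{theorem:N(u)subsetN(v)}, \Cref{theorem:simplicial_vertex}, \Cref{lemma:edge_contractible}). The aim is to identify the link and deletion with suspensions of the auxiliary complexes, for instance $del(b_1)\simeq\Ind(\mathcal{U}_n)$-type or $\mathcal{W}_{n-1}$/$\mathcal{V}_{n-1}$-type pieces, and the link with $\Sigma^{k}$ of some $\mathcal{V}_{n-2}$ or $\mathcal{W}_{n-2}$. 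The auxiliary families were evidently designed so that this closes up. Mayer–Vietoris then gives vanishing up to $n-2$, provided the deletion vanishes up to $n-2$ and the link up to $n-3$. The index checks are routine, but finding the right identification of the pieces of $\Gamma_n$ with the auxiliary graphs is where I expect the real work.
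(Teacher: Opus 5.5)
Your bookkeeping for $\mathcal{U}_n,\mathcal{V}_n,\mathcal{X}_n,\mathcal{Y}_n$ via the wedge decompositions is correct and matches the paper. So is your Mayer--Vietoris treatment of $\mathcal{W}_n$ (at $w_1$ in $\Ind(\mathcal{W}_n-\{d_1\})$) and of $\mathcal{Z}_n$ (at $z_1$).

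There is, however, a genuine gap at item (i). You never produce a recursion for $\Ind(\Gamma_n)$. Your guesses for the pieces are also partly off: the link is not a suspension of some $\Ind(\mathcal{V}_{n-2})$ or $\Ind(\mathcal{W}_{n-2})$. Item (i) is the statement of interest, and $\Gamma$ also feeds into $\mathcal{V}_n$ (via $\Gamma_{n-1}$), $\mathcal{X}_n$ (via $\Gamma_n$) and $\mathcal{W}_n$ (via $\Gamma_{n-2}$). So without this step the simultaneous induction does not close.

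The missing observation is that no simplification lemmas are needed: the link and deletion of $b_1$ are auxiliary graphs on the nose.

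\emph{The link.} Since $N_{\Gamma_n}(b_1)=\{a_1,c_1,d_1\}$, the only vertex of the first block surviving in $\Gamma_n-N_{\Gamma_n}[b_1]$ is $e_1$, and its remaining neighbours are $d_2,e_2$. Hence $\Gamma_n-N_{\Gamma_n}[b_1]\cong\mathcal{U}_{n-1}$, with $e_1$ playing the role of $u$.

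\emph{The deletion.} In $\Gamma_n-\{b_1\}$ we have $d_1\sim c_1,e_1$; besides $d_1$, also $c_1\sim b_2,c_2,d_2$ and $e_1\sim d_2,e_2$; and $a_1\sim a_2,b_2$. These are exactly the adjacencies of $v_1,v_3,v_4,v_2$ in $\mathcal{V}_{n-1}$. Hence $\Gamma_n-\{b_1\}\cong\mathcal{V}_{n-1}$, with $(d_1,a_1,c_1,e_1)$ playing $(v_1,v_2,v_3,v_4)$.

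Therefore $lk(b_1,\Ind(\Gamma_n))\simeq\Ind(\mathcal{U}_{n-1})$ and $del(b_1,\Ind(\Gamma_n))\simeq\Ind(\mathcal{V}_{n-1})$. In the Mayer--Vietoris segment $\tilde H_i(\Ind(\mathcal{V}_{n-1}))\to\tilde H_i(\Ind(\Gamma_n))\to\tilde H_{i-1}(\Ind(\mathcal{U}_{n-1}))$, both ends vanish for $i\le n-2$ by items (iii) and (ii) at $n-1$. These are precisely the ranges you said you would need.

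This step uses only data from step $n-1$, so $\Gamma_n$ can be handled first at step $n$, before $\mathcal{X}_n$. Your extra dependency of $\Gamma_n$ on $\mathcal{U}_n$ disappears.
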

	
	\begin{proof}
		The proof is by induction on $n$. We consider the base cases $n = 1,2.$ For these cases, the required results follow directly from the homotopy types given in \eqref{equation:Gamma_123}--\eqref{equation:Z_12} and the fact that $\tilde{H}_{-1}(\Delta) = 0$ whenever $\Delta\neq \emptyset$. Let $n\geq 3,$ and assume that the result holds for all $1\leq l\leq n-1.$ We now prove the results for $n$.
		\begin{enumerate}[label = (\roman*)]
			\item $\Ind(\Gamma_n)$. To analyze $\Ind(\Gamma_n)$, we consider the link and deletion of the vertex $b_1$ in $\Ind(\Gamma_n)$. We have $lk(b_1, \Ind(\Gamma_n))\simeq \Ind(\Gamma_n-N_{\Gamma_n}[b_1])$ and $del(b_1, \Ind(\Gamma_n))\simeq \Ind(\Gamma_n-\{b_1\})$ by \eqref{equation:lk_del_ind_complex}. 
			Observe that $\Gamma_n-N_{\Gamma_n}[b_1]\cong\mathcal{U}_{n-1}$ and $\Gamma_n-\{b_1\}\cong\mathcal{V}_{n-1}$. Hence $lk(b_1, \Ind(\Gamma_n))\simeq \Ind(\mathcal{U}_{n-1})$ and $del(b_1, \Ind(\Gamma_n))\simeq \Ind(\mathcal{V}_{n-1})$.
			Substituting $lk(b_1, \Ind(\Gamma_n))$ and $del(b_1, \Ind(\Gamma_n))$ into the Mayer-Vietoris sequence given in \eqref{equation:MayerVietoris}, we have
			\begin{align*}
				\cdots \to& \tilde{H}_{n-1}(\Ind(\Gamma_n))\to \tilde{H}_{n-2}(\Ind(\mathcal{U}_{n-1}))\to \tilde{H}_{n-2}(\Ind(\mathcal{V}_{n-1}))\to \tilde{H}_{n-2}(\Ind(\Gamma_n))\to\\
				& \tilde{H}_{n-3}(\Ind(\mathcal{U}_{n-1}))\to \tilde{H}_{n-3}(\Ind(\mathcal{V}_{n-1}))\to \tilde{H}_{n-3}(\Ind(\Gamma_n))\to \tilde{H}_{n-4}(\Ind(\mathcal{U}_{n-1}))\to\cdots
			\end{align*}
			By the induction hypothesis, $\tilde{H}_i(\Ind(\mathcal{U}_{n-1})) = 0$ for all $i\le n-3$ and $\tilde{H}_i(\Ind(\mathcal{V}_{n-1})) = 0$ for all $i\le n-2$. This gives
			\begin{align*}
				\cdots \to & \tilde{H}_{n-1}(\Ind(\Gamma_n))\to \tilde{H}_{n-2}(\Ind(\mathcal{U}_{n-1}))\to 0\to \\
				&\tilde{H}_{n-2}(\Ind(\Gamma_n))\to 0 \to 0 \to \tilde{H}_{n-3}(\Ind(\Gamma_n))\to 0 \to\cdots
			\end{align*}
			From the exactness of the above sequence, $\tilde{H}_i(\Ind(\Gamma_n)) = 0$ for all $i\le n-2$.
			
			\item $\Ind(\mathcal{U}_n).$ For $n = 3$, the result follows from \Cref{claim:Un_relation}. Now assume that $n \geq 4$. By \Cref{claim:Un_relation},
			$\Ind(\mathcal{U}_n)\simeq \Sigma(\Ind(\mathcal{U}_{n-1}))\vee\Sigma(\Ind(\mathcal{Z}_{n-3}))\vee\Sigma^3 (\Ind(\mathcal{U}_{n-3})).$ It follows that $\tilde{H}_i(\Ind(\mathcal{U}_n)) \cong \tilde{H}_{i-1}(\Ind(\mathcal{U}_{n-1}))\oplus\tilde{H}_{i-1}(\Ind(\mathcal{Z}_{n-3}))\oplus\tilde{H}_{i-3}(\Ind(\mathcal{U}_{n-3}))$. 
			By the induction hypothesis, we have $\tilde{H}_{i-1}(\Ind(\mathcal{U}_{n-1})) = 0$, $\tilde{H}_{i-1}(\Ind(\mathcal{Z}_{n-3})) = 0$ and $\tilde{H}_{i-3}(\Ind(\mathcal{U}_{n-3})) = 0$ for all $i\leq n-2$. Therefore $\tilde{H}_i(\Ind(\mathcal{U}_n)) = 0$ for all $i\leq n-2$.
			
			\item $\Ind(\mathcal{V}_n)$. Since $\Ind(\mathcal{V}_n)\simeq \Sigma(\Ind(\mathcal{X}_{n-2}))\vee \Sigma^2 (\Ind(\Gamma_{n-1}))$ by \Cref{claim:Vn_relation}, we have $\tilde{H}_i(\Ind(\mathcal{V}_n))\cong \tilde{H}_{i-1}(\Ind(\mathcal{X}_{n-2}))\oplus \tilde{H}_{i-2}(\Ind(\Gamma_{n-1})).$ 
			For $i\leq n-1$, $\tilde{H}_{i-1}(\Ind(\mathcal{X}_{n-2})) = 0$ and $\tilde{H}_{i-2}(\Ind(\Gamma_{n-1})) = 0$ by the induction hypothesis. Hence $\tilde{H}_i(\Ind(\mathcal{V}_n)) = 0$ for all $i\leq n-1$.
			
			\item $\Ind(\mathcal{W}_n)$. By \Cref{claim:Wn_relation}, $\Ind(\mathcal{W}_n)\simeq \Ind(\mathcal{W}_n')$, where $\mathcal{W}_n' = \mathcal{W}_n-\{d_1\}$. We consider the link and deletion of the vertex $w_1$ in $\Ind(\mathcal{W}_n')$. For $n = 3$, \Cref{claim:Wn_relation} gives $lk(w_1,\Ind(\mathcal{W}_3'))\simeq \vee_3\mathbb{S}^2$ and $del(w_1,\Ind(\mathcal{W}_3'))\simeq \Ind(\mathcal{Y}_{2})$. Using the Mayer-Vietoris sequence given in \eqref{equation:MayerVietoris}, we obtain
			\begin{align*}
				\cdots\to &\tilde{H}_{3}(\Ind(\mathcal{W}_3'))\to \tilde{H}_{2}(\vee_3\mathbb{S}^2)\to \tilde{H}_{2}(\Ind(\mathcal{Y}_{2})) \to \tilde{H}_{2}(\Ind(\mathcal{W}_3'))
				\to \\&\tilde{H}_{1}(\vee_3\mathbb{S}^2)\to \tilde{H}_{1}(\Ind(\mathcal{Y}_{2})) \to \tilde{H}_{1}(\Ind(\mathcal{W}_3'))\to \tilde{H}_{0}(\vee_3\mathbb{S}^2)\to\cdots 
			\end{align*}
			Since $\tilde{H}_2(\vee_3\mathbb{S}^2) \cong\mathbb{Z}^3$ and $\tilde{H}_i(\vee_3\mathbb{S}^2) = 0$ for all $i\ne2$, and $\Ind(\mathcal{Y}_{2})\simeq\text{pt}$ by \eqref{equation:Y_12}, it follows that 
			\begin{equation*}
				\cdots\to \tilde{H}_{3}(\Ind(\mathcal{W}_3'))\to \mathbb{Z}^3\to 0 \to \tilde{H}_{2}(\Ind(\mathcal{W}_3'))\to 0 \to 0 \to \tilde{H}_{1}(\Ind(\mathcal{W}_3'))\to 0 \to\cdots
			\end{equation*}
			The exactness of the above sequence yields $\tilde{H}_i(\Ind(\mathcal{W}_3')) = 0$ for all $i\le 2$. Since $\Ind(\mathcal{W}_3)\simeq \Ind(\mathcal{W}_3')$, we conclude that $\tilde{H}_i(\Ind(\mathcal{W}_3)) = 0$ for all $i\le 2$.
			
			Now let $n\geq 4$. By \Cref{claim:Wn_relation}, we have $lk(w_1, \Ind(\mathcal{W}_n'))\simeq \Sigma^2 (\Ind(\Gamma_{n-2}))\vee \Sigma(\Ind(\mathcal{X}_{n-3}))$ and $del(w_1, \Ind(\mathcal{W}_n'))\simeq \Ind(\mathcal{Y}_{n-1})$.
			Substituting these into the Mayer-Vietoris sequence \eqref{equation:MayerVietoris}, we obtain 
			\begin{align*}
				\cdots \to & \tilde{H}_n(\Ind(\mathcal{W}_n'))\to \tilde{H}_{n-1}(\Sigma^2 (\Ind(\Gamma_{n-2}))\vee \Sigma(\Ind(\mathcal{X}_{n-3})))\to \tilde{H}_{n-1}(\Ind(\mathcal{Y}_{n-1}))\to\\
				& \tilde{H}_{n-1}(\Ind(\mathcal{W}_n'))\to \tilde{H}_{n-2}(\Sigma^2 (\Ind(\Gamma_{n-2}))\vee \Sigma(\Ind(\mathcal{X}_{n-3})))\to \tilde{H}_{n-2}(\Ind(\mathcal{Y}_{n-1})) \to\\
				& \tilde{H}_{n-2}(\Ind(\mathcal{W}_n'))\to \tilde{H}_{n-3}(\Sigma^2 (\Ind(\Gamma_{n-2}))\vee \Sigma(\Ind(\mathcal{X}_{n-3})))\to\cdots
			\end{align*}
			This implies
			\begin{align*}
				\cdots \to & \tilde{H}_n(\Ind(\mathcal{W}_n'))\to \tilde{H}_{n-3}(\Ind(\Gamma_{n-2}))\oplus \tilde{H}_{n-2}( \Ind(\mathcal{X}_{n-3})) \to \tilde{H}_{n-1}(\Ind(\mathcal{Y}_{n-1}))\to\\
				& \tilde{H}_{n-1}(\Ind(\mathcal{W}_n'))\to \tilde{H}_{n-4}(\Ind(\Gamma_{n-2}))\oplus \tilde{H}_{n-3}( \Ind(\mathcal{X}_{n-3})) \to \tilde{H}_{n-2}(\Ind(\mathcal{Y}_{n-1}))\to\\
				& \tilde{H}_{n-2}(\Ind(\mathcal{W}_n'))\to \tilde{H}_{n-5}(\Ind(\Gamma_{n-2}))\oplus \tilde{H}_{n-4}(\Ind(\mathcal{X}_{n-3}))\to\cdots
			\end{align*}
			By the induction hypothesis, $\tilde{H}_i(\Ind(\Gamma_{n-2})) = 0$ for all $i\le n-4$, $\tilde{H}_i(\Ind(\mathcal{X}_{n-3})) = 0$ for all $i\le n-3$, and $\tilde{H}_i(\Ind(\mathcal{Y}_{n-1})) = 0$ for all $i\le n-1$. Hence
			\begin{align*}
				\cdots \to & \tilde{H}_n(\Ind(\mathcal{W}_n'))\to \tilde{H}_{n-3}(\Ind(\Gamma_{n-2}))\oplus \tilde{H}_{n-2}(\Ind(\mathcal{X}_{n-3}))\to 0 \to\\
				& \tilde{H}_{n-1}(\Ind(\mathcal{W}_n'))\to 0 \to 0 \to \tilde{H}_{n-2}(\Ind(\mathcal{W}_n'))\to 0 \to\cdots
			\end{align*}
			Using the exactness of the above sequence, $\tilde{H}_i(\Ind(\mathcal{W}_n')) = 0$ for all $i\le n-1$. Since $\Ind(\mathcal{W}_n)\simeq \Ind(\mathcal{W}_n')$, it follows that $\tilde{H}_i(\Ind(\mathcal{W}_n)) = 0$ for all $i\le n-1$.
			
			\item $\Ind(\mathcal{X}_n)$. By \Cref{claim:Xn_relation}, $\Ind(\mathcal{X}_n)\simeq \vee_2 \Sigma^2(\Ind(\Gamma_n))\vee \Sigma^3 (\Ind(\mathcal{U}_{n-1})).$ This implies $\tilde{H}_i(\Ind(\mathcal{X}_n))\cong\tilde{H}_{i-2}(\Ind(\Gamma_n))\oplus \tilde{H}_{i-2}(\Ind(\Gamma_n))\oplus\tilde{H}_{i-3}(\Ind(\mathcal{U}_{n-1})).$
			For $i\le n$, we have $\tilde{H}_{i-2}(\Ind(\Gamma_n)) = 0$ by \ref{result:Gamma_n}, and $\tilde{H}_{i-3}(\Ind(\mathcal{U}_{n-1})) = 0$ by the induction hypothesis. Thus $\tilde{H}_i(\Ind(\mathcal{X}_n)) = 0$ for all $i\leq n$.
			
			\item $\Ind(\mathcal{Y}_n)$. Since $\Ind(\mathcal{Y}_n)\simeq\Sigma^2 (\Ind(\mathcal{W}_{n-1}))\vee \Sigma^3 (\Ind(\mathcal{W}_{n-2}))$ by \Cref{claim:Yn_relation}, it follows that $\tilde{H}_i(\Ind(\mathcal{Y}_n))\cong \tilde{H}_{i-2}(\Ind(\mathcal{W}_{n-1}))\oplus\tilde{H}_{i-3}(\Ind(\mathcal{W}_{n-2}))$. 
			For $i\le n$, the induction hypothesis gives $\tilde{H}_{i-3}(\Ind(\mathcal{W}_{n-2})) = 0$ and $\tilde{H}_{i-2}(\Ind(\mathcal{W}_{n-1})) = 0$.  Therefore $\tilde{H}_i(\Ind(\mathcal{Y}_n)) = 0$ for all $i\leq n$.
			
			\item $\Ind(\mathcal{Z}_n)$. By \Cref{claim:Zn_relation}, $lk(z_1, \Ind(\mathcal{Z}_n))\simeq \Ind(\mathcal{Y}_n)$ and $del(z_1, \Ind(\mathcal{Z}_n))\simeq \Sigma^2 (\Ind(\mathcal{U}_n))$. Substituting these into the Mayer-Vietoris sequence \eqref{equation:MayerVietoris}, we obtain
			\begin{align*}
				\cdots \to & \tilde{H}_{n+1}(\Ind(\mathcal{Z}_n)) \to \tilde{H}_n(\Ind(\mathcal{Y}_n))\to \tilde{H}_n(\Sigma^2 (\Ind(\mathcal{U}_n)))\to \tilde{H}_n(\Ind(\mathcal{Z}_n)) \to\\
				& \tilde{H}_{n-1}(\Ind(\mathcal{Y}_n)) \to \tilde{H}_{n-1}(\Sigma^2 (\Ind(\mathcal{U}_n)))\to \tilde{H}_{n-1}(\Ind(\mathcal{Z}_n))\to \tilde{H}_{n-2}(\Ind(\mathcal{Y}_n))\to\cdots
			\end{align*}
			This implies
			\begin{align*}
				\cdots \to & \tilde{H}_{n+1}(\Ind(\mathcal{Z}_n)) \to \tilde{H}_n(\Ind(\mathcal{Y}_n))\to \tilde{H}_{n-2}(\Ind(\mathcal{U}_n))\to \tilde{H}_n(\Ind(\mathcal{Z}_n))\to\\
				& \tilde{H}_{n-1}(\Ind(\mathcal{Y}_n)) \to \tilde{H}_{n-3}(\Ind(\mathcal{U}_n))\to \tilde{H}_{n-1}(\Ind(\mathcal{Z}_n))\to \tilde{H}_{n-2}(\Ind(\mathcal{Y}_n))\to\cdots
			\end{align*} 
			Since $\tilde{H}_i(\Ind(\mathcal{Y}_n)) = 0$ for all $i\le n$ by \ref{result:Y_n} and $\tilde{H}_i(\Ind(\mathcal{U}_n)) = 0$ for all $i\le n-2$ by \ref{result:U_n}, we have
			\begin{align*}
				\cdots & \to \tilde{H}_{n+1}(\Ind(\mathcal{Z}_n)) \to 0 \to 0 \to \tilde{H}_n(\Ind(\mathcal{Z}_n))\to 0 \to 0\to \tilde{H}_{n-1}(\Ind(\mathcal{Z}_n))\to 0 \to\cdots
			\end{align*}
			The exactness of the above sequence implies $\tilde{H}_i(\Ind(\mathcal{Z}_n)) = 0$ for all $i\le n$.
		\end{enumerate}
		This completes the proof.
	\end{proof}
	
	\begin{remark}
		In the proof of \Cref{proposition:homology_up_to_n-2}, the idea of considering auxiliary graphs (as defined in \Cref{subsection:graph_definitions}) and establishing recursive relations among the homology groups of independence complexes of these graphs and of $\Gamma_n$, in terms of link and deletion, is taken from \cite{Previous_3Xn}. 
	\end{remark}
	
	\begin{proposition}\label{proposition:homology_n-1} 
		For $n\geq1,$ $\tilde{H}_{n-1}(\Ind(\Gamma_n)) \neq 0 \text{ and }\tilde{H}_{n-1}(\Ind(\mathcal{U}_n)) \neq 0$. 
	\end{proposition}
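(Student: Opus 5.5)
The plan is to prove both statements simultaneously by induction on $n$. The $\mathcal{U}_n$ statement will feed the $\Gamma_n$ statement through the Mayer--Vietoris sequence \eqref{equation:MayerVietoris} for the vertex $b_1$. The $\mathcal{U}_n$ statement itself propagates through the wedge decomposition of \Cref{claim:Un_relation}.

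\emph{Base cases.} For $n=1,2$ both claims can be read off from \eqref{equation:Gamma_123} and \eqref{equation:U_12}:
\begin{itemize}
\item $\Ind(\Gamma_1)\simeq\Ind(\mathcal{U}_1)\simeq\mathbb{S}^0$, so $\tilde{H}_0\neq 0$;
\item $\Ind(\Gamma_2)\simeq\Ind(\mathcal{U}_2)\simeq\vee_2\mathbb{S}^1$, so $\tilde{H}_1\cong\mathbb{Z}^2\neq 0$.
\end{itemize}
For $n=3$ we also have the explicit answers $\Ind(\Gamma_3)\simeq\vee_5\mathbb{S}^2$ from \eqref{equation:Gamma_123} and $\Ind(\mathcal{U}_3)\simeq\vee_4\mathbb{S}^2$ from \Cref{claim:Un_relation}. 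These give nonzero $\tilde{H}_2$ directly, although the general argument below also covers $n=3$ for $\Gamma_n$.

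\emph{Inductive step for $\mathcal{U}_n$, $n\ge 4$.} By \Cref{claim:Un_relation}, $\Ind(\mathcal{U}_n)$ contains $\Sigma(\Ind(\mathcal{U}_{n-1}))$ as a wedge summand. Hence $\tilde{H}_{n-1}(\Ind(\mathcal{U}_n))$ contains $\tilde{H}_{n-2}(\Ind(\mathcal{U}_{n-1}))$ as a direct summand. That summand is nonzero by the induction hypothesis, so $\tilde{H}_{n-1}(\Ind(\mathcal{U}_n))\neq 0$.

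\emph{Inductive step for $\Gamma_n$, $n\ge 3$.} As in the proof of \Cref{proposition:homology_up_to_n-2}\ref{result:Gamma_n}, we have $lk(b_1,\Ind(\Gamma_n))\simeq\Ind(\mathcal{U}_{n-1})$ and $del(b_1,\Ind(\Gamma_n))\simeq\Ind(\mathcal{V}_{n-1})$. The link is nonempty, so the reduced sequence \eqref{equation:MayerVietoris} applies. Its relevant segment is
$$\tilde{H}_{n-1}(\Ind(\Gamma_n))\xrightarrow{c_{n-1}}\tilde{H}_{n-2}(\Ind(\mathcal{U}_{n-1}))\xrightarrow{a_{n-2}}\tilde{H}_{n-2}(\Ind(\mathcal{V}_{n-1})).$$
By \Cref{proposition:homology_up_to_n-2}\ref{result:V_n} applied to $n-1$, we get $\tilde{H}_i(\Ind(\mathcal{V}_{n-1}))=0$ for $i\le n-2$, so the last group vanishes. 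Exactness then makes $c_{n-1}$ surjective onto $\tilde{H}_{n-2}(\Ind(\mathcal{U}_{n-1}))$. That group is nonzero by the induction hypothesis for $\mathcal{U}$ (using the base value when $n-1\in\{2,3\}$). Hence $\tilde{H}_{n-1}(\Ind(\Gamma_n))\neq 0$.

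\emph{Expected obstacle.} There is no real obstacle; the only care needed is with indices. The vanishing range for $\mathcal{V}_{n-1}$, namely $i\le (n-1)-1=n-2$, must reach exactly degree $n-2$, and it does. The induction for $\mathcal{U}$ must also be run first, or simultaneously, so that $\tilde{H}_{n-2}(\Ind(\mathcal{U}_{n-1}))\neq 0$ is available when proving the $\Gamma_n$ statement.
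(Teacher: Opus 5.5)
Your proposal is correct and is essentially the paper's proof. Both argue by simultaneous induction: $\tilde{H}_{n-1}(\Ind(\Gamma_n))\neq 0$ comes from the Mayer--Vietoris sequence for $b_1$, where $c_{n-1}$ surjects onto the nonzero group $\tilde{H}_{n-2}(\Ind(\mathcal{U}_{n-1}))$ because $\tilde{H}_{n-2}(\Ind(\mathcal{V}_{n-1}))=0$, and $\tilde{H}_{n-1}(\Ind(\mathcal{U}_n))\neq 0$ comes from the wedge summand $\Sigma(\Ind(\mathcal{U}_{n-1}))$ in \Cref{claim:Un_relation}. The only cosmetic difference is that you run the $\Gamma_n$ step from $n=3$, whereas the paper treats $n=3$ as a base case.
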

	
	\begin{proof}
		We proceed by induction on $n$. The results for the base cases $n = 1,2,3$ are immediate from \eqref{equation:Gamma_123}, \eqref{equation:U_12} and \Cref{claim:Un_relation}. 
		
		Let $n>3$ and assume that the results hold for all $1\leq l\leq n-1$. We now prove the results for $n.$
		
		For $\Ind(\Gamma_n)$, we consider the link and deletion of the vertex $b_1$ in $\Ind(\Gamma_n)$. Since $lk(b_1, \Ind(\Gamma_n))\simeq \Ind(\mathcal{U}_{n-1})$ and $del(b_1, \Ind(\Gamma_n))\simeq \Ind(\mathcal{V}_{n-1})$, using the Mayer-Vietoris sequence \eqref{equation:MayerVietoris}, we obtain
		\begin{align*}
			\cdots & \to \tilde{H}_{n-1}(\Ind(\mathcal{V}_{n-1}))\to \tilde{H}_{n-1}(\Ind(\Gamma_n))\to \tilde{H}_{n-2}(\Ind(\mathcal{U}_{n-1}))\to \tilde{H}_{n-2}(\Ind(\mathcal{V}_{n-1}))\to \ldots
		\end{align*}
		We have $\tilde{H}_{n-2}(\Ind(\mathcal{U}_{n-1}))\ne0$ by the induction hypothesis, and $\tilde{H}_{n-2}(\Ind(\mathcal{V}_{n-1})) = 0$ by \Cref{proposition:homology_up_to_n-2}. Therefore, the exactness of the above sequence implies that $ \tilde{H}_{n-1}(\Ind(\Gamma_n)) \neq 0$.
		
		For $\Ind(\mathcal{U}_n)$, we have
		$\Ind(\mathcal{U}_n)\simeq \Sigma(\Ind(\mathcal{U}_{n-1}))\vee\Sigma(\Ind(\mathcal{Z}_{n-3}))\vee\Sigma^3 (\Ind(\mathcal{U}_{n-3}))$ by \Cref{claim:Un_relation}. 
		It follows that $\tilde{H}_{n-1}(\Ind(\mathcal{U}_n))\cong \tilde{H}_{n-2}(\Ind(\mathcal{U}_{n-1}))\oplus\tilde{H}_{n-2}(\Ind(\mathcal{Z}_{n-3}))\oplus\tilde{H}_{n-4}(\Ind(\mathcal{U}_{n-3}))$. By the induction hypothesis, $\tilde{H}_{n-2}(\Ind(\mathcal{U}_{n-1}))\ne0$. Hence $\tilde{H}_{n-1}(\Ind(\mathcal{U}_n))\neq 0$.
	\end{proof}
	
	\subsection{}\label{section:top_dimension}
	This section is divided into two subsections.
	In \Cref{subsection:top_dimension}, we prove the vanishing of the homology of $\Ind(\Gamma_n)$ in the top dimension, \textit{i.e.}, $\tilde{H}_{\dim(\Ind(\Gamma_n))}(\Ind(\Gamma_n)) = 0$ for $n\ge2$. In \Cref{subsection:simply_connected}, we establish that $\Ind(\Gamma_n)$ is simply connected.
	
	\subsubsection{Top-dimensional homology of $\Ind(\Gamma_n)$} \label{subsection:top_dimension}
	
	To prove that $\tilde{H}_{\dim(\Ind(\Gamma_n))}(\Ind(\Gamma_n)) = 0$ for $n\ge2$, we use discrete Morse theory. We begin by recalling the necessary terminology and results from discrete Morse theory. For more details, we refer to \cite{JonssonBook,Kozlov2008}. 
	
	Let $A$ be a set and let $\mathcal{F}$ be a finite family of finite subsets of $A$.
	\begin{definition}
		A \textit{matching} on $\mathcal{F}$ is a family $\mathcal{M}$ of pairs $\{\sigma,\tau\}$ with $\sigma,\tau\in\mathcal{F}$ such that each element of $\mathcal{F}$ belongs to at most one pair of $\mathcal{M}$.
	\end{definition}
	
	For any $\sigma\in\mathcal{F}$, if there exists a pair $\mu\in\mathcal{M}$ such that $\sigma\in\mu$, then $\sigma$ is said to be \textit{matched} in $\mathcal{M}$, and we write $\sigma\in\mathcal{M}$. Otherwise, $\sigma$ is said to be \textit{critical} (or \textit{unmatched}) in $\mathcal{M}$, and we write $\sigma\notin\mathcal{M}$.
	\begin{definition} 
		A matching $\mathcal{M}$ on $\mathcal{F}$ is called a \textit{partial matching} if, for every pair $\{\sigma,\tau\}\in\mathcal{M}$, either $\sigma\prec\tau$ (\textit{i.e.}, $\sigma\subset\tau$ and no $\rho\in\mathcal{F}$ satisfies $ \sigma\subset\rho\subset\tau$), or $\sigma\succ \tau$.
	\end{definition}
	Equivalently, $\mathcal{M}$ is a partial matching on $\mathcal{F}$ if and only if there exists $\mathcal{F}' \subset\mathcal{F}$ and an injective map $\phi: \mathcal{F}'\rightarrow \mathcal{F}\setminus \mathcal{F}'$ such that $\phi(\sigma)\succ \sigma$ for all $\sigma\in\mathcal{F}'$. 
	\begin{definition}
		A partial matching $\mathcal{M}$ on $\mathcal{F}$ is called {\it acyclic} if there does not exist a cycle
		$$\phi(\sigma_1) \succ \sigma_1 \prec \phi(\sigma_2) \succ \sigma_2 \prec \phi(\sigma_3) \succ \sigma_3 \prec \dots \prec \phi(\sigma_r) \succ \sigma_r \prec \phi(\sigma_1),$$
		with $\ r\ge 2$, and all $\sigma_i\in\mathcal{F}$ being distinct.
	\end{definition}
	
	We will primarily use a particular type of partial matching called an element matching.
	\begin{definition}
		Let $a\in A$. A matching $\mathcal{M}_a$ on $\mathcal{F}$ is called an \textit{element matching} using $a$ if every pair in $\mathcal{M}_a$ is of the form $\{\sigma\setminus\{a\},\sigma\cup\{a\}\}$ for some $\sigma\in\mathcal{F}$.
	\end{definition}
	
	\begin{lemma}[{\cite[Lemma 4.1]{JonssonBook}}]\label{lemma:Jonsson acyclic}
		Let $a\in A$. Define $$\mathcal{M}_a: = \{\{\sigma\setminus\{a\},\sigma\cup\{a\}\}\, |\, \sigma\setminus\{a\},\, \sigma\cup\{a\}\in\mathcal{F}\}.$$ 
		Let $\mathcal{M}'$ be an acyclic matching on $\mathcal{F}': = \{\sigma\in\mathcal{F}\, |\, \sigma\notin\mathcal{M}_a\}$. Then $\mathcal{M}: = \mathcal{M}_a \cup\mathcal{M}'$ is an acyclic matching on $\mathcal{F}$.
	\end{lemma}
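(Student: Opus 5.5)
The proof reduces to the standard cluster lemma. We order the elements of $A$ as they are used: first $a$, then whatever elements $\mathcal{M}'$ is built from. A cycle in $\mathcal{M}$ cannot stay inside $\mathcal{F}'$, since $\mathcal{M}'$ is acyclic. So it suffices to show that a cycle which uses at least one pair of $\mathcal{M}_a$ leads to a contradiction.

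Write $\mathcal{F}_0 = \{\sigma \in \mathcal{F} : a \notin \sigma\}$ and $\mathcal{F}_1 = \{\sigma \in \mathcal{F} : a \in \sigma\}$. Consider a cycle
$$\phi(\sigma_1) \succ \sigma_1 \prec \phi(\sigma_2) \succ \cdots \succ \sigma_r \prec \phi(\sigma_1).$$
Every pair of $\mathcal{M}_a$ has $\sigma = \tau \setminus \{a\}$ with $\tau = \phi(\sigma)$, so $\sigma \in \mathcal{F}_0$ and $\phi(\sigma) \in \mathcal{F}_1$. Every pair of $\mathcal{M}'$ lies inside $\mathcal{F}'$, and its two members either both contain $a$ or both omit it, because they differ by a single element that is not $a$ (if they differed by $a$, the pair would already belong to $\mathcal{M}_a$).

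Track the indicator $\chi(\rho) = [a \in \rho]$ along the cycle.
\begin{itemize}
\item An up-step $\sigma_i \to \phi(\sigma_i)$ keeps $\chi$ unchanged for an $\mathcal{M}'$-pair and raises $\chi$ from $0$ to $1$ for an $\mathcal{M}_a$-pair.
\item A down-step $\phi(\sigma_i) \succ \sigma_{i+1}$ removes one element. It lowers $\chi$ only if the removed element is $a$, that is, $\sigma_{i+1} = \phi(\sigma_i) \setminus \{a\}$.
\end{itemize}

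Suppose the down-step from $\phi(\sigma_i)$ removes $a$, and write $\tau = \phi(\sigma_i)$ and $\sigma_{i+1} = \tau \setminus \{a\}$. Both $\tau$ and $\tau \setminus \{a\}$ lie in $\mathcal{F}$, so by the definition of $\mathcal{M}_a$ they form a pair of $\mathcal{M}_a$. Hence $\tau$ is matched to $\tau \setminus \{a\}$, which means $\sigma_i = \sigma_{i+1}$. This contradicts distinctness, or equals the excluded trivial back-step in the case $r \ge 2$.

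Therefore $\chi$ never decreases along the cycle. Since the cycle is closed, $\chi$ never increases either, so no $\mathcal{M}_a$-pair occurs. The cycle then lies entirely in $\mathcal{M}'$ and in sets that are unmatched by $\mathcal{M}_a$.

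There is one subtlety. The down-steps must land in $\mathcal{F}'$, meaning each $\sigma_{i+1}$ must be $\mathcal{M}'$-matched in order to continue the cycle. This holds because every $\sigma_{i+1}$ is of the form $\phi^{-1}$ of something, and $\phi(\sigma_{i+1})$ is an $\mathcal{M}'$-pair. So every set in the cycle lies in $\mathcal{F}'$, and the cycle contradicts the acyclicity of $\mathcal{M}'$.

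The main obstacle is the case analysis of down-steps removing $a$, in particular when $\tau \in \mathcal{F}_1$ is $\mathcal{M}'$-matched. That case is vacuous: if $\tau \setminus \{a\} \in \mathcal{F}$, then $\tau \in \mathcal{M}_a$ and so $\tau \notin \mathcal{F}'$. I also need to check carefully that $\mathcal{M}$ is a matching, meaning the two families are disjoint; this holds because $\mathcal{M}'$ lives on $\mathcal{F}'$, which is disjoint from the sets matched by $\mathcal{M}_a$. Alternatively, I would simply cite Jonsson's cluster lemma.
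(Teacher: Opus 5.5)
The paper gives no proof of this lemma and only cites Jonsson. Your argument is the standard proof and it is correct, with one caveat you must make explicit. The argument is that the indicator $\chi(\rho)=[a\in\rho]$ can never drop along a cycle, because a one-element down-step $\tau\succ\tau\setminus\{a\}$ between two members of $\mathcal{F}$ is itself an $\mathcal{M}_a$-pair.

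The caveat is this. Two of your key assertions are that the members of an $\mathcal{M}'$-pair differ in a single element, and that every down-step $\phi(\sigma_i)\succ\sigma_{i+1}$ removes exactly one element. Both presuppose that $\sigma\prec\tau$ means $\sigma\subset\tau$ with $|\tau\setminus\sigma|=1$. The paper's literal definition instead makes $\prec$ a cover relation inside an arbitrary family $\mathcal{F}$. Under that reading the statement is false.

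For a counterexample, take $\mathcal{F}=\{\{a\},\{1\},\{2\},\{a,1\},\{1,2\},\{a,2,3\}\}$.
\begin{itemize}
\item Then $\mathcal{M}_a=\{\{\{1\},\{a,1\}\}\}$ and $\mathcal{F}'=\{\{a\},\{2\},\{1,2\},\{a,2,3\}\}$.
\item The matching $\mathcal{M}'=\{\{\{a\},\{a,2,3\}\},\{\{2\},\{1,2\}\}\}$ is an acyclic partial matching on $\mathcal{F}'$, since a cycle would need $\{a\}\prec\{1,2\}$.
\item Since $\{a,2\},\{a,3\},\{2,3\}\notin\mathcal{F}$, every pair of $\mathcal{M}_a\cup\mathcal{M}'$ is a cover in $\mathcal{F}$.
\end{itemize}
Yet the following is a cycle of $\mathcal{M}_a\cup\mathcal{M}'$:
\[
\{a,1\}\succ\{1\}\prec\{1,2\}\succ\{2\}\prec\{a,2,3\}\succ\{a\}\prec\{a,1\}.
\]
The drop of $\chi$ occurs at $\{a,2,3\}\succ\{2\}$. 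There $\tau=\{a,2,3\}$ is $\mathcal{M}'$-matched and $\tau\setminus\{a\}\notin\mathcal{F}$. This is exactly the case your last paragraph calls vacuous, and it is vacuous only when covers remove one element.

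This does not hurt the paper, for three reasons.
\begin{itemize}
\item A simplicial complex is convex: if $\sigma\subset\rho\subset\tau$ with $\sigma,\tau\in\mathcal{F}$, then $\rho\in\mathcal{F}$.
\item Convexity passes from $\mathcal{F}$ to $\mathcal{F}'$, by a short check using the definition of $\mathcal{M}_a$.
\item In a convex family every cover adds exactly one element.
\end{itemize}
Hence all the families $\mathcal{C}_i$ used in the paper satisfy your convention.

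So begin your proof by fixing that convention, or by assuming $\mathcal{F}$ convex. Also add the one-line check that $\mathcal{M}_a$ is itself a matching: the only possible $\mathcal{M}_a$-partner of $\rho$ is $\rho\setminus\{a\}$ if $a\in\rho$, and $\rho\cup\{a\}$ otherwise.

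Two minor points. Your opening remarks about ordering the elements of $A$ and about the cluster lemma play no role and can be dropped. Your down-steps $\phi(\sigma_i)\succ\sigma_{i+1}$ traverse the displayed cycle in the reverse direction, which is only a harmless reindexing.
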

	
	Using \Cref{lemma:Jonsson acyclic}, we construct an acyclic matching $\mathcal{M}_S$ on $\mathcal{F}$ associated with an ordered subset $S = \{x_1,x_2,\ldots,x_{t}\}$ of $A$. For this, we first define a sequence $\mathcal{M}_{x_1},\mathcal{M}_{x_2},\ldots,$ $\mathcal{M}_{x_{t}}$ of element matchings as follows. 
	
	Let $\mathcal{C}_0: = \mathcal{F}$. For each $1\le i\le t$, define 
	\begin{align*}
		\mathcal{M}_{x_i} &: = \{\{\di{\sigma}{x_i},\un{\sigma}{x_i}\}\, |\, \di{\sigma}{x_i},\un{\sigma}{x_i}\in\mathcal{C}_{i-1}\};\\
		\mathcal{C}_{i} &: = \{\sigma\in\mathcal{C}_{i-1}\, |\, \sigma\notin\mathcal{M}_{x_i}\}. 
	\end{align*} 
	Observe that the matchings $\mathcal{M}_{x_1},\mathcal{M}_{x_2},\ldots,$ $\mathcal{M}_{x_{t}}$ are pairwise disjoint. Define 
	\begin{equation}\label{equation:sequence_of_element_matchings}
		\mathcal{M}_S: = \bigsqcup_{i = 1}^{t}\mathcal{M}_{x_i}.
	\end{equation} 
	By successive applications of \Cref{lemma:Jonsson acyclic}, $\mathcal{M}_S$ is an acyclic matching on $\mathcal{F}$.
	
	We now turn our attention to matchings on simplicial complexes. A matching on a simplicial complex is a matching on the set of its simplices (or faces). The following is one of the main results from discrete Morse theory that we will use.
	\begin{theorem}[{\cite[Theorem 4.14]{JonssonBook}}]\label{theorem:acyclic}
		Let $\Delta$ be a simplicial complex and let $\mathcal{M}$ be an acyclic matching on $\Delta$ such that the empty set is not critical. Then $\Delta$ is homotopy equivalent to a cell complex with one cell of dimension $d \ge 0$ for each critical face of $\Delta$ of dimension $d$ plus one additional $0$-cell.
	\end{theorem}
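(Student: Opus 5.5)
The plan is the standard Forman-type argument: use acyclicity to order the faces of $\Delta$, then rebuild $|\Delta|$ face by face along that order. Each critical face will contribute one cell. Each matched pair will contribute an elementary expansion, which does not change the homotopy type. The pair containing the empty set will account for the extra $0$-cell.

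\textbf{Step 1 (ordering).} Form the Hasse diagram of the face poset of $\Delta$, including $\emptyset$, and orient every covering relation $\sigma\prec\tau$ downward from $\tau$ to $\sigma$. For each matched pair $\{\sigma,\tau\}\in\mathcal{M}$ with $\sigma\prec\tau$, reverse the edge so that it points upward. A directed cycle in this modified diagram alternates between downward steps and upward matched steps, so it is exactly a cycle of the form excluded in the definition of acyclicity. Hence the modified digraph is acyclic.

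Collapse each matched pair to a single node. I would then check that the quotient digraph is still acyclic; a cycle through a collapsed node lifts to a cycle in the modified Hasse diagram, using the fact that $\sigma$ and $\tau$ are joined by an edge. Taking a linear extension of the reverse order gives an ordering $F_1,F_2,\ldots$ of the faces with three properties:
\begin{itemize}
\item every face appears after all of its proper faces;
\item each matched pair $\sigma\prec\tau$ appears consecutively, with $\sigma$ immediately before $\tau$;
\item for a matched pair $\sigma\prec\tau$, every face of $\tau$ other than $\sigma$ and $\tau$ appears before $\sigma$.
\end{itemize}
I expect this step to be the main obstacle: verifying that the contracted graph is acyclic and that the linear extension really places all other boundary faces of $\tau$ before $\sigma$.

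\textbf{Step 2 (building up).} Let $\Delta_k$ be the subcomplex consisting of the first $k$ faces in this order; by Step 1 it is indeed a subcomplex. I will show inductively that $\Delta_k$ is homotopy equivalent to a CW complex $X_k$. This $X_k$ has one $d$-cell for each critical $d$-face among the first $k$ faces, plus one $0$-cell once the pair $\{\emptyset,v\}$ has been added.

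The first pair in the order must be $\{\emptyset,v\}$, since $\emptyset$ is not critical by hypothesis. Adding it produces a point, which is the additional $0$-cell. After that there are two kinds of step:
\begin{itemize}
\item Adding a critical $d$-face $\rho$ attaches a $d$-cell along $\partial\rho\subseteq\Delta_k$.
\item Adding a matched pair $\sigma\prec\tau$ is an elementary expansion. By Step 1, $\sigma$ is a free face of $\tau$ in $\Delta_k\cup\{\sigma,\tau\}$, so $\Delta_k\cup\{\sigma,\tau\}$ collapses onto $\Delta_k$ and is homotopy equivalent to it.
\end{itemize}

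\textbf{Step 3 (transport to CW).} To keep the cell structure of $X_k$ when a critical cell is attached, I will use the standard fact that if $h\colon\Delta_k\to X_k$ is a homotopy equivalence, then attaching a cell along $f$ and along $h\circ f$ gives homotopy equivalent spaces. I can also homotope $h\circ f$ to a cellular map, so the result is again a CW complex with exactly one new $d$-cell. Running the induction to the last face gives the statement.
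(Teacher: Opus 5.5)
The paper does not prove this statement; it quotes it from Jonsson's book, where it is Forman's theorem. Your plan is the standard proof. Steps 2 and 3 are sound once you have an ordering that extends the face poset and places each matched pair $\sigma\prec\tau$ consecutively: the pair $\{\emptyset,v\}$ gives the extra $0$-cell, every other matched pair is an elementary collapse, and every critical face is a cell attachment that you can transport along the current homotopy equivalence and then make cellular.

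The step that fails as written is the acyclicity of the contracted digraph in Step 1. Inside a contracted node $\{\sigma,\tau\}$, your modified diagram has only the edge $\sigma\to\tau$. Suppose a cycle of the quotient enters the node at $\tau$ (from some $\rho\succ\tau$) and leaves it from $\sigma$ (to some facet $\kappa\prec\sigma$). Such a cycle does \emph{not} lift, because $\tau\to\sigma$ is exactly the edge you reversed. So the fact that $\sigma$ and $\tau$ are adjacent is not enough; you must show that such passages cannot occur on a cycle.

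A dimension count does this:
\begin{itemize}
\item Apart from the discarded internal edges, every edge of the quotient is an unmatched covering relation pointing downward, so it lowers dimension by exactly $1$.
\item A passage through a node changes dimension by $+1$ if it enters at $\sigma$ and leaves from $\tau$. It changes dimension by $-1$ if it enters at $\tau$ and leaves from $\sigma$. It changes dimension by $0$ otherwise, in particular at critical nodes.
\end{itemize}
Sum around a closed cycle with $k$ edges and $k$ node passages. This gives (number of $+1$ passages) $-$ (number of $-1$ passages) $= k$, which forces every passage to be of type $+1$. A cycle of that kind does lift to a directed cycle in the modified Hasse diagram, and acyclicity excludes it.

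With this repair you get the required ordering; this is Kozlov's characterization of acyclic matchings by linear extensions in which matched pairs are consecutive. Your third bullet then follows from the first two, and the rest of your argument goes through.
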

	
	Before proceeding further, we first fix some notation. Let $G$ be a graph, and let $\aG{G}$ denote its independence number, \textit{i.e.}, the size of a maximum independent set in $G$. Define 
	$$\IG{G}: = \{\sigma\in \Ind(G)\ |\ \dim(\sigma) = \dim(\Ind(G))\}.$$
	Note that $\IG{G}$ consists of the top-dimensional faces of $\Ind(G)$, which are precisely the maximum independent sets of $G$. 
	\begin{remark}\label{remark:al_G_and_sigma_union_v_notin_Ind(G)}
		Let $\sigma\in\Ind(G)$. Then $\sigma\in\IG{G}$ if and only if $|\sigma| = \aG{G}$. Consequently, if $\sigma\in\IG{G}$ and $v\in V(G)\setminus\sigma$, then $\un{\sigma}{v}\notin\Ind(G)$.
	\end{remark}
	
	For an independent ordered subset $S$ of $V(G)$, the following lemma characterizes the elements of $\IG{G}$ that are matched in the matching $\mathcal{M}_S$ defined in \eqref{equation:sequence_of_element_matchings}.
	\begin{lemma}\label{lemma:matching_top_dimension_in_G}
		Let $\mathcal{S} = \{v_1, v_2,v_3,\cdots,v_t\}\subseteq V(G)$ be an ordered independent set in $G$. For $\sigma\in \IG{G}$, $\sigma\in \mathcal{M}_{\mathcal{S}}$ if and only if $\sigma\cap\mathcal{S}\ne\emptyset$. 
	\end{lemma}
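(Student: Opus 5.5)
We argue both directions by tracking the construction of $\mathcal{M}_{\mathcal{S}}$ step by step, using \Cref{remark:al_G_and_sigma_union_v_notin_Ind(G)} as the key input. Fix $\sigma\in\IG{G}$. The basic observation is this. For any vertex $v$, the only possible partner of $\sigma$ in an element matching using $v$ is $\sigma\setminus\{v\}$ (if $v\in\sigma$) or $\sigma\cup\{v\}$ (if $v\notin\sigma$). The latter is not in $\Ind(G)$ by \Cref{remark:al_G_and_sigma_union_v_notin_Ind(G)}. So $\sigma$ can only be matched downward, and only using a vertex $v_i\in\sigma\cap\mathcal{S}$.

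\emph{($\Rightarrow$)} Suppose $\sigma\cap\mathcal{S}=\emptyset$. Then for each $i$, $v_i\notin\sigma$. A pair in $\mathcal{M}_{v_i}$ containing $\sigma$ would have to be $\{\sigma,\sigma\cup\{v_i\}\}$, which is impossible. So $\sigma\notin\mathcal{M}_{v_i}$ for every $i$, and $\sigma\notin\mathcal{M}_{\mathcal{S}}$.

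\emph{($\Leftarrow$)} Suppose $\sigma\cap\mathcal{S}\neq\emptyset$, and let $j$ be the smallest index with $v_j\in\sigma$. By the same argument as above, $\sigma$ is unmatched in $\mathcal{M}_{v_1},\dots,\mathcal{M}_{v_{j-1}}$, so $\sigma\in\mathcal{C}_{j-1}$. It remains to show that $\tau:=\sigma\setminus\{v_j\}$ also lies in $\mathcal{C}_{j-1}$. Then $\{\tau,\sigma\}\in\mathcal{M}_{v_j}$ by definition, and we are done.

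This is the main step. We show by induction on $i<j$ that $\tau$ is not matched by $\mathcal{M}_{v_i}$. Since $i<j$, we have $v_i\notin\sigma$, so $v_i\notin\tau$. Thus the only candidate pair is $\{\tau,\tau\cup\{v_i\}\}$. Here we use that $\mathcal{S}$ is independent: $v_i$ is not adjacent to $v_j$. Note that $\tau\cup\{v_i\}$ has size $\aG{G}$, so if it is independent it lies in $\IG{G}$. The remaining question is whether $\tau\cup\{v_i\}$ could survive to $\mathcal{C}_{i-1}$ and be matched with $\tau$.

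The danger is that $\rho:=\tau\cup\{v_i\}$ is a top face in $\mathcal{C}_{i-1}$. But $\rho$ contains $v_i$, and $i$ is the smallest index of an element of $\mathcal{S}$ in $\rho$ (every $v_k$ with $k<i<j$ is absent from $\sigma$, hence from $\rho$). So $\rho$ is in the same situation as $\sigma$ but with a smaller index.

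This suggests strengthening the claim and running a simultaneous induction on the index. The strengthened claim is: for every $\rho\in\IG{G}$ with minimal index $m$ (the smallest $k$ with $v_k\in\rho$), the pair $\{\rho\setminus\{v_m\},\rho\}$ lies in $\mathcal{M}_{v_m}$. Assuming this for smaller indices, the face $\rho=\tau\cup\{v_i\}$ (when independent) was already matched with $\tau$ at step $i$. That would contradict $\tau$ remaining available for $\sigma$.

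So the naive claim needs care. I expect the actual proof to resolve this by a counting or ordering argument: $\tau$ is matched at the first $i$ with $\tau\cup\{v_i\}$ independent, and then $\sigma$ is matched downward at its first index where its lower face is still free. The conclusion we need is only that $\sigma$ is matched \emph{somehow}. We will therefore argue as follows. If $\tau$ is consumed at an earlier step, then $\tau\cup\{v_i\}\neq\sigma$. We then continue with the next element $v_{j'}\in\sigma\cap\mathcal{S}$ and its lower face $\sigma\setminus\{v_{j'}\}$, showing that some lower face of $\sigma$ must remain free when $\sigma$'s corresponding vertex is processed.

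The obstacle is making this precise. We will induct on $|\sigma\cap\mathcal{S}|$ together with the index order, and use the independence of $\mathcal{S}$ to ensure that adding $v_i$ to a lower face never creates an edge with $\mathcal{S}$-vertices.
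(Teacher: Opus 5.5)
Your ($\Rightarrow$) direction is correct, and so is the first half of ($\Leftarrow$), namely that $\sigma\in\mathcal{C}_{j-1}$. However, the proposal does not prove ($\Leftarrow$). You stop at the ``danger'' that $\rho=\tau\cup\{v_i\}$, for some $i<j$, is independent and is matched with $\tau$ at step $i$. The strengthened induction and the fallback to a later vertex $v_{j'}\in\sigma\cap\mathcal{S}$ remain a plan whose key step you yourself call an unresolved obstacle.

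The missing idea is a one-line maximality argument showing that this danger never occurs. Suppose $\tau\cup\{v_i\}$ were independent for some $i<j$. Then:
\begin{itemize}
\item $v_i$ is non-adjacent to every vertex of $\tau$;
\item $v_i$ is non-adjacent to $v_j$, because $\mathcal{S}$ is independent;
\item $v_i\notin\sigma$, by minimality of $j$.
\end{itemize}
Hence $\sigma\cup\{v_i\}=\tau\cup\{v_i\}\cup\{v_j\}$ would be an independent set of size $\alpha(G)+1$, contradicting \Cref{remark:al_G_and_sigma_union_v_notin_Ind(G)}. You had both ingredients, $v_i\nsim v_j$ and the size count. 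But you used them only to conclude that $\tau\cup\{v_i\}$ would lie in $\IG{G}$, not that $\sigma\cup\{v_i\}$ would exceed $\alpha(G)$.

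With this observation, $\tau$ cannot be matched upward in any $\mathcal{M}_{v_i}$ with $i<j$. It cannot be matched downward there either, since $v_i\notin\tau$. So $\tau\in\mathcal{C}_{j-1}$, and together with $\sigma\in\mathcal{C}_{j-1}$ this gives $\{\tau,\sigma\}\in\mathcal{M}_{v_j}$ directly.

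This is exactly how the paper argues, phrased as a contradiction. If $\sigma\notin\mathcal{M}_{v_j}$, then either $\sigma$ or $\sigma\setminus\{v_j\}$ was used at an earlier step $j'<j$. Both options produce an independent set $\sigma\cup\{v_{j'}\}$ of size $\alpha(G)+1$.

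In particular, your worry that the ``naive claim needs care'' is unfounded. The strengthened claim you formulate is true, the situation you feared is vacuous, and no induction on $|\sigma\cap\mathcal{S}|$ or on the index is needed.
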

	
	\begin{proof} 
		Assume first that $\sigma \in \mathcal{M}_{\mathcal{S}}$. Since $\mathcal{M}_{\mathcal{S}} = \bigsqcup_{i = 1}^{t}\mathcal{M}_{v_i}$, there exists $1\le j\le t$ such that $\sigma\in \mathcal{M}_{v_j}$. Suppose $v_j\notin \sigma$. Then $\sigma = \di{\sigma}{v_j}$ is matched with $\un{\sigma}{v_j}$ in $\mathcal{M}_{v_j}$. It follows that $\sigma\cup \{v_j\}\in \Ind(G)$, contradicting \Cref{lemma:matching_top_dimension_in_G} (as $\sigma\in \IG{G}$). Therefore $v_j\in \sigma$, and hence $\sigma\cap\mathcal{S}\ne\emptyset$.
		
		We now assume that $\sigma\cap\mathcal{S}\ne\emptyset$. Let $j: = \min\{i: v_i\in \sigma\}$. We prove that $\sigma\in\mathcal{M}_{v_j}$ and hence $\sigma\in\mathcal{M}_{\mathcal{S}}$. On the contrary, suppose $\sigma\notin \mathcal{M}_{v_j}$. Since $v_j\in\sigma$, it follows that $\un{\sigma}{v_j} = \sigma\in\mathcal{M}_{v_{j'}}$ or $\sigma\setminus\{v_j\}\in \mathcal{M}_{v_{j''}}$ for some $j',j''<j$. By the minimality of $j$, we have $v_{j'}\notin\sigma$ and $v_{j''}\notin\sigma\setminus\{v_j\}$.
		
		If $\sigma\in\mathcal{M}_{v_{j'}}$, then since $\sigma = \di{\sigma}{v_{j'}}$ can only be matched with $\un{\sigma}{v_{j'}}$ in $\mathcal{M}_{v_{j'}}$, we have $\un{\sigma}{v_{j'}}\in \Ind(G)$. This contradicts \Cref{remark:al_G_and_sigma_union_v_notin_Ind(G)}. Hence $\sigma\setminus\{v_j\}\in \mathcal{M}_{v_{j''}}$. Now, since $\sigma\setminus\{v_j\} = \di{\sigma}{v_{j},v_{j''}}$ can only be matched with $\un{(\sigma\setminus\{v_j\})}{v_{j''}}$ in $\mathcal{M}_{v_{j''}}$, we get $\un{(\sigma\setminus\{v_j\})}{v_{j''}}\in \Ind(G)$. 
		This implies $v_{j''}\nsim v$ for all $v\in\sigma\setminus\{v_j\}$. Since $\mathcal{S}$ is an independent set, $v_{j''}\nsim v_j$. It follows that $\sigma\cup\{v_{j''}\}\in \Ind(G)$, again a contradiction to \Cref{remark:al_G_and_sigma_union_v_notin_Ind(G)}. Thus $\sigma\in\mathcal{M}_{v_j}$, which implies $\sigma\in\mathcal{M}_{\mathcal{S}}$. 
	\end{proof}
	
	Recall that the aim of this section is to show that $\tilde{H}_{\dim(\Ind(\Gamma_n))}(\Ind(\Gamma_n)) = 0$. By \Cref{theorem:acyclic}, it suffices to construct an acyclic matching on $\Ind(\Gamma_n)$ for which no top-dimensional face is critical. If there exists an ordered independent set $\mathcal{B}_n \subseteq V(\Gamma_n)$ such that $\sigma\cap \mathcal{B}_n\neq \emptyset$ for every $\sigma\in\IG{\Gamma_n}$, then the matching $\mathcal{M}_{\mathcal{B}_n}$ has no critical face of top-dimension by \Cref{lemma:matching_top_dimension_in_G}. We define such a set $\mathcal{B}_n$ of $V(\Gamma_n)$ as 
	\begin{equation}\label{set_Bn}
		\mathcal{B}_n: = \{a_1,a_4,a_7,\cdots,a_{(\beta_n-3)},a_{\beta_n}\}\sqcup\{e_1, e_4,e_7,\cdots, e_{(\beta_n-3)},e_{\beta_n}\},
	\end{equation} where 
	$$\beta_n = \begin{dcases}
		n-3 & \text{ if } n\equiv 1(\text{mod $6$}),\\
		n-2 & \text{ if } n\equiv 3(\text{mod $6$}),\\
		n-1 & \text{ if } n\equiv 5(\text{mod $6$}),\\
		1 & \text{ otherwise}.
	\end{dcases}$$
	Observe that $\mathcal{B}_n$ is independent in $\Gamma_n$. To prove that every $\sigma\in\IG{\Gamma_n}$ satisfies $\sigma\cap\mathcal{B}_n\ne \emptyset$, we first establish two preliminary results.
	\begin{lemma}\label{lemma:dim_sigma}
		Let $\sigma\in\IG{\Gamma_n}$. Then 
		$\aG{\Gamma_n} = |\sigma| 
		= \begin{cases}
			\frac{3n}{2} & \text{if $n$ is even,}\\
			\frac{3n-1}{2} & \text{if $n$ is odd}.
		\end{cases}$
	\end{lemma}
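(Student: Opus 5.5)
The statement is the classical fact that the maximum matching of the $3\times n$ grid has $\lfloor 3n/2\rfloor$ edges, since $\sigma\in\IG{\Gamma_n}$ means $\sigma$ is a maximum independent set of $\Gamma_n=L(G_{3\times n})$, i.e. a maximum matching of $G_{3\times n}$. The equality $|\sigma|=\aG{\Gamma_n}$ is immediate from \Cref{remark:al_G_and_sigma_union_v_notin_Ind(G)}, so it remains to compute $\aG{\Gamma_n}$. I will prove the upper bound $\aG{\Gamma_n}\le\lfloor 3n/2\rfloor$ and then exhibit a matching attaining it.

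\textbf{Upper bound.} A matching in $G_{3\times n}$ covers $2|\sigma|$ distinct vertices among the $3n$ vertices of the grid, so $2|\sigma|\le 3n$. This gives $|\sigma|\le 3n/2$ for $n$ even and, since $|\sigma|$ is an integer, $|\sigma|\le (3n-1)/2$ for $n$ odd. No finer structural argument is needed.

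\textbf{Lower bound (construction).} For $n$ even, I partition the columns into consecutive pairs $\{2k-1,2k\}$. In each pair, the three horizontal edges in rows $1,2,3$ between columns $2k-1$ and $2k$ are pairwise disjoint, giving $3$ edges per pair and $3n/2$ in total; this is a perfect matching. In the notation of $\Gamma_n$ these are the vertices $a_{2k-1},c_{2k-1},e_{2k-1}$, $k=1,\dots,n/2$. To check independence in $\Gamma_n$ directly: $a_i,c_i,e_i$ are mutually nonadjacent, and $a_i\sim a_{i+1}$ only for consecutive indices, which do not both occur because the indices used are all odd. For $n$ odd, I use the perfect matching on the first $n-1$ columns and add one vertical edge in column $n$, say $b_n$. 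This yields $3(n-1)/2+1=(3n-1)/2$ edges. Independence holds because $b_n$ is adjacent only to $a_{n-1},c_{n-1},d_n$, and the index $n-1$ is even, so none of these vertices is in the set.

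There is no real obstacle here. The only step that needs care is confirming that the construction is independent in $\Gamma_n$ under the paper's labelling, in particular the adjacencies of $b_n$ listed in $E(\Gamma_n)$, and that the small case $n=1$ is included: there $\sigma=\{b_1\}$ and $|\sigma|=1=(3\cdot1-1)/2$.
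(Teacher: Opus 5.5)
Your proposal is correct and follows the same route as the paper. Both reduce to maximum matchings of $G_{3\times n}$ and bound the size by counting covered vertices, using $2|\sigma|\le 3n$ together with parity when $n$ is odd. Both then attain the bound with horizontal edges on consecutive column pairs, plus one vertical edge in the last column when $n$ is odd. Your extra check of independence directly in the $\Gamma_n$ labelling is accurate: only odd indices occur among the $a_i,c_i,e_i$, and $b_n$ is adjacent only to $a_{n-1},c_{n-1},d_n$. It simply makes explicit what the paper leaves to the matching picture.
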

	
	\begin{proof}
		By \Cref{remark:al_G_and_sigma_union_v_notin_Ind(G)}, $\aG{\Gamma_n} = |\sigma|$. We now compute $|\sigma|$.
		
		Since $\sigma \in \IG{\Gamma_n},$ $|\sigma| = \dim(\Ind(\Gamma_{ n}))+1$. Using $\Ind(\Gamma_n) = M(G_{3\times n}),$ we obtain $|\sigma| = \dim(M(G_{3\times n}))+1$. By the definition of the matching complex, $|\sigma|$ equals the size of a maximum matching in $G_{3\times n}$.
		
		We first assume that $n$ is even. Then $M = \{(11,21),(12,22),(13,23),(31,41),(32,42),(33,43),\ldots,((n-1)1,n1),((n-1)2,n2),((n-1)3,n3)\}$ is a matching of size $3n/2$ that covers the entire vertex set $V(G_{3\times n})$. Therefore $M$ is a maximum matching, and hence
		$|\sigma| = \frac{3n}{2}$.
		
		Now, suppose that $n$ is odd. Since $|V(G_{3\times n})| = 3n$ is odd, every matching in $G_{3\times n}$ leaves at least one vertex uncovered. Observe that $M' = \{(11,21),(12,22),(13,23),(31,41),(32,42),(33,43),\ldots,((n-2)1,(n-1)1),((n-2)2,(n-1)2),((n-2)3,(n-1)3),(n1,n2)\}$ is a matching of size $\frac{3n-1}{2}$ that covers every vertex of $G_{3\times n}$ except $n3$. Thus $M'$ is a maximum matching, and therefore $|\sigma| = \frac{3n-1}{2}$.
	\end{proof}
	
	\begin{lemma}\label{lemma:3m+1}
		For $m\ge 1,$ let $\sigma\in \IG{\Gamma_{2m+1}}$. Then $\sigma\cap\{ b_1, c_1, d_1\}\ne\emptyset$.
	\end{lemma}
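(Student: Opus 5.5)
The plan is to translate the statement back into the grid $G_{3\times(2m+1)}$ and finish with a parity (bipartite colouring) argument. Write $n=2m+1$. Via the identification $\Ind(\Gamma_n)=M(G_{3\times n})$, the vertices of $\Gamma_n$ correspond to edges of the grid: $b_i$ and $d_i$ are the two vertical edges $(i1,i2)$ and $(i2,i3)$ of column $i$, and $a_i,c_i,e_i$ are the horizontal edges $(i1,(i+1)1)$, $(i2,(i+1)2)$, $(i3,(i+1)3)$. This is consistent with the adjacencies listed in $E(\Gamma_n)$; for instance $c_1$ is adjacent to $b_1,d_1,b_2,d_2$, which are exactly the edges meeting $(12,22)$. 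Hence $\{b_1,c_1,d_1\}$ is precisely the set of edges of $G_{3\times n}$ incident to the vertex $12$. The claim of \Cref{lemma:3m+1} is therefore equivalent to: \emph{every maximum matching of $G_{3\times(2m+1)}$ covers the vertex $12$.}

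By \Cref{lemma:dim_sigma}, a maximum matching $\sigma$ has $|\sigma|=\frac{3n-1}{2}=3m+1$ edges, so it covers $6m+2$ of the $6m+3$ vertices and misses exactly one vertex. It thus suffices to show that the missed vertex cannot be $12$.

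For this, colour the vertex $ij$ (column $i$, row $j$) black if $i+j$ is even and white otherwise. This is a proper $2$-colouring of the grid, so every edge joins a black vertex to a white vertex. We then count the colour classes:
\begin{itemize}
\item each of the $m+1$ odd columns contributes its rows $1$ and $3$ as black vertices, giving $2m+2$;
\item each of the $m$ even columns contributes its row $2$ as a black vertex, giving $m$.
\end{itemize}
So there are $3m+2$ black and $3m+1$ white vertices. A matching with $3m+1$ edges covers exactly $3m+1$ white vertices, which means it covers all white vertices. Since $1+2=3$ is odd, the vertex $12$ is white, so it is covered by some edge of $\sigma$. That edge lies in $\{b_1,c_1,d_1\}$, and hence $\sigma\cap\{b_1,c_1,d_1\}\neq\emptyset$.

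There is no real obstacle in this argument. The only point needing care is the dictionary between the vertex names $a_i,\dots,e_i$ of $\Gamma_n$ and the grid edges, to confirm that $\{b_1,c_1,d_1\}$ is exactly the set of edges at $12$. This can be checked directly against the edge list of $\Gamma_n$.
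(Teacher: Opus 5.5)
Your proof is correct, and it takes a genuinely different route from the paper's.

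Your dictionary ($b_i=(i1,i2)$, $d_i=(i2,i3)$, and $a_i,c_i,e_i$ the horizontal edges in rows $1,2,3$) is indeed an isomorphism $\Gamma_n\cong L(G_{3\times n})$. So $\{b_1,c_1,d_1\}$ is exactly the set of grid edges at $12$. The colour count ($3m+1$ white against $3m+2$ black vertices) then forces every matching of size $3m+1$ to saturate all white vertices, including $12$.

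The paper never leaves $\Gamma_n$ and argues by induction on $m$. Its base case is an explicit list of the $18$ elements of $\IG{\Gamma_3}$. In the inductive step it splits $V(\Gamma_{2m+1})$ into $\Omega=\Gamma_3-\{b_3,d_3\}$ and $\Omega'\cong\Gamma_{2m-1}$. Since $\aG{\Omega}=4$ and $\aG{\Omega'}=3m-2$, there are two cases:
\begin{itemize}
\item $|\sigma_1|=4$: one inspects the four elements of $\IG{\Omega}$.
\item $|\sigma_1|=3$: the induction hypothesis puts an element of $\{b_3,c_3,d_3\}$ in $\sigma_2$, which excludes $c_2$ from $\sigma_1$, and then $\aG{\Omega-\{b_1,c_1,d_1,c_2\}}=2$ finishes.
\end{itemize}

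Your argument avoids the enumeration and the small independence-number computations that the paper checks by inspection. It also proves more: every maximum independent set of $\Gamma_{2m+1}$ meets the star of every grid vertex $ij$ with $i+j$ odd. Equivalently, the single vertex a maximum matching misses always has $i+j$ even.

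The paper's version has the merit of using the same decomposition-and-counting template it reuses in \Cref{prop:even+odd}. There the period-$6$ set $\mathcal{B}_n$ needs more local information than a colour count provides. Even so, your viewpoint already makes the even case of that lemma a one-liner: a perfect matching avoiding $a_1$ and $e_1$ would have to contain both $b_1$ and $d_1$, which share the vertex $12$.
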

	
	\begin{proof}
		The proof proceeds by induction on $m$. For the base case $m = 1$, we have
		\begin{align*}
			\IG{\Gamma_3}& = \{\{a_1, c_1, e_1, b_3\}, \{a_1, c_1, e_1, d_3\}, \{a_1, c_1, e_2, b_3\}, \{a_1, d_1, c_2, e_2\}, \{a_1, d_1, d_2, b_3\}, \{a_1, d_1, d_2, d_3\},\\
			&\qquad \{a_1, d_1, e_2, b_3\}, \{b_1, e_1, a_2, c_2\}, \{b_1, e_1, a_2, d_3\}, \{b_1, e_1, b_2, b_3\}, \{b_1, e_1, b_2, d_3\}, \{b_1, a_2, c_2, e_2\},\\
			&\qquad \{b_1, a_2, d_2, d_3\}, \{b_1, b_2, e_2, b_3\}, \{c_1, e_1, a_2, d_3\}, \{d_1, a_2, c_2, e_2\}, \{d_1, a_2, d_2, d_3\}, \{d_1, b_2, e_2, b_3\}\}.
		\end{align*}
		Clearly, every $\sigma\in \IG{\Gamma_3}$ satisfies $\sigma\cap\{ b_1, c_1, d_1\}\ne\emptyset$. Hence the result is true for $m = 1$.
		
		Let $m>1$ and assume that the result holds for all $1\le l\le m-1$. We now prove it for $m$. 
		
		Let $\sigma\in \IG{\Gamma_{2m+1}}$. Since $\Gamma_3$ is a subgraph of $\Gamma_{2m+1}$, we define $\Omega: = \Gamma_3-\{b_3,d_3\}$ and $\Omega': = \Gamma_{2m+1}- V(\Omega)$. Then $V(\Gamma_{2m+1}) = V(\Omega)\sqcup V(\Omega')$. It follows that $\sigma = \sigma_1\sqcup \sigma_2$ for some $\sigma_1\in \Ind(\Omega)$ and $\sigma_2\in \Ind(\Omega')$. Observe that $$\IG{\Omega} = \{\{a_1, d_1, c_2, e_2\}, \{b_1, e_1, a_2, c_2\}, \{b_1, a_2, c_2, e_2\}, \{d_1, a_2, c_2, e_2\}\}.$$ Since every element of $\IG{\Omega}$ has cardinality $4$, \Cref{remark:al_G_and_sigma_union_v_notin_Ind(G)} implies $\aG{\Omega} = 4$. Hence $|\sigma_1|\le4$. 
		\begin{figure}[H]
			\centering
			\begin{tikzpicture}[scale = 0.3, vertices/.style = {draw, fill = black, circle, inner sep = 0.5pt}]
				
				\node[vertices,label = above:{{\tiny $b_1$}}] (b1) at (12, 3) {};
				\node[vertices,label = below:{{\tiny $d_1$}}] (d1) at (12, 0) {};
				
				\node[vertices,label = above:{{\tiny $a_1$}}] (a1) at (13.5, 4.5) {};
				\node[vertices,label = below:{{\tiny $c_1$}}] (c1) at (13.5, 1.5) {};
				\node[vertices,label = below:{{\tiny $e_1$}}] (e1) at (13.5, -1.5) {};
				
				\node[vertices,label = above:{{\tiny $b_2$}}] (b2) at (15, 3) {};
				\node[vertices,label = below:{{\tiny $d_2$}}] (d2) at (15, 0) {};
				
				\node[vertices,label = above:{{\tiny $a_2$}}] (a2) at (16.5, 4.5) {};
				\node[vertices,label = below:{{\tiny $c_2$}}] (c2) at (16.5, 1.5) {};
				\node[vertices,label = below:{{\tiny $e_2$}}] (e2) at (16.5, -1.5) {};
				
				\node[vertices,label = above:{{\tiny $b_3$}}] (b3) at (18, 3) {};
				\node[vertices,label = below:{{\tiny $d_3$}}] (d3) at (18, 0) {};
				
				\node[vertices,label = above:{{\tiny $a_3$}}] (a) at (19.5, 4.5) {};
				\node[vertices,label = below:{{\tiny $c_3$}}] (c) at (19.5, 1.5) {};
				\node[vertices,label = below:{{\tiny $e_3$}}] (e) at (19.5, -1.5) {};
				
				\node[vertices,label = above:{{\tiny $b_4$}}] (b) at (21, 3) {};
				\node[vertices,label = below:{{\tiny $d_4$}}] (d) at (21, 0) {};
				
				\node[vertices, label = {[xshift = .08cm,yshift = -.01cm]above:{{{\tiny $b_{2m}$}}}}] (1) at ($(b)+(4,0)$) {};
				\node[vertices, label = {[xshift = .05cm]below:{{{\tiny $d_{2m}$}}}}] (2) at ($(d)+(4,0)$) {};
				
				\node[vertices,label = {[xshift = .1cm]above:{{{\tiny $a_{2m}$}}}}] (3) at ($(a)+(7,0)$) {}; 
				\node[vertices,label = {[xshift = .1cm]below:{{{\tiny $c_{2m}$}}}}] (4) at ($(c)+(7,0)$) {};
				\node[vertices,label = {[xshift = .1cm]below:{{{\tiny $e_{2m}$}}}}] (5) at ($(e)+(7,0)$) {};
				
				\node[vertices,label = {[xshift = .18cm]above:{{{\tiny $b_{2m+1}$}}}}] (6) at ($(b)+(7,0)$) {};
				\node[vertices,label = {[xshift = .19cm]below:{{{\tiny $d_{2m+1}$}}}}] (7) at ($(d)+(7,0)$) {};
				
				\draw (a) -- ++(2.3,0);
				\draw (c) -- ++(2.3,0);
				\draw (e) -- ++(2.3,0);
				\draw (3) -- ++(-2.3,0);
				\draw (4) -- ++(-2.3,0);
				\draw (5) -- ++(-2.3,0);
				\draw (b) -- ++(1,1);
				\draw (b) -- ++(1,-1);
				\draw (d) -- ++(1,1);
				\draw (d) -- ++(1,-1);
				\draw (1) -- ++(-1,1);
				\draw (1) -- ++(-1,-1);
				\draw (2) -- ++(-1,1);
				\draw (2) -- ++(-1,-1);
				
				\node at ($(b)!0.5!(1)$) {$\ldots$};
				\node at ($(d)!0.5!(2)$) {$\ldots$};
				\node at ($(a)!0.5!(3)$) {$\ldots$};
				\node at ($(c)!0.5!(4)$) {$\ldots$};
				\node at ($(e)!0.5!(5)$) {$\ldots$};
				
				\foreach \vertex/\neighbors in {
					b1/{a1,c1,d1},
					d1/{c1,e1},
					a1/{a2,b2},
					c1/{b2,c2,d2},
					e1/{d2,e2},
					b2/{a2,c2,d2},
					d2/{c2,e2}}
				{
					\foreach \neighbor in \neighbors
					\draw [blue] (\vertex)--(\neighbor);
				}
				
				\foreach \vertex/\neighbors in {
					a2/{a,b3},
					c2/{b3,c,d3},
					e2/{d3,e}}
				{
					\foreach \neighbor in \neighbors
					\draw [red] (\vertex)--(\neighbor);
				}
				
				\foreach \vertex/\neighbors in {
					b3/{d3},
					a/{b3,b},
					b/{c,d},
					c/{b3,d3,d},
					d/{e},
					e/{d3},
					1/{2,3,4},
					2/{4,5},
					3/{6},
					4/{6,7},
					5/{7},
					6/{7}}
				{
					\foreach \neighbor in \neighbors
					\draw [-] (\vertex)--(\neighbor);
				} 
			\end{tikzpicture}
			\vspace{-.2cm}
			\caption{$\Gamma_{2m+1}$ with subgraphs $\Omega$ (blue edges) and $\Omega'$ (black edges)}
			\label{fig:Gamma_2m+1}
		\end{figure}
		Further, by \Cref{lemma:dim_sigma}, $\aG{\Gamma_{2m-1}} = 3m-2$. Since $\Omega'\cong \Gamma_{2m-1}$, we have $|\sigma_2|\le \aG{\Omega'} = \aG{\Gamma_{2m-1}} = 3m-2$. Moreover, $\sigma\in\IG{\Gamma_{2m+1}}$ implies $|\sigma| = 3m+1$ by \Cref{lemma:dim_sigma}. Therefore, there are two possible cases:
		\begin{enumerate}[label = (\roman*)]
			\item $|\sigma_1| = 3$ and $|\sigma_2| = 3m-2$.\\ 
			Then $|\sigma_2| = \aG{\Omega'}$, and hence \Cref{remark:al_G_and_sigma_union_v_notin_Ind(G)} implies $\sigma_2\in \IG{\Omega'}$. Using the fact that $\Omega'\cong \Gamma_{2m-1}$, we may apply the induction hypothesis to conclude $\sigma_2 \cap\{b_3,c_3,d_3\}\ne \emptyset$. Since $c_2\sim v$ for all $v\in\{b_3,c_3,d_3\}$ in $\Gamma_{2m+1}$, it follows that $c_2\notin \sigma_1$. Let $\Omega'': = \Omega-\{b_1,c_1,d_1,c_2\}$. If $\sigma_1\cap\{b_1,c_1,d_1\} = \emptyset,$ then $\sigma_1\in \Ind(\Omega'')$ (as $\sigma_1 \in \Ind(\Omega)$). Observe that $\aG{\Omega''} = 2$, which implies $|\sigma_1|\le\aG{\Omega''} = 2$, a contradiction. Hence $\sigma_1\cap\{b_1, c_1, d_1\}\ne \emptyset$, and thus $\sigma\cap\{b_1,c_1,d_1\}\ne\emptyset$. 
			
			\item $|\sigma_1| = 4$ and $|\sigma_2| = 3m-3$.\\ 
			In this case, $\sigma_1 \in \IG{\Omega}$ (as $|\sigma_1| = \aG{\Omega}$). Since every $\sigma''\in \IG{ \Omega}$ satisfies $\sigma''\cap\{b_1,c_1,d_1\}\ne \emptyset$, we get $\sigma_1\cap\{b_1,c_1,d_1\}\ne \emptyset$. Hence $\sigma\cap\{b_1, c_1, d_1\}\ne \emptyset$. 
		\end{enumerate}
		In either case, $\sigma\cap\{b_1, c_1, d_1\}\ne \emptyset$. This completes the proof.
	\end{proof}
	
	\begin{lemma}\label{prop:even+odd}
		For $n\ge 4$, let $\sigma\in \IG{\Gamma_{n}}$. Then $\sigma\cap \mathcal{B}_{n}\ne \emptyset$
	\end{lemma}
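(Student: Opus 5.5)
For even $n$ we have $\beta_n=1$ and $\mathcal{B}_n=\{a_1,e_1\}$, so the claim reduces to a corner argument. For odd $n$ the plan is an induction on $n$ in steps of $6$, modelled on the proof of \Cref{lemma:3m+1}, with a splitting of $\Gamma_n$ into a left block and a right copy of $\Gamma_{n-6}$. Throughout we pass freely between $\sigma\in\IG{\Gamma_n}$ and maximum matchings of $G_{3\times n}$. Under this correspondence $a_1=(11,21)$, $b_1=(11,12)$, $c_1=(12,22)$, $d_1=(12,13)$ and $e_1=(13,23)$.

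\emph{Even $n$.} By \Cref{lemma:dim_sigma} and its proof, $\sigma$ is a perfect matching of $G_{3\times n}$. The corner $11$ is covered only by $a_1$ or $b_1$, and the corner $13$ only by $e_1$ or $d_1$. Since $b_1$ and $d_1$ share the vertex $12$, they cannot both lie in $\sigma$. Hence $a_1\in\sigma$ or $e_1\in\sigma$, so $\sigma\cap\mathcal{B}_n\neq\emptyset$.

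\emph{Odd $n$: the first column.} Now $\sigma$ leaves exactly one vertex uncovered. If column $1$ is fully covered, then the vertex $12$ is covered by $b_1$, $c_1$ or $d_1$:
\begin{itemize}
\item if by $b_1$, then $13$ must be covered by $e_1$;
\item if by $d_1$, then $11$ must be covered by $a_1$;
\item if by $c_1$, then both $a_1$ and $e_1$ are forced.
\end{itemize}
In every case $\sigma$ meets $\{a_1,e_1\}\subseteq\mathcal{B}_n$. So it remains to treat $\sigma$ whose uncovered vertex lies in column $1$ and which avoids $a_1,e_1$. Then $\sigma$ contains $b_1$ or $d_1$, and the remaining edges of $\sigma$ form a perfect matching of columns $2,\dots,n$. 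This is not yet enough, because the edges $a_2,e_2$ forced by the corner argument are not in $\mathcal{B}_n$. This is why the indices in $\mathcal{B}_n$ advance in steps of $3$, and why we need a block decomposition.

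\emph{Odd $n$: induction step.} Observe that $\beta_{n-6}+6=\beta_n$ for odd $n\ge 11$, so shifting all indices by $6$ maps $\mathcal{B}_{n-6}$ into $\mathcal{B}_n$. The bases are $n=5,7,9$ (with $\beta=4,4,7$). These I would check directly, reusing the column-$1$ reduction above and the list $\IG{\Gamma_3}$ from \Cref{lemma:3m+1}.

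For $n\ge 11$, set $\Omega:=\Gamma_7-\{b_7,d_7\}$ and $\Omega':=\Gamma_n-V(\Omega)\cong\Gamma_{n-6}$, exactly as $\Omega,\Omega'$ were chosen in \Cref{lemma:3m+1} but six columns wide. Write $\sigma=\sigma_1\sqcup\sigma_2$ with $\sigma_1\in\Ind(\Omega)$ and $\sigma_2\in\Ind(\Omega')$. Compute $\aG{\Omega}$, which should equal $9$, the size of a perfect matching of $G_{3\times 6}$. By \Cref{lemma:dim_sigma}, $|\sigma|=\aG{\Gamma_n}=\aG{\Gamma_{n-6}}+9$, which leaves two cases:
\begin{enumerate}[label=(\roman*)]
\item $|\sigma_2|=\aG{\Omega'}$. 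Then $\sigma_2\in\IG{\Omega'}$, and by induction it meets the shifted copy of $\mathcal{B}_{n-6}$, which lies inside $\mathcal{B}_n$.
\item $|\sigma_2|=\aG{\Omega'}-1$ and $\sigma_1\in\IG{\Omega}$. Here $\sigma_1$ corresponds to a perfect matching of the first $6$ columns, and the even-case corner argument gives $\sigma_1\cap\{a_1,e_1\}\neq\emptyset$.
\end{enumerate}
Note that $\sigma_1\in\IG{\Omega}$ in case (ii) is automatic: $|\sigma_2|\le\aG{\Omega'}-1$ together with $|\sigma_1|\le 9$ forces $|\sigma_1|=9$.

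The main obstacle is pinning down $\aG{\Omega}$ exactly and confirming that the counting leaves only these two cases. The edges $a_6,c_6,e_6$ of $\Omega$ are adjacent to $b_7,d_7\in\Omega'$, so $\aG{\Omega}$ may not simply be $9$. If a third split appears, with $|\sigma_1|$ smaller and $|\sigma_2|$ maximum, it is handled as in case (i). A mixed split would require a short enumeration of near-maximum independent sets of $\Omega$ to show they meet $\{a_1,e_1,a_4,e_4\}$. The other real work is the hand verification of the base cases $n=5,7,9$.
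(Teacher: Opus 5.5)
Your even case is correct; the corner argument at $11$ and $13$ is a cleaner form of the paper's count. Your column-$1$ reduction for odd $n$ is also correct. The gap is in the induction step: $\alpha(\Omega)$ is $10$, not $9$, because an independent set of $\Omega$ may use two or three of the edges $a_6,c_6,e_6$ into column $7$. For example, $\{a_1,c_1,e_1,a_3,c_3,e_3,b_5,e_5,a_6,c_6\}$ is independent in $\Omega$. So the two splits are $(|\sigma_1|,|\sigma_2|)=(9,\alpha(\Omega'))$ and $(10,\alpha(\Omega')-1)$. Your case (i) handles the first. In the second, $\sigma_1$ is \emph{not} a perfect matching of the first six columns, and the corner argument fails. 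Take $\sigma_1=\{d_1,a_2,c_2,e_2,a_4,c_4,e_4,a_6,c_6,e_6\}$: it misses $a_1$ and $e_1$, and adding any perfect matching of columns $8,\dots,n$ gives a maximum independent set of $\Gamma_n$ in exactly this case. So case (ii) as written is false, and the enumeration you defer is genuinely required. The base cases $n=7,9$ are also left unchecked, so the odd case is not proved.

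The missing idea is a parity count across the cut between columns $4$ and $5$, and it removes the need for any induction. Take your residual case: the uncovered vertex is $11$ (the case $13$ is symmetric), so $d_1\in\sigma$ and $\sigma\setminus\{d_1\}$ is a perfect matching of columns $2,\dots,n$. Columns $2,3,4$ contain $9$ vertices, and no edge of $\sigma$ joins them to column $1$. Hence an odd number of $a_4,c_4,e_4$ lie in $\sigma$. Suppose only $c_4$ does. Then columns $2$--$4$ minus the grid vertex $42$ must be matched internally. This forces $a_3$ and $e_3$, then $c_2$, after which the grid vertices $21$ and $23$ cannot be covered. Therefore $a_4\in\sigma$ or $e_4\in\sigma$. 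Since $\{a_1,e_1,a_4,e_4\}\subseteq\mathcal{B}_n$ for every odd $n\ge 5$, this finishes the odd case directly. The same argument repairs your case (ii) and your base cases. The paper instead inducts $n\to n-2$, splitting off the last two columns $\Omega_m$ with $\alpha(\Omega_m)=4$ and treating $n \bmod 6$ separately. Your route, repaired this way, is shorter and shows that the four-element set $\{a_1,e_1,a_4,e_4\}$ already meets every top-dimensional face for odd $n\ge5$.
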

	
	\begin{proof}
		We consider the cases of $n$ even and $n$ odd separately. 
		
		\noindent \textbf{Case 1:} $n = 2m$ for some integer $m\geq 2.$ 
		
		We have $\sigma\in \IG{\Gamma_{2m}}$. Let $\Omega: = \Gamma_2-\{b_2,d_2\}$ and $\Omega': = \Gamma_{2m}- V(\Omega)$ (as $\Gamma_2$ is a subgraph of $\Gamma_{2m}$). Then $V(\Gamma_{2m}) = V(\Omega)\sqcup V(\Omega')$. This gives $\sigma = \sigma_1\sqcup \sigma_2$ for some $\sigma_1\in \Ind(\Omega)$ and $\sigma_2\in \Ind(\Omega')$. It is easy to verify that $\IG{\Omega} = \{\{a_1,c_1,e_1\}\}$, and hence $\aG{\Omega} = 3$ by \Cref{remark:al_G_and_sigma_union_v_notin_Ind(G)}. Thus $|\sigma_1|\le 3$.
		\begin{figure}[H]
			\centering
			\begin{tikzpicture}[scale = 0.3, vertices/.style = {draw, fill = black, circle, inner sep = 0.5pt}]
				
				\node[vertices,label = above:{{\tiny $b_1$}}] (b1) at (12, 3) {};
				\node[vertices,label = below:{{\tiny $d_1$}}] (d1) at (12, 0) {};
				
				\node[vertices,label = above:{{\tiny $a_1$}}] (a1) at (13.5, 4.5) {};
				\node[vertices,label = below:{{\tiny $c_1$}}] (c1) at (13.5, 1.5) {};
				\node[vertices,label = below:{{\tiny $e_1$}}] (e1) at (13.5, -1.5) {};
				
				\node[vertices,label = above:{{\tiny $b_2$}}] (b2) at (15, 3) {};
				\node[vertices,label = below:{{\tiny $d_2$}}] (d2) at (15, 0) {};
				
				\node[vertices,label = above:{{\tiny $a_2$}}] (a2) at (16.5, 4.5) {};
				\node[vertices,label = below:{{\tiny $c_2$}}] (c2) at (16.5, 1.5) {};
				\node[vertices,label = below:{{\tiny $e_2$}}] (e2) at (16.5, -1.5) {};
				
				\node[vertices,label = above:{{\tiny $b_3$}}] (b3) at (18, 3) {};
				\node[vertices,label = below:{{\tiny $d_3$}}] (d3) at (18, 0) {};
				
				\node[vertices,label = above:{{\tiny $a_3$}}] (a) at (19.5, 4.5) {};
				\node[vertices,label = below:{{\tiny $c_3$}}] (c) at (19.5, 1.5) {};
				\node[vertices,label = below:{{\tiny $e_3$}}] (e) at (19.5, -1.5) {};
				
				\node[vertices,label = above:{{\tiny $b_4$}}] (b) at (21, 3) {};
				\node[vertices,label = below:{{\tiny $d_4$}}] (d) at (21, 0) {};
				
				\node[vertices, label = {[xshift = .05cm,yshift = -.12cm]above:{{{\tiny $b_{(2m-1)}$}}}}] (1) at ($(b)+(4,0)$) {};
				\node[vertices, label = {[xshift = .1cm, yshift = .06cm]below:{{{\tiny $d_{(2m-1)}$}}}}] (2) at ($(d)+(4,0)$) {};
				
				\node[vertices,label = {[xshift = .3cm,yshift = -.13cm]above:{{{\tiny $a_{(2m-1)}$}}}}] (3) at ($(a)+(7,0)$) {}; 
				\node[vertices,label = {[xshift = .27cm]below:{{{\tiny $c_{(2m-1)}$}}}}] (4) at ($(c)+(7,0)$) {};
				\node[vertices,label = {[xshift = .3cm]below:{{{\tiny $e_{(2m-1)}$}}}}] (5) at ($(e)+(7,0)$) {};
				
				\node[vertices,label = {[xshift = .15cm]above:{{{\tiny $b_{2m}$}}}}] (6) at ($(b)+(7,0)$) {};
				\node[vertices,label = {[xshift = .15cm]below:{{{\tiny $d_{2m}$}}}}] (7) at ($(d)+(7,0)$) {};
				
				\draw (a) -- ++(2.3,0);
				\draw (c) -- ++(2.3,0);
				\draw (e) -- ++(2.3,0);
				\draw (3) -- ++(-2.3,0);
				\draw (4) -- ++(-2.3,0);
				\draw (5) -- ++(-2.3,0);
				\draw (b) -- ++(1,1);
				\draw (b) -- ++(1,-1);
				\draw (d) -- ++(1,1);
				\draw (d) -- ++(1,-1);
				\draw (1) -- ++(-1,1);
				\draw (1) -- ++(-1,-1);
				\draw (2) -- ++(-1,1);
				\draw (2) -- ++(-1,-1);
				
				\node at ($(b)!0.5!(1)$) {$\ldots$};
				\node at ($(d)!0.5!(2)$) {$\ldots$};
				\node at ($(a)!0.5!(3)$) {$\ldots$};
				\node at ($(c)!0.5!(4)$) {$\ldots$};
				\node at ($(e)!0.5!(5)$) {$\ldots$};
				
				\foreach \vertex/\neighbors in {
					b1/{a1,c1,d1},
					d1/{c1,e1}}
				{
					\foreach \neighbor in \neighbors
					\draw [blue] (\vertex)--(\neighbor);
				}
				
				\foreach \vertex/\neighbors in {
					a1/{a2,b2},
					c1/{b2,c2,d2},
					e1/{d2,e2}}
				{
					\foreach \neighbor in \neighbors
					\draw [red] (\vertex)--(\neighbor);
				}
				
				\foreach \vertex/\neighbors in {
					b2/{a2,c2,d2},
					d2/{c2,e2},
					b3/{a2,c2,d3},
					d3/{c2,e2},
					a/{a2,b3,b},
					b/{c,d},
					c/{c2,b3,d3,d},
					d/{e},
					e/{e2,d3},
					1/{2,3,4},
					2/{4,5},
					3/{6},
					4/{6,7},
					5/{7},
					6/{7}}
				{
					\foreach \neighbor in \neighbors
					\draw [-] (\vertex)--(\neighbor);
				} 
			\end{tikzpicture}
			\vspace{-.2cm}
			\caption{$\Gamma_{2m}$ with subgraphs $\Omega$ (blue edges) and $\Omega'$ (black edges)}
			\label{fig:Gamma_2m}
		\end{figure}
		Observe that $\Omega'\cong \Gamma_{2m-1}$. Therefore, $\aG{\Omega'} = 3m-2$ by \Cref{lemma:dim_sigma}, and hence $|\sigma_2|\le 3m-2$. Furthermore, $\Omega'\cong \Gamma_{2m-1}$ implies $\sigma'\cap\{b_2,c_2,d_2\}\ne\emptyset$ by \Cref{lemma:3m+1}. 
		
		Since $2m(\text{mod $6$})\in\{0,2,4\}$, we have $\beta_{2m} = 1$, and hence $\mathcal{B}_{2m} = \{a_1\}\sqcup\{e_1\}$ (see \eqref{set_Bn}). Let $\Omega'': = \Omega-\{a_1,e_1\}$. 
		Note that $\aG{\Omega''} = 1$. Suppose that $\sigma\cap\mathcal{B}_{2m} = \emptyset$. Then $\sigma_1\cap\mathcal{B}_{2m} = \emptyset,$ which implies that $\sigma_1\in \Ind(\Omega'')$ (as $\sigma_1\in \Ind(\Omega)$). Therefore $|\sigma_1|\le\aG{\Omega''} = 1$. 
		By \Cref{lemma:dim_sigma}, $|\sigma| = 3m$. Hence $|\sigma| = |\sigma_1|+|\sigma_2| = 3m$ implies that $|\sigma_2|\ge 3m-1$, contradicting $|\sigma_2|\le 3m-2$. Thus $\sigma\cap\mathcal{B}_{2m}\ne\emptyset$. 
		
		\vspace{0.1cm}
		\noindent\textbf{Case 2:} $n = 2m+1$ for some integer $m\geq 2.$ 
		
		In this case, $\sigma\in \IG{\Gamma_{2m+1}}$. We proceed by induction on $m$. For this, we first define a subgraph $\Omega_m$ of $\Gamma_{2m+1}$ as $\Omega_m: = \Gamma_{2m+1}-V(\Gamma_{2m-1})$. This gives $V(\Gamma_{2m+1}) = V(\Gamma_{2m-1})\sqcup V(\Omega_m)$.
		\begin{figure}[H]
			\centering
			\begin{subfigure}{0.55\textwidth}
				\centering
				\begin{tikzpicture}[scale = 0.33, vertices/.style = {draw, fill = black, circle, inner sep = 0.5pt}]
					
					\node[vertices,label = above:{{\tiny $b_1$}}] (b1) at (12, 3) {};
					\node[vertices,label = below:{{\tiny $d_1$}}] (d1) at (12, 0) {};
					
					\node[vertices,label = above:{{\tiny $a_1$}}] (a1) at (13.5, 4.5) {};
					\node[vertices,label = below:{{\tiny $c_1$}}] (c1) at (13.5, 1.5) {};
					\node[vertices,label = below:{{\tiny $e_1$}}] (e1) at (13.5, -1.5) {};
					
					\node[vertices,label = above:{{\tiny $b_2$}}] (b2) at (15, 3) {};
					\node[vertices,label = below:{{\tiny $d_2$}}] (d2) at (15, 0) {};
					
					\node[vertices,label = above:{{\tiny $a_2$}}] (a) at (16.5, 4.5) {};
					\node[vertices,label = below:{{\tiny $c_2$}}] (c) at (16.5, 1.5) {};
					\node[vertices,label = below:{{\tiny $e_2$}}] (e) at (16.5, -1.5) {};
					
					\node[vertices,label = above:{{\tiny $b_3$}}] (b) at (18, 3) {};
					\node[vertices,label = below:{{\tiny $d_3$}}] (d) at (18, 0) {};
					
					\node[vertices, label = {[xshift = -.02cm,yshift = -.11cm]above:{{{\tiny $b_{(2m-2)}$}}}}] (1) at ($(b)+(4,0)$) {};
					\node[vertices, label = {[xshift = -.02cm, yshift = .06cm]below:{{{\tiny $d_{(2m-2)}$}}}}] (2) at ($(d)+(4,0)$) {};
					
					\node[vertices,label = {[xshift = -.08cm,yshift = -.06cm]above:{{{\tiny $a_{(2m-2)}$}}}}] (3) at ($(a)+(7,0)$) {}; 
					\node[vertices,label = {[xshift = .02cm,yshift = .06cm]below:{{{\tiny $c_{(2m-2)}$}}}}] (4) at ($(c)+(7,0)$) {};
					\node[vertices,label = {[xshift = -.08cm,yshift = .03cm]below:{{{\tiny $e_{(2m-2)}$}}}}] (5) at ($(e)+(7,0)$) {};
					
					\node[vertices,label = {[xshift = .03cm,yshift = -.1cm]above:{{{\tiny $b_{(2m-1)}$}}}}] (6) at ($(b)+(7,0)$) {};
					\node[vertices,label = {[xshift = .1cm, yshift = .07cm]below:{{{\tiny $d_{(2m-1)}$}}}}] (7) at ($(d)+(7,0)$) {};
					
					\node[vertices,label = {[xshift = .05cm,yshift = -.06cm]above:{{{\tiny $a_{(2m-1)}$}}}}] (8) at ($(3)+(3,0)$) {}; 
					\node[vertices,label = {[xshift = .1cm,yshift = .06cm]below:{{{\tiny $c_{(2m-1)}$}}}}] (9) at ($(4)+(3,0)$) {};
					\node[vertices,label = {[xshift = .05cm,yshift = .03cm]below:{{{\tiny $e_{(2m-1)}$}}}}] (10) at ($(5)+(3,0)$) {};
					
					\node[vertices,label = {[yshift = -.05cm]above:{{{\tiny $b_{2m}$}}}}] (11) at ($(6)+(3,0)$) {};
					\node[vertices,label = {[yshift = .06cm]below:{{{\tiny $d_{2m}$}}}}] (12) at ($(7)+(3,0)$) {};
					
					\node[vertices,label = {[yshift = -.06cm]above:{{{\tiny $a_{2m}$}}}}] (13) at ($(3)+(6,0)$) {}; 
					\node[vertices,label = {[xshift = .05cm,yshift = .06cm]below:{{{\tiny $c_{2m}$}}}}] (14) at ($(4)+(6,0)$) {};
					\node[vertices,label = {[yshift = .03cm]below:{{{\tiny $e_{2m}$}}}}] (15) at ($(5)+(6,0)$) {};
					
					\node[vertices,label = {[yshift = -.1cm]above:{{{\tiny $b_{(2m+1)}$}}}}] (16) at ($(6)+(6,0)$) {};
					\node[vertices,label = {[yshift = .07cm]below:{{{\tiny $d_{(2m+1)}$}}}}] (17) at ($(7)+(6,0)$) {};
					
					\draw (a) -- ++(2.3,0);
					\draw (c) -- ++(2.3,0);
					\draw (e) -- ++(2.3,0);
					\draw (3) -- ++(-2.3,0);
					\draw (4) -- ++(-2.3,0);
					\draw (5) -- ++(-2.3,0);
					\draw (b) -- ++(1,1);
					\draw (b) -- ++(1,-1);
					\draw (d) -- ++(1,1);
					\draw (d) -- ++(1,-1);
					\draw (1) -- ++(-1,1);
					\draw (1) -- ++(-1,-1);
					\draw (2) -- ++(-1,1);
					\draw (2) -- ++(-1,-1);
					
					\node at ($(b)!0.5!(1)$) {$\ldots$};
					\node at ($(d)!0.5!(2)$) {$\ldots$};
					\node at ($(a)!0.5!(3)$) {$\ldots$};
					\node at ($(c)!0.5!(4)$) {$\ldots$};
					\node at ($(e)!0.5!(5)$) {$\ldots$};
					
					\foreach \vertex/\neighbors in {
						b1/{a1,c1,d1},
						d1/{c1,e1},
						a1/{b2},
						c1/{b2,d2},
						e1/{d2},
						b2/{d2},
						a/{a1,b2,b},
						b/{c,d},
						c/{c1,b2,d2,d},
						d/{e},
						e/{d2,e1},
						1/{2,3,4},
						2/{4,5},
						3/{6},
						4/{6,7},
						5/{7},
						6/{7}}
					{
						\foreach \neighbor in \neighbors
						\draw [-] (\vertex)--(\neighbor);
					}
					
					\foreach \vertex/\neighbors in {
						8/{3,6},
						9/{4,6,7},
						10/{5,7}}
					{
						\foreach \neighbor in \neighbors
						\draw [red] (\vertex)--(\neighbor);
					}
					\foreach \vertex/\neighbors in {
						11/{8,9,12},
						12/{9,10},
						13/{8,11},
						14/{9,11,12},
						15/{10,12},
						16/{13,14},
						17/{14,15,16},}
					{
						\foreach \neighbor in \neighbors
						\draw [blue] (\vertex)--(\neighbor);
					} 
				\end{tikzpicture}
				\subcaption{$\Gamma_{2m+1}$ (for $m\geq 2$)}
				\label{fig:Gamma_2m+1_reverse}
			\end{subfigure}
			\begin{subfigure}{0.40\textwidth}
				\centering
				\begin{tikzpicture}[scale = 0.32, vertices/.style = {draw, fill = black, circle, inner sep = 0.5pt}]
					
					\node[vertices,label = above:{{\tiny $b_1$}}] (b1) at (12, 3) {};
					\node[vertices,label = below:{{\tiny $d_1$}}] (d1) at (12, 0) {};
					
					\node[vertices,label = above:{{\tiny $a_1$}}] (a1) at (13.5, 4.5) {};
					\node[vertices,label = below:{{\tiny $c_1$}}] (c1) at (13.5, 1.5) {};
					\node[vertices,label = below:{{\tiny $e_1$}}] (e1) at (13.5, -1.5) {};
					
					\node[vertices,label = above:{{\tiny $b_2$}}] (b2) at (15, 3) {};
					\node[vertices,label = below:{{\tiny $d_2$}}] (d2) at (15, 0) {};
					
					\node[vertices,label = above:{{\tiny $a_2$}}] (a2) at (16.5, 4.5) {};
					\node[vertices,label = below:{{\tiny $c_2$}}] (c2) at (16.5, 1.5) {};
					\node[vertices,label = below:{{\tiny $e_2$}}] (e2) at (16.5, -1.5) {};
					
					\node[vertices,label = above:{{\tiny $b_3$}}] (b3) at (18, 3) {};
					\node[vertices,label = below:{{\tiny $d_3$}}] (d3) at (18, 0) {};
					
					\node[vertices,label = above:{{\tiny $a_3$}}] (a3) at (19.5, 4.5) {};
					\node[vertices,label = below:{{\tiny $c_3$}}] (c3) at (19.5, 1.5) {};
					\node[vertices,label = below:{{\tiny $e_3$}}] (e3) at (19.5, -1.5) {};
					
					\node[vertices,label = above:{{\tiny $b_4$}}] (b4) at (21, 3) {};
					\node[vertices,label = below:{{\tiny $d_4$}}] (d4) at (21, 0) {};
					
					\node[vertices,label = above:{{\tiny $a_4$}}] (a4) at (22.5, 4.5) {};
					\node[vertices,label = below:{{\tiny $c_4$}}] (c4) at (22.5, 1.5) {};
					\node[vertices,label = below:{{\tiny $e_4$}}] (e4) at (22.5, -1.5) {};
					
					\node[vertices,label = above:{{\tiny $b_5$}}] (b5) at (24, 3) {};
					\node[vertices,label = below:{{\tiny $d_5$}}] (d5) at (24, 0) {};
					
					\foreach \to/\from in
					{a1/a2, a1/b1,a1/b2,a2/b2, b1/d1, b1/c1,d1/e1,c1/d2, d1/c1,b2/c1,c1/c2,d2/e1, b2/c2, c2/d2, b2/d2, d2/e2, e1/e2, e2/d3, c2/d3, c2/d2, a2/b3, b3/c2, b3/d3} \draw [black] (\to)--(\from);
					
					\foreach \to/\from in
					{a3/b4, a3/a4, a4/b4, a4/b5, b4/c4, b4/c3, b4/d4, b5/d5, b5/c4, c3/c4, c3/d4, c4/d4, c4/d5, e3/d4, e3/e4, e4/d4, e4/d5} \draw [blue] (\to)--(\from);
					
					\foreach \to/\from in
					{a3/b3, a2/a3, b3/c3, c3/c2, d3/c3, e3/d3, e3/e2} \draw [red] (\to)--(\from);
					
				\end{tikzpicture}
				\subcaption{$\Gamma_{5}$ (for $m=2$)}
				\label{fig:Gamma_5}
			\end{subfigure}  
			\caption{$\Gamma_{2m+1}$ with subgraphs $\Gamma_{2m-1}$ (black edges) and $\Omega_m$ (blue edges)
			}
			\label{fig:gamma_2m+1}
		\end{figure}
		
		\begin{claim}\label{claim:al_Omega_m}
			$\aG{\Omega_m} = 4$.
		\end{claim}
		
		\begin{proof}[Proof of the \Cref{claim:al_Omega_m}]
			Observe that $\Omega_m$ is isomorphic to an induced subgraph of $\Gamma_3$. Since $\aG{\Gamma_3} = 4$ by \Cref{lemma:dim_sigma}, and $\{a_{(2m-1)},c_{(2m-1)},e_{(2m-1)},b_{(2m+1)}\}$ is an independent set of size $4$ in $\Omega_m$, it follows that $\aG{\Omega_m} = 4$.
		\end{proof}
		
		We consider the base case $m = 2$. Let $\sigma\in\IG{\Gamma_5}$. We show that $\sigma\cap\mathcal{B}_5\ne\emptyset$. From \eqref{set_Bn}, $\mathcal{B}_5 = \{a_1,a_4\}\sqcup\{e_1,e_4\}$ (as $\beta_5 = 4$). Since $V(\Gamma_5) = V(\Gamma_3)\sqcup V(\Omega_2)$, where $\Omega_2 = \Gamma_5-V(\Gamma_3)$, it follows that $\sigma = \sigma_1\sqcup \sigma_2$ for some $\sigma_1\in \Ind(\Gamma_3)$ and some $\sigma_2\in \Ind(\Omega_2)$. 
		By \Cref{lemma:dim_sigma}, $|\sigma| = 7$ and $|\sigma_1|\le 4$. Since $\aG{\Omega_2} = 4$ by \Cref{claim:al_Omega_m}, we have $|\sigma_2|\le 4$. Thus, there are two possible cases:
		\begin{enumerate}[label = (\roman*)]
			\item $|\sigma_1| = 3$ and $|\sigma_2| = 4$.
			
			By \Cref{remark:al_G_and_sigma_union_v_notin_Ind(G)}, $\sigma_2\in\IG{\Omega_2}$ (as $|\sigma_2| = \aG{\Omega_2}$). We have $$\IG{\Omega_2} = \{\{a_3, c_3, e_3, b_5\}, \{a_3, c_3, e_3, d_5\}, \{a_3, c_3, e_4, b_5\}, \{c_3, e_3, a_4, d_5\}\}.$$
			If $\sigma_2\cap\{a_4,e_4\}\ne\emptyset$, then $\sigma_2\subset\sigma$ and $\{a_4,e_4\}\subset\mathcal{B}_5$ imply $\sigma\cap\mathcal{B}_5\ne\emptyset$. 
			
			Now, assume $\sigma_2\cap\{a_4,e_4\} = \emptyset$. Then $\sigma_2\in\{\{a_3, c_3, e_3, b_5\}, \{a_3, c_3, e_3, d_5\}\}.$ Using the fact that $\sigma = \sigma_1\sqcup\sigma_2$ is an independent set in $\Gamma_5$, we have $a_2,c_2,e_2,b_3,d_3\notin\sigma_1$.
			Let $\Omega': = \Gamma_3-\{a_1, e_1,a_2,e_2,c_2, b_3,d_3\}$. Observe that $\aG{\Omega'} = 2$. If $\sigma_1\cap\{a_1,e_1\} = \emptyset$, then $\sigma_1\in \Ind(\Omega')$ (as $\sigma_1\in \Ind(\Gamma_3)$), which yields $|\sigma_1|\le\aG{\Omega'} = 2$, a contradiction. Hence $\sigma_1\cap\{a_1,e_1\}\ne\emptyset$. Since $\sigma_1\subset\sigma$ and $\{a_1,e_1\}\subset\mathcal{B}_5$, we have $\sigma\cap\mathcal{B}_5\ne\emptyset$.
			
			\item $|\sigma_1| = 4$ and $|\sigma_2| = 3$.
			
			Since $\aG{\Gamma_3} = 4$ by \Cref{lemma:dim_sigma}, $|\sigma_1| = \aG{\Gamma_3}$. Hence $\sigma_1\in\IG{\Gamma_3}$. If $\sigma_1\cap\{a_1,e_1\}\ne\emptyset$, then $\sigma\cap\mathcal{B}_5\ne\emptyset$.
			
			Now, assume $\sigma_1\cap\{a_1,e_1\} = \emptyset$. Let $\Omega': = \Gamma_3-\{a_1,e_1\}$. Then $\sigma_1\in\IG{\Omega'}$.
			We have $\IG{\Omega'} = \{\{b_1, a_2, c_2, e_2\}, \{b_1, a_2, d_2, d_3\}, \{b_1, b_2, e_2, b_3\}, \{d_1, a_2, c_2, e_2\}, \{d_1, a_2, d_2, d_3\}, \{d_1, b_2, e_2, b_3\}\}.$
			It is easy to verify that $a_3, c_3, e_3\in N_{\Gamma_5}(\sigma')$ for every $\sigma'\in\IG{\Omega'}$, and hence $a_3, c_3, e_3\in N_{\Gamma_5}(\sigma_1)$. Since $\sigma = \sigma_1\sqcup\sigma_2$ is an independent set in $\Gamma_5$, we get $a_3, c_3, e_3\notin\sigma_2$.
			Let $\Omega'': = \Omega_2-\{a_3, c_3, e_3, a_4, e_4\}$. If $\sigma_2\cap\{a_4,e_4\} = \emptyset$, then $\sigma_2\in \Ind(\Omega'')$. Observe that $\aG{\Omega''} = 2$, which implies $|\sigma_2|\le\aG{\Omega''} = 2$, a contradiction. Hence $\sigma_2\cap\{a_4,e_4\}\ne\emptyset$. Therefore, $\sigma_2\subset\sigma$ and $\{a_4,e_4\}\subset\mathcal{B}_5$ imply $\sigma\cap\mathcal{B}_5\ne\emptyset$.
		\end{enumerate}
		In either case, $\sigma\cap\mathcal{B}_5\ne\emptyset$. Hence the result is true for $m = 2$.
		
		Let $m>2$ and assume that the result holds for all $2\le l\le m-1$. We now prove it for $m$. 
		
		Let $\sigma\in \IG{\Gamma_{2m+1}}$. We show that $\sigma\cap\mathcal{B}_{2m+1}\ne \emptyset$. Note that $\beta_{2m-1}\le\beta_{2m+1}$, which implies $\mathcal{B}_{2m-1}\subseteq\mathcal{B}_{2m+1}$. Since $V(\Gamma_{2m+1}) = V(\Gamma_{2m-1})\sqcup V(\Omega_m)$, we have $\sigma = \sigma_1\sqcup \sigma_2$ for some $\sigma_1\in \Ind(\Gamma_{2m-1})$ and some $\sigma_2\in \Ind(\Omega_m)$. By \Cref{lemma:dim_sigma}, $|\sigma| = 3m+1$ and $|\sigma_1|\le 3m-2$. Since $\aG{\Omega_m} = 4$ by \Cref{claim:al_Omega_m}, $|\sigma_2|\le 4$. There are two possible cases:
		\begin{enumerate}[label = (\roman*)]
			\item $|\sigma_1| = 3m-3$ and $|\sigma_2| = 4$.
			
			Then $\sigma_2\in\IG{\Omega_m}$, where 
			\begin{align*}
				\IG{\Omega_m}& = \{\{a_{2m-1}, c_{2m-1}, e_{2m-1}, b_{2m+1}\}, \{a_{2m-1}, c_{2m-1}, e_{2m-1}, d_{2m+1}\},\\
				&\qquad \{a_{2m-1}, c_{2m-1}, e_{2m}, b_{2m+1}\}, \{c_{2m-1}, e_{2m-1}, a_{2m}, d_{2m+1}\}\}.
			\end{align*}
			Since $\mathcal{B}_{2m+1}$ depends on $2m+1(\text{mod $6$})\in\{1,3,5\}$ (see \eqref{set_Bn}), we consider three subcases:
			\begin{enumerate}
				\item Let $2m+1\equiv 1(\text{mod $6$})$. Then $\beta_{2m+1} = 2m-2$, which implies $$\mathcal{B}_{2m+1} = \{a_1,a_4,a_7,\cdots,a_{2m-5},a_{2m-2}\}\sqcup\{e_1, e_4,e_7,\cdots,e_{2m-5}, e_{2m-2}\}.$$
				
				First, assume $\sigma_1\cap N_{\Gamma_{2m-1}}[b_{2m-1}] = \emptyset$. Then $\sigma_1\sqcup \{b_{2m-1}\}\in \Ind(\Gamma_{2m-1})$ with $|\sigma_1\sqcup \{b_{2m-1}\}| = 3m-2$. Since $\aG{\Gamma_{2m-1}}=3m-2$ by \Cref{lemma:dim_sigma}, it follows that $\sigma_1\sqcup \{b_{2m-1}\}\in\IG{\Gamma_{2m-1}}$. By induction hypothesis, $(\sigma_1\sqcup \{b_{2m-1}\})\cap\mathcal{B}_{2m-1}\ne\emptyset$. Note that $b_{2m-1}\notin \mathcal{B}_{2m-1}$. This yields $\sigma_1\cap\mathcal{B}_{2m-1}\ne \emptyset$. Therefore, $\sigma_1\subset \sigma$ and $\mathcal{B}_{2m-1}\subseteq \mathcal{B}_{2m+1}$ imply $\sigma\cap\mathcal{B}_{2m+1}\ne \emptyset.$
				
				We now assume $\sigma_1\cap N_{\Gamma_{2m-1}}[b_{2m-1}]\ne\emptyset$. Then $N_{\Gamma_{2m-1}}[b_{2m-1}] = \{a_{2m-2},c_{2m-2},b_{2m-1},d_{2m-1}\}$ implies $\sigma_1\cap\{a_{2m-2},c_{2m-2},b_{2m-1},d_{2m-1}\}\ne\emptyset$. Since $c_{2m-1}\in \sigma'$ for every $\sigma'\in\IG{\Omega_m}$, we have $c_{2m-1}\in \sigma_2$ (as $\sigma_2\in\IG{\Omega_m}$). 
				Using the fact that $\sigma= \sigma_1\sqcup\sigma_2$ is an independent set in $\Gamma_{2m+1}$, and $c_{2m-2},b_{2m-1},d_{2m-1}\in N_{\Gamma_{2m+1}}(c_{2m-1})$, it follows that $c_{2m-2},b_{2m-1},d_{2m-1}\notin\sigma_1$. Therefore $a_{2m-2}\in\sigma_1$. Since $a_{2m-2}\in\mathcal{B}_{2m+1}$ and $\sigma_1\subset\sigma$, we obtain $\sigma\cap\mathcal{B}_{2m+1}\ne\emptyset$. 
				
				\item Let $2m+1\equiv 3(\text{mod $6$})$. Then $\beta_{2m+1} = 2m-1$, which implies $$\mathcal{B}_{2m+1} = \{a_1,a_4,a_7,\cdots,a_{2m-4},a_{2m-1}\}\sqcup\{e_1, e_4,e_7,\cdots,e_{2m-4}, e_{2m-1}\}.$$
				Note that for every $\sigma'\in\IG{\Omega_m}$, we have $\sigma'\cap\{a_{2m-1},e_{2m-1}\}\ne \emptyset$. Since $\sigma_2\in\IG{\Omega_m}$, it follows that $\sigma_2\cap\{a_{2m-1},e_{2m-1}\}\ne \emptyset$. Therefore, $\sigma_2\subset\sigma$ and $\{a_{2m-1},e_{2m-1}\}\subseteq\mathcal{B}_{2m+1}$ imply $\sigma\cap\mathcal{B}_{2m+1}\ne \emptyset$. 
				
				\item Let $2m+1\equiv 5(\text{mod $6$})$. Then $\beta_{2m+1} = 2m$, which implies $$\mathcal{B}_{2m+1} = \{a_1,a_4,a_7,\cdots,a_{2m-3},a_{2m}\}\sqcup\{e_1, e_4,e_7,\cdots,e_{2m-3},e_{2m}\}.$$
				
				If $\sigma_2\cap\{a_{2m}, e_{2m}\}\ne\emptyset$, then $\sigma_2\subset\sigma$ and $\{a_{2m}, e_{2m}\}\subset\mathcal{B}_{2m+1}$ imply $\sigma\cap\mathcal{B}_{2m+1}\ne\emptyset$.
				
				Now, assume $\sigma_2\cap\{a_{2m}, e_{2m}\} = \emptyset$. We consider two subcases: 
				\begin{itemize}
					\item Let $\sigma_1\cap N_{\Gamma_{2m-1}}[a_{2m-2}] = \emptyset$ or $\sigma_1\cap N_{\Gamma_{2m-1}}[e_{2m-2}] = \emptyset$. Then $\sigma_1\sqcup \{a_{2m-2}\}\in \Ind(\Gamma_{2m-1})$ with $|\sigma_1\sqcup \{a_{2m-2}\}| = 3m-2$ or $\sigma_1\sqcup \{e_{2m-2}\}\in \Ind(\Gamma_{2m-1})$ with $|\sigma_1\sqcup \{e_{2m-2}\}| = 3m-2$. This implies $\sigma_1\sqcup \{a_{2m-2}\}\in\IG{\Gamma_{2m-1}}$ or $\sigma_1\sqcup \{e_{2m-2}\}\in\IG{\Gamma_{2m-1}}$ (as $\aG{\Gamma_{2m-1}}=3m-2$). By induction hypothesis, $(\sigma_1\sqcup \{a_{2m-2}\})\cap\mathcal{B}_{2m-1}\ne\emptyset$ or $(\sigma_1\sqcup \{e_{2m-2}\})\cap\mathcal{B}_{2m-1}\ne\emptyset$. Since $a_{2m-2},e_{2m-2}\notin \mathcal{B}_{2m-1}, $ we get $\sigma_1\cap\mathcal{B}_{2m-1}\ne \emptyset$. Therefore, $\sigma_1\subset \sigma$ and $\mathcal{B}_{2m-1}\subseteq \mathcal{B}_{2m+1}$ imply $\sigma\cap\mathcal{B}_{2m+1}\ne \emptyset.$
					
					\item Let $\sigma_1\cap N_{\Gamma_{2m-1}}[a_{2m-2}]\ne\emptyset$ and $\sigma_1\cap N_{\Gamma_{2m-1}}[e_{2m-2}] \ne\emptyset$. 
					We have $\sigma_2\in \IG{\Omega_m}$ and $\sigma_2\cap\{a_{2m}, e_{2m}\} = \emptyset$. This implies either $\sigma_2=\{a_{2m-1}, c_{2m-1}, e_{2m-1}, b_{2m+1}\}$ or $\sigma_2=\{a_{2m-1}, c_{2m-1}, e_{2m-1}, d_{2m+1}\}$. Note that $a_{2m-2},e_{2m-2},b_{2m-1},d_{2m-1}\in N_{\Gamma_{2m+1}}(\sigma_2)$. Since $\sigma = \sigma_1\sqcup\sigma_2$ is an independent set in $\Gamma_{2m+1}$, it follows that $a_{2m-2},e_{2m-2},b_{2m-1},d_{2m-1}\notin \sigma_1$. Therefore, $N_{\Gamma_{2m-1}}[a_{2m-2}] = \{a_{2m-3},a_{2m-2},b_{2m-2},b_{2m-1}\}$ and $N_{\Gamma_{2m-1}}[e_{2m-2}] = \{d_{2m-2},d_{2m-1},e_{2m-3},e_{2m-2}\}$ imply $\sigma_1\cap\{a_{2m-3},b_{2m-2}\} \ne\emptyset$ and $\sigma_1\cap\{d_{2m-2},e_{2m-3}\} \ne\emptyset$. 
					If $\sigma_1\cap\{a_{2m-3},e_{2m-3}\} = \emptyset$, then $b_{2m-2},d_{2m-2}\in \sigma_1$, which contradicts $\sigma_1\in \Ind(\Gamma_{2m-1})$ (as $b_{2m-2}\sim d_{2m-2}$ in $\Gamma_{2m-1}$). Hence $\sigma_1\cap\{a_{2m-3},e_{2m-3}\}\ne\emptyset$. Since $\sigma_1\subset\sigma$ and $\{a_{2m-3},e_{2m-3}\}\subset\mathcal{B}_{2m+1}$, we obtain $\sigma\cap\mathcal{B}_{2m+1}\ne\emptyset$.
				\end{itemize}
			\end{enumerate}
			
			\item $|\sigma_1| = 3m-2$ and $|\sigma_2| = 3$.
			
			Then $\sigma_1\in \IG{\Gamma_{2m-1}}$. By induction hypothesis, $\sigma_1\cap\mathcal{B}_{2m-1}\ne \emptyset$. Hence, $\sigma_1\subset \sigma$ and $\mathcal{B}_{2m-1}\subseteq \mathcal{B}_{2m+1}$ imply $\sigma\cap\mathcal{B}_{2m+1}\ne \emptyset.$
		\end{enumerate}
		In all cases, $\sigma\cap\mathcal{B}_{2m+1}\ne \emptyset$. This completes the proof.
	\end{proof}
	
	\begin{proposition} \label{proposition:top_dimension}
		For $n\ge 2$, $\tilde{H}_{\dim(\Ind(\Gamma_n))}(\Ind(\Gamma_n)) = 0$.
	\end{proposition}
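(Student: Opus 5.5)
The plan is to apply discrete Morse theory to the ordered independent set $\mathcal{B}_n$ from \eqref{set_Bn}, treating $\Ind(\Gamma_n)$ as the family $\mathcal{F}$ of its faces (including the empty face). First, observe that $\mathcal{B}_n$ is independent in $\Gamma_n$. The only vertices in it are $a_i$ and $e_i$ with indices in $\{1,4,7,\ldots,\beta_n\}$. Since $a$'s are adjacent only to $a$'s with consecutive index and to $b$'s, and $e$'s only to consecutive $e$'s and $d$'s, no two elements of $\mathcal{B}_n$ are adjacent. Hence the sequential element matching $\mathcal{M}_{\mathcal{B}_n}$ of \eqref{equation:sequence_of_element_matchings} is an acyclic matching on $\Ind(\Gamma_n)$, by repeated use of \Cref{lemma:Jonsson acyclic}.

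Next, check that the empty face is not critical. Since $a_1\in\mathcal{B}_n$ is first in the order and $\{a_1\}\in\Ind(\Gamma_n)$, the pair $\{\emptyset,\{a_1\}\}$ lies in $\mathcal{M}_{a_1}$. So \Cref{theorem:acyclic} applies: $\Ind(\Gamma_n)$ is homotopy equivalent to a CW complex with one $d$-cell for each critical face of dimension $d$, plus one $0$-cell.

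The key step is to show that no top-dimensional face is critical. By \Cref{lemma:matching_top_dimension_in_G}, a face $\sigma\in\IG{\Gamma_n}$ is matched exactly when $\sigma\cap\mathcal{B}_n\neq\emptyset$. For $n\ge4$ this is precisely \Cref{prop:even+odd}. For $n=2$ and $n=3$ we have $\beta_n=1$, so $\mathcal{B}_n=\{a_1,e_1\}$, and we check directly.
\begin{itemize}
\item For $n=3$, the explicit list of $\IG{\Gamma_3}$ in the proof of \Cref{lemma:3m+1} should be inspected; each listed set should contain $a_1$ or $e_1$. If a set such as $\{d_1,a_2,c_2,e_2\}$ misses both, we instead fall back on the known homotopy type $\Ind(\Gamma_3)\simeq\vee_5\mathbb{S}^2$ from \eqref{equation:Gamma_123}. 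By \Cref{lemma:dim_sigma} the dimension is $3$, so the top homology vanishes anyway.
\item For $n=2$, \eqref{equation:Gamma_123} gives $\Ind(\Gamma_2)\simeq\mathbb{S}^1\vee\mathbb{S}^1$, while the dimension is $2$. So the vanishing again follows directly, without the matching argument.
\end{itemize}

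Finally, for $n\ge4$ with $D=\dim\Ind(\Gamma_n)$, the resulting CW complex has no $D$-cells, because every critical face has dimension at most $D-1$. Its cellular chain group in degree $D$ is therefore zero, so $\tilde{H}_D(\Ind(\Gamma_n))=0$.

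The only real obstacle is the combinatorial covering property, which is already handled by \Cref{prop:even+odd}. What remains is to handle the small cases carefully, as described above.
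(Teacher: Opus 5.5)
Your proposal is correct and follows the paper's proof. The cases $n=2,3$ are read off from $\Ind(\Gamma_2)\simeq\mathbb{S}^1\vee\mathbb{S}^1$ and $\Ind(\Gamma_3)\simeq\vee_5\mathbb{S}^2$, and the fallback really is needed for $n=3$, since $\{d_1,a_2,c_2,e_2\}\in\IG{\Gamma_3}$ misses $\mathcal{B}_3=\{a_1,e_1\}$. For $n\ge4$, the sequential element matching on $\mathcal{B}_n$, combined with \Cref{lemma:matching_top_dimension_in_G} and \Cref{prop:even+odd}, leaves no critical top-dimensional face. Your explicit check that $\emptyset$ is matched with $\{a_1\}$ is a hypothesis of \Cref{theorem:acyclic} that the paper leaves implicit.
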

	
	\begin{proof}
		For $n = 2, 3$, the result follows from \eqref{equation:Gamma_123}.
		
		Now, let $n\ge4$. Recall that $\mathcal{B}_n$ is an independent ordered set in $\Gamma_n$ (see \eqref{set_Bn}), and $\mathcal{M}_{\mathcal{B}_n} = (\bigsqcup_{i=1}^{\beta_n}\mathcal{M}_{a_i})\sqcup(\bigsqcup_{j=1}^{\beta_n}\mathcal{M}_{e_j})$ (see \eqref{equation:sequence_of_element_matchings}) is an acyclic matching on $\Ind(\Gamma_n)$. By \Cref{lemma:matching_top_dimension_in_G,prop:even+odd}, every top-dimensional face of $\Ind(\Gamma_n)$ is matched in $\mathcal{M}_{\mathcal{B}_n}$. It follows by \Cref{theorem:acyclic} that $\Ind(\Gamma_n)$ is homotopy equivalent to a cell complex of dimension strictly less than $\dim(\Ind(\Gamma_n))$. Thus $\tilde{H}_{\dim(\Ind(\Gamma_n))}(\Ind(\Gamma_n)) = 0$. 
	\end{proof}
	
	\Cref{proposition:homology_up_to_n-2,proposition:homology_n-1,proposition:top_dimension} together prove \Cref{theorem:ind_homology}, and hence \Cref{theorem:intro_homology}. 
	
	\subsubsection{Simply connectedness of $\Ind(\Gamma_n)$} \label{subsection:simply_connected}
	
	To prove \Cref{theorem:intro_simply_connected}, we now show that $\Ind(\Gamma_n)$ is simply connected.
	
	\begin{theorem}\label{theorem:simply_connected}
		For $n\geq 3$, $\Ind(\Gamma_n)$ is simply connected.
	\end{theorem}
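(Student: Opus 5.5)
We argue by induction on $n$, using a van Kampen-type argument for the decomposition $\Ind(\Gamma_n)=st(b_1,\Ind(\Gamma_n))\cup del(b_1,\Ind(\Gamma_n))$ with intersection $lk(b_1,\Ind(\Gamma_n))$. The base case $n=3$ is immediate from \eqref{equation:Gamma_123}, since $\Ind(\Gamma_3)\simeq\vee_5\mathbb{S}^2$ is simply connected. For the inductive step, $st(b_1,\Ind(\Gamma_n))$ is a cone, hence contractible. By the Seifert--van Kampen theorem, if $lk(b_1,\Ind(\Gamma_n))\simeq\Ind(\mathcal{U}_{n-1})$ is path connected and $del(b_1,\Ind(\Gamma_n))\simeq\Ind(\mathcal{V}_{n-1})$ is simply connected, then $\pi_1(\Ind(\Gamma_n))$ is a quotient of $\pi_1(\Ind(\mathcal{V}_{n-1}))$ and hence trivial. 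More concretely, every edge-loop in $\Ind(\Gamma_n)$ can be homotoped off $b_1$ by replacing each passage $x\,b_1\,y$ with a path from $x$ to $y$ in the link, which exists once the link is connected; the loop then lies in the deletion. So it suffices to prove the following for $n\ge 3$:
\begin{enumerate}[label=(\alph*)]
\item $\Ind(\mathcal{U}_{n-1})$ is path connected;
\item $\Ind(\mathcal{V}_{n-1})$ is simply connected.
\end{enumerate}

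Claim (a) follows from \Cref{proposition:homology_up_to_n-2}\ref{result:U_n} for $n-1\ge 2$, which gives $\tilde{H}_0=0$. For $n=3$ we have $\Ind(\mathcal{U}_2)\simeq\vee_2\mathbb{S}^1$, which is connected, so (a) holds in all cases.

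For (b), we use \Cref{claim:Vn_relation}: $\Ind(\mathcal{V}_{n-1})\simeq\Sigma(\Ind(\mathcal{X}_{n-3}))\vee\Sigma^2(\Ind(\Gamma_{n-2}))$ when $n-1\ge3$. The second wedge summand is a double suspension of a nonempty complex, so it is simply connected. The first summand is the suspension of $\Ind(\mathcal{X}_{n-3})$, and a suspension of a path-connected space is simply connected. Path connectivity of $\Ind(\mathcal{X}_{n-3})$ follows from \Cref{proposition:homology_up_to_n-2}\ref{result:X_n}, since $\tilde{H}_0=0$ for every $n-3\ge1$. A wedge of simply connected complexes is simply connected, which gives (b) for $n\ge4$. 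The remaining case $n=3$ uses $\Ind(\mathcal{V}_2)\simeq\vee_3\mathbb{S}^2$ from \eqref{equation:V_12}, which is simply connected. Notice that this route does not need the inductive hypothesis on $\Gamma$ at all: the recursion for $\mathcal{V}$ already produces suspensions.

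The main point to verify is the van Kampen step itself. Since the open sets must be path connected with a path-connected intersection, we thicken the star and the deletion to open neighbourhoods that deformation retract onto them and whose intersection deformation retracts onto the link. This is standard for subcomplexes of a simplicial complex. Alternatively, we can avoid this entirely by the explicit edge-loop argument above: any edge $x b_1$ followed by $b_1 y$ with $x,y$ in the link is replaced by a link path $x\to y$, and the homotopy takes place inside the cone $st(b_1,\Ind(\Gamma_n))$. We also check that the homotopy equivalences \eqref{equation:lk_del_ind_complex} used here are in fact equalities, namely $lk(b_1,\Ind(\Gamma_n))=\Ind(\Gamma_n-N[b_1])$ and $del(b_1,\Ind(\Gamma_n))=\Ind(\Gamma_n-\{b_1\})$, so that connectivity transfers to the actual subcomplexes.
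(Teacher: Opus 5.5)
Your argument is correct and takes a genuinely different route from the paper.

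\textbf{The paper's route.} The paper inducts on $n$ directly with edge loops. It splits off the last column $B=\{a_{n-1},b_n,c_{n-1},d_n,e_{n-1}\}$, so that $\Gamma_n[A]\cong\Gamma_{n-1}$, and pushes a loop off $B$ one vertex at a time. At each step it either deletes $p_j$ when $\{p_{j-1},p_{j+1}\}$ is a simplex, or replaces $p_j$ by a vertex $q\in A$ independent of $p_{j-1},p_j,p_{j+1}$. The resulting loop lies in $\Ind(\Gamma_{n-1})$ and is null-homotopic by the inductive hypothesis.

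\textbf{Your route.} You decompose at $b_1$ into the contractible closed star and the deletion, which meet exactly in the link, and then apply van Kampen. The inputs are facts already proved in \Cref{section:Lower_dimensions_homology}, none of which depend on simple connectivity, so there is no circularity:
\begin{itemize}
\item connectivity of $lk(b_1,\Ind(\Gamma_n))=\Ind(\Gamma_n-N_{\Gamma_n}[b_1])\cong\Ind(\mathcal{U}_{n-1})$, from $\tilde H_0=0$ in \Cref{proposition:homology_up_to_n-2};
\item simple connectivity of $del(b_1,\Ind(\Gamma_n))\cong\Ind(\mathcal{V}_{n-1})$. For $n\ge4$ this comes from \Cref{claim:Vn_relation}, which writes it as the suspension of the connected complex $\Ind(\mathcal{X}_{n-3})$ wedged with a double suspension of the nonempty complex $\Ind(\Gamma_{n-2})$. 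For $n=3$ it comes from $\Ind(\mathcal{V}_2)\simeq\vee_3\mathbb{S}^2$.
\end{itemize}

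\textbf{What your route buys.} Every step is backed by a proved statement, with no case analysis. By contrast, the paper's existence of the substitute vertex $q$ is asserted (``since $p_j\in B$ and $n\ge4$'') rather than checked. Your argument is also not really an induction on $\Gamma$: it handles $n=3$ directly as well. As a by-product it shows that $\Ind(\mathcal{V}_m)$ is simply connected for $m\ge2$.

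\textbf{What the paper's route buys.} It is independent of the auxiliary-graph recursions. It uses only $\Ind(\Gamma_3)\simeq\vee_5\mathbb{S}^2$ and local adjacency in $\Gamma_n$, so it would still work if those recursive homotopy equivalences were unavailable.
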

	
	\begin{proof}
		The proof is by induction on $n.$ For the base case $n = 3$, we have $\Ind(\Gamma_3)\simeq\vee_5\mathbb{S}^2$ by \eqref{equation:Gamma_123}. Hence, $\Ind(\Gamma_3)$ is simply connected.  Let $n>3$, and assume that $\Ind(\Gamma_l)$ is simply connected for all $3\leq l\leq n-1$. We show that $\Ind(\Gamma_n)$ is simply connected. 
		
		Let $p: \mathbb{S}^1\rightarrow \Ind(\Gamma_n)$ be a closed path. Since $\Ind(\Gamma_n)$ is a simplicial complex, we may assume that $p$ is homotopic to an edge loop $\gamma = p_1, p_2, \ldots, p_k,p_1$, where $p_i\in V(\Ind(\Gamma_n))$ and $\{p_i, p_{i+1}\} \in \Ind(\Gamma_n)$ for all $1 \leq i \leq k$, with indices taken modulo $k$ throughout the proof. It suffices to prove that $\gamma$ is null-homotopic. 
		
		Let $B = \{a_{n-1},b_n,c_{n-1},d_{n},e_{n-1}\}$ and $A = V(\Gamma_n)\setminus B$. Since $\Gamma_n[A]\cong \Gamma_{n-1}$, the induction hypothesis implies that every loop in $\Ind(\Gamma_n[A])$ is null-homotopic.
		
		If $p_i\in A$ for all $1 \leq i \leq k$, then $\gamma$ is a loop in $\Ind(\Gamma_n[A])$, and hence null-homotopic by the induction hypothesis. Otherwise, $p_i\in B$ for some $1 \leq i \leq k$. In this case, we proceed by successively eliminating the vertices of $B$ from $\gamma$ to obtain a loop $\tilde{\gamma}$ in $ \Ind(\Gamma_n[A])$ homotopic to $\gamma$. Let $j:=\min \{i\ | \ p_i\in B\}$. We consider the following two cases. 
		\begin{enumerate}[label=(\roman*)]
			\item First, assume that $\{p_{j-1},p_{j+1}\}\in \Ind(\Gamma_n)$. Since $\{p_{j-1},p_j\},\{p_{j},p_{j+1}\}\in \Ind(\Gamma_n) $, it follows that $\{p_{j-1}, p_j, p_{j+1}\} \in \Ind(\Gamma_n)$. Thus, the loop $p_1,\ldots, p_{j-1}, p_{j+1}, \ldots, p_{k},p_1$ obtained by removing the vertex $p_j$ is homotopic to $\gamma$.
			
			\item We now assume that $\{p_{j-1},p_{j+1}\}\notin \Ind(\Gamma_n)$. Note that $p_j \in B$ and $n \geq 4$. Hence, we can choose a vertex $q\in A$ such that $\{p_{j-1}, q, p_j\} \in \Ind(\Gamma_n)$ and $\{q,p_j, p_{j+1}\} \in \Ind(\Gamma_n)$. Replacing the vertex $p_j$ by $q$, we get the loop $p_1, \ldots, p_{j-1}, q, p_{j+1}, \ldots, p_k,p_1$, which is homotopic to $\gamma$. 
		\end{enumerate}
		
		In either case, we obtain a loop homotopic to $\gamma$ that contains one fewer vertex from $B$ than $\gamma$ does. Repeating this procedure finitely many times, we obtain the required loop $\tilde{\gamma}$ in $\Ind(\Gamma_n[A])$ that is homotopic to $\gamma$. By the induction hypothesis, $\tilde{\gamma}$ is null-homotopic, and hence so is $\gamma$. Thus, $\Ind(\Gamma_n)$ is simply connected.
	\end{proof}

	\section{Conclusion and Future Directions} \label{section:conclusion}
	In \cite{Kozlov_directed_trees} and \cite{Matsushita_grid}, the authors studied the matching complexes $M(G_{1\times n})$ and $M(G_{2\times n})$ of the $1\times n$ and $2\times n$ grid graphs, respectively. They proved that these complexes are either contractible or homotopy equivalent to a wedge of spheres. For $M(G_{3\times n})$, an attempt to determine its homotopy type was made in \cite{Previous_3Xn}, but a gap was later found in the proof, leaving the problem open. In this article, we established the topological connectivity of $M(G_{3\times n})$. To this end, we showed that $\tilde{H}_i(M(G_{3\times n})) = 0$ for $i\leq n-2$ and $\tilde{H}_{n-1}(M(G_{3\times n}))\neq 0$.
	We also proved that $\tilde{H}_{\dim(M(G_{3\times n}))}(M(G_{3\times n})) = 0$.
	
	We investigated $M(G_{3\times n})$ using SageMath by computing its homology groups (with coefficients in $\mathbb{Z}$). For $n\leq 9$, the homology data of $M(G_{3\times n})$ is given in \Cref{table:M_G}. Here, the notation \homo{i}{j} indicates that the $i^{th}$ homology group is $\mathbb{Z}^{j}$. 
	\begin{table}[H]
		\centering
		\begin{tabular}{|c|c|c|c|c|c|c|c|c|c|}
			\hline 
			$n$ & $1$ & $2$ & $3$ & $4$ & $5$ & $6$ & $7$ & $8$ & $9$ \\ \hline $M({G_{3\times n}})$
			& \homo{0}{1}& \homo{1}{2}& \homo{2}{5} & \homo{3}{9} & \homo{4}{16} & \homo{5}{31} & \homo{6}{55} & \homo{7}{94} & \homo{8}{163} \\ \hline
		\end{tabular}
		\caption{Non-trivial homology of $M(G_{3\times n})$ for $n\leq 9$}
		\label{table:M_G}
	\end{table}
	
	The explicit computations in \eqref{equation:Gamma_123} show that $M(G_{3\times n})$ is homotopy equivalent to a wedge of $(n-1)$- dimensional spheres for $n=1,2$. Moreover, for $3\le n\le 9$, it can be seen from \Cref{table:M_G} that non-trivial reduced homology occurs only in dimension $n-1$. Combining this with the fact  that, for $n\geq 3$, $M(G_{3\times n})$ is simply connected, we conclude that $M(G_{3\times n})$ is homotopy equivalent to a wedge of $(n-1)$-dimensional spheres for $3\le n\le 9$.  These observations lead us to the following conjecture.
	\begin{conjecture}\label{conjecture}
		For $n\geq 1$, $M(G_{3\times n})$ is homotopy equivalent to a wedge of $(n-1)$-dimensional spheres.
	\end{conjecture}
	
	To prove \Cref{conjecture}, it is sufficient to prove that $\tilde{H}_{i}(M(G_{3\times n})) = 0$ for all $i \geq n$. For this, we tried an approach similar to the one in \Cref{proposition:homology_up_to_n-2,proposition:homology_n-1}, where the Mayer-Vietoris sequence was applied to the independence complex of $\Gamma_n$ (the line graph of $G_{3\times n}$) and to the independence complexes of auxiliary graphs.
	We find that this method, when applied for $n \leq 8$, yields non-trivial homology for the independence complexes of the auxiliary graphs and for $\Ind(\Gamma_n)$ in exactly the same dimensions as given in \Cref{table:M_G,table:2}, obtained using SageMath. However, in extending this argument to general $n$, the major difficulty was control over the exact Betti numbers of the independence complexes of auxiliary graphs (arising in the induction).
	We believe a refinement of the Mayer-Vietoris approach that provides better control over the Betti numbers may allow us to extend the argument further.
	
	\begin{table}[H]
		\centering
		\begin{tabular}{|*{11}{c|}}\hline
			$\mathbf{n}$ & $1$ & $2$ & $3$ & $4$ & $5$ & $6$ & $7$ \\\hline
			
			$\Ind(\mathcal{U}_n)$ & \homo{0}{1} & \homo{1}{2} & \homo{2}{4} &\homo{3}{6} & \homo{4}{10} & \homo{5}{17} & \homo{6}{26}, \homo{7}{1}\\\hline
			
			$\Ind(\mathcal{V}_n)$ &\homo{1}{1} &\homo{2}{3} & \homo{3}{5} & \homo{4}{10} &\homo{5}{21} & \homo{6}{38}& \homo{7}{69} \\\hline
			
			$\Ind(\mathcal{W}_n)$ & &\homo{2}{1} & \homo{3}{3} & \homo{4}{6} &\homo{5}{14} &\homo{6}{30}& \homo{7}{58} \\\hline
			
			$\Ind(\mathcal{X}_n)$ &\homo{2}{3} & \homo{3}{5} &\homo{4}{12} & \homo{5}{22} & \homo{6}{38} & \homo{7}{72} &\homo{8}{127} \\\hline
			
			$\Ind(\mathcal{Y}_n)$ & & & \homo{4}{1} &\homo{5}{4} & \homo{6}{9} & \homo{7}{20} & \homo{8}{44}\\\hline
			
			$\Ind(\mathcal{Z}_n)$ & \homo{2}{1}& \homo{3}{2} & \homo{4}{3} &\homo{5}{3}, \homo{6}{1} & \homo{6}{4}, \homo{7}{3} & \homo{7}{5}, \homo{8}{8} &\homo{8}{5}, \homo{9}{24} \\\hline
		\end{tabular}
		\caption{Non-trivial homology of independence complexes of the auxiliary graphs for $n\leq 7$ (Blank entries denote that the homology is trivial in all dimensions)}
		\label{table:2}
	\end{table}
	
	\section*{Acknowledgement}
	
	The first and second authors are supported by HTRA fellowship by IIT Mandi, India. The third author is supported by the seed grant project IITM/SG/SMS/95 by IIT Mandi, India.
	
	\bibliographystyle{abbrv}
	\bibliography{ref}

@article{Adamaszek_power_of_cyles,
	AUTHOR = {Adamaszek, Micha\l},
	TITLE = {Splittings of independence complexes and the powers of cycles},
	JOURNAL = {J. Combin. Theory Ser. A},
	FJOURNAL = {Journal of Combinatorial Theory. Series A},
	VOLUME = {119},
	YEAR = {2012},
	NUMBER = {5},
	PAGES = {1031--1047},
	ISSN = {0097-3165,1096-0899},
	MRCLASS = {05E45 (05C76)},
	MRNUMBER = {2891380},
	MRREVIEWER = {Sonja\ \v Cuki\'c},
	DOI = {10.1016/j.jcta.2012.01.009},
	URL = {https://doi.org/10.1016/j.jcta.2012.01.009},
}

@article{Bayer2024Cutcomplex,
	AUTHOR = {Margaret Bayer and Mark Denker and Marija Jeli{\'c} Milutinovi{\'c} and Rowan Rowlands and Sheila Sundaram and Lei Xue},
	TITLE = {Topology of cut complexes of graphs},
	JOURNAL = {SIAM J. Discrete Math.},
	FJOURNAL = {SIAM Journal on Discrete Mathematics},
	VOLUME = {38},
	YEAR = {2024},
	NUMBER = {2},
	PAGES = {1630--1675},
	ISSN = {0895-4801,1095-7146},
	MRCLASS = {57M15 (05C69 05E18 05E45 57Q70)},
	MRNUMBER = {4750271},
	MRREVIEWER = {Anurag\ Singh},
	DOI = {10.1137/23M1569034},
	URL = {https://doi.org/10.1137/23M1569034},
}

@article{Bayer2024TotalCutcomplex,
	AUTHOR = {Margaret Bayer and Mark Denker and Marija Jeli{\'c} Milutinovi{\'c} and Rowan Rowlands and Sheila Sundaram and Lei Xue},
	TITLE = {Total cut complexes of graphs},
	JOURNAL = {Discrete Comput. Geom.},
	FJOURNAL = {Discrete \& Computational Geometry. An International Journal of Mathematics and Computer Science},
	VOLUME = {73},
	YEAR = {2025},
	NUMBER = {2},
	PAGES = {500--527},
	ISSN = {0179-5376,1432-0444},
	MRCLASS = {57M15 (05C69 05E45 57Q70)},
	MRNUMBER = {4865931},
	MRREVIEWER = {Priyavrat\ Deshpande},
	DOI = {10.1007/s00454-024-00630-4},
	URL = {https://doi.org/10.1007/s00454-024-00630-4},
}

@article{bayer2025topology,
	title={Topology of Cut Complexes {II}},
	author={Margaret Bayer and Mark Denker and Marija Jeli{\'c} Milutinovi{\'c} and Sheila Sundaram and Lei Xue},
	JOURNAL = {SIAM J. Discrete Math.},
	FJOURNAL = {SIAM Journal on Discrete Mathematics},
	volume={39},
	number={2},
	pages={1123--1157},
	year={2025},
	publisher={SIAM}
}

@book{bondy1976graph,
	AUTHOR = {John Adrian Bondy and Uppaluri Siva Ramachandra Murty},
	TITLE = {Graph theory},
	SERIES = {Grad. Texts in Math.},
	VOLUME = {244},
	PUBLISHER = {Springer, New York},
	YEAR = {2008},
	PAGES = {xii+651},
	ISBN = {978-1-84628-969-9},
	MRCLASS = {05-01 (05Cxx)},
	MRNUMBER = {2368647},
	MRREVIEWER = {Arthur\ M.\ Hobbs},
	DOI = {10.1007/978-1-84628-970-5},
	URL = {https://doi.org/10.1007/978-1-84628-970-5},
}

@book{hatcher2005algebraic,
	AUTHOR = {Hatcher, Allen},
	TITLE = {Algebraic topology},
	PUBLISHER = {Cambridge University Press, Cambridge},
	YEAR = {2002},
	PAGES = {xii+544},
	ISBN = {0-521-79160-X; 0-521-79540-0},
	MRCLASS = {55-01 (55-00)},
	MRNUMBER = {1867354},
	MRREVIEWER = {Donald\ W.\ Kahn},
}

@book{munkres_book,
	AUTHOR = {James R.\ Munkres},
	TITLE = {Elements of algebraic topology},
	PUBLISHER = {Chapman and Hall/CRC},
	YEAR = {1984},
}

@book{west,
	author = {West, Douglas B.},
	isbn = {0130144002, 9780130144003},
	publisher = {Prentice Hall, Inc., Upper Saddle River, NJ},
	refid = {43903794},
	title = {Introduction to graph theory},
	year = {2001},
}

@book{JonssonBook,
	AUTHOR = {Jonsson, Jakob},
	TITLE = {Simplicial complexes of graphs},
	SERIES = {Lecture Notes in Mathematics},
	VOLUME = {1928},
	PUBLISHER = {Springer-Verlag, Berlin},
	YEAR = {2008},
	PAGES = {xiv+378},
	ISBN = {978-3-540-75858-7},
	MRCLASS = {05C10 (05C20 05C40 05E30 06A11 55U10)},
	MRNUMBER = {2368284},
	DOI = {10.1007/978-3-540-75859-4},
	URL = {https://doi.org/10.1007/978-3-540-75859-4},
}

@book{Kozlov2008,
	AUTHOR = {Kozlov, Dmitry},
	TITLE = {Combinatorial algebraic topology},
	SERIES = {Algorithms Comput. Math.},
	PUBLISHER = {Springer, Berlin},
	YEAR = {2008},
	PAGES = {xx+389},
	ISBN = {978-3-540-71961-8},
	MRCLASS = {55-02 (05C15 05C25 06A07 52B70 55U10 57-02)},
	MRNUMBER = {2361455},
	MRREVIEWER = {Rade\ \v Zivaljevi\'c},
	DOI = {10.1007/978-3-540-71962-5},
	URL = {https://doi.org/10.1007/978-3-540-71962-5},
}

@article{Bjorner1994,
	AUTHOR = {Bj\"{o}rner, A. and Lov\'{a}sz, L. and Vre\'{c}ica, S. T. and
		\v{Z}ivaljevi\'{c}, R. T.},
	TITLE = {Chessboard complexes and matching complexes},
	JOURNAL = {J. London Math. Soc. (2)},
	FJOURNAL = {Journal of the London Mathematical Society. Second Series},
	VOLUME = {49},
	YEAR = {1994},
	NUMBER = {1},
	PAGES = {25--39},
	ISSN = {0024-6107,1469-7750},
	MRCLASS = {52B20 (05C70 13F50 55U10)},
	MRNUMBER = {1253009},
	MRREVIEWER = {Rafael\ H.\ Villarreal},
	DOI = {10.1112/jlms/49.1.25},
	URL = {https://doi.org/10.1112/jlms/49.1.25},
}

@article{Wachs_survey,
	AUTHOR = {Wachs, Michelle L.},
	TITLE = {Topology of matching, chessboard, and general bounded degree
		graph complexes},
	JOURNAL = {Algebra Universalis},
	FJOURNAL = {Algebra Universalis},
	VOLUME = {49},
	YEAR = {2003},
	NUMBER = {4},
	PAGES = {345--385},
	ISSN = {0002-5240,1420-8911},
	MRCLASS = {05E25 (05C10 05E10 55U10)},
	MRNUMBER = {2022345},
	MRREVIEWER = {Edward\ B.\ Swartz},
	DOI = {10.1007/s00012-003-1825-1},
	URL = {https://doi.org/10.1007/s00012-003-1825-1},
}

@article{csorba2009subdivision,
	AUTHOR = {Csorba, P\'eter},
	TITLE = {Subdivision yields {A}lexander duality on independence
		complexes},
	JOURNAL = {Electron. J. Combin.},
	FJOURNAL = {Electronic Journal of Combinatorics},
	VOLUME = {16},
	YEAR = {2009},
	NUMBER = {2},
	PAGES = {Research  Paper 11, 7},
	ISSN = {1077-8926},
	MRCLASS = {55P10 (05C69 05E18 57M15)},
	MRNUMBER = {2515774},
	MRREVIEWER = {Alberto\ Cavicchioli},
	DOI = {10.37236/77},
	URL = {https://doi.org/10.37236/77},
}

@article{chandrakar2024topology,
	AUTHOR = {Chandrakar, Himanshu and Hazra, Nisith Ranjan and Rout,
		Debotosh and Singh, Anurag},
	TITLE = {Topology of total cut complexes and cut complexes of grid
		graphs},
	JOURNAL = {SIAM J. Discrete Math.},
	FJOURNAL = {SIAM Journal on Discrete Mathematics},
	VOLUME = {40},
	YEAR = {2026},
	NUMBER = {2},
	PAGES = {760--788},
	ISSN = {0895-4801,1095-7146},
	MRCLASS = {05C69 (05E45 55P15 57M15)},
	MRNUMBER = {5074457},
	DOI = {10.1137/24M1687716},
	URL = {https://doi.org/10.1137/24M1687716},
}

@article{bouc1992homologie,
	title={Homologie de certains ensembles de 2-sous-groupes des groupes sym{\'e}triques},
	author={Bouc, Serge},
	journal = {J. Algebra},
	fjournal={Journal of Algebra},
	volume={150},
	number={1},
	pages={158--186},
	year={1992},
	publisher={Academic Press}
}

@article{braun2016matching,
	AUTHOR = {Braun, Benjamin and Hough, Wesley K.},
	TITLE = {Matching and independence complexes related to small grids},
	JOURNAL = {Electron. J. Combin.},
	FJOURNAL = {Electronic Journal of Combinatorics},
	VOLUME = {24},
	YEAR = {2017},
	NUMBER = {4},
	PAGES = {Paper No. 4.18, 20},
	ISSN = {1077-8926},
	MRCLASS = {05C69 (05C25 05C70 05C76 05E45)},
	MRNUMBER = {3723557},
	DOI = {10.37236/6212},
	URL = {https://doi.org/10.37236/6212},
}

@book{garst1979cohen,
	title={Cohen-{M}acaulay complexes and group actions.},
	author={Garst, Peter Freedman},
	PUBLISHER = {Ph.D.\ Thesis, University of Wisconsin - Madison},
	YEAR = {1979},
	PAGES = {145},
	MRCLASS = {99-05},
	MRNUMBER = {2628756},
	URL =
	{http://gateway.proquest.com/openurl?url_ver=Z39.88-2004&rft_val_fmt=info:ofi/fmt:kev:mtx:dissertation&res_dat=xri:pqdiss&rft_dat=xri:pqdiss:7924170},
}

@article{bayer2023general,
title={General polygonal line tilings and their matching complexes},
author={Bayer, Margaret and Milutinovi{\'c}, Marija Jeli{\'c} and Vega, Julianne},
journal={Discrete Math.},
volume={346},
number={7},
pages={113428},
year={2023},
publisher={Elsevier}
}

@incollection{bjorner1987some,
AUTHOR = {Bj\"orner, Anders},
TITLE = {Some {C}ohen-{M}acaulay complexes arising in group theory},
BOOKTITLE = {Commutative algebra and combinatorics ({K}yoto, 1985)},
SERIES = {Adv. Stud. Pure Math.},
VOLUME = {11},
PAGES = {13--19},
PUBLISHER = {North-Holland, Amsterdam},
YEAR = {1987},
ISBN = {0-444-70314-4},
MRCLASS = {05B30 (06A10 20D30 20F16 20J05)},
MRNUMBER = {951193},
MRREVIEWER = {Stephen\ D.\ Smith},
DOI = {10.2969/aspm/01110013},
URL = {https://doi.org/10.2969/aspm/01110013},
}

@article{Reiner_Matching,
AUTHOR = {Reiner, Victor and Roberts, Joel},
TITLE = {Minimal resolutions and the homology of matching and
	chessboard complexes},
JOURNAL = {J. Algebraic Combin.},
FJOURNAL = {Journal of Algebraic Combinatorics. An International Journal},
VOLUME = {11},
YEAR = {2000},
NUMBER = {2},
PAGES = {135--154},
ISSN = {0925-9899,1572-9192},
MRCLASS = {13D02 (05C70 05E10 13F50)},
MRNUMBER = {1761911},
DOI = {10.1023/A:1008728115910},
URL = {https://doi.org/10.1023/A:1008728115910},
}

@article{Jiang_chessboard,
AUTHOR = {Jiang, Chengyao and Zhao, Yakun and Wang, Hong and Zhu,
	Guangjun},
TITLE = {The facet ideals of chessboard complexes},
JOURNAL = {Rocky Mountain J. Math.},
FJOURNAL = {The Rocky Mountain Journal of Mathematics},
VOLUME = {53},
YEAR = {2023},
NUMBER = {4},
PAGES = {1155--1175},
ISSN = {0035-7596,1945-3795},
MRCLASS = {13F55 (13C15 13D02)},
MRNUMBER = {4634995},
MRREVIEWER = {Hassan\ Haghighi},
DOI = {10.1216/rmj.2023.53.1155},
URL = {https://doi.org/10.1216/rmj.2023.53.1155},
}

@article{Shareshian,
AUTHOR = {Shareshian, John and Wachs, Michelle L.},
TITLE = {Torsion in the matching complex and chessboard complex},
JOURNAL = {Adv. Math.},
FJOURNAL = {Advances in Mathematics},
VOLUME = {212},
YEAR = {2007},
NUMBER = {2},
PAGES = {525--570},
ISSN = {0001-8708,1090-2082},
MRCLASS = {55U10 (05C70 05E10 05E25)},
MRNUMBER = {2329312},
MRREVIEWER = {Rade\ \v Zivaljevi\'c},
DOI = {10.1016/j.aim.2006.10.014},
URL = {https://doi.org/10.1016/j.aim.2006.10.014},
}

@article{Jojic_Tverberg,
AUTHOR = {Joji\'{c}, Du\{v}sko and Vre\'{c}ica, Sini\v{s}a T. and \v{Z}ivaljevi\'{c}, Rade T.}

@article{Athanasiadis,
AUTHOR = {Athanasiadis, Christos A.},
TITLE = {Decompositions and connectivity of matching and chessboard
complexes},
JOURNAL = {Discrete Comput. Geom.},
FJOURNAL = {Discrete \& Computational Geometry. An International Journal
of Mathematics and Computer Science},
VOLUME = {31},
YEAR = {2004},
NUMBER = {3},
PAGES = {395--403},
ISSN = {0179-5376,1432-0444},
MRCLASS = {05E25 (55U10)},
MRNUMBER = {2036946},
MRREVIEWER = {Hugh\ Ross\ Thomas},
DOI = {10.1007/s00454-003-2869-x},
URL = {https://doi.org/10.1007/s00454-003-2869-x},
}

@article{Karaguezian_Matching,
AUTHOR = {Karaguezian, Dikran B. and Reiner, Victor and Wachs, Michelle
L.},
TITLE = {Matching complexes, bounded degree graph complexes, and weight
spaces of {${\rm GL}_n$}-complexes},
JOURNAL = {J. Algebra},
FJOURNAL = {Journal of Algebra},
VOLUME = {239},
YEAR = {2001},
NUMBER = {1},
PAGES = {77--92},
ISSN = {0021-8693,1090-266X},
MRCLASS = {05E25 (05C99 13D25 20G05)},
MRNUMBER = {1827875},
MRREVIEWER = {David\ A.\ Jorgensen},
DOI = {10.1006/jabr.2000.8654},
URL = {https://doi.org/10.1006/jabr.2000.8654},
}

@article{Zivaljevic_colored_Tverberg,
AUTHOR = { \v{Z}ivaljevi\'{c}, Rade T. and Vre\'{c}ica, Sini\v{s}a T.},
TITLE = {The colored {T}verberg's problem and complexes of injective
functions},
JOURNAL = {J. Combin. Theory Ser. A},
FJOURNAL = {Journal of Combinatorial Theory. Series A},
VOLUME = {61},
YEAR = {1992},
NUMBER = {2},
PAGES = {309--318},
ISSN = {0097-3165,1096-0899},
MRCLASS = {52A35 (05D05)},
MRNUMBER = {1185000},
MRREVIEWER = {Egon\ Schulte},
DOI = {10.1016/0097-3165(92)90028-S},
URL = {https://doi.org/10.1016/0097-3165(92)90028-S},
}

@article{Kozlov_directed_trees,
AUTHOR = {Kozlov, Dmitry N.},
TITLE = {Complexes of directed trees},
JOURNAL = {J. Combin. Theory Ser. A},
FJOURNAL = {Journal of Combinatorial Theory. Series A},
VOLUME = {88},
YEAR = {1999},
NUMBER = {1},
PAGES = {112--122},
ISSN = {0097-3165,1096-0899},
MRCLASS = {05C10 (05C05 05E25)},
MRNUMBER = {1713484},
MRREVIEWER = {Timothy\ Y.\ Chow},
DOI = {10.1006/jcta.1999.2984},
URL = {https://doi.org/10.1006/jcta.1999.2984},
}

@article{Milutinovic_trees,
AUTHOR = {Milutinovi\'c, Marija Jeli\'c{} and Jenne, Helen and
McDonough, Alex and Vega, Julianne},
TITLE = {Matching complexes of trees and applications of the matching
tree algorithm},
JOURNAL = {Ann. Comb.},
FJOURNAL = {Annals of Combinatorics},
VOLUME = {26},
YEAR = {2022},
NUMBER = {4},
PAGES = {1041--1075},
ISSN = {0218-0006,0219-3094},
MRCLASS = {05C70 (05C69 05E45)},
MRNUMBER = {4505054},
MRREVIEWER = {Siamak\ Yassemi},
DOI = {10.1007/s00026-022-00605-3},
URL = {https://doi.org/10.1007/s00026-022-00605-3},
}

@article{Matsushita_grid,
AUTHOR = {Matsushita, Takahiro},
TITLE = {Matching complexes of small grids},
JOURNAL = {Electron. J. Combin.},
FJOURNAL = {Electronic Journal of Combinatorics},
VOLUME = {26},
YEAR = {2019},
NUMBER = {3},
PAGES = {Paper No. 3.1, 8},
ISSN = {1077-8926},
MRCLASS = {05C70 (05C69 05E45 55U10)},
MRNUMBER = {3982310},
MRREVIEWER = {Henry\ Hugh\ Adams},
DOI = {10.37236/8480},
URL = {https://doi.org/10.37236/8480},
}

@article{Marietti_forests,
AUTHOR = {Marietti, Mario and Testa, Damiano},
TITLE = {A uniform approach to complexes arising from forests},
JOURNAL = {Electron. J. Combin.},
FJOURNAL = {Electronic Journal of Combinatorics},
VOLUME = {15},
YEAR = {2008},
NUMBER = {1},
PAGES = {Research Paper 101, 18},
ISSN = {1077-8926},
MRCLASS = {05C10 (05C05 52B70 57Q05)},
MRNUMBER = {2426164},
MRREVIEWER = {Alberto\ Cavicchioli},
DOI = {10.37236/825},
URL = {https://doi.org/10.37236/825},
}

@article{Matsushita_polygonal,
AUTHOR = {Matsushita, Takahiro},
TITLE = {Matching complexes of polygonal line tilings},
JOURNAL = {Hokkaido Math. J.},
FJOURNAL = {Hokkaido Mathematical Journal},
VOLUME = {51},
YEAR = {2022},
NUMBER = {3},
PAGES = {339--359},
ISSN = {0385-4035},
MRCLASS = {55P10 (05C70 52C20)},
MRNUMBER = {4517338},
MRREVIEWER = {Shuchita\ Goyal},
DOI = {10.14492/hokmj/2019-213},
URL = {https://doi.org/10.14492/hokmj/2019-213},
}

@article{bayer_outerplanar,
title={Matching Complexes of Outerplanar Graphs},
author={Margaret Bayer and Marija Jeli\'{c} Milutinovi\'{c} and Julianne Vega},
journal={arXiv preprint arXiv:2411.04601},
year={2024}
}

@article{filakovsky_robust_clique,
title={The Topology of $k$-Robust Clique Complexes in Grid-like Graphs}, 
author={Marek Filakovsk\'{y}},
journal={arXiv preprint arXiv:2602.11365},
year={2026} 
}

@article{Bate_representation,
AUTHOR = {Bate, Michael and Everitt, Brent and Ford, Sam and Ramos,
Eric},
TITLE = {Homology of matching complexes and representations of
symmetric groups},
JOURNAL = {Proc. Amer. Math. Soc.},
FJOURNAL = {Proceedings of the American Mathematical Society},
VOLUME = {154},
YEAR = {2026},
NUMBER = {4},
PAGES = {1467--1478},
ISSN = {0002-9939,1088-6826},
MRCLASS = {55U10 (18A25 20C30)},
MRNUMBER = {5042198},
DOI = {10.1090/proc/17573},
URL = {https://doi.org/10.1090/proc/17573},
}

@article{Categorical_product,
AUTHOR = {Gupta, Raju Kumar and Sarkar, Sourav and Sawant, Sagar S. and
Shukla, Samir},
TITLE = {On the matching complexes of the categorical product of path graphs},
JOURNAL = {J. Appl. Comput. Topol.},
FJOURNAL = {Journal of Applied and Computational Topology},
VOLUME = {10},
YEAR = {2026},
NUMBER = {3},
PAGES = {Paper No. 19, 44},
ISSN = {2367-1726,2367-1734},
MRCLASS = {55P10 (05E45 55U10)},
MRNUMBER = {5102471},
DOI = {10.1007/s41468-026-00238-y},
URL = {https://doi.org/10.1007/s41468-026-00238-y},
}

@article{Jonsson_unpublished,
title={Matching complexes on grids},
author={Jakob Jonsson},
journal={Unpublished manuscript},
year={2005}
}

@article {Previous_3Xn,
AUTHOR = {Goyal, Shuchita and Shukla, Samir and Singh, Anurag},
TITLE = {Matching complexes of {$3 \times n$} grid graphs},
JOURNAL = {Electron. J. Combin.},
FJOURNAL = {Electronic Journal of Combinatorics},
VOLUME = {28},
YEAR = {2021},
NUMBER = {4},
PAGES = {Paper No. 4.16, 26},
ISSN = {1077-8926},
MRCLASS = {05E45 (55P15)},
MRNUMBER = {4394659},
MRREVIEWER = {Oana\ Stefania\ Olteanu},
DOI = {10.37236/10496},
URL = {https://doi.org/10.37236/10496},
NOTE = {(This article has been withdrawn by the authors)}
}

@article {Matsushita_grid_independence_1,
AUTHOR = {Matsushita, Takahiro and Wakatsuki, Shun},
TITLE = {Independence complexes of {$(n \times 4)$} and {$(n \times
	5)$}-grid graphs},
	JOURNAL = {Topology Appl.},
	FJOURNAL = {Topology and its Applications},
	VOLUME = {334},
	YEAR = {2023},
	PAGES = {Paper No. 108541, 18},
	ISSN = {0166-8641,1879-3207},
	MRCLASS = {55P10 (05C69)},
	MRNUMBER = {4587515},
	MRREVIEWER = {Bennet\ Goeckner},
	DOI = {10.1016/j.topol.2023.108541},
	URL = {https://doi.org/10.1016/j.topol.2023.108541},
}

@article {Matsushita_grid_independence_2,
AUTHOR = {Matsushita, Takahiro and Wakatsuki, Shun},
TITLE = {Independence complexes of {$(n\times 6)$}-grid graphs},
JOURNAL = {Homology Homotopy Appl.},
FJOURNAL = {Homology, Homotopy and Applications},
VOLUME = {26},
YEAR = {2024},
NUMBER = {1},
PAGES = {15--27},
ISSN = {1532-0073,1532-0081},
MRCLASS = {55P10 (05C69)},
MRNUMBER = {4700575},
MRREVIEWER = {Dae-Woong\ Lee},
DOI = {10.4310/hha.2024.v26.n1.a2},
URL = {https://doi.org/10.4310/hha.2024.v26.n1.a2},
}

@article{adamaszek2012hardsquarescylindersrevisited,
title={Hard squares on cylinders revisited}, 
author={Michal Adamaszek},
journal={arXiv preprint arXiv:1202.1655},
year={2012} 
}

@article{engstrom2009complexes,
title={Complexes of directed trees and independence complexes},
author={Engstr{\"o}m, Alexander},
journal={Discrete Math.},
volume={309},
number={10},
pages={3299--3309},
year={2009},
publisher={Elsevier}
}
	
	\addcontentsline{toc}{section}{References}
	
\end{document}